\theoremstyle{plain}
\newtheorem{thm}{Theorem}[section]
\newtheorem{prop}[thm]{Proposition}
\newtheorem{lemma}[thm]{Lemma}
\newtheorem{cor}[thm]{Corollary}
\newtheorem{summ}[thm]{Summary}
\theoremstyle{definition}
\newtheorem{defi}[thm]{Definition}
\newtheorem{obs}[thm]{Remark}
\newtheorem{ex}[thm]{Example}
\theoremstyle{plain}
\newtheorem*{namedthm}{\namedthmname}
\newcounter{namedthm}
\newenvironment{named}[1]
{\def\namedthmname{#1}%
	\refstepcounter{namedthm}%
	\namedthm\def\@currentlabel{#1}}
{\endnamedthm}
\def\Cx{\mathbb{C}}
\def\Chat{\widehat{\mathbb{C}}}
\def\DD{\mathbb{D}}
\def\dist{\mathrm{dist}}
\def\A{\mathcal{A}}
\def\dD{\partial\mathbb{D}}
\renewcommand{\tilde}{\widetilde}
\renewcommand{\setminus}{\smallsetminus}
\title{Boundary behaviour of universal covering maps}
\author[G. R. Ferreira]{Gustavo R. Ferreira}
\address{Centre de Recerca Matem\`atica, Barcelona, Spain}
\email{grodrigues@crm.cat}
\author[A. Jov\'e]{Anna Jov\'e}
\address{Departament de Matemàtiques i Informàtica, Universitat de Barcelona, Barcelona, Spain}
\email{ajovecam7@alumnes.ub.edu}
\thanks{The first author acknowledges financial support from the Spanish State Research Agency through the Mar\'ia de Maeztu Program for Centers and Units of Excellence in R\&D (CEX2020-001084-M). The second author acknowledges financial support from the Spanish government grant FPI PRE2021-097372.}
\date{\today}
\begin{document}
\begin{abstract}
	Let $ \Omega \subset\Chat$ be a multiply connected domain, and let $ \pi\colon \mathbb{D}\to\Omega $ be a universal covering map. In this paper, we analyze the boundary behaviour of $ \pi $, describing the interplay between radial limits and angular cluster sets,   the tangential and non-tangential limit sets of the deck tranformation group, and the geometry and the topology of the boundary of $ \Omega $.
	
	As an application, we describe accesses to the boundary of $ \Omega $ in terms of radial limits of points in the unit circle, establishing a correspondence, in the same spirit as in the simply connected case. We also develop a theory of prime ends for multiply connected domains which behaves properly under the universal covering, providing an extension of the Carath\'eodory--Torhorst Theorem to multiply connected domains.
\end{abstract}
\maketitle

\section{Introduction}

Let $ \Omega\subset\Chat $ be a simply connected domain. Then, the existence of a conformal map $ \varphi\colon\mathbb{D}\to\Omega $, the so-called {\em Riemann map}, allows us to describe the boundary $ \partial\Omega $, establishing beautiful connections between geometry, analysis and topology. Indeed,  the celebrated Carathéodory-Torhorst Theorem asserts that the Riemann map $ \varphi $ extends to the unit circle continuously if and only if $ \partial\Omega $ is locally connected. Moreover, Carathéodory provided a topological compactification of $ \Omega $ to deal with the non-locally connected situation, known as the theory of \textit{prime ends}. Prime ends, together with accessible boundary points, are strongly related with the theory of radial limits and cluster sets, which in their turn describe analytically the topology of $ \partial\Omega $ -- see Section \ref{section-prelimnary-universal-covering} for a self-contained overview.

However, if we let $ \Omega\subset\Chat $ be a multiply connected domain and consider the universal covering $ \pi\colon\mathbb{D}\to\Omega $, the previous results break down -- indeed, $ \pi\colon\mathbb{D}\to\Omega $ has infinite degree, so it is no longer a homeomorphism. In particular, having $ \left\lbrace z_n\right\rbrace _n\subset\mathbb{D} $ with $ z_n\to\partial \mathbb{D} $ does not necessarily imply $ \pi(z_n)\to\partial\Omega $, meaning that the tools developed by Carathéodory, Torhorst, and many others need to be adapted.

On the one hand, when dealing with multiply connected domains, the group of deck transformations $ \Gamma $ of $ \pi  $ (i.e. automorphisms $ \gamma $ of $ \mathbb{D} $ such that $ \pi  =\gamma\circ\pi$) becomes especially relevant, since it  uniquely determines the planar domain we are dealing with up to conformal isomorphism. It is clear that any attempt at describing the boundary behaviour of $ \pi $ should not disregard  $ \Gamma $, but rather fully describe the connection between the two.

The study of the dynamics of such groups of deck transformations (torsion-free Fuchsian groups), included in the general theory of Klenian groups, as well as their limit set and the non-tangential limit set (see Def. \ref{def-limit-set}), is a topic of independent interest; see \cite{Kat92,MT98}. In particular, Non-tangential limit sets  were introduced by Hedlund \cite{Hed36}, and remained mostly underexplored since the 1970s, when results of Beardon and Maskit \cite{BM74}, Patterson \cite{Pat76}, Sullivan \cite{Sul79},  and others revived the subject, motivated largely by the ergodic theory of geodesic flows on hyperbolic surfaces and Ahlfors' Area Conjecture. However, a rigorous and detailed exploration of the connection of these sets with analytic concepts such as radial limits and cluster sets remains undeveloped, despite a vague connection presented in \cite{FM95} and \cite{BJ97}.  

On the other hand, many attempts have been made to describe prime ends of multiply connected domains, leading to parallel non-equivalent definitions (see e.g.  \cite{Kaufmann,Ahlfors-conformal-invariants,Nakki,suita, Epstein,prime-ends-metric-spaces}, and references therein). For instance, prime ends defined by means of extremal length appear naturally in the theory of quasiconformal invariants (see \cite{suita}, \cite{Nakki}, \cite[Sect. 4.6]{Ahlfors-conformal-invariants}), but this definition does not lend itself well to applicationns in general metric spaces. In \cite{Epstein}, an extension of the theory to higher dimensions is provided, not making use of the Uniformization Theorem nor quasiconformal mappings, resulting in a definition which is purely topological -- but its connection with the universal covering map is obscure. 

Although the previous prime end theories are useful for other purposes, many of them fail to satisfy the basic properties of Carathéodory's construction, for instance not providing a compactification of $ \Omega $, disagreeing in the simply connected case with the compactification by Carathéodory, or only being applicable to finitely connected domains. Moreover, the relation with the universal covering is, in most cases, largely unexplored.

The goal of our paper is to analyze the interplay between the analytic boundary behaviour of the universal covering (in terms of cluster sets and radial limits), the geometry of $ \Omega $, and the group of deck transformations and limit sets, especially in the case where $ \Omega $ has infinite connectivity. As a consequence of our analysis, we describe accesses to boundary points by means of radial limits of the universal covering $ \pi\colon\mathbb{D}\to\Omega $, and develop a prime end theory which is especially well-behaved under $ \pi $. Our tools combine exhaustions of multiply connected domains as first constructed by Ohtsuka \cite{Oht54} with properties of limit sets and non-tangential limit sets of deck transformations of the universal covering, together with specific results of radial limits of holomorphic maps, such as the Lehto-Virtanen Theorem \ref{thm-lehto-virtanen}. Due to the technicalities involved, we postpone the statement of our main theorems and constructions until Section \ref{sect-main-result} (see, in particular, Main Theorem \ref{thm-main-result-complete}). Instead, we state here some of the theory's more remarkable consequences.

First, we analyze the cardinality of the non-tangential limit set $ \Lambda_{NT} $, as a subset of the limit set $ \Lambda $, and the existence of ambiguous points for the universal covering, i.e. points $ e^{i\theta}\in\partial\mathbb{D} $ for which there exists two curves $ \eta_1,\eta_2\subset\mathbb{D} $, ending at $ e^{i\theta} $, such that $ \overline{\pi(\eta_1)}\cap \overline{\pi(\eta_2)}=\emptyset $ (Def. \ref{def-ambiguous}).

\begin{named}{Theorem A}\label{teo:A} {\bf (Cardinality of limit sets)}
		Let $ \Omega $ be a multiply connected domain of connectivity greater than two, and let $ \pi\colon \mathbb{D}\to\Omega $ be a universal covering. Then, the following hold.
	\begin{enumerate}[label={\em (\alph*)}]
		\item $ \Lambda_{NT} $ always consists of uncountably many points.
		\item $ \Lambda\smallsetminus \Lambda_{NT} $ is non-empty if and only if there exists a non-isolated boundary component or an isolated boundary point. Moreover, $ \Lambda\smallsetminus \Lambda_{NT} $ is uncountable if and only if there exists a non-isolated boundary component.
		\item If the radial limit of $ \pi  $ at $ e^{i\theta}\in\partial\mathbb{D} $  exists, then $ e^{i\theta}\in\partial\DD\setminus\Lambda_{NT}$.
		\item There always exists countably many ambiguous points for $ \pi\colon \mathbb{D}\to\Omega $.
	\end{enumerate}
\end{named}

We remark that  \ref{teo:A}(b) is not new: it was already proven in \cite{BM74} for general Kleinian groups. However, we give an independent and easy proof  for the case of torsion-free Fuchsian groups $\Gamma$ such that $\DD/\Gamma$ is a plane domain. Concerning \ref{teo:A}(d), note that Riemann maps do not have ambiguous points, so this is a considerable difference between simply and multiply connected domains.

As a consequence of \ref{teo:A}, we give an analytic characterisation of having a \textit{well-behaved} universal covering in the sense of Hopf-Tsuji-Sullivan, adding ourselves the last equivalent condition of the following list. Notice that, by a theorem of Nevanlinna, conditions (a) and (b) were already known to imply (c); what is remarkable is that (c) itself implies (a) and (b).

\begin{named}{Corollary B}\label{teo:B} {\bf (Hopf-Tsuji-Sullivan Ergodic Theorem)}
	Let $\Omega$ be a hyperbolic plane domain, and let $\pi\colon\DD\to\Omega$ be a universal covering. Then, the following are equivalent.
\begin{enumerate}[label={\normalfont (\alph*)}]
	\item The non-tangential limit set $\Lambda_{NT}$ has zero Lebesgue measure on $\partial\DD$.
	\item  The set $\partial\Omega$ has positive logarithmic capacity.
	\item The radial limits of $\pi$ exist Lebesgue almost everywhere on $ \partial\mathbb{D} $.
\end{enumerate}
\end{named}

The techniques we develop in this paper also allow us to use the universal covering as a means to topologically characterise the boundary of the domain. In this, we show how the universal covering mimics the well-known features of the Riemann map for simply connected domains concerning accesses to boundary points, local connectivity, and prime ends.

First, we describe accessible points on  $\partial  \Omega $ by means of radial limits of the universal covering. The radial limit and the angular cluster set of $ \pi $ at $ e^{i\theta}\in\partial\mathbb{D} $ are denoted by $  \pi^*(e^{i\theta})$  and $ Cl_\mathcal{A}(\pi, e^{i\theta}) $, respectively (see Def. \ref{defi-radial-lim}).
\begin{named}{Theorem C}\label{teo:C}
	{\bf (Accessible points and radial limits)}
	Let $ \Omega $ be a multiply connected domain, and let $ \pi\colon\mathbb{D}\to\Omega $ be a universal covering. Then, all boundary components $ \Sigma\subset \partial \Omega $ are accessible from $ \Omega $.	More precisely, for every boundary component $ \Sigma\subset \partial \Omega $ there exists $ e^{i\theta}\in \partial \mathbb{D} $ such that the angular cluster set $ Cl_\mathcal{A}(\pi, e^{i\theta})\subset\Sigma $.
	
	\noindent Moreover, a point $ p\in\partial \Omega $ is accessible from $ \Omega $ if and only if there exists $ e^{i\theta}\in \partial \mathbb{D} $ such that $ \pi^* (e^{i\theta}) =p$. 
\end{named}

In particular, \ref{teo:C} can be seen as an extension to multiply connected domains of a classical theorem by Lindelöf (Thm. \ref{thm-lindelof}) for Riemann maps. 

In fact, our techniques go further. Indeed, for simply connected domains, there is a bijection between accesses to a boundary point $ x\in\partial\Omega $ (homotopy classes with fixed endpoints of curves landing at $ x $) and points $ e^{i\theta} \in\partial\mathbb{D}$ having $ x $ as radial limit under the Riemann map. This is known as the Correspondence Theorem \ref{correspondence-theorem}.
It is easy to see that the same definition of access is not suitable when working with multiply connected domains, since every accessible point in $ \partial\Omega $ would have infinitely many accesses. However, we are able to define a new concept of access which takes into account the relations induced by the fundamental group, and show that then accesses to $ x\in\partial \Omega $ are in correspondence with points $ e^{i\theta} \in\partial\mathbb{D}/\Gamma$ having $ x $ as radial limit under the universal covering
 -- this is the content of  Theorem \ref{thm:correspondence-mc}.

The culminating point of our construction is to develop a theory of prime ends for multiply connected domains which provides a compactification of $ \Omega $, compatible with the action of the universal covering, and which, for simply connected domains, agrees with the compactification given by Carathéodory. This is the content of Theorem \ref{thm-prime-ends-MC}, which is rather technical and based on all the theory developed in Section \ref{sect-main-result}. Thus, without going into detail for the moment, this theorem gives a natural way of associating subsets of the boundary of the domain with points in the unit circle, in terms of null-chains and impressions, as in the simply connected case.
 
Finally, and as a consequence of our prime end theory, we can enhance the topological description of $ \partial\Omega $. Indeed, we give an extension of the Carathéodory-Torhorst Theorem to the multiply connected case. In order to state it, we define a \textit{true crosscut} of $\Omega$ as an open Jordan arc $C \subset\Omega$ such that $\overline C = C\cup\{a, b\}$, with $a$ and $b$ distinct and contained on the same boundary component of $\Omega$, and such that $\Omega\setminus C$ has exactly two connected components, one of which -- its \textit{true crosscut neighbourhood} -- is simply connected.  We invite the reader to consult Theorem \ref{thm-prime-ends-MC} for the definition of prime ends used here. With this in mind, we prove the following.
 
\begin{named}{Proposition D}{\bf (Carathéodory-Torhorst Theorem for multiply connected domains)}\label{teo:D}
Let $\Omega\subset\widehat{\mathbb{C}}$ be a hyperbolic multiply connected domain, and let $\pi\colon\DD\to\Omega$ be a universal covering with deck transformation group $\Gamma$. Then, there is a one-to-one correspondence between its prime ends and $(\partial\DD\setminus\Lambda_{NT})/\Gamma$. Furthermore, $\pi$ extends continuously to $\partial\DD\setminus\Lambda$ if and only if any true crosscut neighbourhood in $\Omega$ has a locally connected boundary.
\end{named}

Notice that the boundary of a true crosscut neighbourhood consists of the Jordan curve $C$, which is locally connected, plus a continuum contained in $\partial\Omega$, so that local connectedness of the boundary only depends on this continuum on $\partial\Omega$. Furthermore, this theorem is best possible, in the sense that it applies to {any} hyperbolic domain on the Riemann sphere, and that the universal covering cannot extend continuously to any point on the limit set (see Lemma \ref{lemma-limit-sets3}). 

Once this geometric study of the domain $ \Omega $ in terms of crosscuts is developed, we come back to the group of deck transformations and its limit sets, and see how the previous results translate in this setting. It turns out that true crosscuts can be used to characterise universal coverings whose limit set is a Cantor set.

\begin{named}{Corollary E}{\bf (Geometric characterization of limit sets)}\label{cor:E}
Let $\Omega\subset\Chat$ be a hyperbolic multiply connected domain, and let $\pi\colon\DD\to\Omega$ be a universal covering. Then, $\Lambda$ is a Cantor subset of $\partial\DD$ if and only if there exists a true crosscut in $\Omega$.
\end{named}

Moreover, we can use the previous machinery to construct examples of plane domains (and hence, of Fuchsian groups) having `pathological' limit sets, such as a plane domain whose limit set is a Cantor set of positive Lebesgue measure (Sect. \ref{ssec:group-examples}).

Finally, we want to stress the potential of our results and constructions in connection to dynamics. It is well-known that the theory of accesses and prime ends for simply connected domains has several applications in holomorphic dynamics (see e.g. \cite{Rempe,BFJK-Accesses,Mam23,JF23}, and  also \cite[Chap. 17]{Milnor}), and in real dynamical systems (see e.g. \cite{prime-ends}). It is our belief that our deep understanding of the universal covering may lead to similar results in a multiply connected situation.

\subsection*{Structure of the paper} Section \ref{sect-preliminaries} collects basic background in geometry and topology, and Section \ref{section-prelimnary-universal-covering} includes preliminary results on the boundary extension of the universal covering, following the work of Ohtsuka. We have included this background material to try to make the paper as self-contained and as accessible to non-experts as possible, and to emphasize the elementary nature of our approach. Our own results start in Section \ref{sect-main-result}, where we state and prove our main theorem about the boundary behaviour of the universal covering (Thm. \ref{thm-main-result-complete}), and its consequences, including \ref{teo:A} and \ref{teo:B}. Finally, in Section \ref{section-applications}, we apply our methods to describe accesses to the boundary and prime ends of general multiply connected plane domains, addressing \ref{teo:C}, \ref{teo:D}, and \ref{cor:E}.

Throughout the paper, we included several schematic drawings of universal covering maps. One has to keep in mind that these drawings, which are incomplete since we cannot draw either maps with infinite degree or infinitely many curves, have only the purpose of illustrating the associated concepts, and should not be understood as faithful representations.

\section*{Acknowledgements}
We would like to thank Núria Fagella for her detailed and invaluable feedback during the writing of this paper. The second author thanks Marco Abate for helpful discussions in Pisa.

\section{Preliminaries}\label{sect-preliminaries}

In this section we collect all the basic concepts and results that will be needed in the following sections. All of them are well-known, but we present them in a systematic way for the convenience of the reader.

\subsection{Topology and geometry: basic definitions}\label{ssec:top-geom-disc}

We start by giving a precise definition of the landing set of a curve.
\begin{defi}{\bf (Landing set)}
	Given a curve $ \eta\colon \left[ 0,1\right) \to\widehat{\mathbb{C}} $, we consider its \textit{landing set} \[L(\eta)\coloneqq \left\lbrace w\in\widehat{\mathbb{C}}\colon \textrm{ there exists }\left\lbrace t_n\right\rbrace _n\subset\left[ 0,1\right) , \ t_n\to 1\textrm{ such that }\eta(t_n)\to w \right\rbrace .\]
\end{defi}

By definition, $ L(\eta) $ is a connected, compact subset of $ \widehat{\mathbb{C}} $. Thus, it is either a point or a continuum. We say that  $ \eta $ \textit{lands} at $ p\in\widehat{\mathbb{C}} $ if $ L(\eta)=\left\lbrace p\right\rbrace  $, or, equivalently, if  \[\lim\limits_{t\to 1^-}\eta(t)=p.\]

\begin{defi}{\bf (Accessible point)}\label{def-accessible-point}
	Given a domain $\Omega \subset\widehat{\mathbb{C}} $,
	a point $ p\in{\partial} \Omega $ is \textit{accessible} from $ \Omega $ if there is a curve $ \eta\colon \left[ 0,1\right) \to \Omega $ such that $ \lim\limits_{t\to 1^-} \eta(t)=p $ (or, equivalently, if $ \eta $ lands at $ p $).
\end{defi}

\subsection*{Topology and geometry in the unit disk}
Now consider the unit disk $ \mathbb{D} $, and let $ e^{i\theta}\in\partial \mathbb{D}$. We denote the radial segment at $ e^{i\theta}$ by $ R_\theta $, that is \[R_{\theta}\coloneqq R_\theta (t)= \left\lbrace t e^{i\theta}\colon t\in\left[ 0,1\right) \right\rbrace .\]

We shall also use the following notation, which describes the possible ways of approaching a point in the unit circle.
\begin{defi}{\bf (Crosscut neighbourhoods and Stolz angles)}\label{def:cc-nbhd-stolz}
	Let $ e^{i\theta}\in\partial\mathbb{D} $.
	\begin{itemize}
		\item 	A \textit{crosscut} $ C $ is an open Jordan arc $ C\subset\mathbb{D} $ such that $ \overline{C}=C\cup \left\lbrace e^{i\alpha_1},e^{i\alpha_2}\right\rbrace  $, with $ \alpha_1, \alpha_2\in \left[ 0, 2\pi\right) $. If $ e^{i\alpha_1}=e^{i\alpha_2} $, we say that $ C $ is {\em degenerate}; otherwise it is {\em non-degenerate}. The points $ e^{i\alpha_1} $ and $ e^{i\alpha_2} $ are the \textit{endpoints} of $ C $.
		
		\item A \textit{crosscut neighbourhood} of $ e^{i\theta} \in\partial\mathbb{D}$ is an open set $ N\subset\mathbb{D} $ such that $ e^{i\theta}\in\partial N$, and $C\coloneqq \partial N \cap\mathbb{D} $ is a  non-degenerate crosscut.
		We usually write $ N_\theta$ or  $ N_C $, to stress the  dependence on the point $e^{i\theta}$ or on the crosscut $ C $.
		Note that for a crosscut neighbourhood $ N $, $ \partial\mathbb{D}\cap \overline{N} $ is a non-trivial arc.
		\item An (Euclidean) \textit{Stolz angle} at $e^{i\theta}$ is a set of the form 	\[A_{\alpha, \rho}(e^{i\theta})=\left\lbrace z\in\mathbb{D}\colon \left| \textrm{Arg }e^{i\theta}- \textrm{Arg }(e^{i\theta}-z)\right| <\alpha, \ \left| e^{i\theta}- z \right| <\rho \right\rbrace .\]
		\item The {\em horodisk} $ H(e^{i\theta}, R) \subset \mathbb{D}$ of center $ e^{i\theta}\in\partial\mathbb{D} $ and radius $ R>0 $ is the Euclidean disk of radius $ \frac{R}{R+1} $ tangent to $ \partial \mathbb{D} $ at $ e^{i\theta} $.  Its boundary $ \partial H(e^{i\theta}, R) \subset \mathbb{D}$ is called a {\em horocycle}. 
		\item We say that  $ \eta$ \textit{lands non-tangentially} at $e^{i\theta}\in\partial\mathbb{D}$  if $ \eta $ lands at $e^{i\theta}$, and there exists a Stolz angle $ A_{\alpha, \rho} (e^{i\theta})$ at $ e^{i\theta} $  and $ t_0\in (0,1) $ such that  $ \eta(t)\subset A_{\alpha, \rho}(e^{i\theta})  $, for $ t\in (t_0, 1) $.
	\end{itemize}
\end{defi}

See Figure \ref{fig-radial-nbhds} for a geometric representation of the concepts defined above.

\begin{figure}[h]
	\centering
	\captionsetup[subfigure]{labelformat=empty, justification=centering}
	\hfill
	\begin{subfigure}{0.22\textwidth}
		\includegraphics[width=\textwidth]{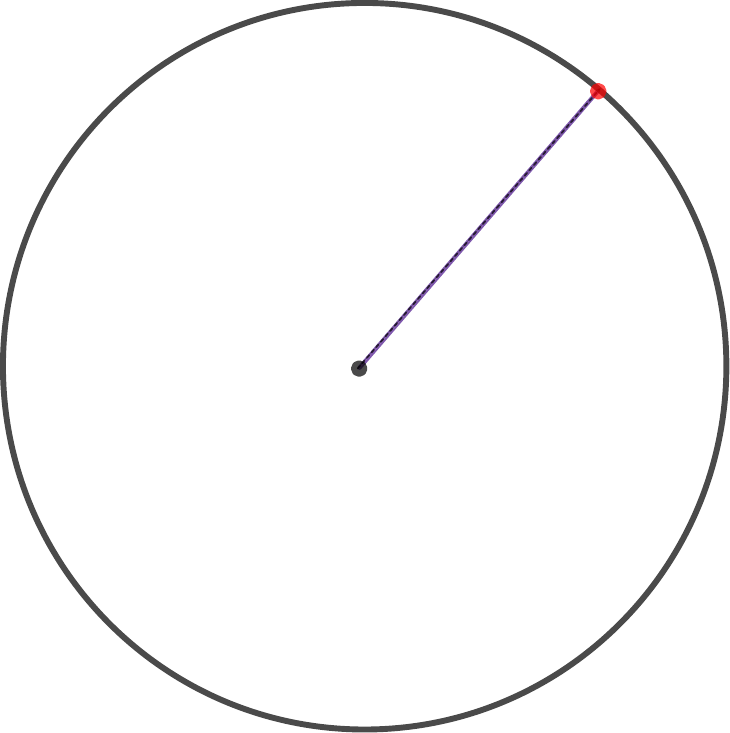}
		\setlength{\unitlength}{\textwidth}
		\put(-0.15, 0.87){$e^{i\theta}$}
		\put(-0.5, 0.65){\footnotesize $R_{\theta}$}
		\caption{\footnotesize Radial segment}
	\end{subfigure}
	\hfill
	\begin{subfigure}{0.22\textwidth}
		\includegraphics[width=\textwidth]{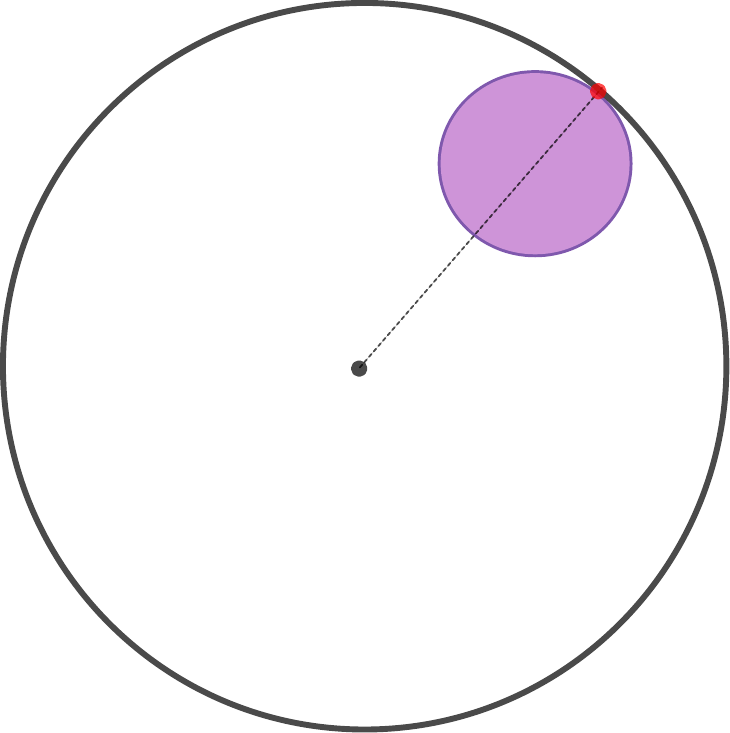}
		\setlength{\unitlength}{\textwidth}
		\put(-0.15, 0.87){$e^{i\theta}$}
		\put(-0.83, 0.7){\footnotesize $H(e^{i\theta}, R)$}
		\caption{\footnotesize Horodisk}
	\end{subfigure}
	\hfill
	\begin{subfigure}{0.22\textwidth}
		\includegraphics[width=\textwidth]{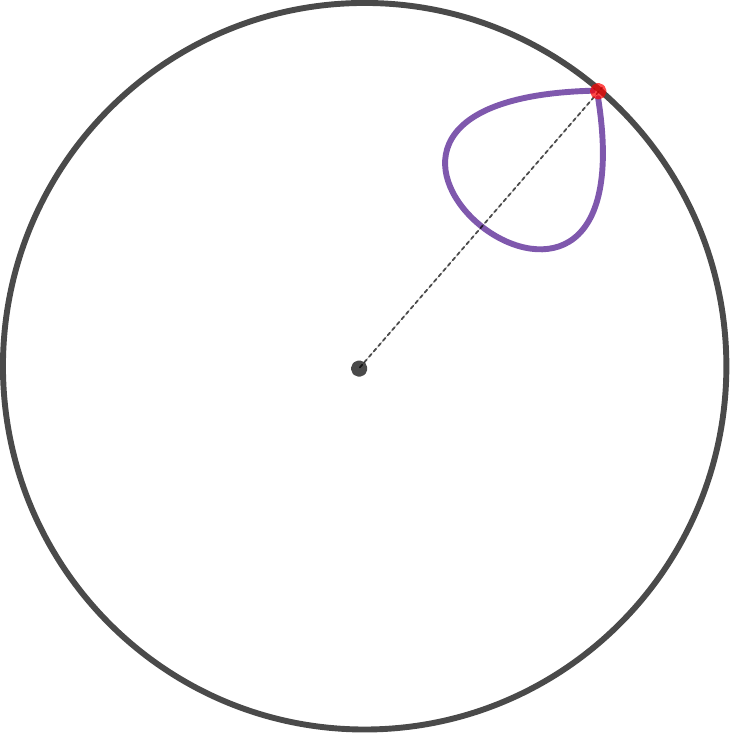}
		\setlength{\unitlength}{\textwidth}
		\put(-0.15, 0.87){$e^{i\theta}$}
			\put(-0.48, 0.7){\footnotesize $C$}
		\caption{\footnotesize Degenerate crosscut}
	\end{subfigure}
	\hfill
	\vspace{1cm}
	
	\hfill
	\begin{subfigure}{0.22\textwidth}
		\includegraphics[width=\textwidth]{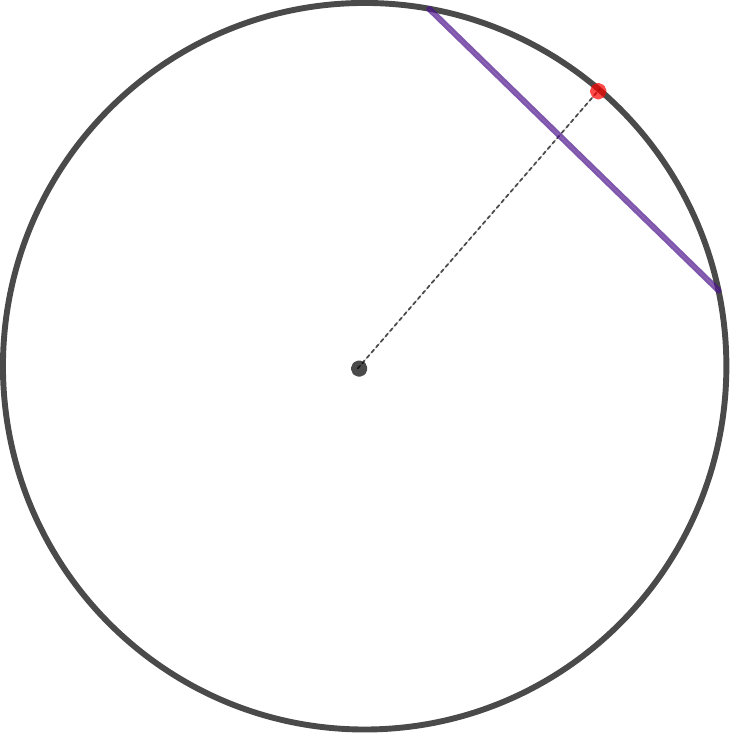}
		\setlength{\unitlength}{\textwidth}
		\put(-0.15, 0.87){$e^{i\theta}$}
		\put(-0.4, 0.8){\footnotesize $C$}
		\caption{\footnotesize (Non-degenerate) crosscut}
	\end{subfigure}
	\hfill
	\begin{subfigure}{0.22\textwidth}
		\includegraphics[width=\textwidth]{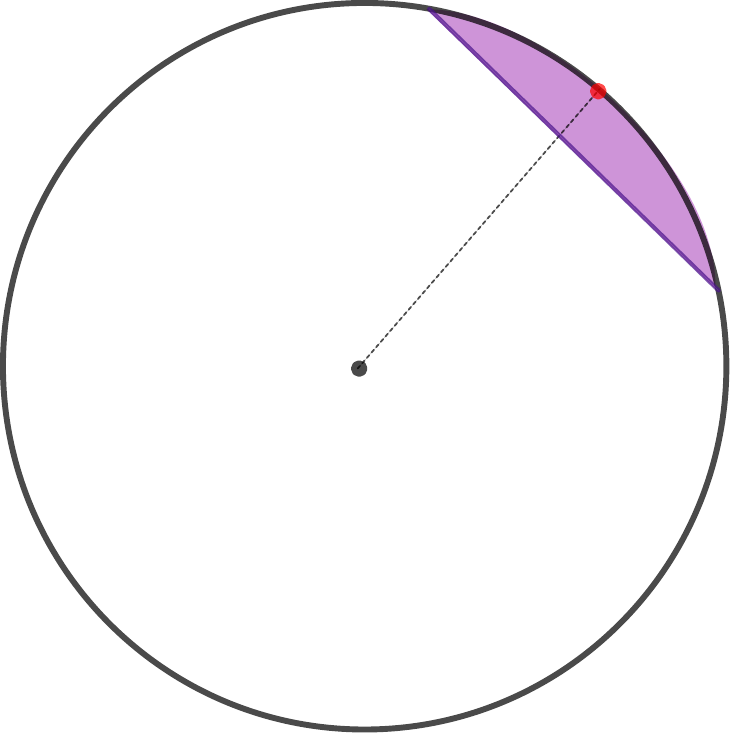}
		\setlength{\unitlength}{\textwidth}
		\put(-0.15, 0.87){$e^{i\theta}$}
		\put(-0.4, 0.8){\footnotesize $N_{\theta}$}
		\caption{\footnotesize Crosscut neighbourhood}
	\end{subfigure}
	\hfill
	\begin{subfigure}{0.22\textwidth}
		\includegraphics[width=\textwidth]{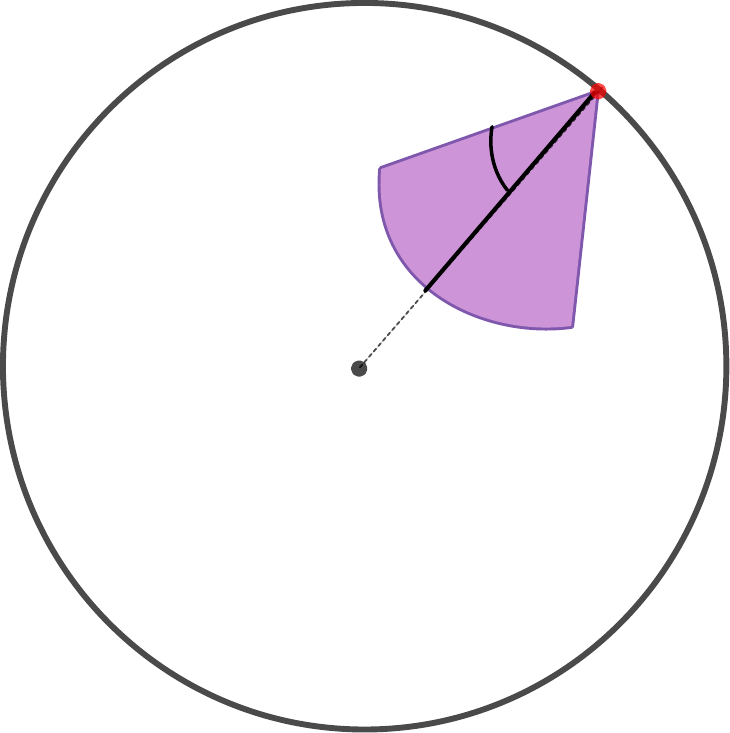}
		\setlength{\unitlength}{\textwidth}
		\put(-0.15, 0.87){$e^{i\theta}$}
		\put(-0.89, 0.65){\footnotesize $A_{\alpha, \rho}(e^{i\theta})$}
			\put(-0.4, 0.75){\footnotesize $\alpha$}
				\put(-0.37, 0.62){\footnotesize $\rho$}
		\caption{\footnotesize Stolz angle }
	\end{subfigure}
	\hfill
	\hfill
	\caption{\footnotesize Different sets related to $ e^{i\theta}\in\partial\mathbb{D} $.}\label{fig-radial-nbhds}
\end{figure}

Next, we introduce the notion of null-chain, which is nothing else  than a specific way of choosing a  basis of crosscut neighbourhoods  for points $ e^{i\theta}\in\partial\mathbb{D} $. See Figure \ref{fig-nullchain}.

\begin{defi}{\bf (Null-chain)}\label{null-chain}
	Let $ e^{i\theta}\in\partial\mathbb{D} $. We say that a collection of crosscuts $ \left\lbrace C_n\right\rbrace _n \subset \mathbb{D}$ is a {\em null-chain} for $ e^{i\theta}  $ if\begin{enumerate}[label={(\alph*)}]
		\item the crosscuts $ C_n  $ are pairwise disjoint and have disjoint endpoints;
		\item if $ N_n $ is the crosscut neighbourhood of $ e^{i\theta} $ bounded by $ C_n $, then $ N_{n+1}\subset N_n $;
		\item $ \bigcap\limits_n \overline{N_n}=\left\lbrace e^{i\theta}\right\rbrace  $.
	\end{enumerate}
	We say that $ \left\lbrace N_n\right\rbrace _n $ is the chain of crosscut neighbourhoods associated to $ \left\lbrace C_n\right\rbrace _n $.
\end{defi}

\begin{figure}[h]\centering
	\includegraphics[width=12cm]{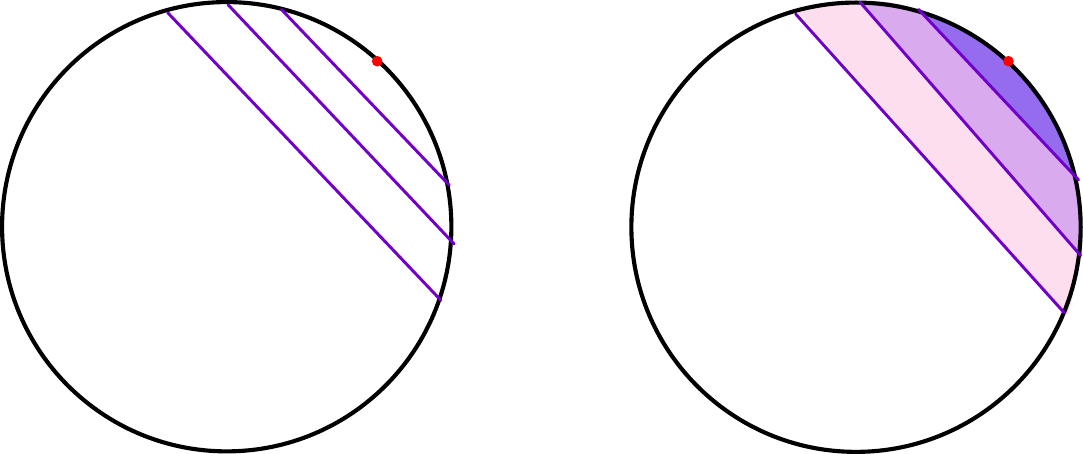}
	\setlength{\unitlength}{12cm}
		\put(-0.06, 0.36){$e^{i\theta}$}
		\put(-0.64, 0.36){$e^{i\theta}$}
		\put(-0.96, 0.36){$\mathbb{D}$}
			\put(-0.38, 0.36){$\mathbb{D}$}
					\put(-0.59, 0.13){\small $C_1$}
						\put(-0.58, 0.19){\small $C_2$}
							\put(-0.58, 0.25){\small $C_3$}
												\put(-0.043, 0.165){\small $N_1$}
							\put(-0.043, 0.23){\small $N_2$}
							\put(-0.02, 0.3){\small $N_3$}
	\caption{\footnotesize Example of null-chain $ \left\lbrace C_n\right\rbrace _n $ and its associated chain of crosscut neighbourhoods $ \left\lbrace N_n\right\rbrace _n $ at the point $ e^{i\theta}\in\partial\mathbb{D} $.}\label{fig-nullchain}
\end{figure}

We say that two null-chains $ \left\lbrace C_n\right\rbrace _n $ and $ \left\lbrace C'_n\right\rbrace _n $  are {\em equivalent} if, for every sufficiently large $ m $, there exists $ n $ such that $ N_n\subset   {N'_{m}} $ and $ {N'_{n}}\subset   {N_{m}} $. This defines an equivalence relation between null-chains.
It is easy to see that null-chains are either equivalent or have eventually disjoint crosscut neighbourhoods \cite[Lemma 17.8]{Milnor}. The equivalence classes of null-chains in $ \mathbb{D} $ correspond bijectively to points in $ \partial\mathbb{D} $ \cite[Thm. 17.12]{Milnor}.

\subsection{The hyperbolic geometry of the unit disk}\label{ssec:hypdisc}
The hyperbolic geometry of plane domains plays an important role in our results, and we give an overview of its relevant properties in Section \ref{subs:hypgeo}. However, the study of hyperbolic geometry begins with the unit disk, and so we take a moment now to study the hyperbolic geometry of $\DD$. We refer the reader to \cite[Chap. 1]{Aba23}, \cite[Chap. 2]{Hub06}, and \cite[Chap. 2]{KeenLakic} for a more in-depth exposition.

Let $ \textrm{Aut}(\mathbb{D}) $ be the group of holomorphic automorphisms of the unit disk $ \mathbb{D} $, i.e.
\[\textrm{Aut}(\mathbb{D})\coloneqq \left\lbrace \gamma\colon\mathbb{D}\to \mathbb{D}\colon \ \gamma \textrm{ is holomorphic and one-to-one} \right\rbrace. \]

It is well-known that $ \textrm{Aut}(\mathbb{D}) $ coincides with the group of Möbius transformations $ M\colon\widehat{\mathbb{C}}\to\widehat{\mathbb{C}} $ which leave the unit disk invariant \cite[Prop. 1.1.5]{Aba23}. Note that such a Möbius transformation can be written explicitly as \[M(z)=e^{i\theta}\dfrac{a-z}{1-\overline{a}z}, \]for some $ \theta\in \left[ 0, 2\pi\right)  $ and $ a\in\mathbb{D} $. Moreover, the action of the group $ \textrm{Aut}(\mathbb{D}) $ in $ \mathbb{D} $ is {\em transitive}, meaning that for any  $ z,w\in \mathbb{D} $, there exists $\gamma \in \textrm{Aut}(\mathbb{D})  $ with $ \gamma (z)=w $  \cite[Corol. 1.1.9]{Aba23}.

The {\em hyperbolic metric} in $ \mathbb{D} $ is defined as the conformal metric whose isometries are precisely the automorphisms of $ \mathbb{D} $, normalised to have curvature $-1$. Equivalently, the hyperbolic metric can be defined in an infinitesimal way by setting the hyperbolic density at $ z\in\mathbb{D} $ to be
 \[  \rho_\mathbb{D}(z)|dz| = \dfrac{2|dz|}{1-\left| z\right| ^2}.\]
Then, the hyperbolic distance between $ z,w\in W $, denoted $\textrm{dist}_\DD(z,w) $, is obtained as usual by integrating $ \rho_\mathbb{D}$ along curves joining $ z $ and $ w $, and taking the infimum.

Given $ z_0\in\mathbb{D} $ and $ R>0 $, the {\em hyperbolic disk} $ D_\mathbb{D} (z_0,R) $ of center $ z_0 $ and radius $ R $ is 
	\[D_\mathbb{D} (z_0,R)\coloneqq \left\lbrace z\in\mathbb{D}\colon \ \textrm{dist}_\DD(z_0,z)<R\right\rbrace .\]
We say that a curve is a (hyperbolic) {\em geodesic} if for every triple of points $ z_1, z_2, z_3 $ on the curve, with $ z_3 $ between $ z_1 $ and $ z_2 $, we have
\[\textrm{dist}_\mathbb{D}(z_1,z_2)= \textrm{dist}_\mathbb{D}(z_1,z_3)+\textrm{dist}_\mathbb{D}(z_3,z_2).\]
Note that a curve satisfying the previous definition may be finite (and then we call it a {\em geodesic arc}), semi-infinite ({\em geodesic ray}) or infinite ({\em infinite geodesic}). Since it will be clear from the context, we will not make any distinction between these cases, and call all of them simply geodesics.

We will need the following properties of the hyperbolic metric, extracted from \cite[Chap. 1.2]{Aba23} and \cite[Chap. 2]{KeenLakic}. 
\begin{prop}{\bf (Properties of the hyperbolic metric in $ \mathbb{D} $)}\label{prop:hypdisc} The following hold.
	\begin{enumerate}[label={\em (\alph*)}]
		\item For all  $\gamma \in \textrm{\em Aut}(\mathbb{D})  $ and $ z,w\in\mathbb{D} $, we have $ \textrm{\em dist}_\mathbb{D}(z,w)= \textrm{\em dist}_\mathbb{D}(\gamma(z),\gamma (w)) $.
		\item The hyperbolic distance is a complete distance inducing on $ \mathbb{D} $ the Euclidean topology. More precisely, hyperbolic disks are Euclidean disks with (possibly) different center and radius.
		\item The geodesics for the hyperbolic distance are the Euclidean diameters of $ \mathbb{D} $ and the intersections with $ \mathbb{D} $ of Euclidean circles orthogonal to $ \partial\mathbb{D} $. In particular, geodesics are smooth curves and any two distinct points of $ \mathbb{D} $ are connected by a unique geodesic.
	\end{enumerate}
\end{prop}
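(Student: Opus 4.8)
The plan is to derive everything from two ingredients: the explicit formula for the hyperbolic density together with the explicit form of the automorphisms recalled above, and a single length-minimisation computation at the origin, after which the general case follows by transporting under the isometric and Möbius action of $\textrm{Aut}(\mathbb{D})$. First I would establish (a) infinitesimally. For $\gamma\in\textrm{Aut}(\mathbb{D})$ written as $\gamma(z)=e^{i\varphi}\frac{a-z}{1-\bar a z}$, a direct computation gives $|\gamma'(z)|=\frac{1-|a|^2}{|1-\bar a z|^2}$ and $1-|\gamma(z)|^2=\frac{(1-|a|^2)(1-|z|^2)}{|1-\bar a z|^2}$, whence $\frac{2|\gamma'(z)|}{1-|\gamma(z)|^2}=\frac{2}{1-|z|^2}$; that is, $\rho_\mathbb{D}(\gamma(z))|\gamma'(z)|=\rho_\mathbb{D}(z)$. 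Pulling this identity back along an arbitrary curve and using the change of variables in the defining integral shows that $\gamma$ preserves hyperbolic length, and hence the infimum defining $\dist_\mathbb{D}$, which is exactly (a).

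Next comes the key step, the computation at the origin. For any piecewise $C^1$ curve $z(t)$ joining $0$ to a point $w$ I would use the pointwise bound $|z'(t)|\ge\big|\frac{d}{dt}|z(t)|\big|$ to obtain $\int\frac{2|z'(t)|}{1-|z(t)|^2}\,dt\ge\int_0^{|w|}\frac{2\,dr}{1-r^2}=\log\frac{1+|w|}{1-|w|}$, with equality precisely when $|z(t)|$ is monotone and $\arg z(t)$ is constant, i.e. when the curve traverses the radial segment from $0$ to $w$. This single inequality yields simultaneously that $\dist_\mathbb{D}(0,w)=\log\frac{1+|w|}{1-|w|}$ is a strictly increasing function of $|w|$, that the unique geodesic from $0$ to $w$ is the radial segment (a piece of a Euclidean diameter), and that each hyperbolic disk $D_\mathbb{D}(0,R)$ coincides with the Euclidean disk $\{|z|<r\}$ where $r=\tanh(R/2)$.

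Finally I would transport these facts to arbitrary points using transitivity of $\textrm{Aut}(\mathbb{D})$ together with (a). Given $z_0\in\mathbb{D}$, choose $\gamma\in\textrm{Aut}(\mathbb{D})$ with $\gamma(0)=z_0$: as an isometry it sends $D_\mathbb{D}(0,R)$ to $D_\mathbb{D}(z_0,R)$, and as a Möbius transformation it sends the Euclidean disk $\{|z|<r\}$ to a Euclidean disk, which proves the disk statement in (b). Likewise every geodesic is a $\gamma$-image of a diameter; since $\gamma$ is conformal, maps generalised circles to generalised circles, and preserves $\partial\mathbb{D}$, and since a diameter meets $\partial\mathbb{D}$ orthogonally, the image is a generalised circle orthogonal to $\partial\mathbb{D}$ — hence either another diameter or an arc of a Euclidean circle orthogonal to $\partial\mathbb{D}$, both smooth — while uniqueness of the connecting geodesic is inherited from the origin case, giving (c).

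It remains to record the topological and metric-completeness claims of (b). The correspondence $R\mapsto\tanh(R/2)$ is a homeomorphism, so the hyperbolic and Euclidean disks around $0$ form mutually cofinal neighbourhood bases; transporting by the isometries above shows the two topologies agree at every point, hence coincide. For completeness, a hyperbolic Cauchy sequence is hyperbolically bounded, so it lies in some $D_\mathbb{D}(0,R)$, that is, in a Euclidean disk $\{|z|<r\}$ with $r<1$; on the compact closure $\{|z|\le r\}\subset\mathbb{D}$ the density $\rho_\mathbb{D}$ is bounded above and below, so the hyperbolic and Euclidean metrics are bi-Lipschitz there, and the sequence converges to a limit still in $\mathbb{D}$. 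The main obstacle is the equality-case analysis in the origin computation, since that one argument is simultaneously responsible for the distance formula, the uniqueness of geodesics, and the shape of hyperbolic balls; everything else is bookkeeping with the Möbius and isometric action of $\textrm{Aut}(\mathbb{D})$.
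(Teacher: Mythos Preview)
Your proof is correct and complete. The paper itself does not give a proof of this proposition: it simply states that the properties are ``extracted from \cite[Chap.~1.2]{Aba23} and \cite[Chap.~2]{KeenLakic}'' and moves on. What you have written is essentially the standard argument found in those references --- infinitesimal invariance of $\rho_\mathbb{D}$ under $\mathrm{Aut}(\mathbb{D})$, the explicit length-minimisation at the origin giving $\dist_\mathbb{D}(0,w)=\log\frac{1+|w|}{1-|w|}$ and the radial segment as unique minimiser, and then transport by the transitive isometric M\"obius action --- so there is nothing to compare beyond the fact that you supply the details the paper omits.
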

\begin{figure}[h]
	\centering
	\captionsetup[subfigure]{labelformat=empty, justification=centering}
	\begin{subfigure}{0.49\textwidth}\centering
		\includegraphics[width=0.75\textwidth]{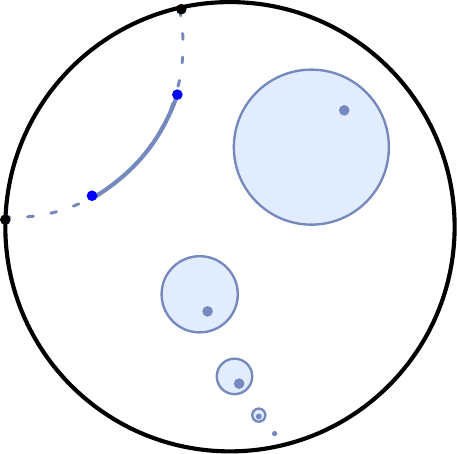}
		\setlength{\unitlength}{\textwidth}
				\put(-0.15, 0.68){$\mathbb{D}$}
								\put(-0.45, 0.57){\small$z$}
								\put(-0.65, 0.41){\small$w$}
												\put(-0.235, 0.55){\small$z_0$}
											\put(-0.34, 0.45){\small$D_\mathbb{D}(z_0, R)$}
		\caption{\footnotesize The hyperbolic disk $ D_\mathbb{D}(z_0, R) $, hyperbolic disks of the same radius approaching $ \partial\mathbb{D} $, and a geodesic from $ z $ to $ w $}
	\end{subfigure}
	\hfill
	\hfill
	\begin{subfigure}{0.49\textwidth}	\centering
		\includegraphics[width=0.75\textwidth]{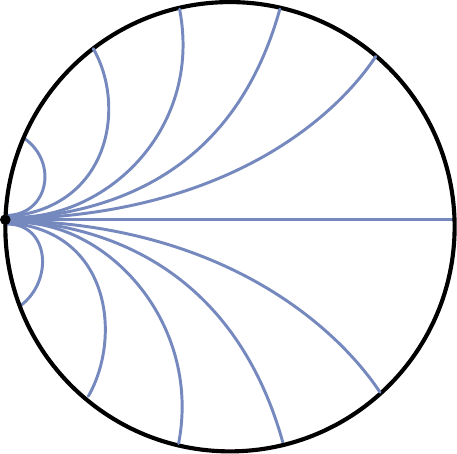}
		\setlength{\unitlength}{\textwidth}
				\put(-0.81, 0.36){$e^{i\theta}$}
		\put(-0.15, 0.68){$\mathbb{D}$}
		\caption{\footnotesize Hyperbolic geodesics at $ e^{i\theta}\in\partial\mathbb{D} $\\ \textcolor{white}{h}}
	\end{subfigure}
	\caption{\footnotesize The hyperbolic metric in the unit disk $ \mathbb{D} $.}\label{fig-properties-hyp-metric}
\end{figure}

\vspace{0.2cm}

\subsection*{Classification of automorphisms of $ \mathbb{D} $.} Let $ \gamma\in\textrm{Aut}(\mathbb{D}) $, so that (as established)
\[\gamma(z)=e^{i\theta}\dfrac{a-z}{1-\overline{a}z} \]
for some $ \theta\in \left[ 0, 2\pi\right)  $ and $ a\in\mathbb{D} $. Such maps are classified into three different types, according to the distribution of their fixed points, or, equivalently, their normal form. This is the content of Proposition \ref{prop-classification}, which  is straightforward from  \cite[Prop. 1.4.1]{Aba23}, \cite[Lemma 1.4.17]{Aba23} and \cite[Prop. 2.1.14]{Hub06}.

 In this setting, we say that two Möbius transformations $ M_1, M_2\colon \widehat{\mathbb{C}}\to\widehat{\mathbb{C}} $ (or, in particular, automorphisms of $ \mathbb{D} $) are {\em conjugate} if there exists a Möbius transformations $ M\colon \widehat{\mathbb{C}}\to\widehat{\mathbb{C}} $ such that $ M\circ M_1=M_2\circ M $.

\begin{prop}{\bf (Classification of automorphisms of $ \mathbb{D} $)}\label{prop-classification} Let $ \gamma\in\textrm{\em Aut}(\mathbb{D}) $, $ \gamma\neq \textrm{\em id}_\mathbb{D} $. Then exactly one of the following holds.
			\begin{enumerate}[label={\em (\alph*)}]
		\item $ \gamma $ has a unique fixed point in $ \mathbb{D} $. In this case, we say that $ \gamma $ is {\em elliptic}, and $ \gamma|_\mathbb{D} $ is conjugate to $ z\mapsto\lambda z $ in $ \mathbb{D} $, with $ \left| \lambda\right| =1 $.
			\item $ \gamma $ has exactly two fixed points and they belong to $ \partial\mathbb{D} $, say $ e^{i\theta_1} $ and $ e^{i\theta_2} $. In this case, we say that $ \gamma $ is {\em hyperbolic}, and $ e^{i\theta_1} $ and $ e^{i\theta_2} $ are the {\em hyperbolic fixed points} of $ \gamma $. Moreover, $ \gamma|_\mathbb{D} $ is conjugate to $ z\mapsto \lambda z $ in $ \mathbb{H} $, with $ \lambda>1 $. If $ \sigma $ denotes the geodesic joining $ e^{i\theta_1} $ and $ e^{i\theta_2} $, then $ \gamma (\sigma)=\sigma $.
		\item $ \gamma $ has a unique fixed point and it belongs to  $ \partial\mathbb{D} $, say $ e^{i\theta} $. In this case, we say that $ \gamma $ is {\em parabolic}, and $ e^{i\theta} $ is the {\em parabolic fixed point} of $ \gamma $. Moreover, $ \gamma|_\mathbb{D} $ is conjugate to $ z\mapsto z\pm 1 $ in the upper half-plane $ \mathbb{H} $, and for any horodisk $ H\coloneqq H(e^{i\theta}, R) $ centered at $ e^{i\theta} $, $ \gamma (H)=H$ and $ \gamma(\partial H) =\partial H$.
	\end{enumerate}
\end{prop}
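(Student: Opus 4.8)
The plan is to regard $\gamma$ as a M\"obius transformation of $\widehat{\mathbb{C}}$ preserving $\mathbb{D}$ (using the identification recalled above) and to read off the classification from the location of its fixed points. First I would record that a non-identity M\"obius transformation has either one or two fixed points in $\widehat{\mathbb{C}}$, and that, since $\gamma$ preserves $\mathbb{D}$, it also preserves $\partial\mathbb{D}$ and the exterior $\widehat{\mathbb{C}}\setminus\overline{\mathbb{D}}$, hence commutes with the reflection $z\mapsto 1/\overline{z}$ in the unit circle. Consequently the fixed-point set is invariant under this reflection: any fixed point in $\mathbb{D}$ is paired with one outside $\overline{\mathbb{D}}$, while fixed points on $\partial\mathbb{D}$ are fixed by the reflection. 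This immediately yields the trichotomy: either $\gamma$ has a fixed point $p\in\mathbb{D}$, together with its mirror image $1/\overline{p}$ outside, and no others — so $p$ is the unique interior fixed point (case (a)); or $\gamma$ has no fixed point in $\mathbb{D}$, in which case, by the pairing, it has none outside either, so all its fixed points lie on $\partial\mathbb{D}$, giving either two such points (case (b)) or a single one (case (c)). These three possibilities are mutually exclusive and exhaustive.

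Next I would extract the normal forms by conjugating to a standard position. In case (a), choose $\phi\in\textrm{Aut}(\mathbb{D})$ with $\phi(p)=0$ (possible by transitivity of the action); then $M\coloneqq\phi\circ\gamma\circ\phi^{-1}$ fixes $0$ and preserves $\mathbb{D}$, so writing it in the explicit form $M(z)=e^{i\alpha}\frac{a-z}{1-\overline{a}z}$ recalled above, the condition $M(0)=0$ forces $a=0$, whence $M(z)=-e^{i\alpha}z$ is a rotation, i.e.\ $\gamma$ is conjugate to $z\mapsto\lambda z$ with $|\lambda|=1$. In cases (b) and (c) I would instead conjugate by a conformal map $\psi\colon\mathbb{D}\to\mathbb{H}$ (itself a hyperbolic isometry) sending the boundary fixed points to $\{0,\infty\}$, respectively the single boundary fixed point to $\infty$. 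A M\"obius map of $\mathbb{H}$ fixing $0$ and $\infty$ must be $z\mapsto\lambda z$, and preserving $\mathbb{H}$ forces $\lambda$ real and positive; since $\gamma\neq\textrm{id}$ we have $\lambda\neq 1$, and after possibly interchanging the two fixed points we may take $\lambda>1$. A M\"obius map of $\mathbb{H}$ fixing only $\infty$ is affine, $z\mapsto az+b$, and the absence of a finite fixed point forces $a=1$; preserving $\mathbb{H}$ forces $b$ real and nonzero, and rescaling $\psi$ normalises $b=\pm 1$.

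Finally I would verify the geometric invariance statements, which is where the hyperbolic geometry enters. For case (b), the geodesic $\sigma$ joining $e^{i\theta_1}$ and $e^{i\theta_2}$ is, by Proposition \ref{prop:hypdisc}, the unique geodesic with those endpoints; its image $\psi(\sigma)$ is the geodesic of $\mathbb{H}$ with endpoints $0,\infty$, namely the imaginary axis, which is invariant under $z\mapsto\lambda z$, so $\gamma(\sigma)=\sigma$. For case (c), horocycles at $e^{i\theta}$ correspond under the isometry $\psi$ to horocycles at $\infty$, i.e.\ horizontal lines $\{\operatorname{Im}z=c\}$, and the horodisks to the half-planes $\{\operatorname{Im}z>c\}$; both families are preserved line-by-line by the translation $z\mapsto z\pm 1$, whence $\gamma(H)=H$ and $\gamma(\partial H)=\partial H$ for every horodisk $H$ centered at $e^{i\theta}$. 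I expect the main obstacle to be organisational rather than deep: the only genuinely substantive inputs are the explicit form of disk automorphisms in case (a) and Proposition \ref{prop:hypdisc}, and the care needed lies in checking that the reflection-pairing argument really excludes mixed configurations of fixed points, and that the conjugating maps transport geodesics and horocycles correctly, so that the last sentences of (b) and (c) are justified rather than merely asserted.
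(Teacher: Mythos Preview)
Your proposal is correct and self-contained. The paper itself does not give a proof of this proposition; it simply cites \cite[Prop.~1.4.1, Lemma~1.4.17]{Aba23} and \cite[Prop.~2.1.14]{Hub06} and states that the result is straightforward from these. Your argument---the reflection-pairing of fixed points under $z\mapsto 1/\overline{z}$ to obtain the trichotomy, followed by conjugation to standard position for the normal forms, and finally the transport of geodesics and horocycles under the conformal isometry $\psi\colon\mathbb{D}\to\mathbb{H}$---is precisely the standard proof one finds in those references, so in effect you have reconstructed what the paper defers to the literature.
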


For a wider exposition on the topic, and more characterizations of the previous types of automorphisms of $ \mathbb{D} $, see \cite[Chap. 1.4]{Aba23}, \cite[Chap. 2.1]{Hub06}, \cite[Chap. 2.4]{KeenLakic}, or \cite{Bea83}.

\begin{figure}[h]
	\centering
	\captionsetup[subfigure]{labelformat=empty, justification=centering}
	\begin{subfigure}{0.8\textwidth}
		\includegraphics[width=\textwidth]{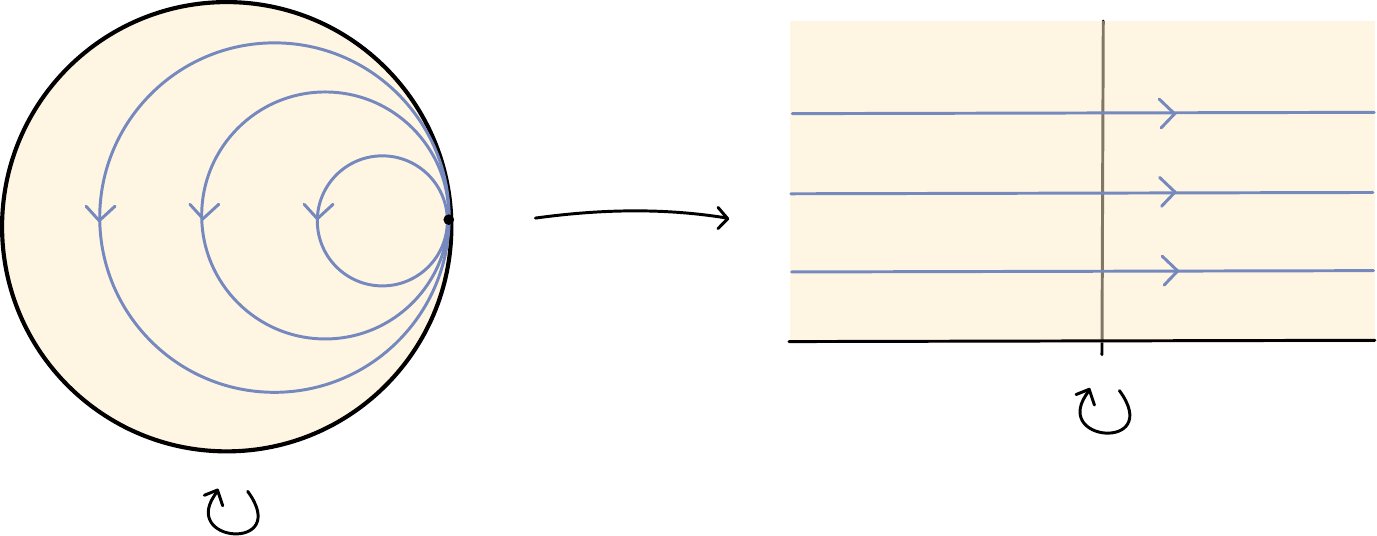}
		\setlength{\unitlength}{\textwidth}
			\put(-0.67, 0.22){$e^{i\theta}$}
						\put(-0.55, 0.24){$M$}
							\put(-0.6, 0.2){\footnotesize $M(e^{i\theta})=\infty$ }
		\put(-0.02, 0.35){$\mathbb{H}$}
			\put(-0.84, -0.02){$\gamma$}
						\put(-0.25, 0.05){$z\mapsto z+1$}
	\end{subfigure}
	\\
	
	\vspace{0.7cm}
	
	\begin{subfigure}{0.8\textwidth}
		\includegraphics[width=\textwidth]{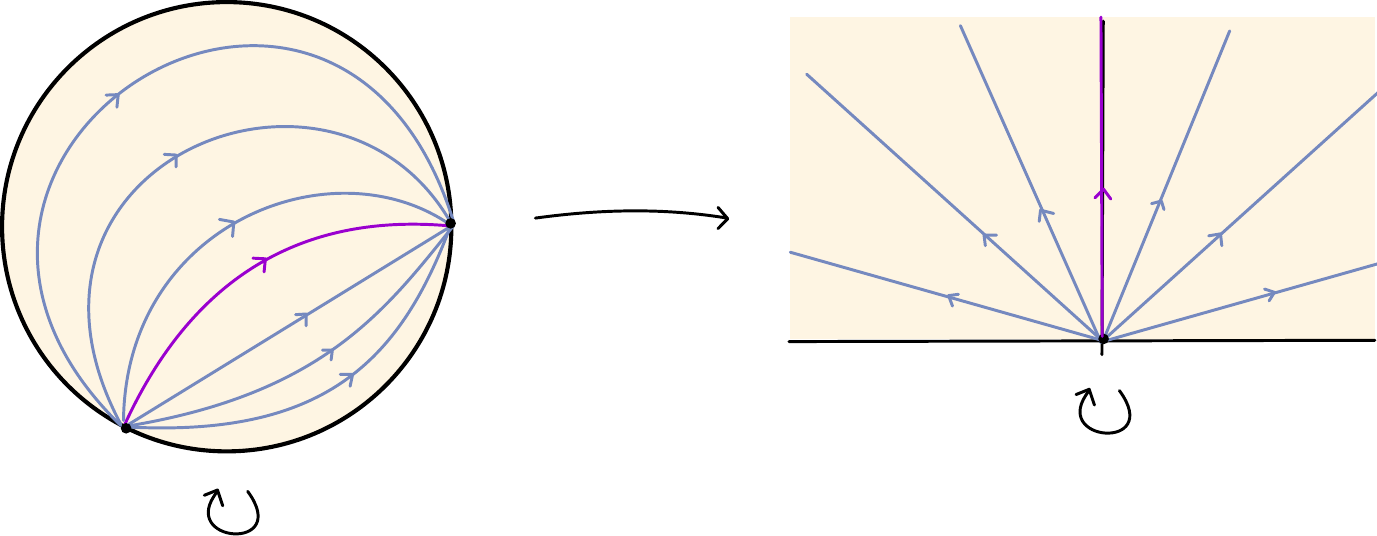}
				\setlength{\unitlength}{\textwidth}
					\put(-0.67, 0.22){$e^{i\theta_2}$}
								\put(-0.95, 0.05){$e^{i\theta_1}$}
		\put(-0.55, 0.24){$M$}
			\put(-0.6, 0.2){\footnotesize $M(e^{i\theta_1})=0$ }
		\put(-0.6, 0.17){\footnotesize $M(e^{i\theta_2})=\infty$ }
		\put(-0.02, 0.35){$\mathbb{H}$}
		\put(-0.84, -0.02){$\gamma$}
		\put(-0.25, 0.05){$z\mapsto \lambda z$}
	\end{subfigure}
	\caption{\footnotesize Schematic representation of the action of a parabolic and a hyperbolic automorphism of $ \mathbb{D} $, respectively.}
\end{figure}

\vspace{0.2cm}
{\bf Hyperbolic Stolz angles.} Although (Euclidean) Stolz angles are the standard way to describe the non-tangential apporach to a point in the unit circle, they are not invariant under  $ \textrm{Aut}(\mathbb{D}) $, which limits their usefulness in dealing with the hyperbolic metric. Thus, let us define {\em hyperbolic Stolz angles}, which describe the same concept of non-tangential approach, but are more convenient to work with, precisely due to their invariance under $ \textrm{Aut}(\mathbb{D}) $ (see \cite[p. 196]{BC08}, \cite[Sect. 2.2]{Aba23}).
\begin{defi}{\bf (Hyperbolic Stolz angle)}
	Let $ e^{i\theta}\in\partial\mathbb{D} $, and let $ \eta_0\subset\mathbb{D} $ be a geodesic ray landing at $ e^{i\theta} $. We define the {\em hyperbolic Stolz angle} of radius $ r$ along $ \eta_0 $ as
	\[\Delta_{\eta_0, r} (e^{i\theta})=\left\lbrace z\in\mathbb{D}\colon \textrm{dist}_\DD (z,\eta_0)<r  \right\rbrace .\]
\end{defi}
\begin{figure}[h]\centering
	\includegraphics[width=6cm]{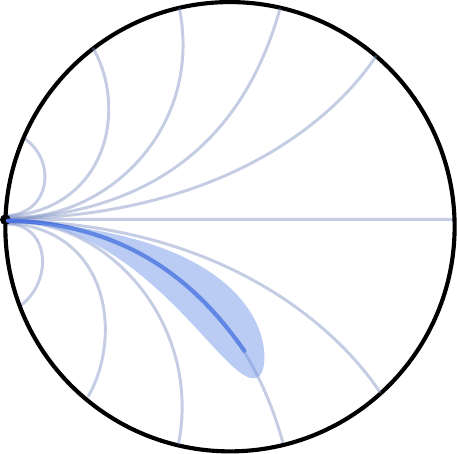}
	\setlength{\unitlength}{6cm}
					\put(-1.07, 0.5){$e^{i\theta}$}
	\put(-0.07, 0.75){$\mathbb{D}$}
					\put(-0.75, 0.4){$\eta_0$}
									\put(-0.75, 0.17){$\Delta_{\eta_0, r}(e^{i\theta})$}
	\caption{\footnotesize Hyperbolic geodesics passing through $ e^{i\theta}\in\partial\mathbb{D} $, and a hyperbolic Stolz angle around one of them.}
\end{figure}
\begin{lemma}{\bf (Euclidean and hyperbolic Stolz angles are equivalent)}
	Let $ e^{i\theta}\in \partial \mathbb{D} $, and let $ \eta_0\subset\mathbb{D} $ be a geodesic ray landing at $ e^{i\theta} $. Then, for every hyperbolic Stolz angle $ \Delta_{\eta_0, r}(e^{i\theta}) $ there exist $ \alpha_1,\alpha_2\in (0, \pi/2) $ and $ K>0 $ such that
	\[A_{\alpha_1, r/K} (e^{i\theta})\subset \Delta_{\eta_0, r}(e^{i\theta})\subset A_{\alpha_2, Kr} (e^{i\theta}).\]
	Conversely, for any Euclidean Stolz angle $ A_{\alpha, r} (e^{i\theta}) $, there exist $ \varepsilon, K >0 $
	such that
	\[\Delta_{\eta_0, r/K}(e^{i\theta})\subset A_{\alpha, r} (e^{i\theta})\cap D(e^{i\theta}, \varepsilon)\subset \Delta_{\eta_0, Kr}(e^{i\theta}).\]
\end{lemma}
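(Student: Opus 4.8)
The plan is to transfer everything to the upper half-plane $\mathbb{H}$, where both families of Stolz angles become genuine Euclidean sectors that can be compared by an explicit computation. First I would fix a M\"obius transformation $T\colon\DD\to\mathbb{H}$ with $T(e^{i\theta})=\infty$; by Proposition \ref{prop:hypdisc}(a) this is a hyperbolic isometry, so it preserves hyperbolic distance and hence maps $\Delta_{\eta_0,r}(e^{i\theta})$ onto the hyperbolic $r$-neighbourhood of $T(\eta_0)$. By Proposition \ref{prop:hypdisc}(c) the image $T(\eta_0)$ is a vertical geodesic ray landing at $\infty$, and after composing with a real translation (another isometry of $\mathbb{H}$) we may assume it is a subray $\{iy\colon y\ge y_0\}$ of the positive imaginary axis. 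The key observation is that both notions only constrain the approach to $e^{i\theta}$, i.e.\ the behaviour near $\infty$ in the model, so it suffices to compare the two families there.

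Next I would carry out the core computation in $\mathbb{H}$. Parametrising by $z=\lambda e^{i\psi}$ and integrating the density along the semicircles $|z|=\mathrm{const}$, which by Proposition \ref{prop:hypdisc}(c) are the geodesics orthogonal to the imaginary axis, one finds $\dist_\mathbb{H}(z,i\mathbb{R}^+)=\log\cot(\psi/2)$ for $\psi\le\pi/2$ (and symmetrically for $\psi\ge\pi/2$). Consequently
\[ \{z\in\mathbb{H}\colon \dist_\mathbb{H}(z,i\mathbb{R}^+)<r\}=\bigl\{\lambda e^{i\psi}\colon |\psi-\tfrac\pi2|<\arcsin(\tanh r)\bigr\}, \]
so the hyperbolic $r$-neighbourhood of the imaginary axis is exactly the Euclidean sector about the vertical of half-opening $\beta_r:=\arcsin(\tanh r)$, with $\beta_r\to\pi/2$ as $r\to\infty$ and $\beta_r\to0$ as $r\to0$. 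The $r$-neighbourhood of the subray $\{iy\colon y\ge y_0\}$ differs from this sector only inside a region that is bounded both hyperbolically and Euclideanly near $iy_0$, hence coincides with it for $|z|$ large. On the other side, a Euclidean Stolz angle at $\infty$ in $\mathbb{H}$ is, by definition, a Euclidean sector truncated to large $|z|$, so in the model the two families are both identified with sectors, of opening determined by $r$ and by $\alpha$ respectively.

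Finally I would transfer back through $T^{-1}$. Since $T$ has a simple pole at $e^{i\theta}$, near that point it has the form $T(z)=a/(z-e^{i\theta})+b$; being conformal and, up to an inversion, affine, it sends a Euclidean sector at $\infty$ of half-opening $\alpha$ to a set which near $e^{i\theta}$ is exactly a Euclidean Stolz angle $A_{\alpha,\rho}(e^{i\theta})$ of the same half-opening, the truncation radius transforming as $\rho\asymp 1/M$ where $|w|>M$ was the truncation at $\infty$. Combining this with the previous paragraph, choosing $\alpha_1<\beta_r<\alpha_2$ and using that $\Delta_{\eta_0,r}$ pulls back to a set which near $e^{i\theta}$ is the cone of half-opening $\beta_r$, one obtains the sandwich $A_{\alpha_1,r/K}\subset\Delta_{\eta_0,r}\subset A_{\alpha_2,Kr}$ for a suitable $K=K(r,\theta,\eta_0)$: taking $K$ large guarantees that $r/K$ is small enough to sit inside the sector region and that $Kr$ is large enough to absorb the bounded base part. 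The converse statement is proved symmetrically, now with the factor $D(e^{i\theta},\varepsilon)$ localising near $e^{i\theta}$ and the monotone dependence of the opening on $r$ inverting the relation $\beta_r=\alpha$.

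I expect the main obstacle to be the bookkeeping of the truncation radii and, relatedly, the behaviour of $\Delta_{\eta_0,r}$ away from $e^{i\theta}$: the $r$-neighbourhood of the whole ray contains a bounded ``base'' blob (and, if $\eta_0$ has its second endpoint on $\partial\DD$, a second cone there) which is not automatically contained in a truncated Euclidean cone at $e^{i\theta}$. This is handled by taking the outer half-opening $\alpha_2$ close enough to $\pi/2$ and $K$ large enough that $A_{\alpha_2,Kr}$ engulfs the entire bounded discrepancy; none of this affects the non-tangential nature of the approach, which is the only feature the comparison is meant to capture.
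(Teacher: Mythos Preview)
Your argument is correct. The paper itself does not prove this lemma; it simply cites \cite[Lemma~2.2.2, Prop.~2.2.4]{Aba23} for the proof and for precise estimates on $K$. Your approach---transfer to $\mathbb{H}$ via a M\"obius map sending $e^{i\theta}\mapsto\infty$, compute explicitly that the hyperbolic $r$-neighbourhood of the imaginary axis is the Euclidean sector of half-opening $\arcsin(\tanh r)$, and note that lines through $e^{i\theta}$ map to lines through $\infty$ so that Euclidean Stolz angles likewise become literal sectors at $\infty$---is the standard direct computation and almost certainly close to what the cited reference does. Your handling of the bounded ``base'' of $\Delta_{\eta_0,r}$ is also right: since $\eta_0$ is a ray with its finite endpoint in $\DD$, the closure of $\Delta_{\eta_0,r}$ meets $\partial\DD$ only at $e^{i\theta}$, so away from $e^{i\theta}$ it is compact in $\DD$ and hence bounded away from the tangent line at $e^{i\theta}$; any $\alpha_2\in(\beta_r,\pi/2)$ exceeding the angular spread of that compact piece, together with $K$ large enough that $Kr\ge 2$, then gives the outer inclusion.
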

\begin{proof}
	The proof can be found in \cite[Lemma 2.2.2]{Aba23} and \cite[Prop. 2.2.4]{Aba23} with a slightly different notation, and with precise estimates on the constant $ K $.
\end{proof}

In the sequel, we shall write $ \Delta(e^{i\theta}) $ for a generic (hyperbolic) Stolz angle at $  e^{i\theta}$, without specifying $ \eta_0 $ or $ r $.

\subsection{Boundary behaviour of holomorphic maps in $ \mathbb{D}$}\label{ssec:boundarydisc}

We are now ready to describe the boundary behaviour of holomorphic maps in $ \mathbb{D} $ in terms of the images of the different neighbourhoods of a point $ e^{i\theta} $ described in Section \ref{ssec:top-geom-disc}. More precisely, let us define the kind of limits we are going to work with.
\begin{defi}{\bf (Radial limits and cluster sets)}\label{defi-radial-lim}
	Let $ h\colon \mathbb{D}\to\widehat{\mathbb{C}}$, and let $ e^{i\theta}\in\partial \mathbb{D} $.
	\begin{itemize}
		\item The {\em radial limit} of $ h $ at $ e^{i\theta} $ is defined to be $h^* (e^{i\theta} )\coloneqq\lim\limits_{r\to 1^-} h(re^{i\theta})$.

		\item	We say that  $ h $  has \textit{angular limit} at $ e^{i\theta}  $ if, for any Stolz angle $ \Delta $ at $ e^{i\theta} $, the limit \[\lim\limits_{z\to e^{i\theta} ,\\ z\in\Delta}h(z) \]  exists.
			\item If $ \eta\colon \left[ 0,1\right) \to\mathbb{D} $ is a curve  landing at $ e^{i\theta} $, the {\em cluster set} $ Cl_\eta(h, e^{i\theta}) $ of $ h $ at $ e^{i\theta} $ along $ \eta $ is defined as the set of values $ w\in\widehat{\mathbb{C}}$ for which there is an increasing sequence $ \left\lbrace t_n\right\rbrace _n \subset (0,1) $ such that $ t_n\to 1 $ and $h(\eta(t_n))\to w $, as $ n\to\infty $.

		\noindent In the particular case when $ \eta=R_\theta $, we say that $ Cl_\eta(h, e^{i\theta}) $ is  the {\em radial cluster set} of $ h $ at $ e^{i\theta} $, and denote it by $ Cl_R(h, e^{i\theta}) $.
		\item The {\em angular cluster set} $ Cl_\mathcal{A}(h, e^{i\theta}) $ of $ h $ at $ e^{i\theta} $ is defined as the union of the cluster sets along all the curves landing non-tangentially at $ e^{i\theta} $. Equivalently,
		\[Cl_\mathcal{A}(h, e^{i\theta})
		= \bigcup_{\alpha 
		} Cl_{\Delta_\alpha(e^{i\theta})}(h, e^{i\theta}).\]
		\item The {\em (unrestricted) cluster set} $ Cl(h, e^{i\theta}) $ of $ \varphi $ at $ e^{i\theta} $ is the set of values $ w\in\widehat{\mathbb{C}} $ for which there is a sequence $ \left\lbrace z_n\right\rbrace _n \subset\mathbb{D}$ such that $ z_n\to e^{i\theta} $ and $h (z_n)\to w $, as $ n\to\infty $.
	\end{itemize}
\end{defi}

Note that cluster sets and radial cluster sets are connected and compact. Hence, they are either a point or a continuum. On the other hand, angular clusters sets may not be closed, since they are defined as an infinite union of closed sets (see also Prop. \ref{prop-escaping-bounded-bungee}).

The existence (or not) of radial limits for meromorphic maps $ h\colon \mathbb{D}\to \Chat$ is a matter of independent interest.  We state now the Lehto-Virtanen Theorem on the existence of radial and angular limits for holomorphic map omitting three values. Note that the universal covering of any hyperbolic surface $ \Omega $ always omits three values, and hence the Lehto-Virtanen Theorem applies.
\begin{thm}{\bf (Lehto-Virtanen, {\normalfont \cite[Sec. 4.1]{Pom92}})}\label{thm-lehto-virtanen}
	Let $ h\colon\mathbb{D}\to\widehat{\mathbb{C}} $ be a meromorphic map omitting three values in $ \widehat{\mathbb{C}}  $. Let $ \eta $ be a curve in $ \mathbb{D} $ landing at $ e^{i\theta} \in\partial \mathbb{D} $. If $ h(\eta) $ lands at a point $ p\in\widehat{\mathbb{C}} $, then $ h $ has angular limit at $ e^{i\theta} $  equal to $ p $. In particular, radial and angular limits are the same.
\end{thm}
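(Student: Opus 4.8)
The plan is to read this as a statement about \emph{normal functions} and to prove it by a normal-family rescaling argument. The first step is to upgrade the hypothesis ``omits three values'' into the quantitative normality bound
$(1-|z|^2)\,h^{\#}(z)\le C$ for all $z\in\DD$, where $h^{\#}=|h'|/(1+|h|^2)$ is the spherical derivative. This follows from the Schwarz--Pick Lemma applied to the thrice-punctured sphere $\Chat\setminus\{a,b,c\}$ containing the image of $h$: the hyperbolic density of the target dominates a fixed multiple of the spherical density $\chi$, so the pullback of the target metric bounds $h^{*}\chi\le C\,\rho_\DD$ as conformal metrics. Integrating along curves and taking infima, this says precisely that $h\colon(\DD,\dist_\DD)\to(\Chat,\chi)$ is $C$-Lipschitz. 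The feature that makes this usable is that the bound is invariant under precomposition with elements of $\mathrm{Aut}(\DD)$, since these are hyperbolic isometries.

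Next I would argue by contradiction. Suppose $h$ does not have angular limit $p$ at $e^{i\theta}$; then there is a Stolz angle and a sequence $z_n\to e^{i\theta}$ inside it with $\chi(h(z_n),p)\ge\varepsilon_0>0$, and after passing to a subsequence $h(z_n)\to q\neq p$. Choose $\gamma_n\in\mathrm{Aut}(\DD)$ with $\gamma_n(0)=z_n$, normalised by a rotation so that $\gamma_n^{-1}(e^{i\theta})$ is a fixed point of $\dD$. By the automorphism-invariance of the normality bound, the maps $g_n\coloneqq h\circ\gamma_n$ are uniformly normal, hence a normal family by Montel's theorem; passing to a locally uniformly convergent subsequence gives a meromorphic limit $g$ with $g(0)=\lim h(z_n)=q\neq p$.

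The point of the hypothesis is that $h\circ\eta$ lands at $p$, so $\eta$ supplies a whole \emph{continuum}, not merely a sequence, whose image is close to $p$. I would pull the asymptotic curve back by $\gamma_n^{-1}$ and use the non-tangential geometry --- that points of a Stolz angle lie within bounded hyperbolic distance of the geodesic ray landing at $e^{i\theta}$ --- to show that suitable subarcs of $\gamma_n^{-1}(\eta)$ accumulate on a nondegenerate continuum $\Gamma\subset\DD$ along which $g_n\to p$. Local uniform convergence then gives $g\equiv p$ on $\Gamma$, and the identity theorem forces $g\equiv p$ on all of $\DD$, contradicting $g(0)=q\neq p$; this proves the angular limit. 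The final assertion is immediate: the radius $R_\theta$ is itself a curve landing at $e^{i\theta}$, so an existing radial limit is an asymptotic value and hence equals the angular limit, while conversely an angular limit restricts to the radial one.

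The main obstacle is exactly the continuum-control in the third step: one must ensure that the rescaled curves converge to a nondegenerate \emph{interior} continuum rather than collapsing onto $\dD$, which is where the possibility that $\eta$ spirals toward $e^{i\theta}$ has to be ruled out. This is precisely what the non-tangential hypothesis and the $\mathrm{Aut}(\DD)$-invariance of the Lipschitz bound are designed to handle. A clean way to organise the argument is to first prove the easier implication ``radial limit $\Rightarrow$ angular limit'' --- where the radius is a genuine geodesic lying boundedly close to the entire Stolz angle, so the rescaled radii limit onto a diameter on which $g=p$ --- and then reduce the general asymptotic-curve case to the radial one.
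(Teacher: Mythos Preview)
The paper does not prove this theorem; it is quoted from Pommerenke as a preliminary tool, so there is no ``paper's own proof'' to compare against. Your framework --- pass from ``omits three values'' to the uniform bound $(1-|z|^2)h^{\#}(z)\le C$ via Schwarz--Pick on $\Chat\setminus\{a,b,c\}$, then run a normal-family rescaling argument --- is exactly the standard Lehto--Virtanen approach, and Steps~1--2 are fine. The implication ``radial limit $\Rightarrow$ angular limit'' also works as you describe: the radius is a geodesic, hence at bounded hyperbolic distance from any Stolz angle, and after rescaling it limits onto a diameter on which $g\equiv p$.

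The gap is in your treatment of the continuum step, and your proposed workaround does not close it. You write that ``the non-tangential hypothesis'' is what prevents the rescaled arcs $\gamma_n^{-1}(\eta)$ from collapsing onto $\partial\DD$, but the non-tangential hypothesis applies to the contradicting sequence $z_n$ (which lives in a Stolz angle), \emph{not} to $\eta$. The curve $\eta$ is allowed to approach $e^{i\theta}$ tangentially --- for instance along a horocycle --- and in that case $\gamma_n^{-1}(\eta)$ genuinely does collapse to the boundary. Concretely, work in the half-plane model with $e^{i\theta}\leftrightarrow\infty$, take $z_n=iy_n$ on the imaginary axis and $\eta=\{t+i:t\ge 0\}$; then $\gamma_n^{-1}(\eta)=\{y=1/y_n\}$, which converges to $\partial\mathbb{H}$, so no interior continuum appears and the identity-theorem contradiction does not fire. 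Your reduction ``first do radial $\Rightarrow$ angular, then reduce the general asymptotic-curve case to the radial one'' does not help either: the step ``asymptotic value along $\eta$ $\Rightarrow$ radial limit'' is precisely where the same obstacle reappears, and you give no argument for it.

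The literature proof (Pommerenke, Thm.~4.1) does handle this, but not by finding a single interior continuum on which $g\equiv p$. One has to exploit that the rescaled tail of $\eta$ --- on which $g_n$ is uniformly close to $p$ --- is a crosscut-like arc joining two \emph{distinct} limit points on $\partial\DD$; combining this with the uniform Lipschitz bound (applied once more, after a second rescaling if the arcs escape) yields the contradiction. You have correctly located the crux of the argument, but the sentence ``this is precisely what the non-tangential hypothesis \dots\ [is] designed to handle'' is where the proof sketch breaks down.
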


Finally, ambiguous points are defined as follows.
\begin{defi}{\bf (Ambiguous points)}\label{def-ambiguous}
	Let $ h \colon \mathbb{D} \to \widehat{\mathbb{C}} $. We say that $ e^{i\theta}\in\partial\mathbb{D} $ is {\em ambiguous} for $ h $ if there exists two curves $ \eta_1,\eta_2\subset\mathbb{D} $ landing at $ e^{i\theta} $ such that 
	\[ Cl_{\eta_1}(h, e^{i\theta})\cap  Cl_{\eta_2}(h, e^{i\theta})=\emptyset.\]
\end{defi}

The Bagemihl ambiguous point theorem states that any map $ h \colon \mathbb{D} \to \widehat{\mathbb{C}} $ has at most countably many ambiguous points \cite[Cor. 2.20]{Pom92}. Note that not even continuity is assumed for $ h $.

\subsection{Hyperbolic geometry of plane domains}\label{subs:hypgeo}
We now use the language and concepts introduced above to define and explore the hyperbolic geometry of a plane domain. We assume, here and throughout, that $\Omega$ is a \textit{hyperbolic} plane domain, i.e. a connected open subset of $\Chat$ omitting at least three points. For more references on this topic, the reader is invited to seek out \cite[Chap. 1]{Aba23}, \cite[Chap. 3]{Hub06}, and \cite[Chap. 7]{KeenLakic}.

\subsection*{Universal coverings and deck transformations}

Let $ \Omega\subset\Chat $ be a hyperbolic domain. Then, by the Uniformization Theorem (see e.g. \cite[Thm. 1.6.5]{Aba23}), there exists a {\em universal covering map}\[\pi\colon X\to \Omega,\] i.e. a local biholomorphism with the path-lifting property, where $ X $ is biholomorphic to $ \mathbb{D} $. In general, we will assume directly $ X=\mathbb{D} $. Then, the universal covering is unique up to precomposition with $ \gamma\in\textrm{Aut}(\mathbb{D}) $. We note the following consequence of the definition of a universal covering.

\begin{lemma}{\bf (Restrictions of the universal covering)}\label{lem:restriction}
	Let $ \Omega $ be a hyperbolic domain, and let $ \pi\colon\mathbb{D}\to \Omega $ be a universal covering. If $ V\subset \Omega $ and $ U $ is a connected component of $ \pi^{-1}(V) $ and $\pi_1(V)\subset \pi_1(\Omega)$, then $ U $ is simply connected and
	\[\pi|_U\colon U\to V\] is a universal covering of $ V $.
\end{lemma}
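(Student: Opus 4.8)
The plan is to use only elementary covering space theory, treating $V$ as a subdomain of $\Omega$ (in particular connected, open, and locally path-connected, as holds for the sets $V$ we shall apply this to). First I would record the standard fact that the restriction of a covering over a subspace is again a covering: since $\pi\colon\mathbb{D}\to\Omega$ is a covering and $V\subset\Omega$ is open, the map $\pi\colon\pi^{-1}(V)\to V$ is a covering of $V$. Because $V$ is connected and locally path-connected, each connected component $U$ of $\pi^{-1}(V)$ is open, maps \emph{onto} $V$, and $\pi|_U\colon U\to V$ is itself a covering. This already yields everything except the simple connectivity of $U$, which is the heart of the matter.

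To prove that $U$ is simply connected, I would take an arbitrary loop $\gamma$ in $U$ based at some $u_0$, and consider its projection $\delta=\pi\circ\gamma$, a loop in $V$ based at $v_0=\pi(u_0)$. The key observation is that $\gamma$ is a loop in $\mathbb{D}$, which is simply connected; hence $\gamma$ is homotopic to a constant in $\mathbb{D}$, and pushing this homotopy forward by $\pi$ shows that $\delta$, regarded as a loop in $\Omega$, is nullhomotopic there. This is precisely where the hypothesis $\pi_1(V)\subset\pi_1(\Omega)$ — meaning that the homomorphism induced by the inclusion $V\hookrightarrow\Omega$ is injective — enters: the class $[\delta]\in\pi_1(V,v_0)$ maps to the trivial class in $\pi_1(\Omega,v_0)$, so by injectivity $[\delta]$ is already trivial in $\pi_1(V,v_0)$.

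Given a nullhomotopy $H\colon[0,1]^2\to V$ of $\delta$ rel basepoint, I would lift it through the covering $\pi$ starting from $\gamma$: the homotopy lifting property produces a unique $\tilde H\colon[0,1]^2\to\mathbb{D}$ with $\tilde H(\cdot,0)=\gamma$. Since $H$ takes values in $V$, the lift $\tilde H$ takes values in $\pi^{-1}(V)$; as $[0,1]^2$ is connected and $\tilde H$ meets $U$, its entire image lies in the single component $U$. The top edge $\tilde H(\cdot,1)$ is then a lift of the constant loop at $v_0$, hence constant, so $\gamma$ is nullhomotopic in $U$. Therefore $\pi_1(U)$ is trivial, and a simply connected covering of $V$ is by definition a universal covering, which closes the argument.

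The step I expect to be the most delicate is the bookkeeping around the injectivity hypothesis: one must carefully distinguish homotopies taking place in $V$ from those taking place in $\Omega$, since the whole argument collapses without the assumption $\pi_1(V)\subset\pi_1(\Omega)$ — otherwise $\delta$ could be essential in $V$ while bounding in $\Omega$, producing a nontrivial loop in $U$. Everything else is standard covering theory, so no genuine analytic input is required beyond the facts that $\pi$ is a covering and that $\mathbb{D}$ is simply connected.
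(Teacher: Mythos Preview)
Your argument is correct and complete. The paper itself states this lemma without proof, presenting it as a standard consequence of covering space theory; the only commentary the authors add is the remark that the hypothesis $\pi_1(V)\subset\pi_1(\Omega)$ is to be read as ``the holes of $V$ correspond to holes of $\Omega$'', i.e.\ no complementary component of $V$ lies inside $\Omega$ --- which is exactly the injectivity of the inclusion-induced map that you use. Your proof thus supplies precisely the elementary covering-theoretic verification the paper omits, and your identification of the injectivity hypothesis as the crux (distinguishing homotopies in $V$ from homotopies in $\Omega$) matches the authors' intent.
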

\begin{obs}
The hypothesis $\pi_1(V)\subset\pi_1(\Omega)$ in Lemma \ref{lem:restriction} is needed to ensure that the  holes of  $V$ correspond to holes of $ \Omega $, i.e. we do not allow connected components of $\Chat\setminus V$  to be fully contained in $\Omega$.
\end{obs}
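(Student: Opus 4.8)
The hypothesis $\pi_1(V)\subset\pi_1(\Omega)$ is to be read as the injectivity of the homomorphism $\iota_*\colon\pi_1(V)\to\pi_1(\Omega)$ induced by the inclusion $\iota\colon V\hookrightarrow\Omega$, and the plan is to prove that this injectivity is \emph{equivalent} to the geometric condition
\[ \text{no connected component of }\Chat\setminus V\text{ lies entirely in }\Omega, \]
which is the content of the remark. (It also dovetails with Lemma \ref{lem:restriction} through covering theory: $\pi^{-1}(V)\to V$ is the pullback of the universal covering $\pi$ along $\iota$, so each component $U$ satisfies $\pi_1(U)\cong\ker\iota_*$ and is simply connected exactly when $\iota_*$ is injective.) After a Möbius change of coordinates we may assume $\infty\in\Chat\setminus\Omega$ — possible since $\Omega$ is hyperbolic — so that every loop involved lies in $\mathbb{C}$ and winding numbers are the usual ones. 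I will use two standard planar facts: a loop in a domain $W\subset\mathbb{C}$ is null-homologous in $W$ iff its winding number about every point of $\mathbb{C}\setminus W$ vanishes; and a simple closed curve in $W\subset\Chat$ is null-homotopic in $W$ iff one of the two Jordan domains it bounds is contained in $W$.

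First I would show the condition is \emph{necessary}, which is the assertion that the hypothesis is genuinely needed. Suppose a component $K$ of the compact set $\Chat\setminus V$ satisfies $K\subset\Omega$ (so $K\subset\mathbb{C}$). Since $K$ is a connected component of a compact set, components and quasicomponents agree, and one can separate $K$ from the other components of $\Chat\setminus V$ by a Jordan curve lying in $V$; concretely, there is a Jordan curve $\gamma\subset V$ whose bounded Jordan domain $D$ satisfies $K\subset D$ and $D\cap(\Chat\setminus V)=K$. Then $D\setminus K\subset V\subset\Omega$ and $K\subset\Omega$, so $D\subset\Omega$; hence $\gamma=\partial D$ bounds the disk $D\subset\Omega$ and is null-homotopic in $\Omega$. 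On the other hand $\gamma$ has winding number $\pm1$ about the points of $K\subset\Chat\setminus V$, so by the homology criterion it is not null-homologous, hence not null-homotopic, in $V$. Thus $[\gamma]$ is a nontrivial element of $\ker\iota_*$ and $\iota_*$ is not injective.

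For \emph{sufficiency} I would argue by contraposition: assuming $\iota_*$ is not injective, I would produce a component of $\Chat\setminus V$ contained in $\Omega$. The key reduction is to represent a nontrivial element of $\ker\iota_*$ by a \emph{simple} closed curve $c\subset V$ that is essential in $V$ yet null-homotopic in $\Omega$. Granting this, $c$ bounds two Jordan domains of $\Chat$; being null-homotopic in $\Omega$ it bounds one of them, say $D$, inside $\Omega$, while being essential in $V$ it satisfies $D\not\subset V$, so $D$ meets some complementary component $K$ of $V$. As $K$ is connected and disjoint from $\partial D=c\subset V$, it lies on one side of $c$, and since it meets $D$ we conclude $K\subset D\subset\Omega$, the desired component contained in $\Omega$.

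I expect the reduction to a simple closed curve to be the main obstacle. Winding numbers only see the homology class of a loop, so the naive argument proves injectivity of $\iota_*$ on $H_1$ only: a commutator in the free group $\pi_1(V)$ is null-homologous but essential, so homology alone cannot witness a nontrivial kernel. The gap is closed by the standard surface-topological fact that a nontrivial kernel of a $\pi_1$-inclusion of subsurfaces is detected by an embedded essential curve compressing in the ambient surface — obtained by a least-self-intersection (innermost-disk) surgery on a kernel representative — which in the present planar case may also be produced directly by resolving the self-intersections of a chosen kernel representative and retaining an innermost essential loop. With this step in hand, the two implications combine to give the equivalence, and hence the geometric meaning of the hypothesis $\pi_1(V)\subset\pi_1(\Omega)$ asserted in the remark.
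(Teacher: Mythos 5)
You are proving the natural formalisation of a statement the paper leaves entirely unproven (it is a remark with no argument attached), so there is nothing in the paper to compare against line by line; your reading of $\pi_1(V)\subset\pi_1(\Omega)$ as injectivity of $\iota_*$, the covering-theoretic dictionary $\pi_1(U)\cong\ker\iota_*$ for a component $U$ of $\pi^{-1}(V)$, and the sufficiency direction are all sound. In particular, you correctly identify the reduction of a nontrivial kernel element to an \emph{embedded} one as the crux there, and the fact you invoke is indeed the standard two-dimensional loop theorem (applied after placing the kernel representative in a compact subsurface of $V$ whose inclusion into $V$ is $\pi_1$-injective); the two planar facts you quote, including that a simple closed curve in a domain $W\subset\Chat$ is null-homotopic in $W$ if and only if one of its two complementary Jordan domains lies in $W$, are correct.

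The genuine gap is in your necessity direction: the Jordan curve you assert, with $D\cap(\Chat\setminus V)=K$ \emph{exactly}, need not exist. Take $V=\DD\setminus(\{0\}\cup\{1/n\colon n\geq 2\})$ and $K=\{0\}$: the interior $D$ of any Jordan curve $\gamma\subset V$ enclosing $0$ is open, hence contains a disk $D(0,\varepsilon)$ and with it infinitely many of the points $1/n\in\Chat\setminus V$. Equality of components and quasicomponents only yields a clopen splitting $X=X_1\sqcup X_2$ of $X=\Chat\setminus V$ with $K\subset X_1$ inside a prescribed neighbourhood of $K$; it does not produce a Jordan curve isolating $K$. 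What your argument actually needs is weaker --- some $\gamma\subset V$ with $K$ inside and $D\subset\Omega$ --- but even this requires an idea missing from your proof: a curve surrounding $X_1$ can a priori trap far-away points of $\Chat\setminus\Omega$ in its interior (think of $K$ a C-shaped arc almost encircling a puncture of $\Omega$; a curve around a neighbourhood of $K$ that closes the gap encloses the puncture). The repair uses the connectivity of $V$, which you never invoke: since $V$ is a domain, every component $K$ of $\Chat\setminus V$ is full, i.e.\ $\Chat\setminus K$ is connected, and fullness implies that the \emph{filled} $\delta$-neighbourhoods of $K$ shrink to $K$ as $\delta\to 0$; a grid curve around a clopen piece $X_1$ contained in a small neighbourhood of $K$ then has its entire interior inside $N_\varepsilon(K)\subset\Omega$ for $\varepsilon<\dist(K,\partial\Omega)$, after which your winding-number computation finishes as before. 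The connectivity hypothesis is not cosmetic: for the disconnected set $V=(\DD\setminus\{0\})\cup\bigl(\Cx\setminus(\overline{\DD}\cup\{5\})\bigr)$ inside $\Omega=\Cx\setminus\{0,5\}$, the component $\partial\DD$ of $\Chat\setminus V$ lies in $\Omega$, yet the inclusion of either component of $V$ is $\pi_1$-injective. So the equivalence you state is true for domains $V$ and your architecture is right, but the separation step as written is false and its correct replacement is where the real plane-topological content of the remark lives.
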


We say that $ \gamma\in \textrm{Aut}(\mathbb{D}) $ is a {\em deck transformation} of $ \pi $ if $ \pi\circ\gamma =\pi $. It is clear that the deck transformations of $ \pi $ form a subgroup of $ \textrm{Aut}(\mathbb{D}) $, which we denote by $ \Gamma $. Then, $ \Omega=\mathbb{D}/ \Gamma $, in the sense that the quotient space $\DD/\Gamma$ can be given a complex structure that turns it into a Riemann surface biholomorphic to $\Omega$.

The group of deck transformations acts freely and properly discontinuously in $ \mathbb{D} $ (see e.g. \cite[Sect. 1.6]{Aba23} and definitions therein), so no  $ \gamma\in \Gamma$ different from the identity has fixed points in $ \mathbb{D} $. In particular, this implies that deck transformations are either hyperbolic  or parabolic, and thus have two or one fixed points in $ \partial\mathbb{D} $. Given a group of deck transformations $ \Gamma $, we say that $ e^{i\theta}\in\partial \mathbb{D} $ is a {\em hyperbolic  (fixed) point} if there exists $ \gamma\in\Gamma $ such that $ e^{i\theta} $ is a fixed point of $ \gamma $ and $\gamma$ is a hyperbolic disk automorphism. {\em Parabolic (fixed) points} are defined likewise.

Let us denote by $ \pi_1(\Omega) $ the fundamental group of $ \Omega $, which is known to be isomorphic to the group of deck transformations.
\begin{lemma}{\bf (Isomorphism with the fundamental group, {\normalfont\cite[Prop. 1.39]{Hat02}})}\label{lem:deck}
	Let $ \Omega $ be a hyperbolic domain, let $ \pi\colon\mathbb{D}\to \Omega $ be a universal covering, and let $ \Gamma $ be the group of deck transformations of $ \pi $. Then, $$\Gamma\simeq \pi_1(\Omega).$$ In particular, $ \Omega $ is finitely connected if and only if $ \Gamma $ is finitely generated.
\end{lemma}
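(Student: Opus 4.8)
The plan is to prove the isomorphism by the standard Galois correspondence of covering space theory, and then deduce the finite-connectivity criterion from the free structure of the fundamental group of a planar domain. Since the excerpt already cites \cite[Prop. 1.39]{Hat02} for the isomorphism, I would keep that part brief and concentrate the reasoning on the ``in particular'' clause.

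For $\Gamma\simeq\pi_1(\Omega)$, I would fix a basepoint $x_0\in\Omega$ and a lift $\tilde x_0\in\pi^{-1}(x_0)$. Given a loop representing $[\alpha]\in\pi_1(\Omega,x_0)$, the path-lifting property of $\pi$ yields a unique lift $\tilde\alpha$ starting at $\tilde x_0$ and ending at some $\tilde x_1\in\pi^{-1}(x_0)$; because $\mathbb{D}$ is simply connected, $\Gamma$ acts simply transitively on the fibre $\pi^{-1}(x_0)$, so there is a unique $\gamma_\alpha\in\Gamma$ with $\gamma_\alpha(\tilde x_0)=\tilde x_1$. One checks, via homotopy-lifting, that $[\alpha]\mapsto\gamma_\alpha$ is well-defined and bijective (surjectivity from transitivity, injectivity from $\pi_1(\mathbb{D})=1$), and compatible with concatenation up to the usual reversal of order, so that composing with $g\mapsto g^{-1}$ if needed produces a genuine isomorphism. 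This is precisely the argument of \cite[Prop. 1.39]{Hat02}.

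For the finite-connectivity statement I would invoke two facts about a domain $\Omega\subset\Chat$. First, $\pi_1(\Omega)$ is a \emph{free} group, since any open subsurface of $S^2$ is homotopy equivalent to a graph. Second, by Alexander duality in $S^2$ one has $H_1(\Omega)\cong\widetilde{H}^0(\Chat\setminus\Omega)\cong\mathbb{Z}^{n-1}$, where $n$ is the number of connected components of the compact set $\Chat\setminus\Omega$, i.e. the connectivity of $\Omega$. As $H_1(\Omega)$ is the abelianisation of the free group $\pi_1(\Omega)$, the rank of $\pi_1(\Omega)$ equals $n-1$. Hence $\Omega$ is finitely connected if and only if $\pi_1(\Omega)$ is free of finite rank, i.e. finitely generated; combined with $\Gamma\simeq\pi_1(\Omega)$, this is equivalent to $\Gamma$ being finitely generated.

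The direction ``finitely connected $\Rightarrow$ finitely generated'' also admits a direct argument: a domain of connectivity $n$ deformation retracts onto a wedge of $n-1$ circles, so $\pi_1\cong F_{n-1}$ is visibly finitely generated. The converse is the delicate point, since one must rule out an infinitely connected domain having finitely generated $\pi_1$; here the freeness of $\pi_1(\Omega)$ together with the exact rank computation $\operatorname{rank}\pi_1(\Omega)=n-1$ is essential, as it forces the rank to be finite precisely when $n$ is. I expect this rank identity (whether obtained from Alexander duality or from the explicit graph model of an open surface) to be the main step to pin down carefully.
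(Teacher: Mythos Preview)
Your argument is correct, and for the isomorphism $\Gamma\simeq\pi_1(\Omega)$ it is essentially identical to the paper's treatment: the paper also cites \cite[Prop.~1.39]{Hat02} and explains the map $[\alpha]\mapsto\gamma_\alpha$ via path-lifting in the paragraph following the lemma, without a formal proof environment.

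For the ``in particular'' clause, the paper gives no argument at all, treating it as folklore. Your route via freeness of $\pi_1(\Omega)$ and the rank computation $\operatorname{rank}\pi_1(\Omega)=n-1$ (through Alexander duality or a graph model) is a standard and correct way to pin this down, and supplies justification that the paper omits. So here you are not diverging from the paper so much as filling in what it left implicit.
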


The isomorphism in Lemma \ref{lem:deck} can be understood as follows. If $\sigma\subset\Omega$ is a closed loop through a point $z\in\Omega$, the unique path-lifting property of univeral coverings means that, given a preimage $\tilde z_0\in\pi^{-1}(z)$, there is a unique lift $\widetilde\sigma$ of $\sigma$ starting at $\tilde z_0$.  Since $\sigma$ is closed, $\widetilde\sigma$ must terminate at another preimage $\tilde z_1$ of $z$. With some extra work (see e.g. \cite[Sect. 1.3]{Hat02}), one can show that this determines a unique deck transformation $\gamma\in\Gamma$ satisfying $\gamma(\tilde z_0) = \tilde z_1$, and that deforming $\sigma$ within its homotopy class does not change $\gamma$. Note that, if $\sigma$ is homotopically trivial, then $\tilde z_0 = \tilde z_1$, and the corresponding deck transformation is the identity $ \textrm{id}_\mathbb{D} $.

We  introduce the {limit set} and {non-tangential limit set} of the deck transformations, defined as follows.
\begin{defi}{\bf (Limit set and non-tangential limit set)}\label{def-limit-set}
	Let $ \Omega $ be a hyperbolic domain, let $ \pi\colon\mathbb{D}\to \Omega $ be a universal covering, and let $ \Gamma $ be the group of deck transformations of $ \pi $. Then, the {\em limit set} and {\em non-tangential limit set} of the group of deck transformations, denoted by $\Lambda(\Gamma)$ and $\Lambda_{NT}(\Gamma)$ respectively, are defined as
	\[\Lambda(\Gamma)= \left\lbrace e^{i\theta}\in \partial\mathbb{D}\colon \gamma_n(0)\to e^{i\theta}, \textrm{ for some }(\gamma_n)_n\subset \Gamma\right\rbrace ,\]
	\[\Lambda_{NT}(\Gamma)= \left\lbrace e^{i\theta}\in \partial \mathbb{D}\colon \gamma_n(0)\to e^{i\theta}\textrm{ non-tangentially}, \textrm{ for some }(\gamma_n)_n\subset \Gamma\right\rbrace .\]
	
	\noindent	We say that $ e^{i\theta}\in\partial \mathbb{D} $  is {\em singular} if $ e^{i\theta}\in \Lambda(\Gamma) $. Otherwise, we say $ e^{i\theta}$ is {\em regular}.
\end{defi}

Equivalently, the non-tangential limit set $ \Lambda_{NT} $ consist of points $ e^{i\theta}\in \partial \mathbb{D} $ for which there exist a Stolz angle $ \Delta(e^{i\theta}) $ and a sequence $ (\gamma_n)_n\subset \Gamma $ such that $ \gamma_n(0)\to e^{i\theta} $, $ \gamma_n(0)\subset \Delta(e^{i\theta}) $.

We say that 0 is the basepoint in the  definition of $ \Lambda $ and $ \Lambda_{NT} $, in the sense that it is the point in which we evaluate the deck transformations. As we will see, there is no dependence of the limit sets on the basepoint, and any other point in the unit disk could have been chosen, so we omit it in the notation. Moreover, if there is no ambiguity, we shall also drop $\Gamma$ from the notation. We gather the following well-known facts about limit sets.

\begin{lemma}{\bf (Properties of limit sets I)}\label{lemma-limit-sets}
	Let $ \Omega $ be a hyperbolic domain, and let $ \pi\colon \mathbb{D}\to\Omega $ be a universal covering. Then, the following holds.
	\begin{enumerate}[label={\em (\alph*)}]
		\item The definition of $ \Lambda $ and $ \Lambda_{NT} $ does not depend on the basepoint.
		\item Fixed points of hyperbolic and parabolic deck transformations belong to $ \Lambda $. More precisely, fixed points of hyperbolic deck transformations are always in $\Lambda_{NT}$, while fixed points of parabolic deck transformations (if there are any) belong to $\Lambda\smallsetminus\Lambda_{NT}$.
		\item $\Lambda_{NT}$ always has either empty or full measure in $\partial\DD$.
	\end{enumerate}
\end{lemma}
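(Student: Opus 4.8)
The plan is to treat the three parts in order, using throughout the $\Gamma$-invariance of the hyperbolic metric (Prop.~\ref{prop:hypdisc}(a)), the fact that hyperbolic disks are Euclidean disks whose Euclidean diameters shrink to $0$ as their centre approaches $\partial\DD$ (Prop.~\ref{prop:hypdisc}(b)), and the equivalence between hyperbolic and Euclidean Stolz angles.

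For (a), I would suppose $\gamma_n(0)\to e^{i\theta}$ and fix an arbitrary basepoint $w_0\in\DD$. Since each $\gamma_n$ is a hyperbolic isometry, $\dist_\DD(\gamma_n(0),\gamma_n(w_0))=\dist_\DD(0,w_0)=:R$ for all $n$, so $\gamma_n(w_0)$ lies in the hyperbolic ball $D_\DD(\gamma_n(0),R)$. As $\gamma_n(0)\to e^{i\theta}\in\partial\DD$, these balls have Euclidean diameters tending to $0$, whence $\gamma_n(w_0)\to e^{i\theta}$ as well, proving that $\Lambda$ is basepoint-independent. For $\Lambda_{NT}$ I would add that a bounded hyperbolic neighbourhood of a Stolz angle is again contained in a (wider) Stolz angle, so non-tangential approach of $\gamma_n(0)$ forces non-tangential approach of $\gamma_n(w_0)$.

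For (b), I would run the dynamics of a single transformation. If $\gamma$ is hyperbolic with axis $\sigma$ joining its fixed points $q_\pm$, then $\dist_\DD(\gamma^n(0),\sigma)=\dist_\DD(0,\gamma^{-n}(\sigma))=\dist_\DD(0,\sigma)=:d$ is constant, so the orbit $\{\gamma^n(0)\}$ stays in the $d$-tube around $\sigma$, i.e.\ in hyperbolic Stolz angles at $q_+$ (as $n\to+\infty$) and at $q_-$ (as $n\to-\infty$); hence both fixed points lie in $\Lambda_{NT}\subseteq\Lambda$. If $\gamma$ is parabolic with fixed point $p$, its orbit lies on a horocycle at $p$ and converges to $p$ tangentially, giving $p\in\Lambda$. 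The delicate point is to show $p\notin\Lambda_{NT}$, that is, to rule out \emph{every} conical approach, not merely the one coming from $\langle\gamma\rangle$. Here I would invoke the existence of a horoball $H$ at $p$ that is precisely invariant under the stabiliser $\Gamma_p$ (Shimizu's lemma / proper discontinuity): $\delta(H)=H$ for $\delta\in\Gamma_p$ and $\delta(H)\cap H=\emptyset$ otherwise. A short coset computation then shows that the horoballs $\delta^{-1}(H)$ are pairwise disjoint over distinct right cosets $\Gamma_p\delta$, so $\delta(0)\in H$ can hold only for $\delta$ in a single coset; consequently every orbit sequence converging to $p$ is eventually of the form $\gamma^k(w_0)$ and therefore approaches $p$ tangentially, which excludes $p\in\Lambda_{NT}$.

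For (c), I would first check that $\Lambda_{NT}$ is $\Gamma$-invariant: each $\delta\in\Gamma$ is a Möbius transformation of $\partial\DD$ carrying Stolz angles to Stolz angles, hence preserving conical approach. Assuming $|\Lambda_{NT}|>0$, I would run a Lebesgue-density / blow-up argument: choose a density point $e^{i\theta_0}\in\Lambda_{NT}$ and a sequence $\gamma_n(0)\to e^{i\theta_0}$ conically; the conical condition yields uniformly bounded distortion for $\gamma_n^{-1}$ on shrinking arcs $I_n\ni e^{i\theta_0}$ whose images $\gamma_n^{-1}(I_n)$ have length bounded below. Transporting the density-$1$ property through $\gamma_n^{-1}$ and using the invariance of $\Lambda_{NT}$ produces arcs of definite length on which $\Lambda_{NT}$ has full density, forcing $|\Lambda_{NT}|$ to be full. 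I expect the parabolic part of (b) and the distortion estimates in (c) to be the main obstacles; both are classical (the dichotomy in (c) being the standard $0$–$1$ law for conical limit sets), and where convenient I would cite the literature rather than reprove them.
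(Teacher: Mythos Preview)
Your proposal is correct. The paper's own proof consists entirely of citations to the literature (Hubbard for (a) and the first half of (b), Beardon--Maskit for the finer statements in (b), Fern\'andez for (c)), whereas you sketch the underlying arguments; your sketches---isometry plus shrinking hyperbolic balls for (a), axis-tube for hyperbolic fixed points and the precisely invariant horoball / coset argument for parabolic fixed points in (b), and the Lebesgue-density blow-up for (c)---are precisely the standard proofs found in those references, so the approaches coincide in substance.
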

\begin{proof}
	For (a), see \cite[Prop. 3.4.1]{Hub06} (for $\Lambda_{NT}$ one can use the same argument); for (b), see  \cite[p. 76]{Hub06}. In (b), hyperbolic fixed points lie in $\Lambda_{NT}$ by \cite[Prop. 2]{BM74}, while parabolic fixed points are in $\Lambda\smallsetminus\Lambda_{NT}$ by \cite[Prop. 3]{BM74}.
	 A proof of (c) can be found in \cite[p. 6]{Fer89}.
\end{proof}

Observe that, according to Lemma \ref{lem:deck}, when the domain $ \Omega $ we are considering is simply connected, $ \Gamma=\left\lbrace \textrm{id}_\mathbb{D}\right\rbrace  $, so the limit set is empty. If $ \Omega $ is doubly connected (see Sect. \ref{subsect-doubly-connected}), then $ \Gamma $ is generated by a unique deck transformation, which is parabolic (if $ \Omega $ is conformally isomorphic to $ \mathbb{D}^* $) or hyperbolic (otherwise). The limit set consists exclusively of the fixed points of this transformation, i.e. $ \Lambda(\Gamma)$  consists of either one or two points. 
However, if the connectivity of the domain is greater than two, then the situation is more involved (in particular, $ \Lambda(\Gamma) $ is always infinite), as the following well-known property shows.

\begin{lemma}{\bf (Properties of limit sets II)}
	Let $ \Omega $ be a multiply connected domain of connectivity greater than two, and let $ \pi\colon \mathbb{D}\to\Omega $ be a universal covering. Then, 
$ \Lambda $ is either $ \partial\mathbb{D} $ or a Cantor subset of $ \partial\mathbb{D} $.
Moreover, both $ \Lambda $ and $\Lambda_{NT}$ are always non-empty.
\end{lemma}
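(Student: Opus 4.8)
The plan is to treat the two assertions separately, reducing both to the classical structure theory of non-elementary Fuchsian groups, and first I would record the structural input. Since $\Omega$ has connectivity greater than two, Lemma \ref{lem:deck} identifies $\Gamma\simeq\pi_1(\Omega)$ with a free group of rank at least two (possibly infinite). In particular $\Gamma$ is not virtually cyclic, hence not elementary (equivalently, $\Lambda$ has more than two points), and since $\Gamma$ acts freely and properly discontinuously, every non-trivial element is hyperbolic or parabolic. These two facts are what drive everything below.

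For the non-emptiness, I would produce a hyperbolic element of $\Gamma$. The cleanest route is algebraic: a non-elementary discrete subgroup of $\textrm{Aut}(\mathbb{D})$ always contains hyperbolic transformations, while a purely parabolic discrete group is necessarily elementary, which is excluded by rank at least two (see \cite[Chap. 5]{Bea83}, \cite{Kat92}). Geometrically, the same element can be exhibited directly: as $\Omega$ has at least three boundary components, one may take a Jordan curve in $\Omega$ enclosing exactly two of them; its free homotopy class is non-trivial and cannot be shrunk to a single puncture, so the associated deck transformation has a closed geodesic as axis and is therefore hyperbolic. Once a hyperbolic $\gamma\in\Gamma$ is in hand, Lemma \ref{lemma-limit-sets}(b) places both of its fixed points in $\Lambda_{NT}$, whence $\emptyset\neq\Lambda_{NT}\subseteq\Lambda$.

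For the dichotomy, I would use that $\Lambda$ is a closed, $\Gamma$-invariant subset of $\partial\mathbb{D}$ which is perfect and on which the non-elementary group $\Gamma$ acts minimally (standard facts for non-elementary Fuchsian groups; see \cite[Chap. 5]{Bea83}, \cite{Kat92}). The decisive step is: if $\Lambda\neq\partial\mathbb{D}$, then $\Lambda$ is nowhere dense. To see this, suppose $\Lambda$ had non-empty interior, so it contains an open arc $U$ around some $p\in\Lambda$, and let $q\in\Lambda$ be an endpoint of a complementary arc $I\subset\partial\mathbb{D}\setminus\Lambda$ (which exists since $\Lambda\neq\partial\mathbb{D}$). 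By minimality the $\Gamma$-orbit of $q$ is dense in $\Lambda$, so some $\gamma(q)$ lies in $U$; but $\gamma(I)$ is again a complementary arc of $\Lambda$ with endpoint $\gamma(q)$, so $\gamma(q)$ is a limit of points outside $\Lambda$, contradicting $\gamma(q)\in U\subseteq\textrm{int}\,\Lambda$. Hence $\Lambda$ is closed with empty interior, i.e. nowhere dense. A closed nowhere dense subset of $\partial\mathbb{D}$ contains no arc, so its connected components are singletons and it is totally disconnected; being in addition non-empty, compact, perfect and metrizable, it is a Cantor set by the standard topological characterisation of the Cantor set.

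I expect the main obstacle to be the dichotomy step, specifically invoking (or reproving) that $\Lambda$ is perfect and that $\Gamma$ acts minimally on it: these are precisely the properties that fail for elementary groups, which is why the hypothesis of connectivity greater than two (equivalently, non-elementarity of $\Gamma$) is essential and enters at every stage. The existence of a hyperbolic element is a secondary but genuine point, whose only subtlety is ruling out a purely parabolic $\Gamma$; this too is settled by non-elementarity.
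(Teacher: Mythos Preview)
Your argument is correct. Note, however, that the paper does not actually prove this lemma: its entire proof is the citation ``See e.g.\ \cite[p.~76]{Hub06}.'' So there is no approach to compare against in any substantive sense --- you have supplied a genuine proof where the paper defers to the literature.

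Your route via non-elementarity is the standard one and is exactly what lies behind the cited reference: connectivity greater than two forces $\pi_1(\Omega)$ (hence $\Gamma$, by Lemma~\ref{lem:deck}) to be free of rank at least two, so $\Gamma$ is non-elementary; then perfection and minimality of $\Lambda$ give the Cantor dichotomy, and the existence of a hyperbolic element (combined with Lemma~\ref{lemma-limit-sets}(b)) gives $\Lambda_{NT}\neq\emptyset$. The one small thing you might make explicit is that $\pi_1$ of a plane domain is free (this is well known but is not stated in the paper), since that is what underpins the rank-$\geq 2$ claim and hence non-elementarity. Otherwise the argument is complete and self-contained to the level one would expect for a background lemma of this kind.
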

\begin{proof}
	See  e.g. \cite[p. 76]{Hub06}. 
\end{proof}

\begin{obs}{\bf (Measure of the limit set)}\label{rmk:beardon}
	We remark that statement (c) in Lemma \ref{lemma-limit-sets} is not true in general for the limit set: $\Lambda$ may have neither zero nor full measure. Indeed, in \cite[p. 224]{Bea71}, Beardon constructs a Fuchsian group $\Gamma$ such that $\Lambda(\Gamma)$ has positive measure, but it is not equal to $\DD$. However, to the best of our knowledge, it is not known whether Beardon's example (or any like it) corresponds to the deck transformation group of a plane domain -- i.e. whether $\DD/\Gamma$ is of planar character. As a result of the techniques developed in Section \ref{sect-main-result}, we  are able to give such an example in Section \ref{ssec:group-examples}.
\end{obs}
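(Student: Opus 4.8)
This remark has two components: that the measure statement of Lemma~\ref{lemma-limit-sets}(c) fails for $\Lambda$ in place of $\Lambda_{NT}$, and the promise to exhibit a \emph{planar} witness in Section~\ref{ssec:group-examples}. The failure itself is recorded by citing Beardon; what must be supplied is a hyperbolic plane domain $\Omega\subset\Chat$ whose deck group $\Gamma$ satisfies $0<\mathrm{Leb}(\Lambda)<\mathrm{Leb}(\dD)$. The plan is to build $\Omega$ directly as a subdomain of $\Chat$. This is the decisive structural point: the obstruction to using Beardon's group is that an abstractly prescribed Fuchsian group need not uniformise a surface of \emph{planar} character, while any $\Omega\subset\Chat$ is planar by construction and $\Gamma$ is automatically its deck group. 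Planarity therefore costs nothing, and the entire problem collapses to controlling $\mathrm{Leb}(\Lambda)$.

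The upper bound is cheap. By Corollary~\ref{cor:E}, as soon as $\Omega$ contains a true crosscut, $\Lambda$ is a Cantor subset of $\dD$, hence nowhere dense and a proper subset, so $\mathrm{Leb}(\Lambda)<\mathrm{Leb}(\dD)$ for free. I would thus arrange a true crosscut from the outset; note that a \emph{fat} Cantor set is still nowhere dense, so there is no tension between "$\Lambda$ is Cantor" and "$\mathrm{Leb}(\Lambda)>0$", and all the weight of the argument falls on the lower bound.

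For $\mathrm{Leb}(\Lambda)>0$ I would first read off two constraints on $\Omega$. It must be infinitely connected, since a finitely generated Fuchsian group of the second kind has limit set of measure zero -- this is why the phenomenon is invisible in the classical finite-connectivity theory and needs the machinery of Section~\ref{sect-main-result}. Moreover, as $\Lambda$ is nowhere dense, Lemma~\ref{lemma-limit-sets}(c) forces $\mathrm{Leb}(\Lambda_{NT})=0$, so all the mass of $\Lambda$ must live in the purely tangential part $\Lambda\setminus\Lambda_{NT}$; by \ref{teo:A}(b) this requires a non-isolated boundary component. The architecture of $\Omega$ is thereby dictated: infinitely many holes that \emph{accumulate}, creating a non-isolated boundary component to carry the tangential measure. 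I would then realise $\Omega$ through an Ohtsuka-type exhaustion, inserting holes generation by generation. Via the correspondence of Main Theorem~\ref{thm-main-result-complete} between the complementary arcs of $\dD\setminus\Lambda$ and the images of crosscuts and boundary components of $\Omega$, each hole contributes a $\Gamma$-orbit of open gap arcs to $\dD\setminus\Lambda$, and $\mathrm{Leb}(\Lambda)=\mathrm{Leb}(\dD)-\sum(\text{gap lengths})$. The target is to realise $\Lambda$ as a prescribed fat Cantor set: remove gaps in every location and scale (forcing nowhere-density, hence the Cantor structure) while keeping the total removed length strictly below $\mathrm{Leb}(\dD)$ (forcing positive measure). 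The lengths of individual gaps I would control by Koebe/distortion or harmonic-measure estimates for $\pi$ restricted to the simply connected complementary pieces (Lemma~\ref{lem:restriction}), tying each gap to the conformal size and separation of its hole, and then choose the holes small and well separated enough that the per-generation survival ratios have a convergent product bounded away from zero.

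The main obstacle is exactly this quantitative step. The gaps are not a single self-similar Cantor cascade but are organised into infinitely many $\Gamma$-orbits, each infinite, whose lengths are governed by the group action rather than directly by the Euclidean data of the holes. Proving $\sum(\text{gap lengths})<\mathrm{Leb}(\dD)$ -- equivalently, that the infinite product of survival ratios stays positive -- while simultaneously keeping $\Lambda$ nowhere dense is the delicate balance on which the whole construction turns, and where the distortion estimates above must be made fully quantitative.
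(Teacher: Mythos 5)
Your architecture agrees with the paper's example in Section \ref{ssec:group-examples} up to the point where it matters: planarity by direct construction, a true crosscut to force $\Lambda$ to be a Cantor set via \ref{cor:E} (hence $\mathrm{Leb}(\Lambda_{NT})=0$ and the proper-subset half for free), and infinitely many holes accumulating so as to create a non-isolated boundary component. But the proposal does not prove $\mathrm{Leb}(\Lambda)>0$; it defers the entire lower bound to a programme of Koebe/distortion estimates controlling gap lengths and a convergent product of ``survival ratios'', and you concede yourself that this is the step on which everything turns. That concession is the gap. The lengths of the complementary arcs of $\Lambda$ are conformal invariants of the pair $(\Omega,\pi)$, organised into infinitely many infinite $\Gamma$-orbits; they are not free parameters one can prescribe, and shrinking or separating one hole perturbs every orbit simultaneously. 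Nothing in the proposal shows that the total gap length can actually be kept below $2\pi$ while the holes accumulate densely enough on a boundary component, so as written this is a plan with its central estimate missing, not a proof.

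The paper closes exactly this step with a soft argument that needs no quantitative control whatsoever. It takes $\Omega=\mathcal{A}\setminus\bigcup_n\{z_n\}$, where $\mathcal{A}=\{1/R<|z|<R\}$ and the $z_n$ are \emph{punctures} accumulating at every point of the outer circle $C_2$ --- note that punctures contribute parabolic fixed points, i.e.\ single points of $\Lambda$, not gaps, so the only complementary arcs of $\Lambda$ are the $\alpha$-images of the inner circle $C_1$. Since every point of $\dD\setminus\Lambda$ is regular, hence of escaping type with radial limit on $C_1$, L\"owner's lemma gives $\lambda(\dD\setminus\Lambda)=\omega(C_1;1,\Omega)$ (harmonic measure exists because $\partial\Omega$ has positive capacity, by Theorem \ref{thm-hopf-tsuji-sullivan}), and the subordination principle --- monotonicity of harmonic measure under the inclusion $\Omega\subset\mathcal{A}$ --- yields
\begin{equation*}
\omega(C_1;1,\Omega)\;\leq\;\omega(C_1;1,\mathcal{A})\;=\;\tfrac12,
\end{equation*}
so $\Lambda$ carries at least half the measure of the circle for \emph{any} such configuration of punctures. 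The moral difference between your route and the paper's: you tried to manufacture a fat Cantor set by budgeting gap lengths hole by hole, which is where the group action makes the estimates genuinely hard; the paper observes that the total gap measure \emph{is} the harmonic measure of the inner boundary component, which is trivially bounded away from full by comparison with the unperturbed annulus. If you want to salvage your version, replacing the distortion programme with this harmonic-measure identity is the repair.
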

\subsection*{The hyperbolic metric in a plane domain}
Let $ \Omega\subset\widehat{\mathbb{C}} $ be a hyperbolic domain, and let $ \pi\colon\mathbb{D}\to\Omega $ be a universal covering. Then, the \textit{hyperbolic density} in $ \Omega $ is (locally) defined as \[ \rho_\Omega(z)\coloneqq \rho_\mathbb{D}(\Pi(z))\cdot \left| \Pi'(z)\right| ,\]where $ \Pi $ is a branch of $ \pi^{-1} $. It can be proved that $ \rho_\Omega $ does not depend on $ \pi $ nor on $ \Pi $, and hence is well-defined  \cite[Thm. 10.3]{BeardonMinda}. The hyperbolic distance between $ z,w\in \Omega $, denoted by $ \textrm{dist}_\Omega(z,w) $,  is obtained by integrating $ \rho_\Omega $ along curves joining $ z $ and $ w $, and taking the infimum. 

Given $ z_0\in\Omega $ and $ R>0 $, the {\em hyperbolic disk} $ D_\Omega (z_0,R) $ of center $ z_0 $ and radius $ R $ is 
\[D_\Omega (z_0,R)\coloneqq \left\lbrace z\in\Omega\colon \ \textrm{dist}_\Omega(z_0,z)<R\right\rbrace .\]
A {\em geodesic}  in $\Omega$ is defined as the image under $\pi$ of a geodesic in $\DD$. Again, we shall use the word geodesic to refer indistinctly to geodesic arcs, geodesic rays, or infinite geodesics. Note that a geodesic arc between two points $a$ and $b$ in $\Omega$ is defined as the image under $\pi$ of a geodesic in $\DD$ joining a preimage of $a$ to a preimage of $b$. Because such preimages can be combined in infinitely many different ways, there are infinitely many geodesics joining $a$ to $b$, pairwise non-homotopic  (see  Lemma \ref{lem:superSP}(d)).

The following are standard properties of the hyperbolic metric in $ \Omega $, whose proofs follow from the definition of $ \rho_\Omega $.
\begin{lemma}{\bf (Properties of the hyperbolic metric in $ \Omega $)}\label{lem:superSP}
Let $\Omega\subset\Chat$ be a hyperbolic domain, and let $\pi\colon\DD\to\Omega$ be a universal covering. Consider the hyperbolic metrics $ \rho_\mathbb{D} $ and $ \rho_\Omega $ in $ \mathbb{D} $ and $ \Omega $, respectively. The following hold. 
	\begin{enumerate}[label={\em (\alph*)}]
		\item  $ \pi$ is a local isometry between $ (\mathbb{D}, \rho_\mathbb{D}) $ and $ (\Omega, \rho_\Omega) $.
		\item Let $z, w\in\Omega$. Then, for any $ \tilde z \in\pi^{-1}(z)$, we have
		\[ \textrm{\em dist}_\Omega(z,w)=\min \left\lbrace \textrm{\em dist}_\DD(z, \gamma(w))\colon \ \gamma \in\Gamma \right\rbrace .\]
		In particular, there exists $\tilde w\in\pi^{-1}(w)$ such that
		\[ \textrm{\em dist}_\Omega(z, w) = \textrm{\em dist}_\DD(\tilde z, \tilde w). \]
		\item $(\Omega, \rho_\Omega)$ is a complete metric space.
		\item Any two points $z$ and $w$ in $\Omega$ can be joined by a distance-minimizing geodesic. Furthermore, any two geodesics joining $z$ and $w$ are not homotopic to each other.
		\item Let $\left\lbrace z_n\right\rbrace _n\subset\Omega$ be a sequence such that $z_n\to\partial\Omega$ as $n\to+\infty$. Then, for any $r > 0$,
		\[ \mathrm{diam}_{\widehat{\mathbb{C}}}(D_\Omega(z_n, r))\to 0 \text{ as $n\to+\infty$,} \]
		where $\mathrm{diam}_{\widehat{\mathbb{C}}}$ denotes the spherical diameter.
		\end{enumerate}
\end{lemma}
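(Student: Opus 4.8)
The plan is to prove each of the five properties by leveraging the definition of $\rho_\Omega$ via a branch of $\pi^{-1}$ together with the properties of the hyperbolic metric in $\mathbb{D}$ collected in Proposition \ref{prop:hypdisc}. Parts (a)–(d) should follow fairly mechanically from the local isometry structure, while part (e) requires a genuinely different, more analytic argument.

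For part (a), I would argue directly from the definition $\rho_\Omega(z) = \rho_\DD(\Pi(z))\,|\Pi'(z)|$: since any branch $\Pi$ of $\pi^{-1}$ is locally biholomorphic, the pullback of $\rho_\Omega$ under $\pi$ is by construction equal to $\rho_\DD$, which is exactly the infinitesimal statement that $\pi$ is a local isometry. For part (b), the key point is that the preimages of $w$ under $\pi$ are precisely $\{\gamma(\tilde w_0) : \gamma\in\Gamma\}$ for any fixed $\tilde w_0\in\pi^{-1}(w)$. Every path in $\Omega$ from $z$ to $w$ lifts (using the path-lifting property) to a path in $\mathbb{D}$ from a fixed $\tilde z$ to \emph{some} preimage of $w$, and conversely every such lift projects down; because $\pi$ is a local isometry, lengths are preserved, so minimizing over paths in $\Omega$ is the same as minimizing $\mathrm{dist}_\DD(\tilde z, \gamma(w))$ over $\gamma\in\Gamma$. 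The minimum is attained because $\Gamma$ acts properly discontinuously, so only finitely many translates of $w$ meet any given hyperbolic ball around $\tilde z$. Part (c), completeness, then follows since $\pi$ is a local isometry and $(\DD,\rho_\DD)$ is complete by Proposition \ref{prop:hypdisc}(b): a Cauchy sequence in $\Omega$ lifts (locally) to a Cauchy sequence in $\DD$, which converges, and projecting the limit gives convergence in $\Omega$. Part (d) combines (b) and (c): existence of a minimizing geodesic comes from projecting the unique $\DD$-geodesic realizing the minimum in (b); the non-homotopy statement follows because two distinct geodesics in $\Omega$ joining $z$ and $w$ lift to $\DD$-geodesics ending at distinct preimages $\gamma_1(\tilde w), \gamma_2(\tilde w)$, and a homotopy between the projected curves would lift to a homotopy forcing $\gamma_1 = \gamma_2$.

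The main obstacle, and the part deserving the most care, is (e): the spherical diameters of fixed-radius hyperbolic disks shrink to zero as their centers approach $\partial\Omega$. The cleanest route is a contradiction/compactness argument. Suppose not; then there is a radius $r>0$, a constant $\delta>0$, and a sequence $z_n\to\partial\Omega$ with $\mathrm{diam}_{\Chat}(D_\Omega(z_n,r))\ge\delta$. For each $n$ pick $w_n\in D_\Omega(z_n,r)$ with spherical distance at least $\delta/2$ from $z_n$. Both $z_n$ and $w_n$ accumulate on the compact set $\Chat$; I would pass to subsequences so that $z_n\to z_\infty$ and $w_n\to w_\infty$ with $z_\infty, w_\infty\in\overline\Omega$ and spherical distance $\ge\delta/2$ apart. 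Since $z_n\to\partial\Omega$, the limit $z_\infty$ lies on $\partial\Omega$. The tension to exploit is that $\mathrm{dist}_\Omega(z_n,w_n)\le r$ stays bounded, yet the hyperbolic density $\rho_\Omega$ blows up near the boundary: a standard comparison (the hyperbolic metric dominates a constant multiple of $\frac{1}{d(z,\partial\Omega)}$ near the boundary, which for a domain omitting three points follows from the classical lower bound on $\rho_\Omega$) shows that any curve joining $z_n$ to a point at definite spherical distance must have hyperbolic length tending to infinity, contradicting the uniform bound $r$. I expect the delicate step here is handling the spherical (rather than Euclidean) metric on $\Chat$ uniformly, including the case where the relevant points approach $\infty$, which I would manage either by applying a rotation of $\Chat$ to move $z_\infty$ away from $\infty$ or by quoting the Euclidean-spherical comparability of metrics on compact subsets.
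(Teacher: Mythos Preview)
The paper does not give a detailed proof of this lemma, merely remarking that the properties ``follow from the definition of $\rho_\Omega$''. Your arguments for (a)--(d) are correct and standard; the only minor looseness is in (c), where ``a Cauchy sequence in $\Omega$ lifts (locally) to a Cauchy sequence in $\DD$'' is vague---a cleaner phrasing is that a Cauchy sequence is contained in some closed ball $\overline{D_\Omega(z_0,R)} = \pi\bigl(\overline{D_\DD(\tilde z_0,R)}\bigr)$, which is compact.

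For (e), however, your key estimate is wrong as stated. The lower bound $\rho_\Omega(z) \gtrsim 1/d(z,\partial\Omega)$ holds for simply connected domains (via the Koebe quarter theorem), but \emph{fails} for general hyperbolic domains near isolated boundary points: in $\DD^* = \DD\setminus\{0\}$ one has $\rho_{\DD^*}(z) = \tfrac{1}{|z|\log(1/|z|)}$, which is $o(1/|z|)$ as $z\to 0$. So the step ``the hyperbolic metric dominates a constant multiple of $\frac{1}{d(z,\partial\Omega)}$'' is simply false when $\partial\Omega$ has punctures, and since the lemma is stated for arbitrary hyperbolic $\Omega\subset\Chat$, this is exactly the case you must handle. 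Your contradiction strategy can be repaired using the correct logarithmically corrected lower bound, but a cleaner route avoids density estimates entirely: write $D_\Omega(z_n,r) = \pi_n\bigl(D_\DD(0,r)\bigr)$ where $\pi_n := \pi\circ\gamma_n$ and $\gamma_n\in\mathrm{Aut}(\DD)$ sends $0$ to a lift of $z_n$. The family $\{\pi_n\}$ omits $\Chat\setminus\Omega$ and is normal by Montel; any locally uniform limit $g$ satisfies $g(0)\in\partial\Omega$, forcing $g$ to be constant. Uniform convergence on the compact set $\overline{D_\DD(0,r)}$ then gives $\mathrm{diam}_{\Chat}\bigl(D_\Omega(z_n,r)\bigr)\to 0$ along every subsequence, hence along the full sequence.
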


\subsection*{The limit set $\Lambda(\Gamma)$ and the cluster sets $ Cl(\pi, \cdot) $} One of the difficulties when working with multiply connected domains is that the universal covering becomes very badly behaved around points of the limit set. More precisely, let us recall the following result.

\begin{lemma}{\bf (Properties of limit sets III)}\label{lemma-limit-sets3}
	Let $ \Omega $ be a multiply connected domain, let $ \pi\colon \mathbb{D}\to\Omega $ be a universal covering, and let $ e^{i\theta} \in\partial\mathbb{D}$. Then, the following holds.
	\begin{enumerate}[label={\em (\alph*)}]
		\item If $ e^{i\theta}\in\Lambda $, then $ Cl(\pi, e^{i\theta}) =\overline{\Omega}$.
		\item  If $ e^{i\theta}\notin\Lambda $, then there exists a null-chain $ \left\lbrace C_n\right\rbrace _n $, with crosscut neighbourhoods $  \left\lbrace N_n\right\rbrace _n$, such that $ \pi|_{N_n} $ is univalent and $ \pi(N_n) $ is simply connected, for all $ n\geq 0 $.
	\end{enumerate}
\end{lemma}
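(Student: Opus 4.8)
For part (a), the plan is to exploit the invariance $\pi\circ\gamma=\pi$ together with the fact that deck transformations are hyperbolic isometries. Since $e^{i\theta}\in\Lambda$, I would fix a sequence $(\gamma_n)_n\subset\Gamma$ with $\gamma_n(0)\to e^{i\theta}$, and then show every $w\in\Omega$ lies in the cluster set. Given $w\in\Omega$, choose a preimage $\tilde w\in\pi^{-1}(w)$ and set $z_n\coloneqq\gamma_n(\tilde w)$. Because each $\gamma_n$ is an isometry of $(\mathbb{D},\rho_\mathbb{D})$ (Proposition \ref{prop:hypdisc}(a)), the point $z_n$ lies in the hyperbolic disk $D_\mathbb{D}(\gamma_n(0),R)$ with $R=\dist_\mathbb{D}(0,\tilde w)$ independent of $n$; since hyperbolic disks are Euclidean disks (Proposition \ref{prop:hypdisc}(b)) whose Euclidean diameters tend to $0$ as their centres approach $\partial\mathbb{D}$ (a direct consequence of the explicit form of $\rho_\mathbb{D}$), we get $z_n\to e^{i\theta}$. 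As $\pi(z_n)=\pi(\gamma_n(\tilde w))=\pi(\tilde w)=w$ for every $n$, this gives $w\in Cl(\pi,e^{i\theta})$, hence $\Omega\subset Cl(\pi,e^{i\theta})$. Since the unrestricted cluster set is closed (a decreasing intersection of compacta in $\widehat{\mathbb{C}}$) and always contained in $\overline{\pi(\mathbb{D})}=\overline{\Omega}$, passing to closures yields $Cl(\pi,e^{i\theta})=\overline{\Omega}$.

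For part (b), the key observation is that univalence of $\pi$ on an open set $N$ is \emph{equivalent} to $\gamma(N)\cap N=\emptyset$ for all $\gamma\in\Gamma\setminus\{\mathrm{id}_\mathbb{D}\}$, because $\pi(z_1)=\pi(z_2)$ holds precisely when $z_2=\gamma(z_1)$ for some $\gamma\in\Gamma$. The plan is therefore to produce a single crosscut neighbourhood $N$ of $e^{i\theta}$ satisfying this disjointness condition; every crosscut neighbourhood contained in $N$ then inherits it, since $\gamma(N_n)\cap N_n\subset\gamma(N)\cap N$. To find $N$, I would use that $\Lambda$ is closed, so that $e^{i\theta}$ is regular, and that no non-trivial $\gamma\in\Gamma$ fixes $e^{i\theta}$ — indeed all fixed points of hyperbolic and parabolic deck transformations lie in $\Lambda$ (Lemma \ref{lemma-limit-sets}(b)). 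Combining this with the classical fact that a Fuchsian group acts freely and properly discontinuously on $\mathbb{D}\cup(\partial\mathbb{D}\setminus\Lambda)$, one obtains a neighbourhood $W$ of $e^{i\theta}$ in $\overline{\mathbb{D}}$ with $\gamma(W)\cap W=\emptyset$ for all $\gamma\neq\mathrm{id}_\mathbb{D}$; intersecting with a small Euclidean disk yields a crosscut neighbourhood $N=D(e^{i\theta},\delta)\cap\mathbb{D}$ with the required property.

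Once $N$ is in hand, the remaining steps are routine. Univalence of $\pi|_N$ follows from the disjointness as above; since an injective holomorphic map is a biholomorphism onto its open image, and $N$ is a Jordan domain (hence simply connected), $\pi(N)$ is simply connected. To assemble the null-chain I would take concentric circular crosscuts $C_n=\mathbb{D}\cap\partial D(e^{i\theta},\delta_n)$ with $\delta_n\downarrow 0$ and $\delta_1\le\delta$: for $\delta_n$ small these are non-degenerate, pairwise disjoint with disjoint endpoints, their crosscut neighbourhoods $N_n=D(e^{i\theta},\delta_n)\cap\mathbb{D}$ are nested with $\bigcap_n\overline{N_n}=\{e^{i\theta}\}$, and each satisfies $N_n\subset N$, so that $\pi|_{N_n}$ is univalent with $\pi(N_n)$ simply connected for every $n$.

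The main obstacle is the existence of the disjointness neighbourhood $N$ in part (b), which is exactly the assertion that $\Gamma$ acts properly discontinuously not only on $\mathbb{D}$ but also across the regular boundary points $\partial\mathbb{D}\setminus\Lambda$. The excerpt records proper discontinuity only on $\mathbb{D}$ itself, so extending it to the ordinary set is the crux; this is the classical characterisation of the limit set (see Beardon or Katok). A fully self-contained alternative would analyse the Dirichlet fundamental domain centred at $0$, showing that a regular point lies in the interior of a free boundary arc of a single translate of that domain, so that only finitely many translates approach it and the separating neighbourhood can be extracted by hand.
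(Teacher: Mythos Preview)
Your proposal is correct. For part (a), you spell out exactly what the paper means when it says the statement is ``a reformulation of the definition of the limit set in terms of cluster sets (preimages of any point in $\Omega$ accumulate at every point of $\Lambda$)''; your argument via $z_n=\gamma_n(\tilde w)$ and shrinking hyperbolic disks is the standard way to make that remark precise.

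For part (b), the paper gives no argument at all and simply cites \cite[Thm.~3.5]{Oht51}. Your route---proper discontinuity of $\Gamma$ on the ordinary set $\mathbb{D}\cup(\partial\mathbb{D}\setminus\Lambda)$, together with the absence of fixed points at $e^{i\theta}$, to obtain a precisely invariant crosscut neighbourhood---is a clean and correct self-contained proof. You rightly flag that the extension of proper discontinuity from $\mathbb{D}$ to the regular boundary points is the only input not provided by the paper's preliminaries; this is indeed the classical characterisation of the ordinary set of a Fuchsian group (and your suggested Dirichlet-domain alternative would also work). The remaining steps---univalence from disjointness of $\Gamma$-translates, simple connectivity of $\pi(N_n)$ from that of $N_n$, and the explicit circular null-chain---are all fine.
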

\begin{proof}
Statement	(a) is a reformulation of the definition of the limit set in terms of cluster sets (preimages of any point in $ \Omega $ accumulate at every point of $ \Lambda $), and (b) can be found in \cite[Thm. 3.5]{Oht51}.
\end{proof}

\section{Boundary behaviour of the universal covering. Preliminary results}\label{section-prelimnary-universal-covering}

In this section we collect some known results about the  boundary behaviour of the universal covering, starting with the simplest (and the most studied case): when $ \Omega $ is simply connected and $ \pi $ is the Riemann map. Indeed, first, we gather results concerning the boundary behaviour of Riemann maps, including accessibility and prime ends. Second, we show that these results extend easily to doubly connected domains. Finally, we recall the work of Ohtsuka concerning multiply connected domains \cite{Oht54}, which is the basis for our work presented in the following sections.
\subsection{Boundary behaviour of the Riemann map}\label{subsect-Riemann-map}

In the particular case where $ \Omega $ is simply connected, its universal covering is a homeomorphism, known as the {\em Riemann map}. In contrast with the universal covering of a multiply connected domain, the Riemann map has been widely (and successfully) studied. Next, we briefly collect the main results on this topic. For more details and proofs see e.g. \cite[Sect. 17]{Milnor}, \cite[Chap. 2]{Pom92}, as well as the classical references \cite{Noshiro-clustersets,Collingwood-Lohwater}.

Let  $ \Omega\subset\Chat $ be a simply connected domain, and consider $ \varphi\colon\mathbb{D}\to \Omega $ a Riemann map. The first crucial observation (that is no longer true for a multiply connected domain) is that, for $ \left\lbrace z_n \right\rbrace _n\subset\mathbb{D}$, $ z_n\to\partial\mathbb{D} $ if and only if $ \varphi(z_n)\to\partial\Omega $.
Moreover, by the Carathéodory-Torhorst Theorem \cite[Thm. 2.1]{Pom92},  $ \varphi $ extends continuously to $ \overline{\mathbb{D}} $ if and only if $ \partial \Omega $ is locally connected.

Even if $\partial\Omega$ is not locally connected, the following theorem, due to Fatou, Riesz and Riesz, ensures the existence of radial limits almost everywhere.

\begin{thm}{\bf (Existence of radial limits) }\label{thm-FatouRiez}
	Let $ \varphi\colon\mathbb{D}\to \Omega$ be a Riemann map. Then, for Lebesgue almost every point $ e^{i\theta} \in\partial \mathbb{D}$, the radial limit $ \varphi^*(e^{i\theta} ) $ exists. Moreover, fixed $ e^{i\theta}\in \partial\mathbb{D} $ for which $ \varphi^*(e^{i\theta} ) $ exists, then $ \varphi^*(e^{i\theta} )\neq  \varphi^*(e^{i\alpha} )  $, for Lebesgue almost every point $ e^{i\alpha} \in\partial \mathbb{D}$.
\end{thm}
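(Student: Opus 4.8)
The plan is to treat the two assertions separately and, in both, to reduce to classical Hardy-space results by first moving the (possibly unbounded, or $\infty$-containing) image into $\Cx$ with a Möbius transformation. Since $\Omega\subsetneq\Chat$ is simply connected and hyperbolic, its complement $K\coloneqq\Chat\setminus\Omega$ is a non-degenerate continuum; I would pick $a\in K$ and a Möbius map $T\colon\Chat\to\Chat$ with $T(a)=\infty$, so that $\psi\coloneqq T\circ\varphi$ is a univalent holomorphic map $\psi\colon\DD\to T(\Omega)\subset\Cx$. Because $T$ is a homeomorphism of $\Chat$, the radial limit $\varphi^*(e^{i\theta})$ exists as a point of $\Chat$ if and only if $\psi^*(e^{i\theta})$ exists, in which case $\varphi^*(e^{i\theta})=T^{-1}(\psi^*(e^{i\theta}))$; likewise $\varphi^*(e^{i\theta})=\varphi^*(e^{i\alpha})$ holds exactly when $\psi^*(e^{i\theta})=\psi^*(e^{i\alpha})$. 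Thus it suffices to prove both statements for the $\Cx$-valued univalent map $\psi$.

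For the existence statement, I would invoke the classical fact that a univalent holomorphic function on $\DD$ belongs to the Hardy space $H^p$ for every $0<p<1/2$ (the exponent $1/2$ being sharp, as the Koebe function shows). In particular $\psi\in H^p$ for some $p>0$, and Fatou's theorem for $H^p$ guarantees that $\psi$ has non-tangential, and in particular radial, limits at Lebesgue almost every point of $\dD$. Transferring back through the homeomorphism $T^{-1}$ then yields the existence of $\varphi^*(e^{i\theta})$ for almost every $e^{i\theta}$.

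For the distinctness statement, I would fix $e^{i\theta}$ with $\varphi^*(e^{i\theta})$ existing and set $q\coloneqq\psi^*(e^{i\theta})\in\Cx$. Consider $g\coloneqq\psi-q$, which lies in the Nevanlinna class $N$ (since $H^p\subset N$ and $N$ is closed under subtraction of constants) and is not identically zero, as $\psi$ is univalent and hence non-constant. The structure theorem for the Nevanlinna class then gives $\log|g^*|\in L^1(\dD)$, whence $g^*\neq 0$ almost everywhere on $\dD$. Equivalently, the level set $\{e^{i\alpha}\in\dD\colon \psi^*(e^{i\alpha})=q\}$ has zero Lebesgue measure, which is precisely the claim that $\varphi^*(e^{i\alpha})\neq\varphi^*(e^{i\theta})$ for almost every $e^{i\alpha}$.

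The routine part is the Möbius normalization and the bookkeeping between $\varphi$ and $\psi$. The substantive input — and where I expect the real work to sit — is the Hardy-space machinery: the membership of schlicht functions in $H^p$ for $p<1/2$ (needed so that Fatou applies even when $\Omega$, hence $T(\Omega)$, is unbounded, which rules out a naive ``bounded Fatou'' argument), and the log-integrability of the boundary values of a nonzero Nevanlinna-class function — equivalently, the F. and M. Riesz theorem — which is what forces each level set to be null. Both ingredients are classical and can be cited from \cite{Pom92} or \cite{Collingwood-Lohwater}, so no genuinely new estimate is required.
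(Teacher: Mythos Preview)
The paper does not give its own proof of this statement: Theorem~\ref{thm-FatouRiez} is stated in Section~\ref{subsect-Riemann-map} as a classical background result, attributed to Fatou and the brothers Riesz, and used without further argument. Your proposal is therefore not being compared to anything in the paper.

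That said, your outline is correct and is essentially the classical route. The M\"obius normalisation cleanly handles the possibility $\infty\in\Omega$; the fact that univalent functions lie in $H^p$ for $p<1/2$ (Prawitz) gives almost-everywhere radial limits via Fatou's theorem; and the Riesz--Riesz uniqueness principle (equivalently, $\log|g^*|\in L^1$ for nonzero $g\in N$) disposes of the level-set statement. One minor remark: rather than invoking the $H^p$-membership of schlicht functions, many textbook accounts (including \cite[Thm.~1.7 and Thm.~1.3]{Pom92}) argue more directly that a Riemann map onto any simply connected domain is in the Nevanlinna class, which already suffices for both parts and sidesteps the sharper $H^p$ result. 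Either way, the substance is the same and your proof is sound.
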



\subsection*{{Accessing the boundary of a simply connected domain}}
A classical result about Riemann maps is the following, which can be seen as a stronger version of the Lehto-Virtanen Theorem \ref{thm-lehto-virtanen} for Riemann maps.
\begin{thm}{\bf (Lindelöf Theorem, {\normalfont \cite[Thm. I.2.2]{CarlesonGamelin}})}\label{thm-lindelof}
	Let $ \varphi\colon\mathbb{D}\to \Omega$ be a Riemann map, and	let  $ \eta\colon \left[ 0,1\right) \to \Omega $  be a curve which lands at a point $ p\in\partial\Omega$. Then, the curve $ \varphi^{-1}(\eta) $ in $ \mathbb{D} $ lands at some point $e^{i\theta}\in\partial \mathbb{D} $. Moreover, $ \varphi $ has angular limit at $ e^{i\theta} $ equal to $ p $. In particular, curves that land at different points on $ \partial\Omega $ correspond to curves which land at different points on $ \partial\mathbb{D} $.
\end{thm}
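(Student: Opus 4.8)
The plan is to reduce everything to a single claim: that the preimage curve $\gamma := \varphi^{-1}\circ\eta$ lands at a point of $\partial\mathbb{D}$. Once this is known, the rest is immediate. Indeed, $\varphi$ is holomorphic and omits every value of the nondegenerate continuum $\widehat{\mathbb{C}}\setminus\Omega$ (in particular three values), and by hypothesis $\varphi(\gamma)=\eta$ lands at $p$; so, writing $e^{i\theta}$ for the landing point of $\gamma$, the Lehto--Virtanen Theorem \ref{thm-lehto-virtanen} yields that $\varphi$ has angular limit $p$ at $e^{i\theta}$. The final ``in particular'' assertion then follows from the uniqueness of angular limits: if two curves landing at distinct points $p_1\neq p_2$ had preimages landing at the same $e^{i\theta}$, then $\varphi$ would have two different angular limits there, which is impossible.

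First I would check that $\gamma$ accumulates only on $\partial\mathbb{D}$. Since $\varphi$ is a homeomorphism of $\mathbb{D}$ onto $\Omega$, any interior accumulation point $w\in\mathbb{D}$ of $\gamma$ along a sequence $t_n\to 1$ would force $\eta(t_n)=\varphi(\gamma(t_n))\to\varphi(w)\in\Omega$, contradicting $\eta(t_n)\to p\in\partial\Omega$. Hence $|\gamma(t)|\to 1$, and the landing set $L(\gamma)$ is a compact connected subset of $\partial\mathbb{D}$, that is, either a single point or a nondegenerate closed arc.

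The heart of the argument---and the step I expect to be the main obstacle---is ruling out the arc. Suppose $L(\gamma)=A$ is a nondegenerate arc, and fix $e^{i\psi}$ in its interior together with angles $\theta_1<\psi<\theta_2$ with $e^{i\theta_1},e^{i\theta_2}\in\mathrm{int}(A)$. Since both $e^{i\theta_1}$ and $e^{i\theta_2}$ are accumulation points of $\gamma$ and $|\gamma(t)|\to 1$, I would extract, for arbitrarily large $t$, subarcs of $\gamma$ running from a tiny disk around $e^{i\theta_1}$ to a tiny disk around $e^{i\theta_2}$ and contained in $\{|z|>r_k\}$ with $r_k\to 1$; closing these off by short radial segments near $e^{i\theta_1}$ and $e^{i\theta_2}$ produces crosscuts $\widetilde C_k$ whose crosscut neighbourhoods contain $e^{i\psi}$ on their boundary arc and exclude the origin. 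Consequently the radius $R_\psi$ must cross each $\widetilde C_k$ and, since the connecting segments sit away from the angle $\psi$, it must cross the $\gamma$-part, at a point $w_k\in R_\psi$ with $|w_k|\to 1$. As $\varphi(w_k)=\eta(\tau_k)\to p$, this shows $p\in Cl_R(\varphi,e^{i\psi})$ for every $e^{i\psi}$ interior to $A$.

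To close the argument I would invoke the Fatou--Riesz--Riesz Theorem \ref{thm-FatouRiez}: the radial limit $\varphi^*(e^{i\psi})$ exists for almost every $e^{i\psi}$, and where it exists the radial cluster set $Cl_R(\varphi,e^{i\psi})$ is the single point $\{\varphi^*(e^{i\psi})\}$. Combined with the previous paragraph, this forces $\varphi^*(e^{i\psi})=p$ at almost every point of the positive-measure set $\mathrm{int}(A)$. But the second part of Theorem \ref{thm-FatouRiez} asserts that, fixing any point where the radial limit exists, the set of points sharing the same radial limit has measure zero---a contradiction. Hence $L(\gamma)$ is a single point, completing the reduction. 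The delicate point throughout is the topological separation step guaranteeing that $R_\psi$ genuinely crosses the tail of $\gamma$ arbitrarily close to $\partial\mathbb{D}$; everything else is then a clean combination of Lehto--Virtanen and Fatou--Riesz--Riesz.
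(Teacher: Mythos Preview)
The paper does not supply its own proof of this theorem; it is stated as background with a citation to Carleson--Gamelin. Your argument is correct and is the classical one: reduce to showing that $L(\gamma)$ is a single point, rule out a nondegenerate arc by showing that $p$ would lie in the radial cluster set at every interior angle of that arc, and then contradict Theorem~\ref{thm-FatouRiez}. The final appeal to Lehto--Virtanen (Theorem~\ref{thm-lehto-virtanen}) is exactly right, and your observation that $\varphi$ omits a nondegenerate continuum (hence three values) is the correct hypothesis check.

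One cosmetic remark on the separation step you flag as delicate: the subarcs of $\gamma$ you extract need not be simple, so the $\widetilde C_k$ are not literally crosscuts in the sense of Definition~\ref{def:cc-nbhd-stolz}. This does not matter, since the conclusion you need---that $R_\psi$ meets $\gamma\cap\{|z|>r_k\}$---follows from connectedness alone (a connected set in the thin annulus $\{r_k<|z|<1\}$ meeting both small disks around $e^{i\theta_1}$ and $e^{i\theta_2}$ must cross $R_\psi$). So the argument goes through unchanged.

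It is worth noting that when the paper later proves the multiply connected analogue (Proposition~\ref{prop:access}, which is \ref{teo:C}), it follows essentially the same template you use here: show the lift accumulates on $\partial\mathbb{D}$, argue that a nondegenerate landing set would force the radial extension to fail on a set of positive measure (there via \ref{teo:B} in place of Theorem~\ref{thm-FatouRiez}), and conclude with Lehto--Virtanen. Your approach is thus fully in the spirit of the paper's own methods.
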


Therefore, Theorem \ref{thm-lindelof} relates accessible points (Def. \ref{def-accessible-point}) and radial limits.  A stronger relationship is true, as we will see in the Correspondence Theorem \ref{correspondence-theorem}. First, we need the definition of access.
\begin{defi}{\bf (Access)}\label{def:access-sc}
	Given a simply connected domain $ \Omega\subset\widehat{\mathbb{C}} $, let $ z_0\in \Omega$ and $ p\in\partial \Omega $. A homotopy class (with fixed endpoints) of curves $ \eta\colon \left[ 0,1\right] \to\widehat{\mathbb{C}} $ such that $ \eta(\left[ 0,1\right))\subset \Omega $, $ \eta(0)=z_0 $ and $ \eta(1)=p $ is called an \textit{access} from $ \Omega $ to $ p $.
\end{defi}

Note that, since $ \Omega $ is simply connected, the definition of access does not depend on the chosen basepoint. Accesses to the boundary are in bijection with points in $ \partial \mathbb{D} $ for which $ \varphi^* $ exists, as shown in the following well-known theorem \cite[p.35, Ex. 5]{Pom92}. For a complete proof, see \cite{BFJK-Accesses}.

\begin{thm}{\bf (Correspondence Theorem)}\label{correspondence-theorem} Let $ \Omega\subset\widehat{\mathbb{C}} $ be a simply connected domain, let $ \varphi\colon\mathbb{D}\to \Omega $ be a Riemann map, and let $ p\in\partial \Omega $. Then, there is a one-to-one correspondence between accesses from $ \Omega $ to $ p $ and the points $ e^{i\theta}\in \partial \mathbb{D} $ such that $ \varphi^*(e^{i\theta})=p $. The correspondence is given as follows.\begin{enumerate}[label={\em (\alph*)}]
		\item If $ \mathcal{A} $ is an access to $ p\in\partial \Omega $, then there is a point $ e^{i\theta}\in\partial \mathbb{D} $ with $ \varphi^*(e^{i\theta})=p  $. Moreover, different accesses to $p$ correspond to different points in $ \partial\mathbb{D} $.
		\item If the radial limit $ \varphi^* $ at a point $ e^{i\theta}\in\partial\mathbb{D} $ exists and is equal to $ p\in \partial \Omega $, then there exists an access $ \mathcal{A} $ to $ p $. Moreover, for every curve $ \eta\subset \mathbb{D} $ landing at $ e^{i\theta} $, if $ \varphi(\eta) $ lands at some point $ q\in\widehat{\mathbb{C}} $, then $ p=q $ and $ \varphi(\eta)\in \mathcal{A} $.
	\end{enumerate}
\end{thm}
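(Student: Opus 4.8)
The plan is to set up the correspondence $\Phi$ sending an access $\mathcal A$ to the unique landing point on $\partial\DD$ of the lifted curve, and to verify it is well defined, surjective, and injective, together with the refinement in part (b). Throughout I would normalise the basepoint so that $z_0=\varphi(0)$, i.e. $\tilde z_0=0$; by the remark after Definition \ref{def:access-sc} this is harmless. First I would dispatch the easy half-statements. Given an access $\mathcal A$ with representative $\eta$ landing at $p$, Lindelöf's Theorem \ref{thm-lindelof} says that $\tilde\eta:=\varphi^{-1}(\eta)$ lands at some $e^{i\theta}\in\partial\DD$ and that $\varphi$ has angular limit $p$ there; in particular $\varphi^*(e^{i\theta})=p$. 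Conversely, if $\varphi^*(e^{i\theta})=p$ then the image $\varphi(R_\theta)$ of the radial segment lands at $p$ and so defines an access $\mathcal A_\theta$ whose associated point is $e^{i\theta}$, giving surjectivity. It then remains to show that the point $e^{i\theta}$ depends only on the homotopy class $\mathcal A$ (well-definedness) and that distinct accesses yield distinct points (injectivity); the refinement in (b) will follow from Theorem \ref{thm-lehto-virtanen}.

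For well-definedness, let $\eta_0,\eta_1\in\mathcal A$ be joined by a homotopy $H\colon[0,1]^2\to\Omega\cup\{p\}$ fixing the endpoints $0\mapsto z_0$ and $1\mapsto p$, and suppose for contradiction that the lifts land at distinct points $e^{i\theta_0}\neq e^{i\theta_1}$. Since $H$ is uniformly continuous and $H(1,s)=p$, we have $H(t,s)\to p$ uniformly in $s$ as $t\to1$; as $\varphi^{-1}$ is proper, the lift $\tilde H:=\varphi^{-1}\circ H$ is continuous on $[0,1)\times[0,1]$ with $\tilde H(t,s)\to\partial\DD$ uniformly. Let $A$ be the nondegenerate (hence positive-measure) arc of $\partial\DD\setminus\{e^{i\theta_0},e^{i\theta_1}\}$ that the curves $\tilde H(t,\cdot)$ separate from $0$ for $t$ close to $1$. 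A standard separation argument shows that, for every $e^{i\beta}$ in the interior of $A$, the radial segment $R_\beta$ must meet $\tilde H(t,\cdot)$ at some point $z_t^\beta$ with $z_t^\beta\to e^{i\beta}$; since $\varphi(z_t^\beta)\in H(t,\cdot)\to p$, we obtain $\varphi^*(e^{i\beta})=p$ wherever the radial limit exists. By Theorem \ref{thm-FatouRiez} radial limits exist almost everywhere, so $\varphi^*\equiv p$ on a positive-measure subset of $A$; but the same theorem asserts that, for fixed $p$, the set $\{e^{i\alpha}:\varphi^*(e^{i\alpha})=p\}$ is null, a contradiction. Hence $e^{i\theta_0}=e^{i\theta_1}$.

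For injectivity I would instead show that \emph{any} access whose associated point is $e^{i\theta}$ coincides with the radial access $\mathcal A_\theta$. Let $\eta_0$ be such an access, with lift $\tilde\eta_0$ landing at $e^{i\theta}$, and recall that $R_\theta$ also lands there. Consider the straight-line homotopy $\tilde H(t,s)=(1-s)\tilde\eta_0(t)+sR_\theta(t)$, which stays in $\DD$ by convexity and whose level curves all land at $e^{i\theta}$. Both boundary curves of the region swept by $\tilde H$ have $\varphi$-image landing at $p$, so by a Lindelöf-type sector argument (the same circle of ideas as Theorem \ref{thm-lehto-virtanen}, applied to $\varphi$, which omits three values) $\varphi\to p$ uniformly as $z\to e^{i\theta}$ within this region. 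Consequently $\varphi\circ\tilde H$ extends continuously by $p$ on $\{t=1\}$ and provides a homotopy of accesses between $\eta_0$ and $\varphi(R_\theta)$, so $\eta_0\in\mathcal A_\theta$. The same applies to any second access with point $e^{i\theta}$, so the two agree. Finally, the refinement in (b) is immediate: if $\eta\subset\DD$ lands at $e^{i\theta}$ and $\varphi(\eta)$ lands at $q$, Theorem \ref{thm-lehto-virtanen} forces the angular limit of $\varphi$ at $e^{i\theta}$ to be $q$, whence $q=\varphi^*(e^{i\theta})=p$; and then $\varphi(\eta)$, a curve landing at $p$ whose lift lands at $e^{i\theta}$, lies in $\mathcal A_\theta=\mathcal A$ by well-definedness.

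I expect the main obstacle to be the control of $\varphi$ near the boundary point $e^{i\theta}$ along tangential approaches. In the well-definedness step this is exactly what forces the use of the measure-theoretic distinctness of radial limits (Theorem \ref{thm-FatouRiez}) rather than a purely topological argument, since the image $\varphi(\tilde H(t,\cdot))$ of a shrinking crosscut need not shrink when the crosscut abuts a nondegenerate boundary arc. In the injectivity step the analogous difficulty is that $\tilde\eta_0$ may approach $e^{i\theta}$ tangentially, so that the homotopy region is contained in no Stolz angle; the crux is therefore the Lindelöf-type statement that a function omitting three values which tends to $p$ along both edges of a (possibly tangential) cusp at $e^{i\theta}$ must tend to $p$ throughout the cusp.
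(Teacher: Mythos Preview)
The paper does not give its own proof of this statement: it is quoted as a known preliminary result, with a pointer to \cite[p.~35, Ex.~5]{Pom92} and the remark ``For a complete proof, see \cite{BFJK-Accesses}.'' So there is no in-paper argument to compare against; what one can do is check your outline against the standard proof.

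Your plan is the standard one and is essentially correct. Lindel\"of (Theorem~\ref{thm-lindelof}) gives the map from accesses to boundary points; the radial image gives the inverse; and you correctly use the Fatou--Riesz pair (Theorem~\ref{thm-FatouRiez}) to force well-definedness via the ``positive-measure set where $\varphi^*\equiv p$'' contradiction. The refinement in (b) via Lehto--Virtanen is also right.

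Two places deserve tightening. In the well-definedness step, the assertion that the transverse arcs $\widetilde H(t,\cdot)$ separate a fixed subarc $A$ from $0$ is correct, but you should say why the same arc $A$ works for all large $t$: the arcs $\widetilde H(t,\cdot)$ vary continuously in $t$, their endpoints tend to $e^{i\theta_0}$ and $e^{i\theta_1}$, and they stay in an annulus $\{1-\varepsilon<|z|<1\}$; hence their homotopy class in the punctured disk (equivalently, the arc they go around) is eventually constant. In the injectivity step, your ``Lindel\"of-type sector argument'' is exactly the two-curve Lindel\"of principle for univalent maps, recorded as \cite[Corollary~2.17]{Pom92}: if $\varphi(z)\to p$ as $z\to e^{i\theta}$ along two arcs in $\DD$, then $\varphi(z)\to p$ in the domain between them. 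You should cite it rather than leave it as a named analogy, since the intermediate curves of your straight-line homotopy may approach $e^{i\theta}$ tangentially and neither Theorem~\ref{thm-lindelof} nor Theorem~\ref{thm-lehto-virtanen} alone covers that. Note also that $\widetilde\eta_0$ and $R_\theta$ may intersect infinitely often, so ``the region swept by $\widetilde H$'' is not literally the Jordan region between them; the clean fix is to apply \cite[Corollary~2.17]{Pom92} locally near $e^{i\theta}$ to conclude $\varphi\circ\widetilde H(t,s)\to p$ uniformly in $s$ as $t\to1$, which is all you need for the homotopy to extend.
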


\subsection*{Prime ends for a simply connected domain}
Prime ends give a more geometrical approach to the same concepts, and are defined in terms of equivalence classes of null-chains (which where introduced in Def. \ref{null-chain}). Indeed, the concepts of crosscuts, crosscut neighbourhoods and null-chains extend trivially to arbitrary simply connected domains as follows. 

Consider a simply connected domain $ \Omega $, and a Riemann map $ \varphi\colon\mathbb{D}\to\Omega $. Let 
 $\varphi(0)= z_0\in \Omega $. We say that $ C $ is a \textit{crosscut} in $ \Omega $ if $ C $ is an open Jordan arc in $ \Omega $ such that $ \overline{C}=C\cup \left\lbrace a,b\right\rbrace  $, with $ a,b\in \partial \Omega $. If $ C $ is a crosscut of $ \Omega $ and  $ z_0\notin C $, then $ \Omega\smallsetminus C $ has exactly one component which does
 not contain $ z_0 $; let us denote this component by $ \Omega_C $. We say that $ \Omega_C $ is the \textit{crosscut neighbourhood} in $ \Omega $ associated to $ C$.  A sequence of crosscuts $ \left\lbrace C_n\right\rbrace _n \subset \Omega$ is a {\em null-chain} in $ \Omega $, if the crosscuts $ \left\lbrace C_n\right\rbrace _n $ have disjoint closures, the corresponding crosscut neighbourhoods $ \left\lbrace \Omega_{C_n}\right\rbrace _n $  are nested, and  the spherical diameter of $ C_n $ tends to zero as $ n\to\infty $. 

As it is stated next, null-chains can be chosen so that they are compatible with the action of the Riemann map $ \varphi $, allowing us to define prime ends and impressions (Thm. \ref{thm-prime-ends-sc}).

\begin{defi}{\bf (Admissible null-chain)}
		Let $ \Omega $ be a simply connected domain, and let $ \varphi\colon\mathbb{D}\to\Omega $ be a Riemann map. A null-chain $ \left\lbrace C_n\right\rbrace _n \subset \mathbb{D}$ is {\em admissible} for $ \varphi $ if $ \left\lbrace \varphi(C_n)\right\rbrace _n $ is a null-chain in $ \Omega $.
\end{defi}

\begin{thm}{\bf (Prime ends and impressions, {\normalfont \cite[Thm. 2.15]{Pom92}})}\label{thm-prime-ends-sc}
	Let $ \Omega $ be a simply connected domain, and let $ \varphi\colon\mathbb{D}\to\Omega $ be a Riemann map. For all $ e^{i\theta}\in\partial\mathbb{D} $, there exists a null-chain $ \left\lbrace C_n\right\rbrace _n $ at $ e^{i\theta} $, which is admissible for $ \varphi $.
	The {\em prime end} $ P(e^{i\theta}) $  is the equivalence class of admissible null-chains for $ \varphi $ at $ e^{i\theta} $.
	
\noindent 	 Then, the {\em impression} of the prime end $ P(e^{i\theta}) $ under $ \varphi $ is \[I(\varphi, P( e^{i\theta}))=\bigcap_{n}\overline{\varphi (N_n)}\subset \partial \Omega,\] where $ \left\lbrace C_n\right\rbrace _n $ is an admissible null-chain for $ \varphi $, with crosscut neighbourhoods $ \left\lbrace N_n\right\rbrace _n $. The impression $ I(\varphi, P( e^{i\theta})) $ does not depend on the chosen null-chain $ \left\lbrace C_n\right\rbrace _n $, provided it is admissible.
\end{thm}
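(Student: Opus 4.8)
The plan is to prove the three assertions of the statement in turn: \emph{(i)} for every $e^{i\theta}\in\partial\DD$ there exists a null-chain at $e^{i\theta}$ that is admissible for $\varphi$; \emph{(ii)} the impression $\bigcap_n\overline{\varphi(N_n)}$ is contained in $\partial\Omega$; and \emph{(iii)} it does not depend on the admissible null-chain chosen. The heart of the matter is the existence in \emph{(i)}; assertions \emph{(ii)} and \emph{(iii)} will then follow softly from the facts that $\varphi$ is a homeomorphism of $\DD$ onto $\Omega$ and that, for sequences in $\DD$, one has $z_n\to\partial\DD$ if and only if $\varphi(z_n)\to\partial\Omega$.

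For \emph{(i)}, I would seek the crosscuts $C_n$ among the circular arcs $C_r\coloneqq\{z\in\DD:|z-e^{i\theta}|=r\}$, which are non-degenerate crosscuts of $\DD$ for small $r$ and whose crosscut neighbourhoods shrink to $\{e^{i\theta}\}$ as $r\to 0$. Writing $\ell(r)$ for the \emph{spherical} length of $\varphi(C_r)$, the goal is to produce a sequence $r_n\to 0$ with $\ell(r_n)\to 0$. Parametrising $C_r$ by the angle $\phi$ over an interval $I_r\subset[0,2\pi)$ (so $|I_r|\le 2\pi$) and applying the Cauchy--Schwarz inequality yields
\[
\ell(r)^2 \;\le\; |I_r|\int_{I_r}\rho_{\widehat{\mathbb C}}(\varphi)^2\,|\varphi'|^2\,r^2\,d\phi \;\le\; 2\pi\, r\int_{I_r}\rho_{\widehat{\mathbb C}}(\varphi)^2\,|\varphi'|^2\,r\,d\phi,
\]
where $\rho_{\widehat{\mathbb C}}$ denotes the spherical density. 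Dividing by $r$ and integrating over $r\in(0,\delta)$, the right-hand side becomes $2\pi$ times the spherical area of the image under $\varphi$ of a neighbourhood of $e^{i\theta}$; since $\varphi$ is injective this is bounded by the total spherical area of $\widehat{\mathbb C}$, hence finite. Therefore $\int_0^\delta \ell(r)^2\,\frac{dr}{r}<\infty$, and because $\int_0^\delta\frac{dr}{r}=\infty$ the integrand cannot remain bounded away from $0$; this forces a sequence $r_n\to 0$ with $\ell(r_n)\to 0$. For each such $r_n$ the curve $\varphi(C_{r_n})$ has finite spherical length and thus lands at two endpoints, which lie on $\partial\Omega$ by the observation above (the endpoints of $C_{r_n}$ lie on $\partial\DD$). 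So $\varphi(C_{r_n})$ is a crosscut of $\Omega$ of spherical diameter at most $\ell(r_n)\to 0$; since $\varphi$ is a homeomorphism its crosscut neighbourhood is exactly $\varphi(N_{r_n})$, and these are nested because the $N_{r_n}$ are. Discarding finitely many terms and passing to a subsequence to ensure disjoint closures, $\{\varphi(C_{r_n})\}_n$ is a null-chain in $\Omega$, i.e. $\{C_{r_n}\}_n$ is admissible.

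For \emph{(ii)}, let $p\in\bigcap_n\overline{\varphi(N_n)}$. A diagonal argument produces $w_j\in N_{n_j}$ with $n_j\to\infty$ and $\varphi(w_j)\to p$; since $\bigcap_n\overline{N_n}=\{e^{i\theta}\}\subset\partial\DD$ and the $N_n$ are nested, necessarily $w_j\to e^{i\theta}$, whence $\varphi(w_j)\to\partial\Omega$ and $p\in\partial\Omega$. For \emph{(iii)}, two admissible null-chains $\{C_n\}_n$ and $\{C_n'\}_n$ at the same point $e^{i\theta}$ are equivalent as null-chains of $\DD$, since equivalence classes of null-chains correspond bijectively to points of $\partial\DD$. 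The defining interleaving $N_n\subset N_m'$ and $N_n'\subset N_m$ is preserved by the homeomorphism $\varphi$, so monotonicity of the nested intersections gives $\bigcap_n\overline{\varphi(N_n)}=\bigcap_m\overline{\varphi(N_m')}$, proving independence.

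The only genuinely delicate point is the length-area step in \emph{(i)}: one must phrase it with the spherical metric so that the area bound is valid for an arbitrary, possibly unbounded, domain $\Omega\subset\widehat{\mathbb C}$, and one must verify that finiteness of $\int_0^\delta\ell(r)^2\,dr/r$ truly forces $\liminf_{r\to 0}\ell(r)=0$. Everything else reduces to the homeomorphism property of $\varphi$ and the equivalence theory of null-chains recalled in Section~\ref{ssec:top-geom-disc}.
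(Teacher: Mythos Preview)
The paper does not give its own proof of this statement: it is quoted as a classical result with a direct citation to \cite[Thm.~2.15]{Pom92}, so there is no in-paper argument to compare against. Your proof is correct and is precisely the standard one found in the cited reference --- the length--area (Wolff) lemma applied to circular arcs $C_r$ around $e^{i\theta}$, phrased in the spherical metric to cover unbounded $\Omega$, followed by the soft homeomorphism/equivalence arguments for (ii) and (iii). One minor point worth tightening: when you ``pass to a subsequence to ensure disjoint closures'' you should also note explicitly that the \emph{endpoints} of the image crosscuts can be made disjoint (since $\mathrm{diam}\,\varphi(C_{r_n})\to 0$ while the nested $\varphi(N_{r_n})$ do not collapse to a point, consecutive closures can be separated by skipping terms); otherwise the argument is complete.
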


Given a prime end $ P $, we say that $ p\in{\partial }\Omega $ is a \textit{principal point} of $ P(e^{i\theta}) $ under $ \varphi $ if $ P $ can be
represented by an admissible  null-chain $ \left\lbrace C_n\right\rbrace _n $ at $ e^{i\theta} $ satisfying that, for all $ r>0 $, there exists $ n_0 $ such that the crosscuts
$ \varphi(C_n)$ are contained in the disk $ D(p, r) $ for $ n\geq n_0 $.
Let $ \varPi(\varphi,P( e^{i\theta}))$ denote the set of all principal points of $ P(e^{i\theta}) $ under $ \varphi $.

The following theorem gives explicitly the relation between cluster sets and prime ends, and between radial cluster sets and principal points.

\begin{thm}{\bf (Prime ends and cluster sets, \normalfont{\cite[Thm. 2.16]{Pom92}}\bf )}\label{thm-prime-ends}
	Let $ \varphi\colon\mathbb{D}\to \Omega $ be a Riemann map, and let $ e^{i\theta}\in \partial\mathbb{D} $. Then,
	\[ I(\varphi, P( e^{i\theta}))=Cl(\varphi, e^{i\theta}),\]
and  \[\varPi(\varphi,P( e^{i\theta}))=Cl_R(\varphi, e^{i\theta})=Cl_\mathcal{A}(\varphi, e^{i\theta})=\bigcap_{\eta }Cl_\eta(\varphi, e^{i\theta}),\] where $ \eta $ runs through all curves in $ \mathbb{D} $ landing at $ e^{i\theta} $.
\end{thm}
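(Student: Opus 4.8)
The plan is to separate the statement into the impression identity $I=Cl$ and the four-fold identity for principal points, and to prove the latter as a cycle of inclusions. For the impression, I would exploit that the crosscut neighbourhoods $\{N_n\}_n$ of any null-chain at $e^{i\theta}$ form a neighbourhood basis of $e^{i\theta}$ in $\DD$: since the $\overline{N_n}$ are nested compacta with $\bigcap_n\overline{N_n}=\{e^{i\theta}\}$, every open $U\ni e^{i\theta}$ contains some $N_n$ (a finite-intersection argument applied to the compacta $\overline{N_n}\setminus U$), and each $N_n$ contains all points of $\DD$ close enough to $e^{i\theta}$. Given $w\in Cl(\varphi,e^{i\theta})$ realised by $z_k\to e^{i\theta}$ with $\varphi(z_k)\to w$, the $z_k$ eventually lie in each $N_n$, so $w\in\overline{\varphi(N_n)}$ for all $n$, i.e. $w\in I$. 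Conversely, for $w\in I=\bigcap_n\overline{\varphi(N_n)}$ I would pick $\zeta_n\in N_n$ with $\varphi(\zeta_n)$ within spherical distance $1/n$ of $w$; the basis property forces $\zeta_n\to e^{i\theta}$, giving $w\in Cl(\varphi,e^{i\theta})$. This uses only the null-chain structure in $\DD$, together with the well-definedness of $I$ from Theorem \ref{thm-prime-ends-sc}.

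For the principal points I would prove the cycle $\varPi\subset\bigcap_\eta Cl_\eta\subset Cl_R\subset Cl_\mathcal{A}\subset\varPi$, two of whose inclusions are immediate ($R_\theta$ is one of the curves $\eta$, and the radius lands non-tangentially). The inclusion $\varPi\subset\bigcap_\eta Cl_\eta$ I would obtain by a crosscut-trapping argument: if $p$ is principal, realised by an admissible null-chain with $\varphi(C_n)\subset D(p,r_n)$ and $r_n\to 0$, then any curve $\eta$ landing at $e^{i\theta}$ must, after leaving a fixed $N_{n_0}$, cross each $C_n=\partial N_n\cap\DD$; hence $\varphi(\eta)$ meets $\varphi(C_n)\subset D(p,r_n)$ and $p\in Cl_\eta$ for \emph{every} $\eta$. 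The identity $Cl_R=Cl_\mathcal{A}$ then comes for free from hyperbolic geometry, which sidesteps any angular estimate: since $\varphi$ is a local isometry (Lemma \ref{lem:superSP}(a)), a non-tangential sequence $z_k\to e^{i\theta}$ stays within bounded hyperbolic distance $M$ of the radius, so $\varphi(|z_k|e^{i\theta})\in D_\Omega(\varphi(z_k),M)$; as $\varphi(z_k)\to\partial\Omega$, Lemma \ref{lem:superSP}(e) gives $\diam_{\widehat{\mathbb{C}}}D_\Omega(\varphi(z_k),M)\to 0$, so $\varphi(z_k)$ and $\varphi(|z_k|e^{i\theta})$ share the same spherical limit and every angular cluster value is already a radial one.

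The remaining inclusion $Cl_R\subset\varPi$ is the main obstacle, being the only direction that demands a construction rather than a limiting argument. Given $w\in Cl_R$ with $\varphi(\rho_k e^{i\theta})\to w$ and $\rho_k\uparrow 1$, I would build crosscuts $C_k\subset\DD$ meeting the radius at $\rho_k e^{i\theta}$ and of fixed small hyperbolic size; by the isometry property their images lie in balls $D_\Omega(\varphi(\rho_k e^{i\theta}),r)$, whose spherical diameters tend to $0$ by Lemma \ref{lem:superSP}(e) since $\varphi(\rho_k e^{i\theta})\to\partial\Omega$. After passing to a nested subsequence I would verify that the $C_{k_n}$ form a null-chain at $e^{i\theta}$ — their crosscut neighbourhoods pinch down to $e^{i\theta}$ because short hyperbolic crosscuts near $\dD$ are also Euclidean-ly short — and that it is admissible, the image crosscuts having vanishing spherical diameter so that $\{\varphi(C_{k_n})\}_n$ is a null-chain in $\Omega$. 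Then $\varphi(C_{k_n})\subset D(w,r_n)$ with $r_n\to 0$ exhibits $w$ as a principal point. Checking that the constructed crosscuts are genuinely nested with neighbourhoods shrinking to $e^{i\theta}$, while keeping control of their endpoints on $\dD$, is the delicate bookkeeping that I expect to absorb most of the effort.
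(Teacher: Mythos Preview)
The paper does not prove this theorem; it is quoted from Pommerenke with the reference \cite[Thm.~2.16]{Pom92}, so there is no in-paper argument to compare against. Your impression identity $I=Cl$, the crosscut-trapping argument for $\varPi\subset\bigcap_\eta Cl_\eta$, and the hyperbolic-distance argument for $Cl_\mathcal{A}\subset Cl_R$ are all sound (for the last, take $w_k$ to be the nearest point on $R_\theta$ rather than $|z_k|e^{i\theta}$; then $w_k\to e^{i\theta}$ follows because a hyperbolic $M$-neighbourhood of a compact set is compact).

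There is, however, a genuine gap in the step $Cl_R\subset\varPi$. You propose crosscuts $C_k$ ``of fixed small hyperbolic size'' through $\rho_k e^{i\theta}$, so that $\varphi(C_k)\subset D_\Omega(\varphi(\rho_k e^{i\theta}),r)$. But any arc contained in a hyperbolic ball is compactly contained in $\DD$, hence cannot be a crosscut: its endpoints do not lie on $\partial\DD$, and therefore $\varphi(C_k)$ does not land on $\partial\Omega$. Since an admissible null-chain must have $\{\varphi(C_n)\}_n$ a null-chain in $\Omega$ --- in particular, genuine crosscuts with endpoints on $\partial\Omega$ --- your construction does not produce a principal-point witness. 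The shrinking-hyperbolic-ball mechanism from Lemma~\ref{lem:superSP}(e) is simply incompatible with the requirement that the crosscuts reach $\partial\DD$, where hyperbolic distance blows up.

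The classical remedy (and Pommerenke's) is to take the Euclidean circular arcs $C_r=\{|z-e^{i\theta}|=r\}\cap\DD$ and use a length--area estimate to show that $\liminf_{r\to 0}\diam_{\widehat{\mathbb{C}}}\varphi(C_r)=0$; one then extracts radii $r_n\downarrow 0$ with $\rho_{k_n}e^{i\theta}\in C_{r_n}$ and $\diam\varphi(C_{r_n})\to 0$, which gives an admissible null-chain with $\varphi(C_{r_n})$ clustering at $w$. Your hyperbolic approach does not replace this analytic input.
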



As an easy consequence of the previous theorem, we have the following result (compare with \cite[Ex. 2.6.1]{Pom92}).
\begin{cor}{\bf (Riemann maps do not have ambiguous points)}
		Let $ \varphi\colon\mathbb{D}\to \Omega $ be a Riemann map. Then, $ \varphi $ has no ambiguous points.
\end{cor}

Doubly connected domains are somehow special, compared to general multiply connected domains (note that the corresponding limit set is finite, and the fundamental group is generated by a single element).
We sketch next how the previous results on the Riemann map extend straightforward for the universal covering of doubly connected domains.

\subsection{Boundary behaviour of the universal covering of a doubly connected domain}\label{subsect-doubly-connected}

Let $ \Omega $ be a doubly connected hyperbolic domain, so that it is conformally isomorphic (see e.g. \cite[Proposition 3.2.1]{Hub06}) to a non-degenerate round annulus
\[\mathcal{A}_r\coloneqq \left\lbrace z\in\mathbb{C}\colon r<\left| z\right| <1 \right\rbrace, \hspace{0.3cm}r>0,\] or to the punctured disk
\[\mathbb{D}^*\coloneqq \mathbb{D}\smallsetminus\left\lbrace 0\right\rbrace. \]

Assume first that $ \Omega $ is conformally isomorphic to a non-degenerate annulus $ \mathcal{A}_r $, i.e. there exists $ \varphi\colon\mathcal{A}_r \to\Omega$ conformal. Then, the universal covering map $ \pi\colon\mathbb{D}\to\Omega $ can be written explicitly as
\[\pi=\varphi\circ\exp\circ M,\]where $ M $ is a Möbius transformation sending $ \mathbb{D} $ to a suitable vertical strip (see Fig. \ref{fig-covering-annulus}). Moreover, the deck transformation group $ \Gamma $ is generated by a unique hyperbolic automorphism $ \gamma $. By an appropriate choice of $ M $, we can assume that the hyperbolic fixed points of  $ \gamma $ are 1 and $ -1 $.

Then, for the hyperbolic fixed points 1 and $ -1 $ the radial limit does not exist, and the image of any crosscut neighbourhood of these points covers $ \Omega $ infinitely many times. Note that these points are ambiguous for $ \pi $.

On the other hand, if $ \Omega $ is conformally isomorphic to the punctured disk $ \mathbb{D}^* $ via the isomorphism $ \varphi $, the universal covering map $ \pi\colon\mathbb{D}\to\Omega $ can be written explicitly as \[\pi=\varphi\circ\exp\circ M,\]where $ M $ is a Möbius transformation sending $ \mathbb{D} $ to the left half-plane $ \mathbb{H}_- $ (see Fig. \ref{fig-covering-annulus}). The deck transformation group $ \Gamma $ is generated by a parabolic automorphism $ \gamma $, whose fixed point may be assumed to be $ -1 $. Then, $ \varphi^*(-1) $ exists and it is the puncture of $ \Omega $.

\begin{figure}[h]
	\centering
		\includegraphics[width=14cm]{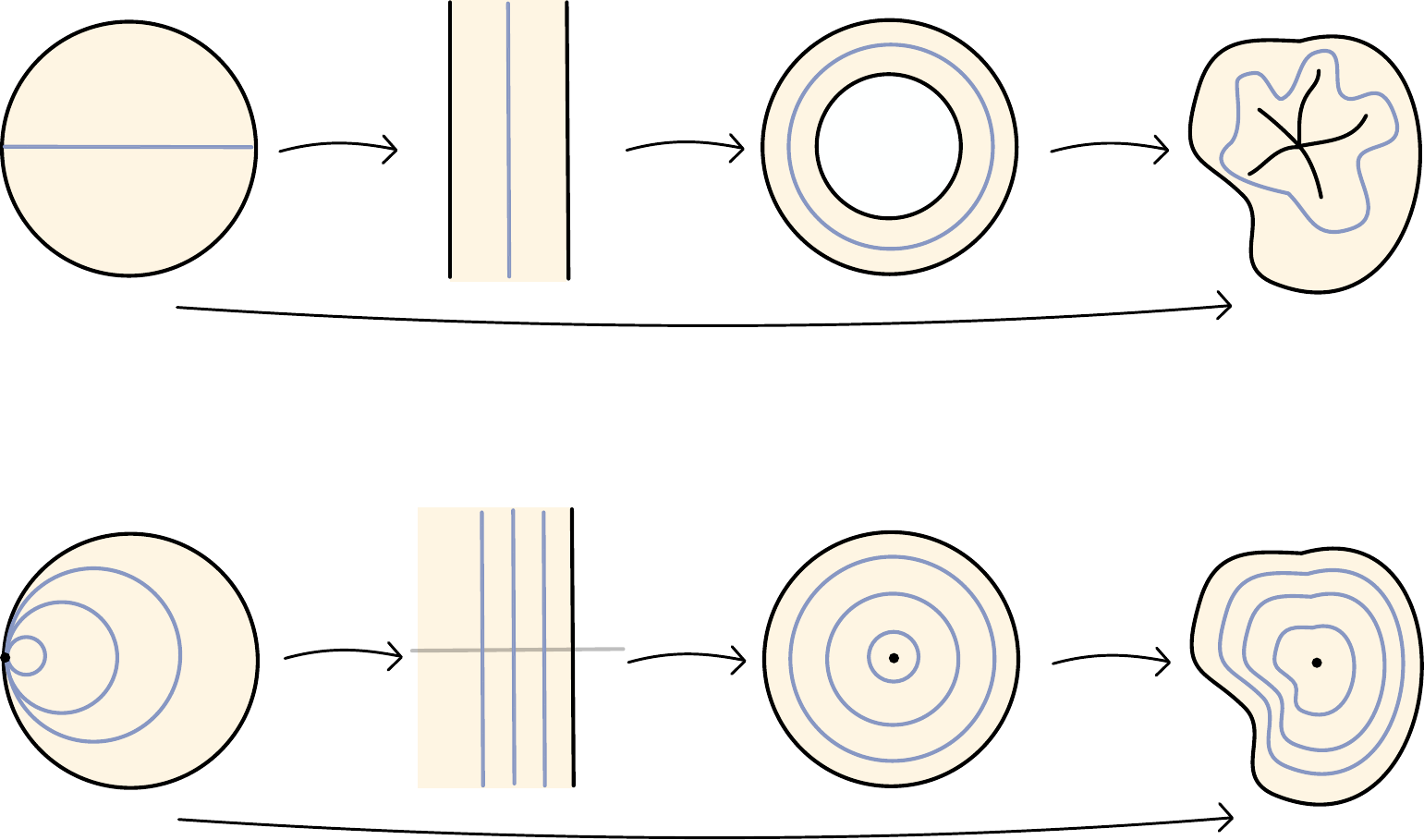}
		\setlength{\unitlength}{14cm}
				\put(-0.84, 0.55){$\mathbb{D}$}
					\put(-0.595, 0.57){$S$}
						\put(-0.31, 0.55){$\mathcal{A}_r$}
							\put(-0.02, 0.55){$\Omega$}
					\put(-0.775, 0.495){$M$}
						\put(-0.54, 0.5){$\exp$}
							\put(-0.23, 0.5){$\varphi$}
								\put(-0.5, 0.37){$\pi$}
								\put(-0.84,0.2){$\mathbb{D}$}
								\put(-0.595, 0.21){$\mathbb{H}$}
								\put(-0.31, 0.2){$\mathbb{D}^*$}
								\put(-0.02, 0.2){$\Omega$}
								\put(-0.775, 0.135){$M$}
								\put(-0.54, 0.14){$\exp$}
								\put(-0.23, 0.14){$\varphi$}
								\put(-0.5, 0.01){$\pi$}
	\caption{\footnotesize Universal covering map of a doubly connected domain $ \Omega $, conformally isomorphic to a non-denegenerate annulus $ \mathcal{A}_r $ or a punctured disk $ \mathbb{D}^* $, respectively.}\label{fig-covering-annulus}
\end{figure}

In both cases, one can split the unit disk $ \mathbb{D} $ into countably many simply connected domains, each of them containing exactly one preimage of each point in $ \Omega $. These regions are called {\em fundamental domains}, and $ \pi $ is univalent on these domains, so the previous results on the Riemann map apply. 

Prime ends thus can be defined in each fundamental domain following the same approach as in the simply connected case, with the following considerations. A crosscut $ C $ in $ \Omega $ must have both endpoints in the same connected component, and hence exactly one of the components of $  \Omega \smallsetminus C$ is simply connected. Then, the preimage of a null-chain in $ \Omega $ is a countable union of null-chains in $ \mathbb{D} $, which are contained in disjoint fundamental domains. Therefore, $ \pi $ is univalent on their correspondent crosscut neighbourhoods. 

Another way, maybe easier, of dealing with the same problem, is to consider $ \varphi $ acting around each of both boundary components separatedly, and analyze it as if it were a Riemann map (see Fig. \ref{fig-prime-end-annulus}).

The generalization of the concept of access is more subte, and we postpone it until Section \ref{ssect:accesses}, when dealing with general multiply connected domains.

\begin{figure}[h]
	\centering
	\includegraphics[width=14cm]{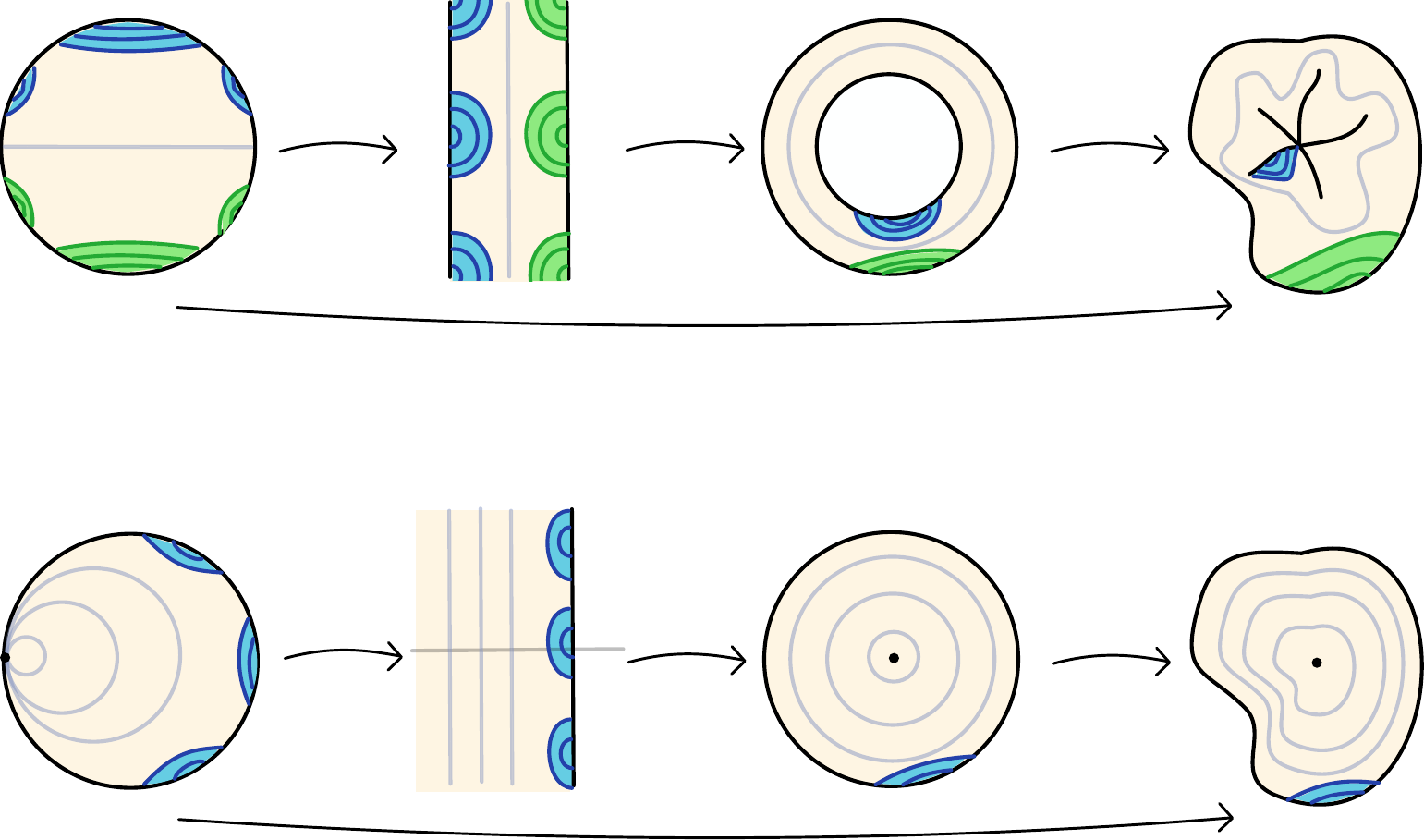}
	\setlength{\unitlength}{14cm}
	\put(-0.84, 0.55){$\mathbb{D}$}
	\put(-0.595, 0.57){$S$}
	\put(-0.31, 0.55){$\mathcal{A}_r$}
	\put(-0.02, 0.55){$\Omega$}
	\put(-0.775, 0.495){$M$}
	\put(-0.54, 0.5){$\exp$}
	\put(-0.23, 0.5){$\varphi$}
	\put(-0.5, 0.37){$\pi$}
	\put(-0.84,0.2){$\mathbb{D}$}
	\put(-0.595, 0.21){$\mathbb{H}$}
	\put(-0.31, 0.2){$\mathbb{D}^*$}
	\put(-0.02, 0.2){$\Omega$}
	\put(-0.775, 0.135){$M$}
	\put(-0.54, 0.14){$\exp$}
	\put(-0.23, 0.14){$\varphi$}
	\put(-0.5, 0.01){$\pi$}
	\caption{\footnotesize Null-chains and impressions of prime ends for a doubly connected domain, conformally isomorphic to a non-denegenerate annulus or a punctured disk, respectively.}\label{fig-prime-end-annulus}
\end{figure}
\subsection{Boundary behaviour of the universal covering of a multiply connected domain}\label{subsect-multiply-connected-domains}

Now we collect the known results about the boundary behaviour of the universal covering of a multiply connected domain of connectivity greater than two. We start with a technical result on how non-contractible curves are lifted by the universal covering, which will become a fundamental tool in the subsequent sections. Next, we sketch the work done by Ohtsuka in \cite{Oht54}.
Finally, we state the Hopf-Tsuji-Sullivan Ergodic Theorem.

\subsection*{Lifts of non-contractible curves in multiply connected domains}
The following is a technical lemma collecting results on the relation between non-contractible curves in $ \Omega $ and the universal covering $ \pi $.
\begin{lemma}{\bf (Lifts of non-contractible curves)}\label{lemma-lifts-of-curves}
	Let $ \Omega \subset \widehat{\mathbb{C}}$ be a multiply connected domain of connectivity greater than two, and let $ \pi\colon\mathbb{D}\to \Omega $ be a universal covering. Let $ \eta $ be a non-contractible Jordan curve in $ \Omega $, and let $ D_\eta $ be a connected component of $ \Omega\smallsetminus\eta $. Exactly one of the following holds. \begin{enumerate}[label={\em (\alph*)}] 
			\item  $D_\eta $ is conformally isomorphic to $ \mathbb{D}^* $, and the following hold.
		\begin{enumerate}[label={\em a.\arabic*}]
			\item $ \pi^{-1}(\eta) $ consists of  countably many disjoint degenerate crosscuts in $ \mathbb{D} $, whose endpoints are parabolic fixed points. Any two crosscuts in $ \pi^{-1}(\eta) $ have no  common endpoint.
			\item $ \pi^{-1}(D_\eta) $ consists of countably many simply connected domains, each of which touches $\partial\DD$ at exactly one point. Each connected component of $ \pi^{-1}(D_\eta) $ is bounded by one crosscut in $ \pi^{-1}(\eta) $.
			\item There exist curves $\eta^*\subset\Omega$ homotopic to $\eta$, such that $ \pi^{-1}(D_{\eta^*}) $ are horodisks.
		\end{enumerate}
		\item  $ D_\eta $ is conformally isomorphic to a non-degenerate annulus, and the following hold.
		\begin{enumerate}[label={\em b.\arabic*}]
			\item There exists a unique closed geodesic $\eta^*\subset\Omega$ homotopic to $\eta$.
				\item $ \pi^{-1}(\eta) $ consists of  countably many disjoint non-degenerate crosscuts in $ \mathbb{D} $, whose endpoints are hyperbolic fixed points. Any two crosscuts in $ \pi^{-1}(\eta) $ have no endpoints in common.
			\item $ \pi^{-1}(D_\eta) $ consists of countably many simply connected domains, each of which is unbounded in $ \mathbb{D} $. Each connected component of $ \pi^{-1}(D_\eta) $ is bounded by one crosscut in $ \pi^{-1}(\eta) $, i.e. it is a crosscut neighbourhood for that crosscut.
		\end{enumerate}
		\item Otherwise, $ D_\eta $ has connectivity greater than two, and the following hold.
				\begin{enumerate}[label={\em c.\arabic*}]
			\item There exists a unique closed geodesic $\eta^*\subset\Omega$ homotopic to $\eta$.
			\item $ \pi^{-1}(\eta) $ consists of  countably many disjoint non-degenerate crosscuts in $ \mathbb{D} $, whose endpoints are hyperbolic fixed points. Any two crosscuts in $ \pi^{-1}(\eta) $ have no endpoints in common.
			\item $ \pi^{-1}(D_\eta) $ consists of countably many simply connected domains, each of which is unbounded in $ \mathbb{D} $. Each connected component of $ \pi^{-1}(D_\eta) $ is bounded by countably many crosscuts in $ \pi^{-1}(\eta) $.
			\item For any crosscut in $ \pi^{-1}(\eta) $, each of its crosscut neighbourhoods contain infinitely many crosscuts in $ \pi^{-1}(\eta) $.
		\end{enumerate}
	\end{enumerate}

\noindent Moreover, let $ \eta_0 $ be another non-contractible curve in $ \Omega $. If $ \eta_0 $ is homotopic to $ \eta $ in $ \Omega $,  then for each crosscut in $ \pi^{-1}(\eta) $, there is a crosscut in $ \pi^{-1}(\eta_0) $  with the same endpoints in $ \partial\mathbb{D} $, and vice versa. If $ \eta_0 $ is not homotopic to $ \eta $ in $ \Omega $, then any two crosscuts in $ \pi^{-1}(\eta) $ and in $ \pi^{-1}(\eta_0) $ have no endpoints in common.
\end{lemma}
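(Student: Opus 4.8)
The plan is to reduce every assertion to the conformal type of $D_\eta$ together with the algebraic type of the associated subgroup of $\Gamma$, and then read off the geometry of the lifts from the classification of disk automorphisms (Prop.~\ref{prop-classification}).

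First I would settle the trichotomy. Since $\eta$ is a Jordan curve it bounds a Jordan domain on each side of $\widehat{\mathbb{C}}$, so $D_\eta$ is a subdomain of the hyperbolic domain $\Omega$ and hence itself hyperbolic. If $D_\eta$ were simply connected it would be a Jordan domain bounded by $\eta$, forcing $\eta$ to be contractible in $\Omega$; thus $D_\eta$ has connectivity at least two. The complementary components of $D_\eta$ other than the $\eta$-side are genuine holes of $\Omega$ (they meet $\partial\Omega$), so the inclusion induces an injection $\pi_1(D_\eta)\hookrightarrow\pi_1(\Omega)$ and Lemma~\ref{lem:restriction} applies: every component $U$ of $\pi^{-1}(D_\eta)$ is simply connected and $\pi|_U\colon U\to D_\eta$ is a universal covering. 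If the connectivity of $D_\eta$ is exactly two, the classification recalled in Section~\ref{subsect-doubly-connected} gives either $D_\eta\cong\mathbb{D}^*$ (case (a)) or $D_\eta\cong\mathcal{A}_r$ (case (b)); otherwise the connectivity exceeds two (case (c)). This is the required exclusive trichotomy.

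Next I would analyse the lifts of $\eta$. Writing $g\in\Gamma$ for the deck transformation of the free homotopy class of $\eta$ (Lemma~\ref{lem:deck}), each connected component of $\pi^{-1}(\eta)$ is an embedded lift invariant under a $\Gamma$-conjugate of $g$, so it accumulates exactly on that conjugate's fixed point set on $\partial\mathbb{D}$. In case (a) the subgroup $\pi_1(D_\eta)\cong\mathbb{Z}$ corresponds to a cyclic parabolic group, so $g$ is parabolic with a single fixed point and the lifts are degenerate crosscuts with parabolic endpoints; in cases (b) and (c) the relevant element is hyperbolic, so its two fixed points give non-degenerate crosscuts with hyperbolic endpoints (cf. Lemma~\ref{lemma-limit-sets}(b)). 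Disjointness of the lifts is covering-space theory, and the absence of a common endpoint between distinct lifts follows from discreteness of $\Gamma$: a parabolic fixed point is fixed only by the powers of one parabolic, and two hyperbolics sharing a single fixed point generate a non-discrete group. The number of lifts bounding a fixed $U$ is the index of $\langle g\rangle$ in the stabiliser $\Gamma_U\cong\pi_1(D_\eta)$: this index is one when $\pi_1(D_\eta)$ is cyclic (cases (a), (b), giving a single crosscut and a region touching $\partial\mathbb{D}$ at one point, respectively an unbounded crosscut neighbourhood), and infinite when $\pi_1(D_\eta)$ is free of rank $\ge2$ (case (c), giving countably many crosscuts, with further crosscuts accumulating inside every crosscut neighbourhood by density of $\Gamma$-orbits in $\Lambda$). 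The canonical representatives are then $\eta^*$ a horocycle in case (a) — whose $\langle g\rangle$-invariant lift bounds a horodisk by Prop.~\ref{prop-classification}(c) — and the unique closed geodesic in the class in cases (b), (c), obtained as the projection of the axis of the hyperbolic element and unique by Lemma~\ref{lem:superSP}.

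Finally, for the ``Moreover'' part I would lift a free homotopy between $\eta$ and $\eta_0$: such a homotopy lifts to one between a lift of $\eta$ and a lift of $\eta_0$ invariant under the same $g\in\Gamma$, which therefore share the fixed points of $g$ as endpoints, and applying the deck group transports this to every lift, yielding the claimed pairing of crosscuts with equal endpoints. Conversely, if $\eta_0$ is not homotopic to $\eta$, their classes are non-conjugate primitive elements of $\Gamma$, and a shared endpoint of two lifts would force the corresponding elements to share a fixed point, hence by discreteness to be powers of a common element, contradicting non-homotopy. I expect the main obstacle to be the bookkeeping in case (c): proving that each component $U$ is bounded by exactly the lifts of $\eta$ indexed by the cosets $\Gamma_U/\langle g\rangle$ and that infinitely many of them accumulate inside every crosscut neighbourhood, which is where the free non-abelian structure of $\pi_1(D_\eta)$ and the density of limit points must be used carefully; the matching of the analytic type of $g$ with the conformal type of $D_\eta$ (parabolic $\leftrightarrow\mathbb{D}^*$) also requires attention to avoid the puncture-loop subtleties.
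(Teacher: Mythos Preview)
Your proposal is correct and captures the essential ideas, but it is worth noting that the paper itself does not give a self-contained proof of this lemma: it simply cites \cite[Thm.~1.8.7, Thm.~1.8.9]{Aba23} for cases (a) and (b), \cite[Sect.~3]{Oht54} for case (c), and \cite[Lemma~1]{Oht54} together with \cite[Lemma~1.6.5]{Bus92} for the ``Moreover'' part. What you have written is therefore a reconstruction of the arguments those references contain, organised around the same dichotomy (parabolic versus hyperbolic type of the deck transformation $g$ associated to $\eta$, read off from the conformal type of $D_\eta$) and the same covering-space bookkeeping (Lemma~\ref{lem:restriction} to identify $\Gamma_U\cong\pi_1(D_\eta)$, coset counting for the number of boundary crosscuts of $U$, discreteness of $\Gamma$ to rule out shared fixed points). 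Your awareness of the delicate point --- that the parabolic/hyperbolic type of $g$ is determined by $\eta$ globally and must be matched consistently with whichever side $D_\eta$ is chosen --- is exactly the subtlety the cited results handle, and your coset argument for c.3--c.4 is the standard one. In short: the paper defers to the literature, and your sketch is a faithful outline of what that literature proves.
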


\begin{proof}
A proof of (a) can be found in \cite[Thm. 1.8.7]{Aba23}, while property (b) follows from \cite[Thm. 1.8.9]{Aba23} (although the results are stated for {\em regular type} domains, they apply to isolated boundary components). Statement (c) follows from the content of  \cite[Sect. 3]{Oht54}. The final statement  is the combination of \cite[Lemma 1]{Oht54} and \cite[Lemma 1.6.5]{Bus92}.
\end{proof}

Figure \ref{fig-lemma-lifts} gives a sketch of the different cases in Lemma \ref{lemma-lifts-of-curves}.

\begin{figure}[h]
	\centering
	\captionsetup[subfigure]{labelformat=empty, justification=centering}
	\begin{subfigure}{0.9\textwidth}
		\includegraphics[width=\textwidth]{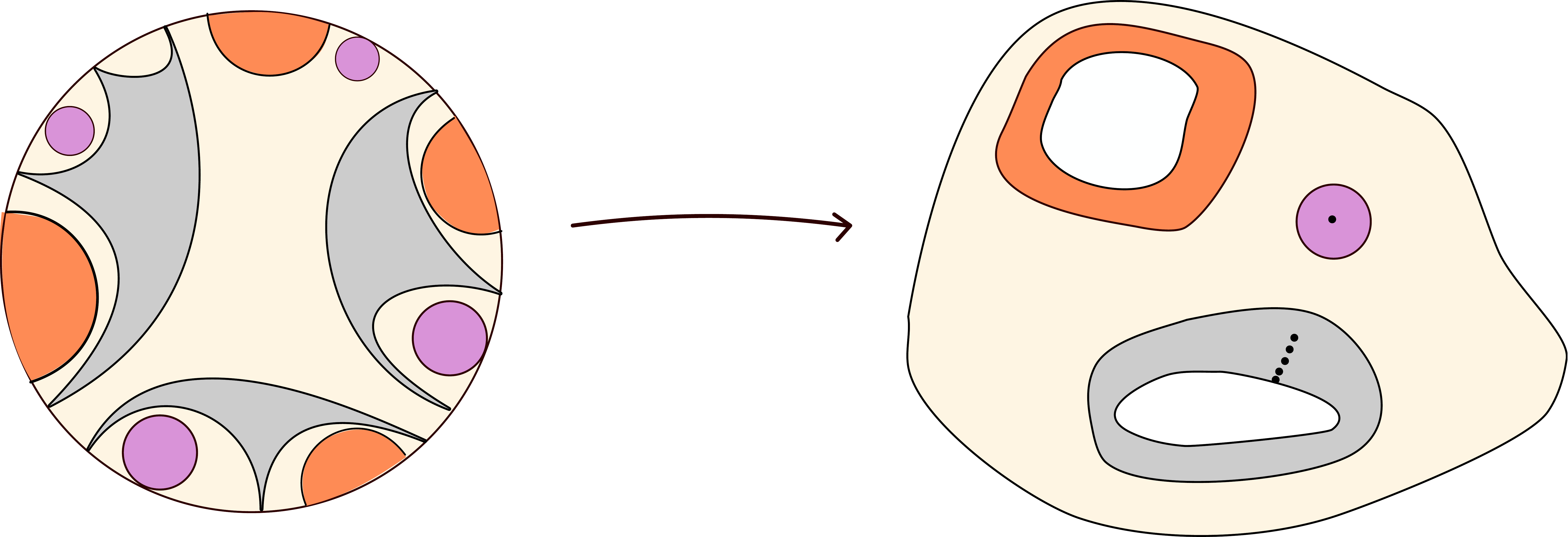}
		\setlength{\unitlength}{\textwidth}
					\put(-0.56, 0.21){$ \pi $}
		\put(-0.75, 0.31){$ \mathbb{D} $}
		\put(-0.14, 0.3){$ \Omega $}
			\caption{\footnotesize Schematic representation of the lift of  non-contractible curves and the domains bounded by such curves, exhibiting all the possibilities in Lemma \ref{lemma-lifts-of-curves}.}
	\end{subfigure}

	\vspace{0.5cm}
	
	\begin{subfigure}{0.9\textwidth}
		\includegraphics[width=\textwidth]{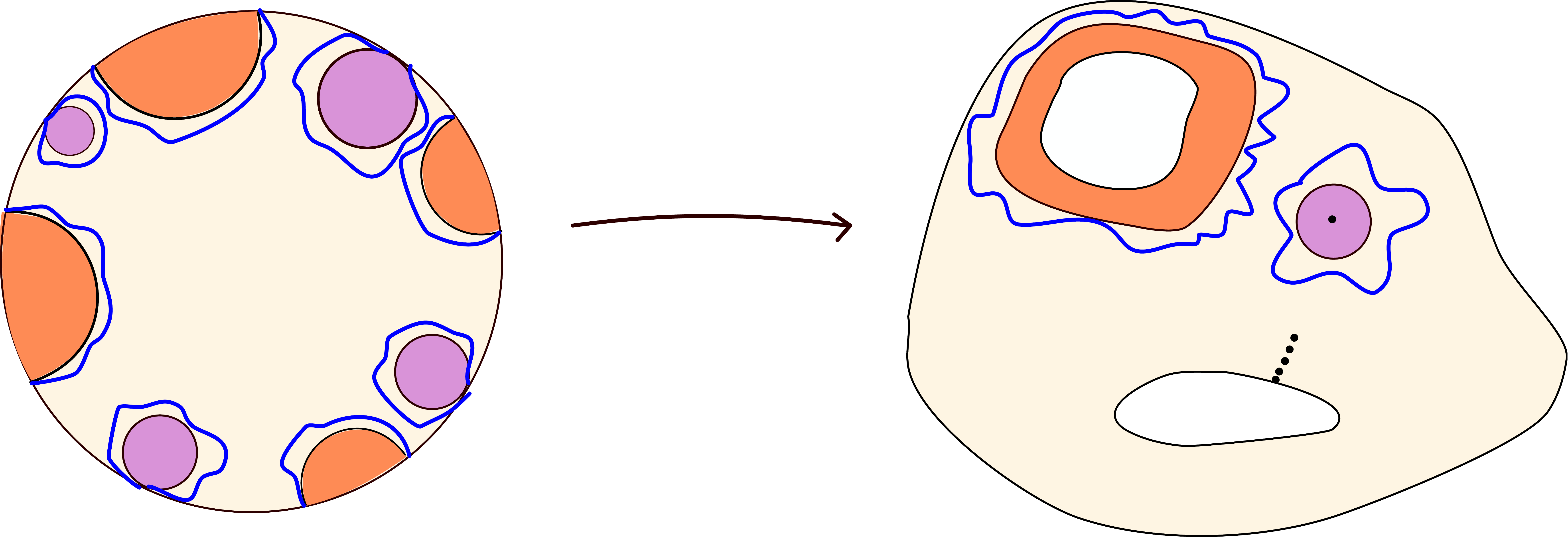}
			\setlength{\unitlength}{\textwidth}
			\put(-0.56, 0.21){$ \pi $}
					\put(-0.75, 0.31){$ \mathbb{D} $}
							\put(-0.14, 0.3){$ \Omega $}
			\caption{\footnotesize Given a non-contractible curve $ \eta $, one shall choose $ \eta^* $ in its homotopy class so that $ \pi^{-1}(\eta^*) $ is either a geodesic or it is a horocycle.  }
	\end{subfigure}
	\caption{\footnotesize Schematic representation of Lemma \ref{lemma-lifts-of-curves}.}\label{fig-lemma-lifts}
\end{figure}

\subsection*{Ohtsuka's exhaustion of multiply connected domains}

Ohtsuka \cite{Oht54} described the boundary of a multiply connected domain by means of exhausting it by finitely connected subdomains and identifying each boundary component symbolically. In this subsection, we give a brief overview of his methods and main results.

Let $ \Omega\subset\widehat{\mathbb{C}} $ be a multiply connected domain, which we assume to be of connectivity greater than two. Let $ \left\lbrace \Omega_n\right\rbrace _{n\geq 1}$  be an exhaustion of $ \Omega $, such that each $ \Omega_n $ is a finitely connected domain compactly contained in $ \Omega $, with $ \overline{\Omega_n} \subset \Omega_{n+1}$ for all $ n\geq 1 $, and $ \bigcup_n \Omega_n =\Omega $. Without loss of generality, we can assume that each $ \Omega_n $ is bounded by (finitely many) simple closed analytic curves, each of them dividing $ \Omega $ into two non-simply connected domains.

We label the boundary curves of the $ {\Omega_n}$'s as follows.  First, let $ \sigma_1, \dots,\sigma_{m} $ be the boundary curves of $ \Omega_1 $, and let $ D_i $, $ i=1,\dots, m $, be the connected component of $ \Omega\smallsetminus \Omega_1 $ bounded by $ \sigma_i $. Then, let the boundary curves  of $ \Omega_2 $ lying in $ D_i $ be denoted by $ \sigma_{i1}, \dots,\sigma_{im_i} $, and denote by $ D_{ij} $ the domain  in $ \Omega\smallsetminus \Omega_2 $ bounded by $ \sigma_{ij}$. The same way, the boundary curves of $ \Omega_3 $  in $ D_{ij} $ are denoted by $ \sigma_{ij1},\dots,  \sigma_{ijm_{ij}} $.

Proceeding inductively, let $ D_{s_1s_2\dots s_{n}} $ be the domain bounded by the curve $ \sigma_{s_1s_2\dots s_{n}}\subset\partial \Omega_n
 $. Then, the boundary curves  of $ \Omega_{n+1} $ lying in $ D_ {s_1s_2\dots s_{n}}$ are labeled as  $ \sigma_{s_1s_2\dots s_{n}s_{n+1}} $, with $ s_{n+1}\in \left\lbrace 1, \dots, m_{\underline{s}^{n}} \right\rbrace  $, where $ \underline{s}^{n} $ denotes the truncated sequence $ s_1s_2\dots s_{n} $. Then,  $ D_{s_1s_2\dots s_{n}}\supset  D_{s_1s_2\dots s_ns_{n+1}}$, and $  D_{s_1s_2\dots s_{n}} $ is the domain with relative boundary $ \sigma_ {s_1s_2\dots s_{n}}$ in $ \Omega $. See Figure \ref{fig-exhaustion}.

\begin{figure}[h]
	\centering
	\captionsetup[subfigure]{labelformat=empty, justification=centering}

	\begin{subfigure}{0.45\textwidth}
		\includegraphics[width=\textwidth]{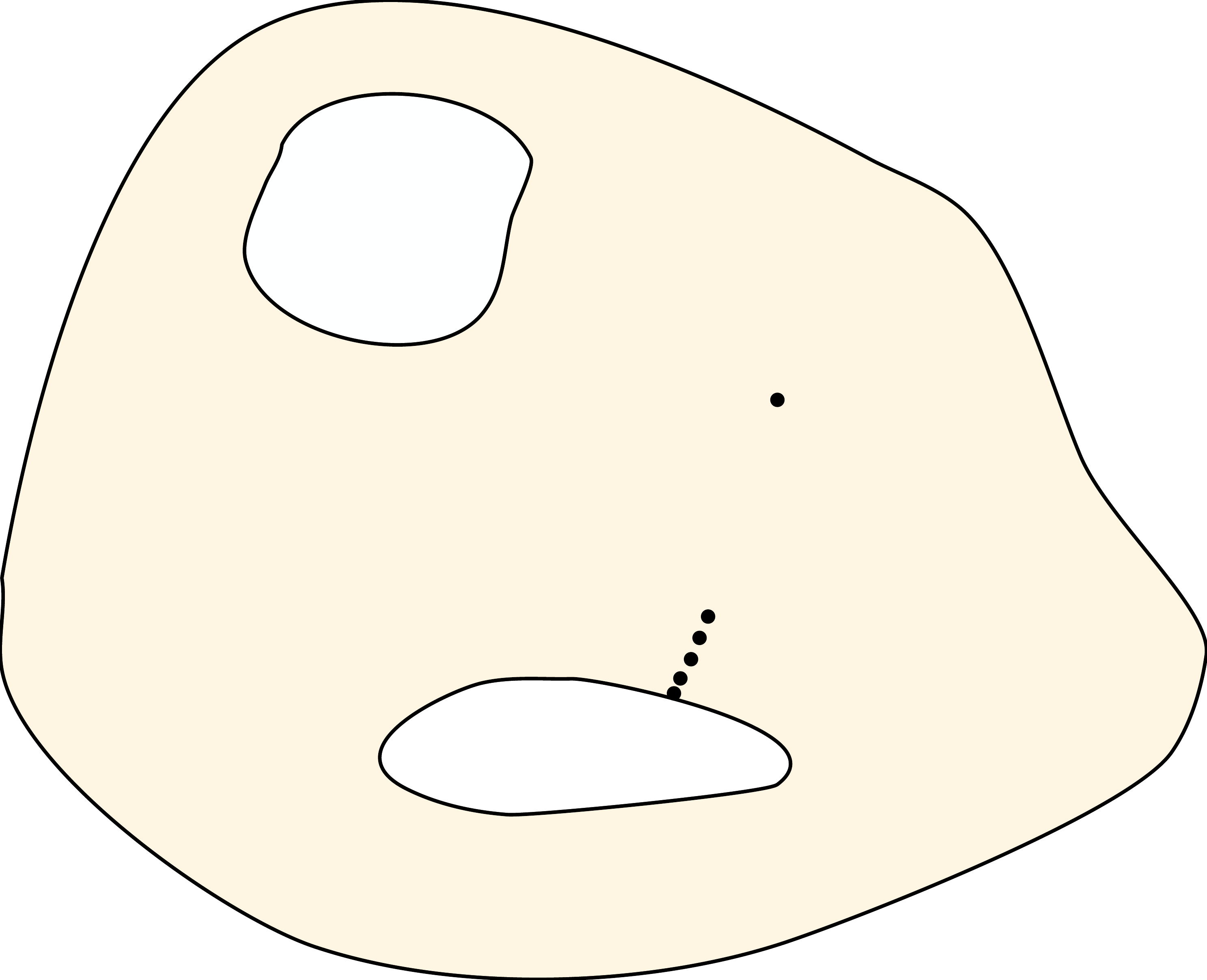}
		\setlength{\unitlength}{\textwidth}
		\put(-0.22,0.67){$ \Omega $}
		\caption{\footnotesize The multiply connected domain $ \Omega $}
	\end{subfigure}
	\hfill
	\begin{subfigure}{0.45\textwidth}
		\includegraphics[width=\textwidth]{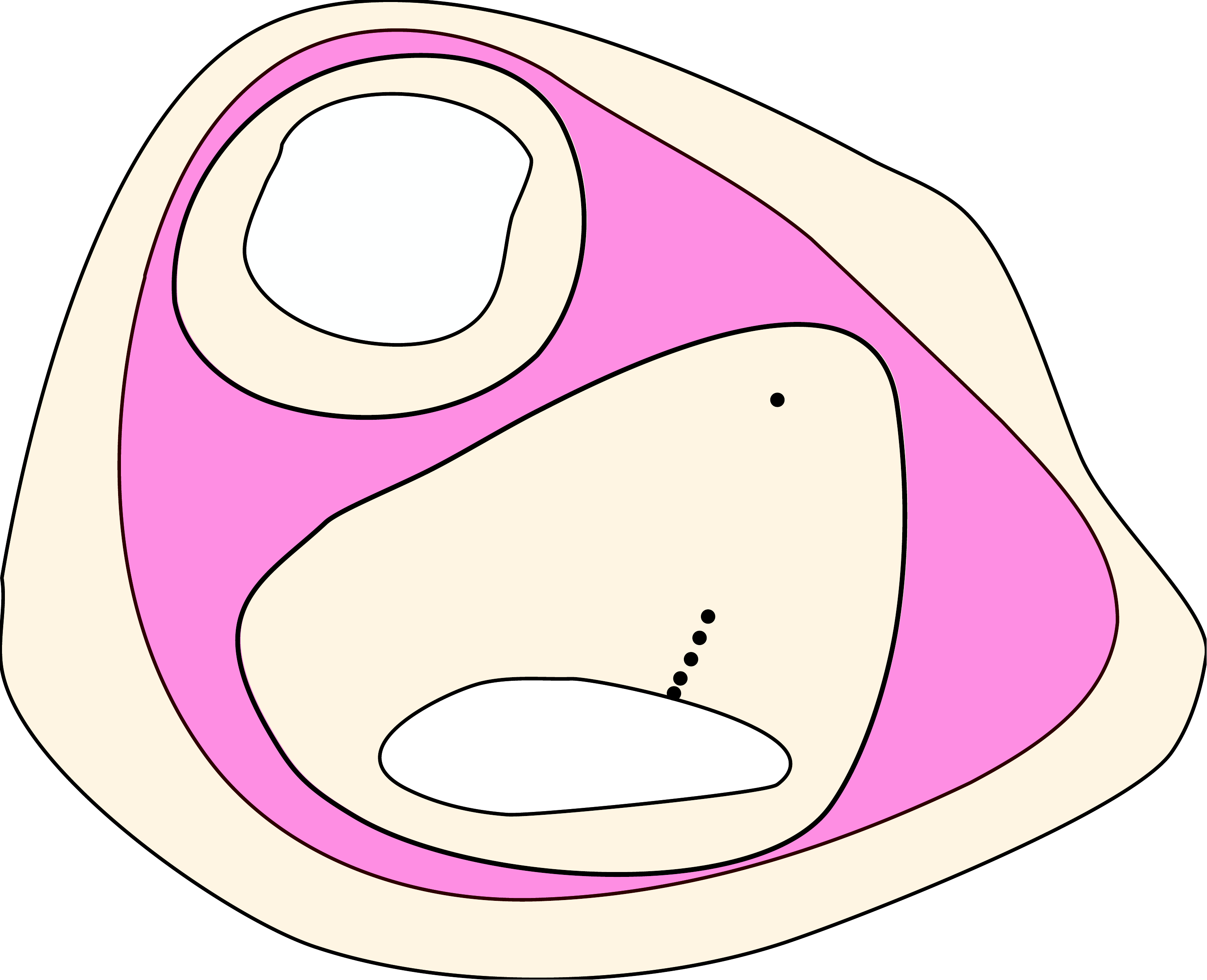}
		\setlength{\unitlength}{\textwidth}
				\put(-0.22,0.67){$ \Omega $}
					\put(-0.85,0.4){$ \Omega_1$}
				\put(-0.25, 0.57){\footnotesize $ D_1 $}
					\put(-0.8, 0.5){\footnotesize $ D_2 $}
						\put(-0.55, 0.3){\footnotesize $ D_3 $}
								\put(-0.3, 0.57){\footnotesize $ \sigma_1 $}
						\put(-0.63, 0.5){\footnotesize $ \sigma_2 $}
						\put(-0.55, 0.45){\footnotesize $ \sigma_3 $}
		\caption{\footnotesize STEP 1: Subdomain $ \Omega_1 $}
	\end{subfigure}
	\vspace{1cm}

	\begin{subfigure}{0.45\textwidth}
		\includegraphics[width=\textwidth]{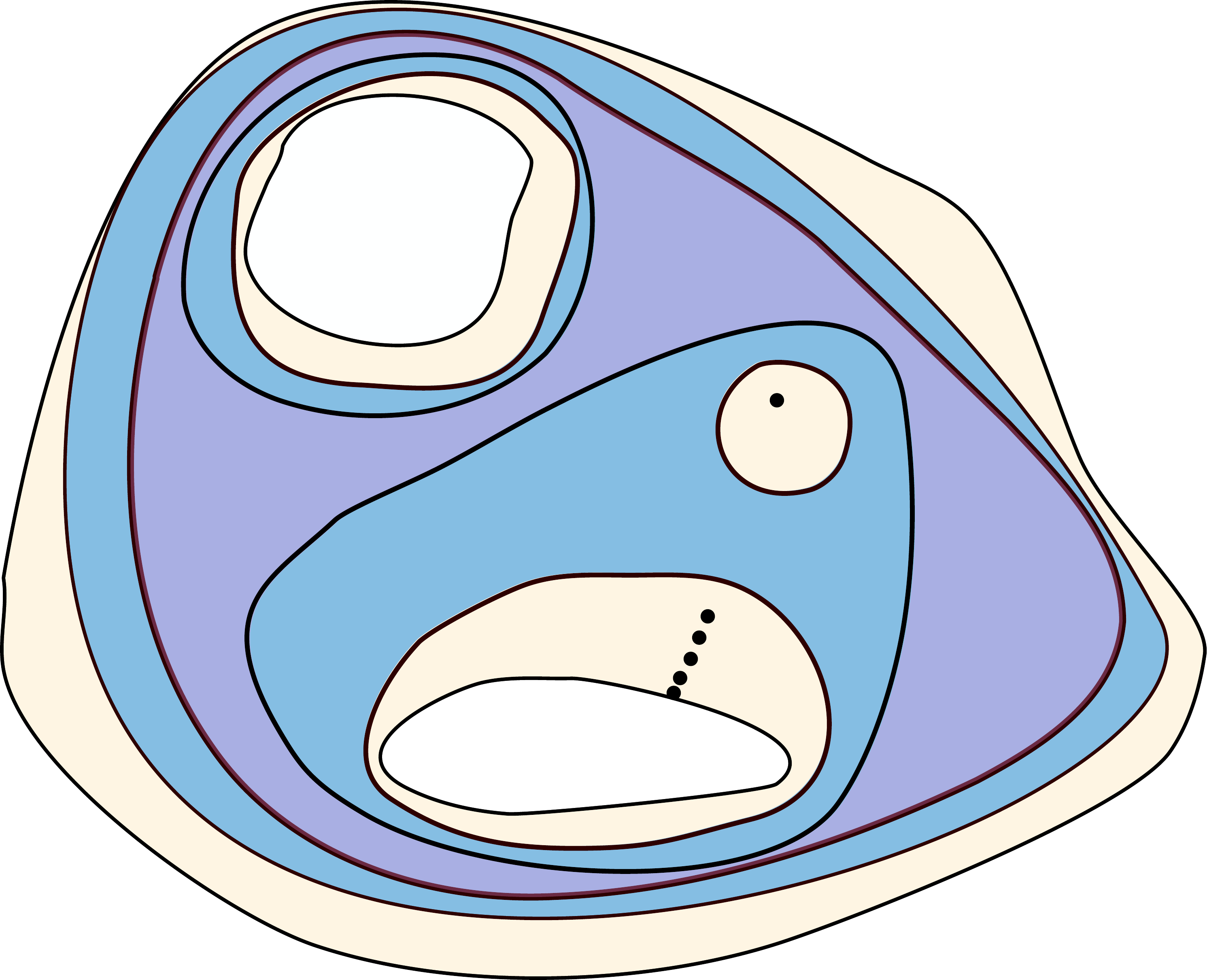}
		\setlength{\unitlength}{\textwidth}
					\put(-0.22,0.67){$ \Omega $}
		\put(-0.985,0.25){$ \Omega_{2}$}
		\put(-0.25, 0.59){\footnotesize $ D_{11}$}
		\put(-0.79, 0.52){\footnotesize $ D_{21} $}
		\put(-0.55, 0.28){\footnotesize $ D_{31} $}
			\put(-0.38, 0.43){\footnotesize $ D_{32} $}
		\caption{\footnotesize STEP 2: Subdomain $ \Omega_2 $}
	\end{subfigure}
	\hfill
	\begin{subfigure}{0.45\textwidth}
		\includegraphics[width=\textwidth]{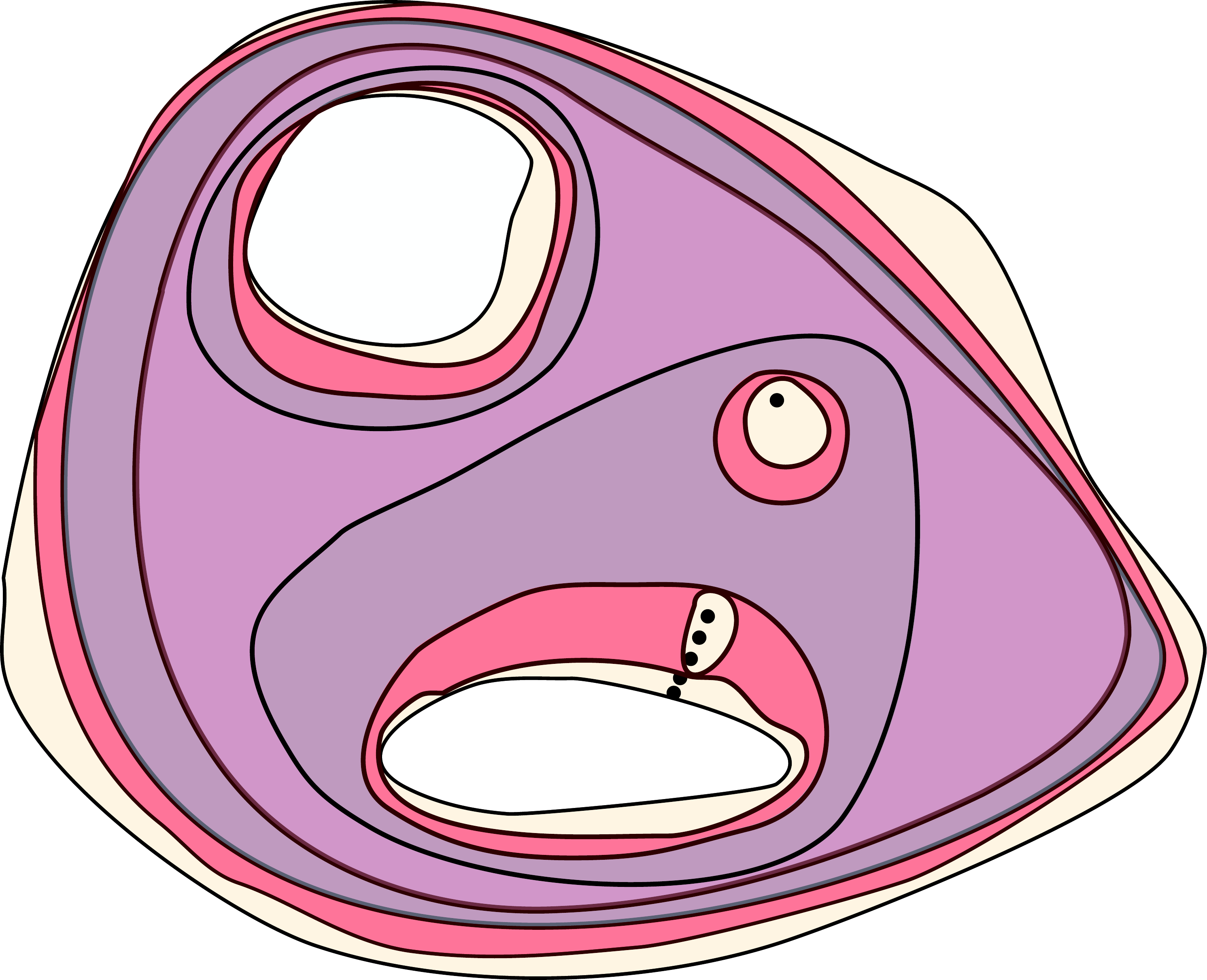}
		\setlength{\unitlength}{\textwidth}
			\put(-0.22,0.67){$ \Omega $}
		\caption{\footnotesize STEP 4: Subdomain $ \Omega_3 $}
	\end{subfigure}
	\caption{\footnotesize The first steps in the construction of Ohtsuka exhaustion for a multiply connected domain $ \Omega $.}\label{fig-exhaustion}
\end{figure}

An exhaustion of $ \Omega $ satisfying the properties above, and labeled as described, will be called an {\em Ohtsuka exhaustion} of $ \Omega$. It allows us to identify each boundary component of $\Omega$ as follows.

\begin{defi}{\bf (Boundary address)}\label{def:boundaryaddress}
Let $ \Omega $ be a multiply connected domain, and let $ \pi\colon\mathbb{D}\to\Omega $ be a universal covering. Let $ \left\lbrace \Omega_n\right\rbrace _n $ be an Ohtsuka exhaution of $ \Omega $. Then, for each boundary component $ \Sigma\subset \partial\Omega $, we say that the sequence
\[\underline{s}(\Sigma)=\left\lbrace s_n\right\rbrace _n, \hspace{0.5cm} s_n\in\left\lbrace 1, 2, \dots, m_{\underline{s}^{n-1}} \right\rbrace, \] is the {\em boundary address} of $ \Sigma $ if \[\Sigma =\bigcap_{n\geq 1} \overline{D_{s_1s_2\dots s_n}} .\]
\end{defi}

It is clear from the construction that, given an Ohtsuka exhaustion of $\Omega$, any boundary component $ \Sigma\subset\partial\Omega $ has a unique boundary address. Conversely, any  address $\underline{s}=\left\lbrace s_n\right\rbrace _n$ with $s_n\in\{1, \dots, m_{\underline{s}^{n-1}}\}$ is the boundary address of a unique boundary component $\Sigma\subset\partial\Omega$.

By considering nested preimages of the sequence $\{D_{s_1s_2\dots s_n}\}_n$ under the universal covering, we can identify the points on $ \partial\mathbb{D} $ that correspond to  $\Sigma$ under $ \pi $ as follows.

\begin{defi}{\bf ($ \alpha $-image)}
Let $ \Omega $ be a multiply connected domain, and let $ \pi\colon\mathbb{D}\to\Omega $ be a universal covering. Let $ \left\lbrace \Omega_n\right\rbrace _n $ be an Ohtsuka exhaution of $ \Omega $, and let $ \Sigma $ be a connected component of $ \partial \Omega $, with address $ \underline s(\Sigma) = \left\lbrace s_n\right\rbrace _n$, and determined by the domains $ \left\lbrace D_{s_1s_2\dots s_n}\right\rbrace _n  $. A connected component of $$\bigcap_{n} \overline{\pi^{-1}(D_{s_1s_2\dots s_n})},$$ minus any fixed points of hyperbolic deck transformations\footnote{Ohtsuka's definition of $\alpha$-image allows for such fixed points to be part of the $\alpha$-image. However, as we will see when relating $\alpha$-images to the limit sets of deck transformations, it is more natural for our purposes to exclude them.}, is called an {\em $\alpha$-image} of $\Sigma$.
\end{defi}

Notice that an $\alpha$-image is the intersection (possibly minus hyperbolic fixed points) of nested, connected compact sets shrinking towards $\partial\DD$, and thus is either a point or an arc (open or closed) on $\partial\DD$. Ohtsuka proved that an $\alpha$-image $\tilde\Sigma$ of the boundary component $\Sigma$ is indeed, in some sense, a preimage of $\Sigma$ under $\pi$.
\begin{thm}{\bf($ \alpha $-images, {\normalfont \cite[Sect. I.5]{Oht54}})}\label{thm:alphaimage}
Let $\tilde\Sigma$ be an $\alpha$-image of the boundary component $\Sigma\subset\partial\Omega$. Then, there exists a curve $\eta\subset\DD$ with $L(\eta)\subset\tilde\Sigma$ such that $L(\pi(\eta))\subset\Sigma$. Conversely, any lift $\tilde\eta\subset\DD$ of a curve $\eta\subset\Omega$ such that $L(\eta)\subset\Sigma$ satisfies $L(\tilde\eta)\subset \tilde\Sigma$ for some $\alpha$-image $\tilde\Sigma$ of $\Sigma$.
\end{thm}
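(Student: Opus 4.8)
The plan is to localise everything to the boundary circle, translate the nested domains $D_{s_1\cdots s_n}$ (abbreviate $D_n := D_{s_1\cdots s_n}$) into a tree of crosscut neighbourhoods in $\mathbb{D}$, and then read off both halves of the statement from descending branches of that tree. First I would record the elementary nesting $\overline{D_{n+1}}\cap\Omega\subset D_n$, together with $\Sigma=\bigcap_n\overline{D_n}$ and $\bigcap_n D_n\subset\Sigma\cap\Omega=\emptyset$. Since $\pi$ is an open continuous surjection onto $\Omega$, for $z\in\mathbb{D}$ one has $z\in\overline{\pi^{-1}(D_n)}$ precisely when $\pi(z)\in\overline{D_n}$; hence $\bigcap_n\overline{\pi^{-1}(D_n)}\cap\mathbb{D}=\pi^{-1}(\Sigma\cap\Omega)=\emptyset$, so $K:=\bigcap_n\overline{\pi^{-1}(D_n)}\subset\partial\mathbb{D}$ and every $\alpha$-image indeed lives on $\partial\mathbb{D}$. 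By Lemma~\ref{lemma-lifts-of-curves}, each non-contractible Jordan curve $\sigma_{s_1\cdots s_n}$ lifts to countably many disjoint crosscuts whose endpoints (and accumulation points) are hyperbolic fixed points in the non-degenerate case and parabolic fixed points in the punctured case, and each component of $\pi^{-1}(D_n)$ is simply connected, with $\pi$ restricting to a universal covering of $D_n$ on it by Lemma~\ref{lem:restriction}.

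Next I would set up the correspondence. Because $D_{n+1}\subset D_n$, every component of $\pi^{-1}(D_{n+1})$ lies inside a unique component of $\pi^{-1}(D_n)$, so the components form a rooted tree whose infinite descending branches $U_1\supset U_2\supset\cdots$ I want to match with $\alpha$-images. For such a branch, $\bigcap_n\overline{U_n}$ is a nested intersection of non-empty compact connected sets, hence a non-empty connected subset of $\partial\mathbb{D}$; the claim is that, after deleting the hyperbolic fixed points, it is exactly a connected component of $K$. The crux is a separation statement: two distinct branches first differ at some level $n$, where $U_n$ and $U_n'$ are disjoint components of $\pi^{-1}(D_n)$ separated by crosscuts of $\pi^{-1}(\sigma_{s_1\cdots s_n})$, so $\overline{U_n}\cap\overline{U_n'}\cap\partial\mathbb{D}$ can only consist of endpoints or accumulation points of those crosscuts — that is, of the very fixed points excluded in the definition of $\alpha$-image. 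Granting this, deleting the hyperbolic fixed points turns branches into the genuine components of $K$.

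For the converse I would take $\eta\subset\Omega$ with $L(\eta)\subset\Sigma$ and a lift $\tilde\eta$. Since $\Sigma\subset\partial\Omega$ sits at the bottom of the nesting, its compact subset $L(\eta)$ is disjoint from $\overline{\Omega_n}$ and from the closures of the other level-$n$ pieces, so it has a neighbourhood $V$ in $\widehat{\mathbb{C}}$ with $V\cap\Omega\subset D_n$; as $L(\eta)\subset V$, the tail of $\eta$ lies in $V\cap\Omega\subset D_n$, and being connected it lifts into a single component $U_n$ of $\pi^{-1}(D_n)$, these $U_n$ nesting into a branch. Thus $L(\tilde\eta)\subset\bigcap_n\overline{U_n}$. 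To land inside the $\alpha$-image I must rule out hyperbolic fixed points in $L(\tilde\eta)$: near such a point $e^{i\phi}$, which is an endpoint (or accumulation) of crosscuts of some $\pi^{-1}(\sigma_{s_1\cdots s_m})$, the covering $\pi$ wraps around the closed geodesic homotopic to $\sigma_{s_1\cdots s_m}$ (Lemma~\ref{lemma-lifts-of-curves}), so $\eta=\pi\circ\tilde\eta$ would accumulate on that geodesic or cross $\sigma_{s_1\cdots s_m}$ infinitely often, forcing $L(\eta)$ to meet $\Omega$ and contradicting $L(\eta)\subset\Sigma$; parabolic fixed points are retained and cause no trouble, as there the image simply lands at the corresponding puncture of $\Sigma$. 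Hence $L(\tilde\eta)\subset\tilde\Sigma$ for the $\alpha$-image attached to the branch.

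Finally, for the forward direction I would reverse the construction: given $\tilde\Sigma$ with branch $U_1\supset U_2\supset\cdots$, pick $z_n\in U_n$, join $z_n$ to $z_{n+1}$ by an arc inside the connected set $U_n$, and concatenate to obtain a curve $\eta\subset\mathbb{D}$ whose tail lies in each $U_n$, so that $L(\eta)\subset\bigcap_n\overline{U_n}$; since $\pi(\eta)$ eventually lies in $\pi(U_n)=D_n$, we get $L(\pi(\eta))\subset\bigcap_n\overline{D_n}=\Sigma$, and the fixed-point-avoidance argument above (applicable because $L(\pi(\eta))\subset\Sigma$) places $L(\eta)$ inside $\tilde\Sigma$. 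I expect the main obstacle to be precisely the separation statement of the second paragraph — that the closures of distinct branches meet only at the excluded hyperbolic fixed points, so that branches correspond bijectively to components of $K$ and connected landing sets cannot straddle two $\alpha$-images. This is exactly where the detailed description in Lemma~\ref{lemma-lifts-of-curves} of crosscut endpoints as fixed points, together with the wild cluster behaviour at points of $\Lambda$ recorded in Lemma~\ref{lemma-limit-sets3}, does the real work, and it is the reason the definition of $\alpha$-image removes the hyperbolic fixed points in the first place.
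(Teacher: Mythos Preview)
The paper does not give its own proof of this statement: Theorem~\ref{thm:alphaimage} is attributed to Ohtsuka and cited from \cite[Sect.~I.5]{Oht54}, with the remark that stronger versions are established later (Sections~\ref{ssec:crosscuts}--\ref{subsect-proof-main-result}). So there is no direct proof in the paper to compare yours against. Your outline --- showing $K\subset\partial\mathbb{D}$, organising the components of $\pi^{-1}(D_n)$ into a tree, and reading off both directions from descending branches --- is exactly the circle of ideas the paper exploits when it revisits Ohtsuka's results (see the proof of Proposition~\ref{prop-associated-boundary-comp} and of Proposition~\ref{prop:access}), so your approach is well aligned with the paper's methods.

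There is one genuine soft spot. In the converse direction you must exclude hyperbolic fixed points from $L(\tilde\eta)$, and you argue that such a point $e^{i\phi}$ ``is an endpoint (or accumulation) of crosscuts of some $\pi^{-1}(\sigma_{s_1\cdots s_m})$'' and that $\eta$ would then accumulate on the corresponding closed geodesic. But an arbitrary hyperbolic fixed point need not be an endpoint of any crosscut coming from the \emph{given} exhaustion; it is the fixed point of some $\gamma\in\Gamma$ whose axis projects to a closed geodesic that may be unrelated to the curves $\sigma_{s_1\cdots s_m}$. Moreover, even when $e^{i\phi}$ is a crosscut endpoint, having $e^{i\phi}\in L(\tilde\eta)$ with the tail of $\tilde\eta$ inside $U_m$ does not by itself force $\eta$ to cross $\sigma_{s_1\cdots s_m}$ infinitely often or to accumulate on the closed geodesic: $\tilde\eta$ can approach $e^{i\phi}$ while remaining on one side of the crosscut. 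A cleaner route, consistent with the paper's later machinery, is to use that hyperbolic fixed points lie in $\Lambda_{NT}$ and are of \emph{finite depth} (Lemma~\ref{lemma-limit-sets}(b) and Proposition~\ref{prop:type-group}/Summary~\ref{thm-ohtsuka}(a)): such a point lies on the boundary of a component of $\pi^{-1}(\Omega_N)$ for some $N$, and one then argues that this forces the fundamental crosscuts at $e^{i\phi}$ to stay shallow, which is incompatible with $e^{i\phi}\in\bigcap_n\overline{U_n}$ for a branch whose crosscuts have depth tending to infinity. Your identification of the ``separation statement'' as the crux is correct; it is precisely there, and in the fixed-point exclusion, that the argument needs tightening.
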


We will prove a stronger version of Theorem \ref{thm:alphaimage} in Section \ref{ssec:crosscuts}.

 Notice that not every point in $\partial\DD$ is part of an $\alpha$-image. Indeed, let $ \Omega_m $ be a subdomain in Ohtsuka exhaustion, so that $ \sigma_{s_1s_2\dots s_m} \subset \partial  \Omega_m$ is a closed geodesic and $ D_{s_1s_2\dots s_m} $ has connectivity greater than two. Let $e^{i\theta}\in\partial \mathbb{D}$ be the landing point of some geodesic $ \widetilde{\sigma} $ in $\pi^{-1}(\sigma_{s_1s_2\dots s_m})$. We claim that such an $e^{i\theta}$ is not in any $ \alpha $-image. If this was the case, for any boundary address $\underline{s} = \left\lbrace s_n\right\rbrace $ giving rise to $e^{i\theta}$ as a point in one of its $\alpha$-images we would have $$e^{i\theta}\in \bigcap_{n} \overline{\pi^{-1}(D_{s_1s_2\dots s_n})}.$$ According to Lemma \ref{lemma-lifts-of-curves}, since endpoints of lifts of non-homotopic closed curves are distinct, for each $ D_{s_1s_2\dots s_n} $ there is a crosscut in $ \partial \pi^{-1}(D_{s_1s_2\dots s_n}) $ separating 0 from $ e^{i\theta} $, and hence crossing $ \widetilde{\sigma} $. Thus,
  $$D_{s_1s_2\dots s_n}\cap \sigma_{s_1s_2\dots s_m}  \neq\emptyset$$ for all but finitely many values of $n$. This is a contradiction because the domains $  D_{s_1s_2\dots s_n}$ lie in $ \Omega\smallsetminus\Omega_n $ and $ \left\lbrace \Omega_n\right\rbrace _n $ exhaust $ \Omega $, while $ \sigma_{s_1s_2\dots s_m} $ is compactly contained in $ \Omega $.

  Ohtsuka went further, proving the existence of plenty of more points in $ \partial\mathbb{D} $ that do not belong to $\alpha$-images, and characterizing and   classifying all of them.
\begin{thm}{\bf ({\normalfont \cite[Thm. 5]{Oht54}})}\label{thm:notalphaimage}
If a point $e^{i\theta}$ is not contained in any $\alpha$-image of any boundary component of $\Omega$, then one of the following holds.
\begin{enumerate}[label={\em(\alph*)}]
	\item There exists $n\in\mathbb{N}$ such that $e^{i\theta}$ belongs to the boundary of a connected component of $\pi^{-1}(\Omega_n)$. There are uncountably many such points in $\partial\DD$.
	\item The image of any curve $\eta\colon[0, 1)\to\DD$ landing at $e^{i\theta}$ is neither compactly contained in $\Omega$ nor tends to $\partial\Omega$ as $t\to 1^{-}$. Equivalently, if one denotes by $\{\tilde\sigma_j\}_j$ all the curves in $\pi^{-1}(\sigma_{s_1s_2\dots s_n})$ separating $0$ from $e^{i\theta}$, enumerating them such that $\tilde\sigma_n$ separates $0$ from $\tilde\sigma_{n+1}$, then the curves $\pi(\tilde\sigma_n)$ are neither compactly contained in $\Omega$ nor converging to $\partial\Omega$ as $n\to+\infty$. 
	
	\noindent Such points exist if and only if $\Omega$ is infinitely connected, and in this case, there are uncountably many of them.
\end{enumerate}
\end{thm}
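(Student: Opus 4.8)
\emph{Overall approach.} The plan is to build the whole proof around one dichotomy: whether or not there exists a curve landing at $e^{i\theta}$ whose $\pi$-image is compactly contained in $\Omega$. Assume $e^{i\theta}$ lies in no $\alpha$-image. If some curve $\eta\colon[0,1)\to\DD$ lands at $e^{i\theta}$ with $\pi(\eta)$ contained in a compact set $K\subset\Omega$, I would choose $n$ with $K\subset\Omega_n$; the tail of $\eta$ then lies in $\pi^{-1}(\Omega_n)$, hence (being connected) inside a single component $U$, so that $e^{i\theta}\in\overline U\cap\dD\subset\partial U$. This is exactly alternative (a). If, on the contrary, no curve to $e^{i\theta}$ has compactly contained image, I claim no curve to $e^{i\theta}$ can have its image landing on $\partial\Omega$ either: were $\pi(\eta)$ to land at a point of $\partial\Omega$, its landing set would be a connected subset of $\partial\Omega$, hence contained in a single boundary component $\Sigma$, and the converse half of Theorem~\ref{thm:alphaimage} would place $e^{i\theta}$ in an $\alpha$-image of $\Sigma$ --- contrary to assumption. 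Thus every curve to $e^{i\theta}$ has image neither compactly contained in $\Omega$ nor convergent to $\partial\Omega$, which is alternative (b). As the two options are exhaustive, this settles the dichotomy.

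\emph{Equivalence and cardinality in the two cases.} For the reformulation in (b), I would enumerate the crosscuts among $\bigcup_n\pi^{-1}(\sigma_{s_1\cdots s_n})$ that separate $0$ from $e^{i\theta}$ as a nested sequence $\tilde\sigma_1,\tilde\sigma_2,\dots$ ordered toward $e^{i\theta}$; any curve landing at $e^{i\theta}$ crosses all but finitely many of them, and conversely a curve concatenated across successive $\tilde\sigma_k$ realises the $\pi(\tilde\sigma_k)$ as samples of its image, so ``compactly contained / convergent to $\partial\Omega$'' passes between the curve and the sequence $\{\pi(\tilde\sigma_k)\}$. For the uncountability in (a), I would fix $n$ with $\Omega_n$ of connectivity greater than two; by Lemma~\ref{lem:restriction} a component $U$ of $\pi^{-1}(\Omega_n)$ is a universal cover of $\Omega_n$, so $\partial U\cap\dD$ is the (Cantor, hence uncountable) limit set of the corresponding subgroup. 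Uncountably many of these points are approached radially from within $U$, giving a compactly contained image and thus falling under (a); and such a point can lie in an $\alpha$-image only if it is a common boundary point of $U$ and of a region $\tilde D_m$ over some $D_{s_1\cdots s_m}$ with $m\ge n$, forcing it to be an endpoint of a separating crosscut, i.e. a hyperbolic fixed point --- which $\alpha$-images exclude. Hence uncountably many non-$\alpha$-image points satisfy (a).

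\emph{Existence in (b).} The final claim is that (b) occurs precisely when $\Omega$ is infinitely connected. If $\Omega$ is finitely connected, the exhaustion domains $D_{s_1\cdots s_n}$ eventually isolate a single boundary component and so become doubly connected or of punctured-disc type; by Lemma~\ref{lemma-lifts-of-curves}(a)--(b) the lifts of a fixed $\sigma_{s_1\cdots s_n}$ then no longer accumulate away from fixed points, so that along the separating crosscuts the levels of $\pi(\tilde\sigma_k)=\sigma_{w_k}$ increase along a single boundary address; the point is then either caught at a finite level (case (a)) or captured by that address (an $\alpha$-image), and (b) cannot occur. Conversely, if $\Omega$ is infinitely connected, Lemma~\ref{lemma-lifts-of-curves}(c) provides, at every level, a component of connectivity greater than two, so inside any crosscut neighbourhood I can perform either a \emph{level-raising} move (entering a deeper such hole) or a \emph{level-preserving} move (passing to another of the infinitely many accumulating lifts of the current $\sigma_w$, by (c.4)). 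Interleaving the two moves while forcing the crosscut neighbourhoods to shrink yields a null-chain with $\bigcap_k\overline{N_{\tilde\sigma_k}}=\{e^{i\theta}\}$ along which $\liminf_k|w_k|<\infty$ and $\limsup_k|w_k|=\infty$; the former keeps $\{\pi(\tilde\sigma_k)\}$ from converging to $\partial\Omega$ and the latter keeps it from being compactly contained, so $e^{i\theta}$ satisfies (b). Since each level-raising step offers at least two choices, the construction carries a Cantor set of parameters, giving uncountably many such points.

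\emph{Main obstacle.} I expect the delicate part to be the existence construction for (b): one must simultaneously interleave level-raising and level-preserving moves \emph{and} guarantee that the resulting nested crosscut neighbourhoods shrink to a single point rather than to a non-degenerate arc. This calls for a quantitative control of diameters --- choosing each successive crosscut deep enough, using the collapse of hyperbolic balls near $\partial\DD$ (Lemma~\ref{lem:superSP}(e)) --- together with careful bookkeeping of exactly which lifts separate $0$ from the moving target. The finitely connected direction, by contrast, should be routine once the tameness of doubly-connected ends in Lemma~\ref{lemma-lifts-of-curves}(a)--(b) is in hand.
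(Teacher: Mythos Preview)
The paper does not prove Theorem~\ref{thm:notalphaimage}; it is quoted from Ohtsuka~\cite{Oht54}. What the paper does prove are the reformulations in Section~\ref{sect-main-result} (Propositions~\ref{prop-associated-boundary-comp}, \ref{prop-bound-esc-bungee2}, \ref{prop:type-group}), and it is against those that your proposal should be measured.

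Your dichotomy argument is correct and is slightly different from the paper's route. You reduce to the converse of Theorem~\ref{thm:alphaimage}: if $e^{i\theta}$ lies in no $\alpha$-image, then no curve to $e^{i\theta}$ can have $\pi$-image converging to $\partial\Omega$, so either some curve stays compact (case~(a)) or every curve oscillates (case~(b)). The paper instead chooses a \emph{geodesic} Ohtsuka exhaustion so that the radius $R_\theta$ meets each fundamental crosscut exactly once, and reads the type of $e^{i\theta}$ off the depth sequence directly (Proposition~\ref{prop-bound-esc-bungee2}). Your argument is cleaner in that it avoids this geometric normalisation, at the price of invoking Theorem~\ref{thm:alphaimage} as a black box.

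For uncountability in~(a) you use that $\partial U\cap\partial\DD$ contains the Cantor limit set of the subgroup covering $\Omega_n$; this is exactly the mechanism behind the paper's Proposition~\ref{prop:type-group}(b). Your remark that a point of $\partial U\cap\partial\DD$ lying in an $\alpha$-image must be a crosscut endpoint (hence a hyperbolic fixed point, hence excluded) is correct but deserves one more line: if $e^{i\theta}\in\overline U\cap\overline{\tilde D_m}$ with $U\cap\tilde D_m=\emptyset$, then $e^{i\theta}$ lies in the closure of a boundary crosscut of $U$, since $\partial U\cap\DD$ consists of such crosscuts and they accumulate only on $\partial\DD$.

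For existence in~(b) your interleaving construction is essentially the same as the paper's zig-zag in the last part of Proposition~\ref{prop-associated-boundary-comp}. The paper, like you, asserts that the nested crosscut neighbourhoods shrink to a point without giving a detailed diameter estimate; your identification of this as the delicate step is apt. The standard fix is that at each stage one has infinitely many disjoint admissible crosscuts inside the current neighbourhood (Lemma~\ref{lemma-lifts-of-curves}(c.4)), and disjoint crosscuts in $\DD$ can have at most finitely many of Euclidean diameter $\geq\varepsilon$, so one may always choose the next crosscut of diameter $<2^{-k}$.
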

In Section \ref{ssec:Ohtsuka2}, we will see an alternative way of formulating Theorem \ref{thm:notalphaimage} by classifying points in $\partial\DD$ according to the image of their radial segment under $ \pi $. This classification will aid us in defining and exploring prime ends in multiply connected domains.

\subsection*{The Hopf-Tsuji-Sullivan Ergodic Theorem}
Originally proved by Hopf as a way to characterise hyperbolic surfaces with an ergodic geodesic flow \cite{Hop36}, this theorem has grown into a powerful set of equivalent conditions on Riemann surfaces -- many of them due to Tsuji and Sullivan, as the theorem's name suggests, but some were originally due to Ahlfors, and others (which are specific to plane domains) due to Nevanlinna. We state here only the equivalences that concern us; see \cite{Fer89} for a more comprehensive list. Proofs of most equivalences can be found in \cite[Chap. 11]{Tsu75}, or \cite[Chap. VII.5]{Nev70} for equivalences specific to plane domains.
\begin{thm}{\bf (Hopf-Tsuji-Sullivan Ergodic Theorem)}\label{thm-hopf-tsuji-sullivan}
Let $\Omega$ be a hyperbolic plane domain, and let $\pi\colon\DD\to\Omega$ be a universal covering with deck transformation group $\Gamma$. Then, the following are equivalent.
\begin{enumerate}[label={\normalfont (\alph*)}]
	\item $\Lambda_{NT}$ has full Lebesgue measure in $\DD$.
	\item $\partial\Omega$ has zero logarithmic capacity as a set in $ \widehat{\mathbb{C}} $.
	\item $\partial\Omega$ does not support a harmonic measure.
	\item $\pi$ does not belong to the Nevanlinna class, i.e.
	\[ \sup_{0 < r < 1} \int_0^{2\pi} \log^+\pi|(re^{i\theta})|\,d\theta = +\infty. \]
\end{enumerate}
\end{thm}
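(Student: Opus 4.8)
The plan is to assemble the classical ingredients into a cycle of implications, using the zero--one law for $\Lambda_{NT}$ (Lemma \ref{lemma-limit-sets}(c)) as the structural backbone and routing everything through potential theory on $\Omega$ and the function theory of $\pi$. I would first dispose of the low-connectivity cases: if $\Omega$ is simply or doubly connected then $\partial\Omega$ contains a continuum and is non-polar, while $\Lambda_{NT}$ has at most two points and is therefore null, so all four conditions fail simultaneously and the equivalence is immediate. Thus I assume connectivity greater than two throughout. For the equivalence (b) $\iff$ (c) the covering plays no role at all: it is pure potential theory that a closed set $E\subset\Chat$ has zero logarithmic capacity exactly when it is polar, and a polar boundary carries no (non-degenerate) harmonic measure, equivalently $\Omega$ admits no Green's function. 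I would invoke Frostman's theorem together with the standard characterisation of polar sets through solvability of the Dirichlet problem.

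Next I would establish (c) $\iff$ (d), which is the Nevanlinna half of the theorem and is where $\pi$ enters. If $\partial\Omega$ is non-polar, then $\Omega$ is Greenian and the subharmonic function $\log^+|\cdot|$ admits a least harmonic majorant $U$ on $\Omega$; pulling back through the covering, $U\circ\pi$ is harmonic on $\DD$ and majorises $\log^+|\pi|$, so by the Riesz representation $\pi$ is of bounded characteristic, i.e.\ NOT(d). Conversely, if $\partial\Omega$ is polar there is no such majorant, and since the cluster set of $\pi$ equals $\overline\Omega$ at every point of the (dense) limit set by Lemma \ref{lemma-limit-sets3}(a), the resulting oscillation forces the characteristic to blow up, giving (d). This is precisely Nevanlinna's argument \cite[Chap. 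VII.5]{Nev70}.

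It remains to link (a) with the analytic conditions, say via (a) $\iff$ (d). One direction is clean and uses only results already recorded: if $\pi$ is of bounded characteristic, i.e.\ NOT(d), then Fatou's theorem for the Nevanlinna class yields radial limits $\pi^*(e^{i\theta})$ almost everywhere, and by \ref{teo:A}(c) every point at which a radial limit exists lies outside $\Lambda_{NT}$; hence $\Lambda_{NT}$ is null, i.e.\ NOT(a). For the converse I would invoke the zero--one law (Lemma \ref{lemma-limit-sets}(c)) to reduce the claim ``$\pi\notin N \Rightarrow \Lambda_{NT}$ full'' to showing merely that $\Lambda_{NT}$ is \emph{non-null} in that case. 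To obtain this I would identify $\Lambda_{NT}$, up to a null set, with the set of directions $e^{i\theta}$ along which $\pi(R_\theta)$ is recurrent in $\Omega$ (does not land on $\partial\Omega$), and match this recurrence with conservativity of the geodesic flow on $\DD/\Gamma\cong\Omega$, equivalently recurrence of Brownian motion, equivalently the absence of a Green's function, i.e.\ condition (b). This is the genuine Hopf--Tsuji--Sullivan core, for which I would follow the conservativity/ergodicity dichotomy in \cite[Chap. 11]{Tsu75} (see also the comprehensive account in \cite{Fer89}).

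The main obstacle is exactly this last correspondence. The delicate point is to transfer the measure on $\partial\DD$ detecting non-tangential accumulation of the orbit $\Gamma\cdot 0$ to harmonic measure on $\partial\Omega$ through the boundary behaviour of $\pi$, and to prove the \emph{sharp} equivalence between full measure of the conical limit set and recurrence. Since $\pi$ is maximally wild on all of $\Lambda$ (its cluster set there is $\overline\Omega$ by Lemma \ref{lemma-limit-sets3}(a)), the transfer must be carried out off the limit set and only then extended by the zero--one law; in addition one must control the distortion between non-tangential approach in $\DD$ and the return behaviour of geodesics, and equivalently of Brownian paths, downstairs in $\Omega$. Everything outside this core reduces either to classical potential theory or to a direct application of the Fatou and Lehto--Virtanen theorems (Theorem \ref{thm-lehto-virtanen}) already recorded above.
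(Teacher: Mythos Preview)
The paper does not supply its own proof of this statement; Theorem~\ref{thm-hopf-tsuji-sullivan} is recorded in Section~\ref{section-prelimnary-universal-covering} as a classical preliminary, with proofs deferred to \cite[Chap.~11]{Tsu75} and \cite[Chap.~VII.5]{Nev70}. Your outline is consistent with those references for (b)$\iff$(c)$\iff$(d) and for the hard ergodic direction linking (a) to the potential-theoretic conditions, where you correctly isolate the conservativity/recurrence core and defer to Tsuji.

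The one structural issue is your route for NOT(d)$\Rightarrow$NOT(a): you invoke \ref{teo:A}(c), which in the paper is a \emph{new} result, proved only in Section~\ref{sect-main-result} downstream of the Main Theorem~\ref{thm-main-result-complete}, whereas Theorem~\ref{thm-hopf-tsuji-sullivan} sits in the preliminaries. There is no logical circularity---\ref{teo:A}(c) rests on Lemma~\ref{lem:non-tangential-curves} and Proposition~\ref{prop:type-group}, neither of which uses Hopf--Tsuji--Sullivan---but you are in effect proving \ref{teo:B} rather than the classical Theorem~\ref{thm-hopf-tsuji-sullivan}. The paper deliberately keeps these separate: the classical equivalences (a)--(d) are quoted as background, and \ref{teo:A}(c) is then used to \emph{append} the new condition ``$\pi^*$ exists a.e.''\ and obtain \ref{teo:B}. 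If you want a proof of Theorem~\ref{thm-hopf-tsuji-sullivan} that is self-contained relative to the paper's logical order, the classical argument for NOT(d)$\Rightarrow$NOT(a) runs through existence of the Green's function on $\Omega$ and convergence of the Poincar\'e series for $\Gamma$, not through radial limits of $\pi$.
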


Notice that, by another theorem of Nevanlinna, belonging to the Nevanlinna class implies the existence of radial limits of $\pi$ almost everywhere (see e.g. \cite[Thm. 20.2.11]{Conway2}).

\section{The universal covering from different perspectives}\label{sect-main-result}
Let $ \Omega $ be a multiply connected domain of connectivity greater than two, and consider the universal covering\[\pi\colon\mathbb{D}\longrightarrow \Omega.\]

\subsection{The universal covering acting on radii}
For the Riemann map of a simply connected domain, the image of a curve tending to $\partial\DD$ tends to the boundary of the domain. However, that is no longer the case for multiply connected domains. In particular, having a sequence $ \left\lbrace z_n \right\rbrace _n\subset\mathbb{D}$ with $ z_n\to\partial\mathbb{D} $, no longer implies that $ \pi(z_n)\to\partial\Omega $; and there exist points in the unit circle (possibly all of them) for which the cluster set is the whole $ \overline{\Omega} $. This motivates the following definition. 
\begin{defi}{\bf (Boundary behaviour of $ \pi $)}\label{def:escaping-bounded-bungee}
	Let $ \Omega $ be a multiply connected domain,  let $ \pi\colon\mathbb{D}\to \Omega $ be the universal covering and let $ e^{i\theta}\in\partial\mathbb{D} $. Let $ R_\theta(t)=\left\lbrace te^{i\theta} \colon t\in \left[ 0,1\right)  \right\rbrace $.
	\begin{itemize}
		\item  We say that $ e^{i\theta}\in\partial \mathbb{D} $ is {\em of escaping type} if $ \pi(R_\theta (t))\to\partial \Omega $, as $ t\to 1^{-} $.
		\item We say that $ e^{i\theta}\in\partial \mathbb{D} $ is {\em of bounded type} if $\left\lbrace  \pi(R_\theta (t))\colon t\in \left[ 0,1\right) \right\rbrace  $ is compactly contained in $ \Omega $.
		\item Otherwise, we say that $ e^{i\theta}\in\partial \mathbb{D} $ is {\em of bungee type}.
	\end{itemize}
\end{defi}

First, we prove that there is no loss of generality in considering the radial segment $ R_\theta $ in the previous definition, in the sense that $ \pi $ behaves likewise along any other curve  landing non-tangentially at $ e^{i\theta}\in\partial \mathbb{D} $. Moreover, we describe the angular cluster set in each situation.
\begin{prop}{\bf (Boundary behaviour of non-tangential curves)}\label{prop-escaping-bounded-bungee}	Let $ \Omega $ be a multiply connected domain,  let $ \pi\colon\mathbb{D}\to \Omega $ be the universal covering and let $ e^{i\theta}\in\partial\mathbb{D} $. 
	Let $ \eta $ be a curve landing non-tangentially at $ e^{i\theta}$.
		\begin{itemize}
		\item  If $ e^{i\theta}\in\partial \mathbb{D} $ is of {escaping} type, then $ \pi(\eta (t))\to\partial \Omega $, as $ t\to 1^{-} $, and
		\[ Cl_{\mathcal{A}}(\pi, e^{i\theta})= Cl_{R}(\pi, e^{i\theta})\subset \partial \Omega. \] In particular, $ Cl_{\mathcal{A}}(\pi, e^{i\theta}) $ is closed and contained in a unique boundary component of $\Omega$.
		\item If  $ e^{i\theta}\in\partial \mathbb{D} $ is of bounded type, then $\left\lbrace  \pi(\eta (t))\colon t\in \left[ 0,1\right) \right\rbrace  $ is compactly contained in $ \Omega $. Moreover,
		\[ Cl_{\mathcal{A}}(\pi, e^{i\theta})=  \Omega. \] In particular, $ Cl_{\mathcal{A}}(\pi, e^{i\theta}) $ is open.
		\item If $ e^{i\theta}\in\partial \mathbb{D} $ is of {bungee} type, then $\left\lbrace  \pi(\eta (t))\colon t\in \left[ 0,1\right) \right\rbrace  $ is neither compactly contained in $ \Omega $ nor converging to $ \partial \Omega $.
	\end{itemize}
\end{prop}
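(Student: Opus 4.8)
The plan is to reduce everything to the radial segment $R_\theta$ by exploiting that any curve landing non-tangentially at $e^{i\theta}$ stays at bounded \emph{two-sided} hyperbolic distance from $R_\theta$ near $e^{i\theta}$, and that the three types in Definition \ref{def:escaping-bounded-bungee} are coarse properties preserved under such bounded perturbations. First I would record the geometric core. Since $R_\theta$ is a Euclidean radius, it is a hyperbolic geodesic ray landing at $e^{i\theta}$ (Proposition \ref{prop:hypdisc}(c)). Taking $\eta_0 = R_\theta$ in the equivalence between Euclidean and hyperbolic Stolz angles, a non-tangential curve $\eta$ lies, for $t$ close to $1$, inside a hyperbolic Stolz angle $\Delta_{R_\theta, r}(e^{i\theta})$; that is, there are $r > 0$ and $t_0$ with $\mathrm{dist}_{\mathbb{D}}(\eta(t), R_\theta) < r$ for $t \in (t_0, 1)$. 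For the reverse inequality I would use the nearest-point projection $P$ onto the geodesic carrying $R_\theta$: the map $P \circ \eta$ is continuous and, because $\eta(t) \to e^{i\theta}$ while staying within $r$ of $R_\theta$, its parameter along $R_\theta$ tends to $+\infty$; by the intermediate value theorem every $R_\theta(s)$ with $s$ close to $1$ equals $P(\eta(t))$ for some $t$ (with $t \to 1$ as $s \to 1$), whence $\mathrm{dist}_{\mathbb{D}}(R_\theta(s), \eta) < r$. Throughout I will use that $\pi$ is $1$-Lipschitz for the hyperbolic metrics, so $\mathrm{dist}_\Omega(\pi(a), \pi(b)) \le \mathrm{dist}_{\mathbb{D}}(a, b)$ (Lemma \ref{lem:superSP}(a)--(b)), together with the shrinking of spherical diameters of hyperbolic balls near $\partial\Omega$ (Lemma \ref{lem:superSP}(e)) and completeness (Lemma \ref{lem:superSP}(c)).

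For the escaping case, given $\eta(t)$ let $w(t) = P(\eta(t)) = R_\theta(s(t)) \in R_\theta$, so $\mathrm{dist}_\Omega(\pi(\eta(t)), \pi(w(t))) < r$ and $s(t) \to 1$. Since $\pi(w(t)) \to \partial\Omega$, Lemma \ref{lem:superSP}(e) forces $\mathrm{diam}_{\widehat{\mathbb{C}}} D_\Omega(\pi(w(t)), r) \to 0$, and hence $\pi(\eta(t)) \to \partial\Omega$. The same estimate shows $Cl_\eta(\pi, e^{i\theta}) \subset Cl_R(\pi, e^{i\theta})$: any limit $p$ of $\pi(\eta(t_n))$ is also the limit of the radial points $\pi(w(t_n))$, which tend to $e^{i\theta}$ along $R_\theta$. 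Taking the union over non-tangential $\eta$ gives $Cl_{\mathcal{A}} \subset Cl_R$, while the reverse inclusion is trivial because $R_\theta$ itself lands non-tangentially; thus $Cl_{\mathcal{A}} = Cl_R \subset \partial\Omega$. As radial cluster sets are connected and compact, this set is closed and, being connected, lies in a single boundary component.

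For the bounded case, write $\pi(R_\theta) \subset K$ with $\overline{K} \subset \Omega$ compact. The closed hyperbolic $r$-neighbourhood of $\overline{K}$ is bounded and, by completeness (Lemma \ref{lem:superSP}(c)) and local compactness, compact in $\Omega$; since $\pi(\eta(t))$ lies within $\mathrm{dist}_\Omega \le r$ of $\pi(w(t)) \in K$, the image $\pi(\eta)$ is compactly contained in $\Omega$, whence $Cl_\eta \subset \Omega$ for every non-tangential $\eta$, giving $Cl_{\mathcal{A}} \subset \Omega$. The substantial step is the reverse inclusion $\Omega \subset Cl_{\mathcal{A}}$. Fixing $q \in \Omega$, I would set $M := \sup_{k \in K} \mathrm{dist}_\Omega(k, q) < \infty$ and, using Lemma \ref{lem:superSP}(b), choose for each $z_n := R_\theta(t_n)$ (with $t_n \to 1$) a preimage $\tilde q_n \in \pi^{-1}(q)$ with $\mathrm{dist}_{\mathbb{D}}(z_n, \tilde q_n) = \mathrm{dist}_\Omega(\pi(z_n), q) \le M$. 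Each $\tilde q_n$ then lies within $M$ of $R_\theta$, so joining $z_n$ to $\tilde q_n$ by a geodesic detour of length $\le M$ and splicing these detours into $R_\theta$ produces a curve $\eta$ contained in $\Delta_{R_\theta, M}(e^{i\theta})$ that lands non-tangentially at $e^{i\theta}$ (the detour basepoints $z_n \to e^{i\theta}$) and whose image passes through $q$ infinitely often. Hence $q \in Cl_\eta \subset Cl_{\mathcal{A}}$, and $Cl_{\mathcal{A}} = \Omega$, which is open.

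Finally, the bungee case follows by contraposition using the two-sided bound. If $\pi(\eta)$ converged to $\partial\Omega$, then picking $s_n \to 1$ with $\pi(R_\theta(s_n))$ staying in a compact subset of $\Omega$ (possible since $R_\theta$ is not of escaping type) and matching points $\eta(t_n)$ within $r$ of $R_\theta(s_n)$ would keep $\pi(\eta(t_n))$ in a compact set, a contradiction; if instead $\pi(\eta)$ were compactly contained in $\Omega$, then choosing $s_n \to 1$ with $\pi(R_\theta(s_n)) \to \partial\Omega$ (possible since $R_\theta$ is not of bounded type) and matching points would force $\pi(\eta(t_n)) \to \partial\Omega$ via Lemma \ref{lem:superSP}(e), again a contradiction. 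Thus $\pi(\eta)$ is neither compactly contained nor convergent to $\partial\Omega$. The main obstacle I anticipate is the construction of the non-tangential curve through prescribed preimages in the bounded case, together with verifying the two-sided distance bound cleanly; once these are in place, all three cases reduce to routine applications of Lemma \ref{lem:superSP}.
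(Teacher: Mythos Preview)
Your proof is correct and follows essentially the same strategy as the paper: both use that a non-tangential curve stays at bounded hyperbolic distance from $R_\theta$, that $\pi$ is $1$-Lipschitz for the hyperbolic metrics, and Lemma \ref{lem:superSP}(e) to transfer the escaping/bounded/bungee trichotomy from $R_\theta$ to $\eta$. The only notable difference is in the step $\Omega\subset Cl_{\mathcal{A}}(\pi,e^{i\theta})$ for the bounded case: the paper simply observes that the preimages $\tilde q_n$ of $q$ lie in a fixed hyperbolic Stolz angle and invokes the equivalent definition $Cl_{\mathcal{A}}=\bigcup_\alpha Cl_{\Delta_\alpha}$ directly, whereas you splice geodesic detours into $R_\theta$ to build an explicit non-tangential curve passing through the $\tilde q_n$; your route works but is slightly more laborious than necessary.
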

\begin{proof}
	The proof relies strongly on the fact that the radius $ R_\theta $ and the curve $ \eta $ are contained in some hyperbolic Stolz angle $ \Delta_{R_\theta, r} (e^{i\theta}) $, for some $ r>0 $, the invariance of the hyperbolic density under the universal covering, and the fact that hyperbolic density tends to infinity as we approach $ \partial \Omega $. Indeed, for every $ \tilde z,\tilde w\in\mathbb{D} $,
	\[ \textrm{dist}_\Omega (\pi(\tilde z),\pi(\tilde w))\leq \textrm{dist}_\DD(\tilde z,\tilde w) ,\] and, for all $ z\in\Omega $, $  \dist_\Omega(z, \partial\Omega)=\infty$.

	First, let $ e^{i\theta} $ be of escaping type, so $ Cl_{R}(\pi, e^{i\theta})\subset \Sigma $ for some boundary component $ \Sigma \subset\partial \Omega  $. If $ \eta \subset \Delta_{R_\theta, r} (e^{i\theta}) $, we have that
	$$\textrm{dist}_\Omega(\pi (z), \pi(R_\theta))<r \text{ for all $z\in\eta$}. $$
	By Lemma \ref{lem:superSP}(e), it follows that $ Cl_{\eta}(\pi, e^{i\theta})=Cl_{R}(\pi, e^{i\theta}) \subset \Sigma  $.
	Therefore, $ Cl_{\mathcal{A}}(\pi, e^{i\theta})= Cl_{R}(\pi, e^{i\theta}) $, and since $ Cl_{R}(\pi, e^{i\theta}) $ is always closed, it follows that $ Cl_{\mathcal{A}}(\pi, e^{i\theta}) $ is also closed.

	Second, assume $ e^{i\theta} $ is bounded. For all $ r>0 $, consider the set
	\[\Omega_r \coloneqq \left\lbrace z\in\Omega\colon \textrm{dist}_\Omega (z, \pi(R_\theta))<r\right\rbrace, \] which is compactly contained in $ \Omega $. Then, if $ \eta\subset \Delta_{R_\theta, r} (e^{i\theta})   $, it follows that $ \pi (\eta)\subset\Omega_r $, so it is compactly contained in $ \Omega $. 
	
	Next we have to see that  $ Cl_{\mathcal{A}}(\pi, e^{i\theta})=  \Omega $. One inclusion is clear from the previous argument: if $ z\in Cl_{\mathcal{A}}(\pi, e^{i\theta})$, then $ z\in Cl_{\eta}(\pi, e^{i\theta})$ for some curve  $ \eta $ landing non-tangentially at $ e^{i\theta} $, so $ z\in\Omega $.
	
	To prove the other inclusion, let $w\in\Omega$. The fact that $e^{i\theta}$ is of bounded type implies, by definition, that $\pi(R_\theta)$ has some accumulation point $z^*\in Cl_R(\pi, e^{i\theta})\subset \Omega$, which is at some finite distance $r$ from $w$ (notice that $r$ depends on $w$). Next, by the definition of the radial cluster set, there exists a sequence of points $\tilde z_n\in R_\theta$ such that $\pi(\tilde z_n)\to z^*$ as $n\to\infty$, and in particular such that $\textrm{dist}_\Omega(\pi(\tilde z_n), z^*) < \delta$ for some fixed $\delta > 0$. By the triangle inequality,
	\[ \textrm{dist}_\Omega(\pi(\tilde z_n), w) \leq \textrm{dist}_\Omega(\pi(\tilde z_n), z^*) + \textrm{dist}_\Omega(z^*, w) < \delta + r \]
	for all $n\in\mathbb{N}$; in particular, by Lemma \ref{lem:superSP}(b), there exist points $\tilde w_n\in D_\DD(\tilde z_n, r + \delta)$ such that $\pi(\tilde w_n) = w$. Since $\tilde z_n\in R_\theta$, this implies that the points $\tilde w_n$ belong to the hyperbolic Stolz angle $\Delta_{R_\theta, r + \delta}(e^{i\theta})$. Hence, $w\in Cl_{\mathcal{A}}(\pi, e^{i\theta}))$. Since $w\in\Omega$ was arbitrary we have $$\Omega\subset Cl_{\mathcal{A}}(\pi, e^{i\theta}),$$ as desired.

	The third case is deduced arguing by contradiction. Indeed, the existence of a non-tangential curve $ \eta $ landing at $ e^{i\theta} $ whose image is compactly contained in $ \Omega $ would force the radius $ R_\theta $ to have the same behaviour, and the same if $ \pi(\eta) $ accumulates on the boundary. Hence, if $ e^{i\theta}\in\partial \mathbb{D} $ is of bungee type, then $\left\lbrace  \pi(\eta (t))\colon t\in \left[ 0,1\right) \right\rbrace  $ can neither be compactly contained in $ \Omega $ nor converge to $ \partial \Omega $. This ends the proof of the proposition; see Figure \ref{fig-bounded-escaping-bungee} for an illustration.
	\begin{figure}[h]\centering
		\includegraphics[width=14cm]{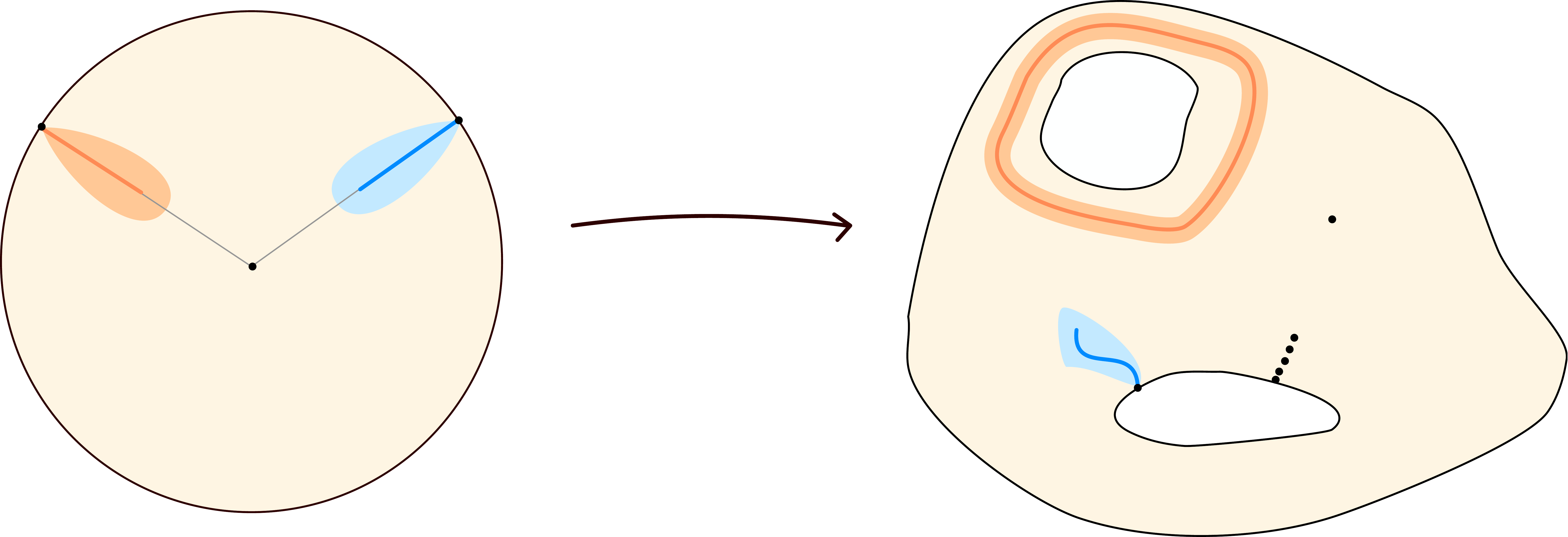}
		\setlength{\unitlength}{14cm}
						\put(-0.56, 0.21){$ \pi $}
		\put(-1, 0.31){$ \mathbb{D} $}
		\put(-0.14, 0.3){$ \Omega $}
		\put(-1.02, 0.25){$ e^{i\theta_1} $}
			\put(-0.7, 0.26){$ e^{i\theta_2} $}
				\put(-0.93, 0.23){$ R_{\theta_1} $}
			\put(-0.8, 0.24){$ R_{\theta_2} $}
				\put(-0.3, 0.17){$ \pi(R_{\theta_1}) $}
			\put(-0.4, 0.1){$ \pi(R_{\theta_2} )$}
		\caption{\footnotesize A geometric interpretation of Proposition \ref{prop-escaping-bounded-bungee}, where $ e^{i\theta_1} $  and $ e^{i\theta_2} $ are points of escaping  and bounded type, respectively, and radial segments and hyperbolic Stolz angles at these points, and their images under $ \pi $, are represented.}\label{fig-bounded-escaping-bungee}
	\end{figure}
\end{proof}

As preparation for our main result, Theorem \ref{thm-main-result-complete}, we relate the behaviour of the image of curves approaching $e^{i\theta}$ to the limit sets of $\Gamma$, the group of deck transformations.
\begin{lemma}{\bf (Non-tangential curves and limit sets)}\label{lem:non-tangential-curves}
	Let $ \Omega $ be a multiply connected domain, let $ \pi\colon \mathbb{D}\to\Omega $ be a universal covering, and let $ e^{i\theta} \in\partial\mathbb{D}$. Then, the following hold.
	\begin{enumerate}[label={\em (\alph*)}]
		\item If $e^{i\theta}\in\partial\DD\smallsetminus \Lambda_{NT}(\Gamma)$, then $Cl_{\A}(\pi, e^{i\theta}) \subset \partial\Omega$.
		\item If $e^{i\theta}\in \Lambda_{NT}(\Gamma)$, and $\eta\subset\DD$ is a curve landing non-tangentially at $e^{i\theta}$, then $$Cl_\eta(\pi, e^{i\theta})\cap\Omega \neq \emptyset.$$ In particular, $Cl_\mathcal{A}(\pi, e^{i\theta})\cap\Omega \neq \emptyset$.
	\end{enumerate}
\end{lemma}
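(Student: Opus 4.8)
The plan is to translate both statements into the escaping/bounded/bungee trichotomy of Proposition~\ref{prop-escaping-bounded-bungee}, using three facts: that $\pi$ is distance non-increasing (Lemma~\ref{lem:superSP}), that every point $\gamma(0)$ with $\gamma\in\Gamma$ is a preimage of the fixed point $z_0:=\pi(0)$, and that $e^{i\theta}\in\Lambda_{NT}$ precisely when some orbit sequence $\gamma_n(0)$ enters a fixed hyperbolic Stolz angle at $e^{i\theta}$ and converges to it (the equivalent formulation given after Definition~\ref{def-limit-set}). I will also use repeatedly that hyperbolic balls of fixed radius have Euclidean diameter tending to $0$ as their centre approaches $\partial\DD$ (Proposition~\ref{prop:hypdisc}(b)), so that a point within bounded hyperbolic distance of a sequence tending to $e^{i\theta}$ must itself tend to $e^{i\theta}$.

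\textbf{Part (a).} I will show, arguing by contradiction, that $e^{i\theta}\notin\Lambda_{NT}$ forces $e^{i\theta}$ to be of escaping type, whence Proposition~\ref{prop-escaping-bounded-bungee} immediately yields $Cl_\A(\pi,e^{i\theta})\subset\partial\Omega$. Suppose $e^{i\theta}$ is not of escaping type. Then $\pi(R_\theta(t))$ does not leave every compact subset of $\Omega$, so by completeness (Lemma~\ref{lem:superSP}(c)) there are $t_n\to 1^-$ and $M>0$ with $\textrm{dist}_\Omega(z_0,\pi(t_ne^{i\theta}))\le M$. By Lemma~\ref{lem:superSP}(b) there exist $\gamma_n\in\Gamma$ with $\textrm{dist}_\DD(0,\gamma_n(t_ne^{i\theta}))\le M$; applying the isometry $\gamma_n^{-1}$ gives $\textrm{dist}_\DD(\gamma_n^{-1}(0),t_ne^{i\theta})\le M$. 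Hence the points $\delta_n(0):=\gamma_n^{-1}(0)$ lie in a fixed hyperbolic Stolz angle $\Delta_{R_\theta,M'}(e^{i\theta})$ and, since $t_ne^{i\theta}\to e^{i\theta}$ while hyperbolic $M$-balls shrink near $\partial\DD$, we get $\delta_n(0)\to e^{i\theta}$ non-tangentially. As $\delta_n\in\Gamma$, this says $e^{i\theta}\in\Lambda_{NT}$, the desired contradiction.

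\textbf{Part (b).} Since $e^{i\theta}\in\Lambda_{NT}$, fix $\gamma_n\in\Gamma$ with $\gamma_n(0)\to e^{i\theta}$ inside a hyperbolic Stolz angle, and note that $\pi(\gamma_n(0))=z_0$ for every $n$. Let $\eta$ land non-tangentially at $e^{i\theta}$, so that $\eta(t)$ eventually lies in a hyperbolic Stolz angle along $R_\theta$. Conjugating $\DD$ to $\mathbb{H}$ so that $e^{i\theta}=\infty$, both Stolz angles become Euclidean cones about the imaginary axis and horocycles at $e^{i\theta}$ become horizontal lines. For each large $n$ I pair $\gamma_n(0)$ with a point $\eta(t_n)$ lying on the same horocycle (same imaginary part); such $t_n$ exist and satisfy $t_n\to1$ because $\textrm{Im}\,\eta(t)\to+\infty$ continuously as $t\to1$. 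A direct estimate (see below) shows that two points on a common horocycle, both inside fixed cones, stay at bounded hyperbolic distance, so $\textrm{dist}_\DD(\gamma_n(0),\eta(t_n))\le C$ with $C$ independent of $n$. Then
\[ \textrm{dist}_\Omega\big(z_0,\pi(\eta(t_n))\big)=\textrm{dist}_\Omega\big(\pi(\gamma_n(0)),\pi(\eta(t_n))\big)\le \textrm{dist}_\DD(\gamma_n(0),\eta(t_n))\le C, \]
so the points $\pi(\eta(t_n))$ remain in the compact set $\overline{D_\Omega(z_0,C)}\subset\Omega$. A convergent subsequence then produces a point of $Cl_\eta(\pi,e^{i\theta})\cap\Omega$. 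Finally, taking $\eta=R_\theta$ (which lands radially, hence non-tangentially) and using $Cl_R\subset Cl_\A$ gives $Cl_\A(\pi,e^{i\theta})\cap\Omega\neq\emptyset$.

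\textbf{Main obstacle.} The heart of (b) is the geometric estimate that two non-tangential approaches to $e^{i\theta}$ sharing a horocycle stay at bounded hyperbolic distance; this is most transparent in the model above, where the cones read $\{|\textrm{Re}\,z|<\kappa\,\textrm{Im}\,z\}$ and the identity $\cosh\textrm{dist}_\mathbb{H}(x_1+iy,x_2+iy)=1+\tfrac{(x_1-x_2)^2}{2y^2}$ makes the bound explicit, since $|x_1-x_2|\le 2\kappa y$ there. The remaining delicate point is the bookkeeping that matches $\gamma_n(0)$ with $\eta(t_n)$: one must check that the matching parameters $t_n$ tend to $1$ and that $\eta(t_n)$ has already entered the cone, both of which follow from $\eta$ landing non-tangentially together with $\textrm{Im}\,\eta(t)\to+\infty$.
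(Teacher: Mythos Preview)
Your argument is correct and follows essentially the same logic as the paper's proof: in both parts the key step is to use Lemma~\ref{lem:superSP}(b) to produce $\Gamma$-orbit points at bounded hyperbolic distance from a non-tangential sequence, then exploit that bounded hyperbolic distance forces non-tangential convergence. The only organisational difference is that in (a) the paper argues directly from a hypothetical $w\in Cl_{\A}(\pi,e^{i\theta})\cap\Omega$, whereas you first reduce to the radial segment and then invoke Proposition~\ref{prop-escaping-bounded-bungee} to pass from ``escaping type'' to the full angular conclusion; and in (b) your horocycle-matching in the $\mathbb{H}$ model is a more explicit realisation of what the paper states somewhat tersely (``we can find points $\tilde z_n\in\eta$ with $\textrm{dist}_\DD(\tilde w_n,\tilde z_n)<r$'').
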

\begin{proof}
	To prove (a), assume that there exists $w\in\Omega$ such that $w\in Cl_\A(\pi, e^{i\theta})$. By the definition of the angular cluster set, there exists a Stolz angle $\Delta$ at $ e^{i\theta} $ (which we can take to be a hyperbolic Stolz angle) and a sequence $\left\lbrace \tilde z_n\right\rbrace _{n}\subset \Delta$ such that $\pi(\tilde z_n)\to w$. Taking now $r > 0$ small enough, the hyperbolic disc $D_\Omega(w, r)$ is simply connected and contains infinitely many of the values $\pi(\tilde z_n)$. By Lemma \ref{lem:superSP}(b), there exists a sequence $\left\lbrace \tilde w_n\right\rbrace _n\subset\DD$ such that  $ \textrm{dist}_\mathbb{D}(\tilde w_n, \tilde z_n)<r $ and $\pi(\tilde w_n) = w$. Therefore, the points $\tilde w_n$ also belong to a hyperbolic Stolz angle $\Delta'\supset \Delta$ and, since $\pi(\tilde w_n) = w$, there exists, for any $n\geq 1$, a deck transformation $ \gamma_n \in \Gamma$ with $ \gamma_n(\tilde w_1) = \tilde w_n$. Hence, $ \gamma_n(\tilde w_1)\to e^{i\theta} $ non-tangentially, showing that $e^{i\theta}\in \Lambda_{NT}(\Gamma)$; a contradiction.

	The proof of (b) proceeds on a similar logic. Let $\eta\subset\DD$ be a curve landing non-tangentially at $e^{i\theta}\in \Lambda_{NT}(\Gamma)$. By definition of the non-tangential limit set, for every $w\in \Omega$ we can take a hyperbolic Stolz angle $\Delta\subset\DD$ at $e^{i\theta}$ and a sequence $\left\lbrace \tilde w_n\right\rbrace _n\subset\Delta$ such that $\pi(\tilde w_n) = w$. Wihtout loss of generality we can assume that both $\eta$ and $\left\lbrace \tilde w_n\right\rbrace _n$ belong to the same hyperbolic Stolz angle $\Delta$. Thus, we can find points $\tilde z_n\in \eta$ such that $$\textrm{dist}_\Omega(w, \pi(\tilde z_n)) \leq \textrm{dist}_\DD(\tilde w_n, \tilde z_n) < r,$$ where $r$ is a constant defined solely by $\Delta$. It follows that the sequence $\left\lbrace \pi(\tilde z_n)\right\rbrace _n$ belongs to $\overline{D_\Omega(w, r)}$, a compact set, and so has an accumulation point $w^*\in \Omega$. This point, by construction, belongs to $Cl _\eta(\pi, e^{i\theta})$, concluding the proof. 
\end{proof}

\subsection{Admissible crosscuts and null-chains}\label{ssec:crosscuts}
In this section, we analyse the crosscuts of $\DD$ generated by lifting the boundary curves $\sigma_{s_1s_2\dots s_n}$ of a given Ohtsuka exhaustion of $\Omega$. We start with the following definition.
\begin{defi}{\bf (Admissible crosscut and null-chain)}\label{def-admissible-crosscut-null-chain}
		Let $ \Omega $ be a multiply connected domain, let $ \pi\colon \mathbb{D}\to\Omega $ be a universal covering, and let $ e^{i\theta} \in\partial\mathbb{D}$. We say that a crosscut $ C $ at $ e^{i\theta} $, with crosscut neighbourhood $ N_C$, is {\em admissible} if either $ \pi (C) $ is a non-contractible Jordan curve, or $ C $ is non-degenerate and $ \pi (N_C) $ is simply connected.

		\noindent An  {\em admissible null-chain} is a collection of admissible crosscuts $ \left\lbrace C_m\right\rbrace _m \subset \mathbb{D}$ such that\begin{enumerate}[label={(\alph*)}]
			\item the crosscuts $ C_m  $ are pairwise disjoint in $ {\mathbb{D} }$;
			\item if $N_m := N_{C_m}$ is the crosscut neighbourhood of $ e^{i\theta} $ bounded by $ C_m $, then $ N_{m+1}\subset N_m $;
			\item $ \bigcap\limits_n \overline{N_m}=\left\lbrace e^{i\theta}\right\rbrace  $.
		\end{enumerate}
\end{defi}
Notice that an admissible crosscut is a (possibly degenerate) crosscut in $ \mathbb{D} $, whose image under $ \pi $ divides $ \Omega $ in precisely two subdomains. Note that admissible null-chains may have degenerate crosscuts, and therefore they are not null-chains in the strict sense of Definition \ref{null-chain} (see also Sect. \ref{subsect-prime-ends}, where we prove that such unusual null-chains correspond to isolated boundary points of $\partial \Omega$).

\subsection*{Lifting non-contractible curves and constructing null-chains}
Next, consider a given Ohtsuka exhaustion $\{\Omega_n\}_n$ of $\Omega$. First, according to Lemma \ref{lemma-lifts-of-curves}, the lift of each boundary curve $ \sigma_{s_1s_2\dots s_n}\subset\partial\Omega_n$ consists of countably many disjoint crosscuts in $ \mathbb{D} $ (recall that we are assuming that our domain has connectivity greater than two). Hence, lifting all the boundary curves gives rise to a countable collection of disjoint crosscuts in $ \mathbb{D} $, which have different endpoints whenever they correspond to different preimages of the same boundary curve $ \sigma_{s_1s_2\dots s_n}  $ or to two non-homotopic boundary curves. Notice that, by construction, all these crosscuts are admissible.

As opposed to the case of simply connected domains, for which the uniqueness of nested preimages between crosscuts is preserved under the Riemann map, the nestedness relationship among the curves $ \sigma_{s_1s_2\dots s_n}  $ and the domains $ D_{s_1s_2\dots s_n}  $  is not preserved by $ \pi $ in general. Indeed, we shall distinguish the following two cases.
\begin{itemize}
	\item If $ \Sigma \subset\partial\Omega$ is an isolated boundary component, then there exists a level of the exhaustion $ \Omega_n $ for which a component $ \sigma_\Sigma$ of $\partial \Omega_n $ separates $ \Sigma $ from the rest of the boundary, and the associated domain $ D_\Sigma $ is doubly connected. Then, according to Lemma \ref{lemma-lifts-of-curves}(d), $ \pi^{-1}(D_\Sigma) $ consists of countably many disjoint crosscut neighbourhoods, each of them bounded by a preimage $\tilde\sigma_\Sigma$ of $ \sigma_\Sigma $ (see Fig. \ref{fig-isolated-boundary-component}). Moreover, any non-contractible curve $\sigma$ homotopic to $ \sigma_\Sigma  $ in $ D_\Sigma $, including the boundary component of $\partial\Omega_m$, $m > n$, that surrounds $\Sigma$, admits a unique nested crosscut $\tilde\sigma$ with the same endpoints as $\tilde\sigma_\Sigma$, and the uniqueness of nesting is preserved (provided the right choice of preimage is made).

	\item  If $ \Sigma \subset\partial\Omega$ is a non-isolated boundary component, with sequence $ \underline{s}(\Sigma)=s_1s_2\dots $, then none of the domains $ D_{s_1s_2\dots s_n} $, $ n\geq 1 $, is doubly connnected. Then, according to Lemma \ref{lemma-lifts-of-curves}(d), when one considers the crosscuts arising from lifting $ \sigma_{s_1s_2\dots s_n} $, no unique nesting relation is satisfied. More precisely, inside any connected component of $ \pi^{-1}(D_{s_1s_2\dots s_n} ) $, there are countably many choices for a component of  $ \pi^{-1}(D_{s_1s_2\dots s_{n+1}} ) $. See Figure \ref{fig-isolated-boundary-component}, and compare with \cite[Sect. 4]{Oht54}.
\end{itemize}

\begin{figure}[h]
	\captionsetup[subfigure]{labelformat=empty, justification=centering}
	\begin{subfigure}{\textwidth}
		\includegraphics[width=14cm]{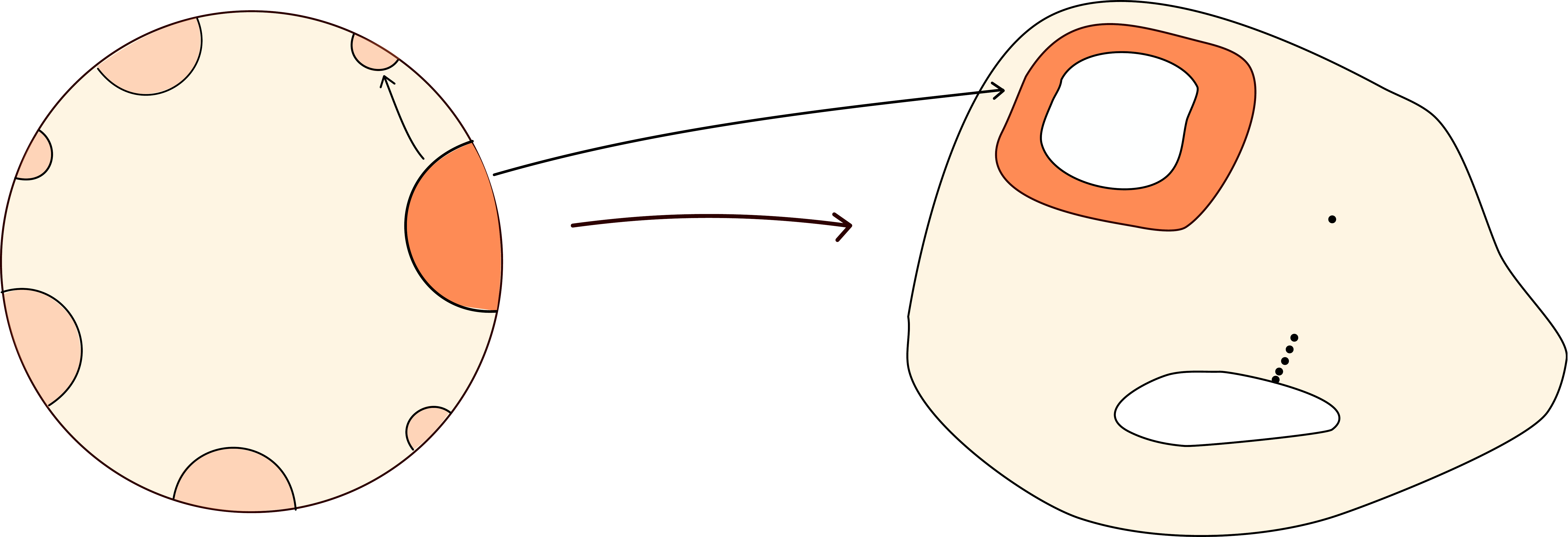}
		\setlength{\unitlength}{14cm}
				\put(-0.56, 0.21){$ \pi $}
		\put(-1, 0.31){$ \mathbb{D} $}
		\put(-0.14, 0.3){$ \Omega $}
				\put(-0.805, 0.25){\footnotesize$ \gamma\in\Gamma$}
						\put(-0.72, 0.2){$ N $}
								\put(-0.27, 0.28){\small $ \Sigma$}
									\put(-0.235, 0.28){\small $ D_\Sigma$}
										\put(-0.34, 0.19){$\sigma_\Sigma$}
											\put(-0.56, 0.275){$ \pi|_N$}
		\caption{\footnotesize Lifts of non-contractible curves around isolated boundary components.}
	\end{subfigure}

	\vspace{0.7cm}
	
	\begin{subfigure}{\textwidth}
		\includegraphics[width=14cm]{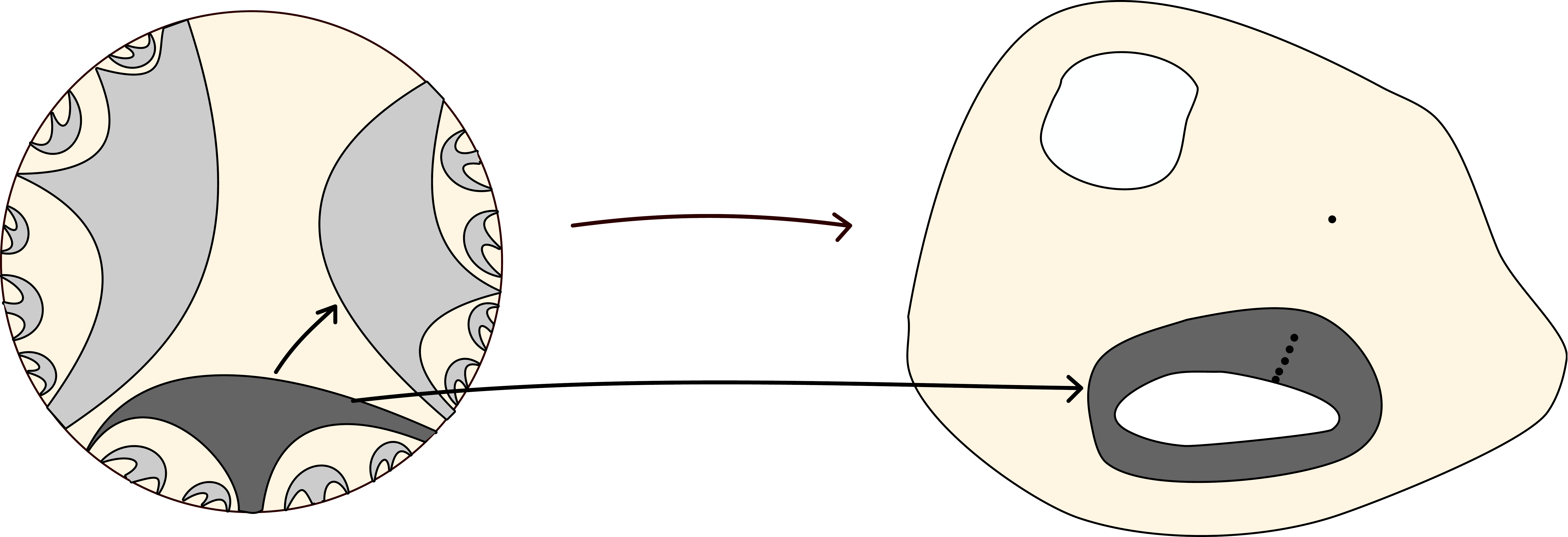}	
				\setlength{\unitlength}{14cm}
			\put(-0.56, 0.21){$ \pi $}
		\put(-1, 0.31){$ \mathbb{D} $}
		\put(-0.14, 0.3){$ \Omega $}
						\put(-0.865, 0.12){\footnotesize$ \gamma\in\Gamma$}
							\put(-0.56, 0.105){$ \pi|_N$}
											\put(-0.9, 0.05){$ N $}
		\caption{\footnotesize Lifts of non-contractible curves around non-isolated boundary components.}
	\end{subfigure}
	\caption{\footnotesize Schematic representation of the two situations that can arise when lifting a domain $ D_{s_1s_2\dots s_n} $ in Ohtsuka exhaustion, depending on whether the non-contractible curve surrounds an isolated or a non-isolated boundary component. }\label{fig-isolated-boundary-component}
\end{figure}

Now let us build null-chains for points in the unit circle, out of the crosscuts $\pi^{-1}(\sigma_{s_1s_2\dots s_n})$, in the following way.
\begin{defi}{\bf (Fundamental sequence of crosscuts)}
	Let $ \Omega $ be a mutiply connected domain, with an Ohtsuka exhaustion $ \left\lbrace \Omega_n\right\rbrace _n $. Let $\pi\colon\mathbb{D}\to\Omega$ be a universal covering map, and let $ e^{i\theta}\in\partial\mathbb{D} $.
	Consider all the crosscuts $ \left\lbrace C_m\right\rbrace _m $ out of all the curves $\pi^{-1}(\sigma_{s_1s_2\dots s_n})$ separating $0$ from $e^{i\theta}$ (including degenerate crosscuts terminating at $e^{i\theta}$), and order them in such a way that $ C_m $ separates $ C_{m+1} $ from $0$. We call this the \emph{sequence of fundamental crosscuts} at $e^{i\theta}$.
\end{defi}

Observe that the fundamental sequence of crosscuts  at a given point $ e^{i\theta}\in\partial\mathbb{D} $ depends on the original Ohtsuka exhaustion $ \left\lbrace \Omega_n\right\rbrace _n $.
Note also that, if $e^{i\theta}\in\partial\DD$ is a hyperbolic fixed point, its sequence of fundamental crosscuts may be finite, or even empty. Moreover, the obtained sequence of fundamental crosscuts may not be an admissible null-chain, since the corresponding crosscut neighbourhoods might not shrink to $e^{i\theta}$ (for instance if $e^{i\theta}$ is regular). In the same vein, the corresponding crosscut neighbourhoods might behave very poorly under $\pi$ -- if $e^{i\theta}$ is singular, their image will always cover $\Omega$. However, we are interested in these crosscuts in relation to the classification of points on the unit circle, and not for the purpose of building a theory of prime ends -- for that, see Section \ref{subsect-prime-ends}.

\subsection{The depth of a point on the unit circle}\label{ssec:depth} As we shall see in Theorem \ref{thm-main-result-complete},
it turns out that the boundary behaviour of $\pi$ at $e^{i\theta}$ is completely determined by the behaviour of the sequence $\left\lbrace \pi(C_m)\right\rbrace _m$ of images of fundamental crosscuts at $e^{i\theta}$ obtained from an Ohtsuka exhaustion of $\Omega$ -- in fact, we have already seen hints of this in the form of Theorem \ref{thm:notalphaimage}. This motivates the following definition.
\begin{defi}{\bf (Sequence of depths)}\label{def:depthalpha}
	Let $ e^{i\theta}\in\partial\mathbb{D} $, and let $ \left\lbrace C_m\right\rbrace _m $ be the sequence of fundamental crosscuts at $ e^{i\theta} $ (from the exhaustion $ \left\lbrace \Omega_n\right\rbrace _n $). We define the {\em sequence of depths} of $ e^{i\theta} $ as \[\underline{d}(e^{i\theta}) = \left\lbrace d_m\right\rbrace _m,\] where $ d_m=n $ if and only if $ \pi (C_m)\subset \partial\Omega_n $.
	We say that $ e^{i\theta} $ is of {\em finite}, {\em infinite}, or {\em oscillating depth} if the sequence $ \underline{d}(e^{i\theta}) $ is (respectively) bounded (or finite), diverges to infinity, or neither.
\end{defi}
\begin{obs}
	Notice that, because the fundamental crosscuts are allowed to be degenerate, fixed points of parabolic deck transformations are of infinite depth. Fixed points of hyperbolic deck transformations, on the other hand, are of finite depth.
\end{obs}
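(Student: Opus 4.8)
The plan is to treat the parabolic and hyperbolic cases separately, exploiting the distinct geometric pictures supplied by Lemma \ref{lemma-lifts-of-curves}: a parabolic fixed point arises from a puncture, around which the relevant lifts are degenerate crosscuts filling a horodisk, while a hyperbolic fixed point is an endpoint of an axis that projects to a \emph{compact} closed geodesic. In both cases the strategy is to locate, near $e^{i\theta}$, which exhaustion boundary curves $\sigma_{s_1s_2\dots s_n}$ the fundamental crosscuts project to, and then to read off the behaviour of the levels $n = d_m$.

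For a parabolic fixed point $e^{i\theta}$, I would first note that $e^{i\theta}$ corresponds to an isolated boundary point (puncture) $\Sigma$, so that for some level $n_0$ the domain $D_{s_1\dots s_{n_0}}$ is conformally a punctured disc and $e^{i\theta}$ is the single point at which the component $U$ of $\pi^{-1}(D_{s_1\dots s_{n_0}})$ touches $\partial\mathbb{D}$ (Lemma \ref{lemma-lifts-of-curves}(a)). By Lemma \ref{lemma-lifts-of-curves}(a.3), $U$ contains a horodisk $H(e^{i\theta}, R)$. Since the lifts of the nested puncture curves $\sigma_{s_1\dots s_n}$ with $n \geq n_0$ are degenerate crosscuts landing at $e^{i\theta}$ whose crosscut neighbourhoods shrink to $\{e^{i\theta}\}$, the fundamental null-chain genuinely contracts to $e^{i\theta}$, and all but finitely many of its crosscuts lie inside $H(e^{i\theta}, R) \subset U$. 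For such a crosscut $C_m$ one has $\pi(C_m) \subset \pi(U) = D_{s_1\dots s_{n_0}}$, so $\pi(C_m)$ must be one of the curves $\sigma_{s_1\dots s_n}$, $n \geq n_0$; moreover, because $\pi|_U$ is the universal covering of a punctured disc, the full preimage in $U$ of each such curve is a \emph{single} degenerate crosscut (its $\langle\gamma\rangle$-translates coincide with it, where $\gamma$ is the parabolic generator). Hence, inside $U$, distinct fundamental crosscuts sit at distinct levels, and since deeper nesting corresponds to lying closer to the puncture, the levels $d_m$ are forced to increase to $+\infty$.

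For a hyperbolic fixed point $e^{i\theta}$, I would instead use the axis $\sigma$ of the corresponding hyperbolic $\gamma$, which lands at $e^{i\theta}$ and projects to the closed geodesic $\eta^* = \pi(\sigma)$ (Lemma \ref{lemma-lifts-of-curves}(b) or (c)). As $\eta^*$ is compact, it is contained in $\Omega_N$ for some fixed $N$. Concatenating a compact arc from $0$ to a point of $\sigma$ with the ray of $\sigma$ towards $e^{i\theta}$ yields a curve $\ell$ from $0$ to $e^{i\theta}$ whose image $\pi(\ell) \subset \overline{\Omega_N}$ is relatively compact. Now every fundamental crosscut $C_m$ separates $0$ from $e^{i\theta}$ and therefore must cross $\ell$, so $\pi(C_m)$ meets $\pi(\ell) \subset \Omega_N$. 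Since a level-$n$ boundary curve $\sigma_{s_1\dots s_n}$ lies in $\Omega \setminus \overline{\Omega_{n-1}}$, the relation $\pi(C_m) \cap \Omega_N \neq \emptyset$ forces $d_m \leq N$ for every $m$; hence the sequence of depths is bounded, i.e. $e^{i\theta}$ has finite depth.

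I expect the main obstacle to be the bookkeeping in the parabolic case: one must confirm that only finitely many fundamental crosscuts lie outside the horodisk $U$ (otherwise a constant-level subsequence of depths could persist and the sequence would merely oscillate), and that within $U$ exactly one fundamental crosscut occurs per level. Both points rest on the ``no common endpoint'' statement of Lemma \ref{lemma-lifts-of-curves}(a.1) and on the fact that a neighbourhood of $e^{i\theta}$ is eventually swallowed by $U$; making this precise, together with checking that the fundamental sequence at a limit-set point really does contract to $e^{i\theta}$, is where the care is needed. The hyperbolic case, by contrast, reduces cleanly to the compactness of the closed geodesic and the crossing argument.
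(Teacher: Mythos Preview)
Your argument is correct. The paper offers this only as a brief remark without proof, so there is no explicit argument to compare against; your treatment fleshes out what the paper leaves implicit, and it does so using the same circle of ideas. The hyperbolic case---reducing to compactness of the closed geodesic $\pi(\sigma)$ and a crossing argument---mirrors the reasoning in the paragraph preceding Theorem~\ref{thm:notalphaimage}. For the parabolic case, the bookkeeping you worry about resolves cleanly: once the degenerate crosscut $C^{(n_0)}$ appears in the fundamental sequence (say as $C_{m_0}$), every $C_m$ with $m>m_0$ lies in the region $N^{(n_0)}\subset U$ that it bounds, so $\pi(C_m)\subset D_{s_1\dots s_{n_0}}$; since the only exhaustion boundary curves in this punctured disc are the $\sigma_{s_1\dots s_n}$ with $n\geq n_0$, each having a unique lift in $U$, the tail of the fundamental sequence is exactly $C^{(n_0)}, C^{(n_0+1)},\dots$, with depths $n_0, n_0+1,\dots\to\infty$.

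One minor dependency to flag: you open the parabolic case by asserting that a parabolic fixed point corresponds to an isolated boundary point. This is standard for Fuchsian groups uniformising plane domains, and the paper uses it freely, but its explicit statement (Theorem~\ref{thm-main-result-complete}(a)(iii)) appears only later. At this stage one can simply cite the cusp--puncture correspondence directly, or note that the parabolic $\gamma$ preserves horodisks at $e^{i\theta}$, so that $\pi$ restricted to a sufficiently small horodisk is a covering of a once-punctured disc in $\Omega$.
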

In order to guarantee that the definition above makes sense, we must show that the depth of a point is determined by the universal covering $\pi$, and does not depend on which particular Ohtsuka exhaustion of $\Omega$ is considered. This philosophy is somewhat implicit in \cite{Oht54}, but we make it explicit here for completeness.
\begin{lemma}{\bf (Depth does not depend on the exhaustion)}\label{lemma-equivalence-ohtsuka}
	Let $ \Omega $ be a multiply connected domain, and let $ \pi\colon \mathbb{D}\to\Omega $ be a universal covering. Let $ e^{i\theta} \in\partial\mathbb{D}$. Then, $ e^{i\theta} $ is of finite, infinite or oscillating depth  not depending on the chosen Ohtsuka exhaustion of $ \Omega $.
\end{lemma}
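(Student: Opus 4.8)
The plan is to eliminate the dependence on the exhaustion by rephrasing the depth in terms of an intrinsic hyperbolic distance, and then to compare two exhaustions through the geometry of the deck orbit. Fix the basepoint $w_0=\pi(0)\in\Omega$. First I would establish the reformulation that drives everything. Since each boundary curve $\sigma_{s_1\dots s_n}$ is a non-contractible loop, each fundamental crosscut $C_m$ is an arc covering it infinitely, so $\pi(C_m)=\sigma_{s_1\dots s_{d_m}}\subset\partial\Omega_{d_m}$ is the whole loop. By Lemma \ref{lem:superSP}(b), $\mathrm{dist}_\Omega(w_0,\pi(C_m))=\mathrm{dist}_\DD(\Gamma\cdot 0,C_m)$. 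On the other hand, the relative boundaries of an exhaustion escape every compact subset of $\Omega$, and $\mathrm{dist}_\Omega(w_0,\cdot)$ tends to infinity towards $\partial\Omega$; a short argument using compactness of closed hyperbolic balls (Lemma \ref{lem:superSP}(c)) then gives $\mathrm{dist}_\Omega(w_0,\partial\Omega_n)\to\infty$ as $n\to\infty$, while $\partial\Omega_n\subset\overline{\Omega_N}$ for $n\le N$ keeps the distance bounded. Combining these, I obtain that $e^{i\theta}$ has infinite depth if and only if $\mathrm{dist}_\DD(\Gamma\cdot 0,C_m)\to\infty$, and finite depth if and only if $\{\mathrm{dist}_\DD(\Gamma\cdot 0,C_m)\}_m$ is bounded (or finite). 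Thus the depth type is governed entirely by how the fundamental crosscuts sit with respect to the orbit $\Gamma\cdot 0$, which is an exhaustion-free datum.

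Second, I would compare two exhaustions $\{\Omega_n\}$ and $\{\Omega'_n\}$, with fundamental crosscuts $\{C_m\}$ and $\{C'_{m'}\}$. It suffices to show that infinite depth for one forces it for the other, since finite depth means boundedness of the whole sequence and oscillating is the remaining possibility. Arguing by contradiction, suppose $\mathrm{dist}_\DD(\Gamma\cdot 0,C_m)\to\infty$ but infinitely many $C'_{m'_k}$ satisfy $\mathrm{dist}_\DD(\Gamma\cdot 0,C'_{m'_k})\le R$. Then the loops $\pi(C'_{m'_k})$ all meet the compact ball $\overline{D_\Omega(w_0,R)}$, which only finitely many boundary loops of $\{\Omega'_n\}$ can do; hence a single loop $\sigma'$ has infinitely many distinct lifts separating $0$ from $e^{i\theta}$. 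Being pairwise disjoint crosscuts separating the same pair of points (Lemma \ref{lemma-lifts-of-curves}), these lifts are linearly nested, so they nest down to $e^{i\theta}$ and their hyperbolic endpoints accumulate there, while each remains within distance $R$ of the orbit. Now pick $C_M$ with $\mathrm{dist}_\DD(\Gamma\cdot 0,C_M)>R$; since the deep lifts nest to $e^{i\theta}$, one of them, say $L$, lies in the crosscut neighbourhood $N_{C_M}$, and there is an orbit point $p$ with $\mathrm{dist}_\DD(p,L)\le R$. If $p$ lies on the shallow side of $C_M$, the geodesic from $p$ to $L$ crosses $C_M$, giving $\mathrm{dist}_\DD(\Gamma\cdot 0,C_M)\le R$, a contradiction.

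The delicate point, which I expect to be the main obstacle, is exactly the escape from the previous contradiction: the orbit point $p$ lying on the deep side of every $C_M$, which forces the orbit itself to accumulate at $e^{i\theta}$, i.e. $e^{i\theta}\in\Lambda$. There one cannot fall back on tracking the image of the radius, whose cluster set is all of $\overline\Omega$ by Lemma \ref{lemma-limit-sets3}(a), and which may dip towards $\partial\Omega$ through non-separating lifts without this being recorded by any exhaustion. I would therefore split off the singular points: parabolic and hyperbolic fixed points are handled directly and give, respectively, infinite and finite depth for every exhaustion (the axis of a hyperbolic element projects to a compact closed geodesic, keeping all nearby fundamental crosscuts at bounded orbit distance, while the degenerate crosscuts around a puncture recede to a single boundary point).

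For the remaining limit points, and to match the two crosscut families, I would use the endpoint-matching in the final part of Lemma \ref{lemma-lifts-of-curves}: homotopic boundary loops of the two exhaustions have lifts with identical endpoints on $\partial\DD$, so the two families of fundamental crosscuts share the same nesting order towards $e^{i\theta}$, and the orbit-distance reformulation then transfers boundedness and divergence from one family to the other. Verifying that this transfer is insensitive to the difference between two arcs sharing a pair of endpoints---equivalently, that only genuine separating progress towards the boundary, and not non-separating dips, contributes to the depth---is where the argument requires the most care, and is the step I would expect to occupy the bulk of the proof.
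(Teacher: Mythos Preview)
Your reformulation --- recasting the depth of $C_m$ via $\mathrm{dist}_\DD(\Gamma\cdot 0, C_m) = \mathrm{dist}_\Omega(w_0,\pi(C_m))$ --- is correct and is at bottom the same starting observation the paper uses: a subsequence of bounded depth is exactly one whose projected loops lie in a fixed compact subset of $\Omega$. However, your comparison argument does not close. First, a small logical point: showing ``infinite depth for one forces it for the other'' is not sufficient on its own, since it only yields (finite or oscillating) $\Leftrightarrow$ (finite or oscillating); you also need to transfer bounded subsequences, which you never address. More seriously, in your contradiction step, when the orbit point $p$ near $L$ lies on the deep side of every $C_M$ you correctly observe that $e^{i\theta}\in\Lambda$, but your proposed repair is too thin. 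Splitting off parabolic and hyperbolic fixed points leaves the generic limit points untouched (and these are the ones that matter --- singular points of escaping type are abundant by Theorem~\ref{thm-ohtsuka}(b)(iii)). And the endpoint-matching in Lemma~\ref{lemma-lifts-of-curves} applies only to \emph{homotopic} loops; boundary curves of two arbitrary Ohtsuka exhaustions are generally not homotopic, so the claim that ``the two families of fundamental crosscuts share the same nesting order towards $e^{i\theta}$'' does not follow. The gap is therefore genuine.

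The paper avoids the orbit detour altogether. Once a subsequence $\{C_{m_k}\}_k$ of bounded depth is known to project into a compact set, absorb that compact set into some level $\Omega'_n$ of the \emph{other} exhaustion. Lifting, each $C_{m_k}$ lies in a component of $\pi^{-1}(\Omega'_n)$, and the boundary crosscuts of these components separating $0$ from $e^{i\theta}$ are fundamental crosscuts for $\{\Omega'_n\}_n$ of depth at most $n$. This produces a bounded-depth subsequence for the second exhaustion directly, with no case split on $\Lambda$. The same interleaving applied to subsequences of depth tending to infinity completes the trichotomy. Your distance reformulation is a perfectly good way to phrase the first step; the missing move is to feed the resulting compact set back into the second exhaustion rather than into the orbit.
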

\begin{proof}
The proof is divided in two parts: first, we show that the we can move the exhaustion homotopically, i.e. an exhaustion $\{\Omega_n\}_n$ is equivalent to any other exhaustion $\{\Omega_n'\}_n$ if $\partial\Omega_n$ is homotopic to $\partial\Omega_n'$, for every $n$. This, in turn, is a simple consequence of Lemma \ref{lemma-lifts-of-curves}: for any given $n$, the fundamental crosscuts $C_n$ and $C_n'$ generated by the exhaustion have the same fixed points, and so both give rise to the same sequence of depths.

If $\{\Omega_n\}_n$ and $\{\Omega_n'\}_n$ do not have homotopic boundaries, assume that $\left\lbrace m_k\right\rbrace _k$ is a subsequence so that the depths of $\left\lbrace C_{m_k}\right\rbrace _k$ are bounded, where $\{C_m\}_m$ denotes the fundamental sequence of crosscuts at $e^{i\theta}$. Then, since $\{\Omega_n'\}_n$ is an exhaustion of $\Omega$ and the collection $\left\lbrace \pi(C_{m_k})\right\rbrace _k$ is contained in a compact set, there exists a minimal value of $n$ so that $\Omega_n'$ contains all the curves $\pi(C_{m_k})$, $k\in\mathbb{N}$. Lifting this relation, it follows that each $C_{m_k}$ is contained in a distinct preimage component $\tilde\Omega_{n_k}'$ of $\Omega_n'$, and gathering the component of $\partial\Omega_{n_k}'$ separating $0$ from $e^{i\theta}$ for every $k$ gives rise to a subsequence $\left\lbrace C_{n_k}'\right\rbrace _k$ of fundamental crosscuts of bounded depth generated by the exhaustion $\Omega_n'$. Arguing the same way for sequences that tend to infinity, one ends the proof of the lemma.
\end{proof}

\subsection{Boundary components associated to a point on the unit circle}\label{ssec:association}
In this subsection, we examine the boundary components of $\Omega$ belonging to the angular cluster set of $\pi$ at points $e^{i\theta}\in\partial\DD$. We start with the following definition.
\begin{defi}{\bf (Boundary component associated to $ e^{i\theta}\in\partial \mathbb{D} $)}\label{def:associatedcomponent}
	Let $ e^{i\theta}\in\partial \mathbb{D} $. Let $ \Sigma $ be a boundary component of $ \Omega $. If \[ Cl_{\mathcal{A}}(\pi, e^{i\theta})\cap \Sigma\neq\emptyset, \] we say that $ \Sigma $ is a {\em boundary component associated} to $ e^{i\theta}$.
\end{defi}
According to Definition \ref{def:boundaryaddress}, given an Ohtsuka exhaustion $\{\Omega_n\}_n$ of $ \Omega $, each boundary component $\Sigma\subset\partial\Omega$ has its unique address, which specifies the sequence of boundary components $\sigma_{s_1s_2\dots s_n}$ of $\Omega_n$ that converge to $\Sigma$ as $n\to+\infty$. This motivates the following parallel definition.
\begin{defi}{\bf (Associated boundary address)}\label{def:associatedaddress}
Let $e^{i\theta}\in\partial\DD$, and let $\Sigma$ be a component of $\partial\Omega$. We say that $e^{i\theta}$ has $\underline{s}(\Sigma) = \left\lbrace s_n\right\rbrace _n$ as an \textit{associated boundary address} if we can find a subsequence of fundamental crosscuts $\left\lbrace C_{m_n}\right\rbrace _n$ at $e^{i\theta}$ such that $\pi(C_{m_n}) = \sigma_{s_1s_2\dots s_n}$, for all $n\geq 1$.
\end{defi}
We can make the parallel between Definitions \ref{def:associatedcomponent} and \ref{def:associatedaddress} more explicit.
\begin{prop}{\bf (Associated boundary components)}\label{prop-associated-boundary-comp}
Let $ \Sigma $ be a boundary component of $ \Omega $ with address $ \underline{s} (\Sigma)$.
Then,	$ e^{i\theta} \in\partial\mathbb{D}$ has $ \underline{s} (\Sigma)$ as an associated boundary address if and only if it has $\Sigma$ as an associated boundary component. 

\noindent Moreover, if $ e^{i\theta} $ is of escaping type, then $  Cl_{\mathcal{A}}(\pi, e^{i\theta})\subset\Sigma$ and $ \Sigma $ is the unique boundary component associated to $ e^{i\theta}$. For every countable collection of non-isolated boundary components $ \left\lbrace \Sigma_n \right\rbrace _n$, there exist uncountably many points $ e^{i\theta}\in\partial\mathbb{D} $ of bungee type such that each $e^{i\theta}$ has $ \left\lbrace \Sigma_n \right\rbrace _n$ as associated boundary components.
\end{prop}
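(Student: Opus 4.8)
The plan is to establish the three assertions in turn, using the nested structure of the fundamental crosscuts together with Proposition~\ref{prop-escaping-bounded-bungee}. Throughout I write $\underline s(\Sigma)=\{s_n\}_n$ for the address of a boundary component $\Sigma=\bigcap_n\overline{D_{s_1\cdots s_n}}$.

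For the equivalence between associated addresses and associated components, I would argue both directions topologically. Suppose first that $e^{i\theta}$ has $\underline s(\Sigma)$ as an associated address, so some subsequence $\{C_{m_n}\}_n$ of fundamental crosscuts satisfies $\pi(C_{m_n})=\sigma_{s_1\cdots s_n}$. Since the fundamental crosscuts are nested and separate $0$ from $e^{i\theta}$, the radial segment $R_\theta$ crosses each $C_{m_n}$, so $\pi(R_\theta)$ meets every curve $\sigma_{s_1\cdots s_n}\subset\overline{D_{s_1\cdots s_{n-1}}}$; as these sets shrink to $\Sigma$, any accumulation point of the corresponding points of $\pi(R_\theta)$ lies in $\Sigma$, whence $Cl_R(\pi,e^{i\theta})\cap\Sigma\neq\emptyset$, and since $R_\theta$ lands non-tangentially, $Cl_{\mathcal A}(\pi,e^{i\theta})\cap\Sigma\neq\emptyset$. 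Conversely, if $\Sigma$ is associated, I pick $w\in Cl_{\mathcal A}(\pi,e^{i\theta})\cap\Sigma$ and a non-tangential curve $\eta$ with points $\tilde z_k\in\eta$, $\tilde z_k\to e^{i\theta}$, $\pi(\tilde z_k)\to w\in\Sigma$. For each $n$, eventually $\pi(\tilde z_k)\in D_{s_1\cdots s_n}$, so the $\tilde z_k$ lie in $\pi^{-1}(D_{s_1\cdots s_n})$; I would show that a single component $U_n$ of this preimage captures infinitely many of them with $e^{i\theta}\in\overline{U_n}$, and that the bounding crosscut of $U_n$ separating $0$ from $e^{i\theta}$ (a lift of $\sigma_{s_1\cdots s_n}$ by Lemma~\ref{lemma-lifts-of-curves}) is a fundamental crosscut. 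These crosscuts, being nested, form the required subsequence. The escaping statement is then immediate: by Proposition~\ref{prop-escaping-bounded-bungee}, $Cl_{\mathcal A}(\pi,e^{i\theta})=Cl_R(\pi,e^{i\theta})$ is a nonempty continuum inside one boundary component $\Sigma$; hence $\Sigma$ is associated, and since distinct components are disjoint no other one can be, giving uniqueness, and by the equivalence just shown this is the component carrying the associated address.

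The substantial work is the final statement. Given a countable family $\{\Sigma_n\}_n$ of \emph{non-isolated} components, I would produce $e^{i\theta}$ as the landing point of a null-chain built by stages. The decisive structural input is Lemma~\ref{lemma-lifts-of-curves}(c)--(d): since each $\Sigma_n$ is non-isolated, every crosscut neighbourhood of a lift of a boundary curve $\sigma_{\underline t}$ (with $D_{\underline t}$ of connectivity greater than two) contains not only infinitely many lifts of deeper curves, but also, after crossing another lift of $\sigma_{\underline t}$ bounding the same component of $\pi^{-1}(D_{\underline t})$, lifts of arbitrarily shallow curves; iterating these two-sided crossings, one reaches inside any given crosscut neighbourhood a lift of \emph{any} prescribed boundary curve $\sigma_{\underline u}$. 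With this freedom I run a bookkeeping construction giving nested neighbourhoods $N_1\supset N_2\supset\cdots$ whose chosen crosscuts realise, for every $n$, the full chain $\sigma_{s_1^n},\sigma_{s_1^ns_2^n},\dots$ as a subsequence (so each $\Sigma_n$ is associated, by the equivalence above), while the depths return to a fixed bounded value infinitely often and also tend to infinity along another subsequence. The first property forces $\pi(R_\theta)$ back into a fixed compact subset of $\Omega$ infinitely often (not escaping), the second forces it arbitrarily close to $\partial\Omega$ (not compactly contained), so $e^{i\theta}$ is of bungee type. Choosing at each stage a lift with small boundary arc (possible since infinitely many disjoint lifts subdivide $\partial\mathbb{D}\cap\overline{N_k}$) guarantees $\bigcap_k\overline{N_k}=\{e^{i\theta}\}$; and performing a binary branching at infinitely many stages, always available by Lemma~\ref{lemma-lifts-of-curves}(d), yields $2^{\mathbb N}$ null-chains with eventually disjoint neighbourhoods, hence uncountably many distinct points.

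The main obstacle is exactly this construction: one must verify that oscillation of depth, realisation of every address chain, and shrinking of the arcs are simultaneously achievable in a single nested sequence, and the crux is the two-sided accessibility claim, namely that from deep inside a neighbourhood one can always return through the multiply-connected structure of a non-isolated component to a lift of a shallow curve while keeping neighbourhoods nested and shrinking. This is precisely where non-isolatedness is essential (for isolated components the nesting in Lemma~\ref{lemma-lifts-of-curves}(a)--(b) is unique and no return is possible) and where countability enters (only countably many requirements can be diagonalised). A secondary subtlety, already present in the reverse direction of the first assertion, is the identification of the correct preimage component $U_n$ near $e^{i\theta}$ when $e^{i\theta}$ might a priori lie in $\Lambda$; I would control this using the local behaviour of $\pi$ afforded by Lemma~\ref{lemma-limit-sets3}.
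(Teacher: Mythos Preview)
Your overall strategy is correct and matches the paper's in spirit, but the paper's proof of the first equivalence is considerably cleaner, and the simplification is worth knowing. Before doing anything else, the paper reduces $Cl_{\mathcal A}$ to $Cl_R$ on $\partial\Omega$: if $\tilde z_n\to e^{i\theta}$ non-tangentially with $\pi(\tilde z_n)\to\partial\Omega$, one finds $\tilde w_n\in R_\theta$ with $\mathrm{dist}_\Omega(\pi(\tilde z_n),\pi(\tilde w_n))$ bounded, and Lemma~\ref{lem:superSP}(e) forces the spherical distance to $0$. Then---and this is the key trick you are missing---the paper chooses the Ohtsuka exhaustion so that every $\sigma_{s_1\cdots s_n}$ is a closed geodesic (Lemmas~\ref{lemma-lifts-of-curves} and \ref{lemma-equivalence-ohtsuka}). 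All lifts are then circle arcs orthogonal to $\partial\mathbb D$, so the radius $R_\theta$ meets each fundamental crosscut exactly once and meets no other lift. With this, both directions become one line: forward, $\pi(R_\theta)$ hits every $\sigma_{s_1\cdots s_n}$, so it accumulates on $\Sigma$; backward, if $\pi(R_\theta)$ accumulated on some other $\Sigma'$, it would have to cross some $\sigma_{s'_1\cdots s'_n}$ whose lifts are \emph{not} fundamental crosscuts at $e^{i\theta}$, which is impossible since $R_\theta$ meets only fundamental crosscuts. Your direct approach to the reverse direction---finding the right component $U_n$ with $e^{i\theta}\in\overline{U_n}$ and checking its bounding crosscut is fundamental---can be made to work, but exactly the subtlety you flag (when $e^{i\theta}\in\Lambda$ and infinitely many preimage components accumulate there) requires a genuine argument that the geodesic trick bypasses entirely.

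For the bungee construction your outline is essentially the paper's, only more explicit. The paper writes out the two-component case and waves at a diagonal argument for countably many; your ``return'' mechanism (cross back through a second lift of the same $\sigma$ to reach a shallow curve) is precisely what the paper does, and your insistence on shrinking arcs and binary branching for uncountability is the same idea, stated more carefully than the paper bothers to. The paper does not separately verify the bungee property via oscillating depth---it leaves that implicit in the construction---so your bookkeeping of returning to bounded depth infinitely often is a useful addition, not a divergence.
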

\begin{proof}
	First, we argue that we can restrict our attention to the radial limit set. Indeed, let $\left\lbrace z_n\right\rbrace _n\subset\DD$ be a sequence tending non-tangentially to $e^{i\theta}$ and such that $\pi(z_n)\to\partial\Omega$ as $n\to+\infty$. Then, by the definition of a hyperbolic Stolz angle, there exists $r > 0$ and a sequence $\left\lbrace w_n\right\rbrace _n\subset R_\theta$ such that $\textrm{dist}_\Omega(\pi(w_n), \pi(z_n)) < r$ for all $n$. By Lemma \ref{lem:superSP}, the spherical distance $\textrm{dist}_{\Chat}(z_n, w_n)$ tends to $ 0$ as $n\to+\infty$, and therefore both sequences accumulate at the same points on $ \partial\Omega $.

	Now, consider an exhaustion $ \left\lbrace \Omega_n\right\rbrace _n $ of $ \Omega $ such that all the curves in $ \left\lbrace \partial\Omega_n\right\rbrace _n $ are geodesics (which we can do by Lemmas \ref{lemma-lifts-of-curves} and \ref{lemma-equivalence-ohtsuka}). Let $ \left\lbrace C_m\right\rbrace _m $ be the sequence of fundamental crosscuts at $ e^{i\theta} $ given by the exhaustion  $ \left\lbrace \Omega_n\right\rbrace _n $. Then, $ R_\theta $ intersects each crosscut $ C_m $ exactly once, and intersects no other crosscut obtained from the exhaustion, since by Proposition \ref{prop:hypdisc} each curve $\pi^{-1}(\sigma_{s_0s_1\dots s_n})$ is a circle arc orthogonal to $\partial\DD$. 
	
	First assume $ e^{i\theta} \in\partial\mathbb{D}$ has $ \underline{s} (\Sigma)$ as an associated boundary address. By Definition \ref{def:associatedaddress}, there exists a subsequence $ \left\lbrace C_{m_n}\right\rbrace _n $ such that $ \pi(C_{m_n})=\sigma_{s_1s_2\dots s_n} $. This already implies that \[ Cl_R(\pi, e^{i\theta})\cap\Sigma \neq\emptyset.\] For the other implication, we claim $ \pi (R_\theta ) $ cannot accumulate in a boundary component whose boundary address is not associated to $ e^{i\theta} $. Indeed, that would require $\pi(R_\theta)$ to cross a boundary curve $\sigma_{s_1s_2\dots s_n}'$ whose lifts are, by hypothesis, not part of the fundamental crosscuts at $e^{i\theta}$, and therefore cannot intersect $R_\theta$.
	
	The statement for points of escaping type follows straightforward from Proposition \ref{prop-escaping-bounded-bungee}. 

	It is left to see that for every countable collection of non-isolated boundary components $ \left\lbrace \Sigma_n \right\rbrace _n$, there exist uncountably many  points $ e^{i\theta}\in\partial\mathbb{D} $ of bungee type associated to $ \left\lbrace \Sigma_n \right\rbrace _n$. Equivalently, we shall prove that, given a sequence $ \left\lbrace \Sigma_n \right\rbrace _n$ of non-isolated boundary components, there exist uncountably many points $ e^{i\theta}\in\partial\mathbb{D} $ having $ \left\lbrace \underline{s}(\Sigma_n) \right\rbrace _n$ as associated boundary addresses. To do so, we rely on the nestedness properties of the crosscuts coming from the exhaustion. We will outline the argument for two non-isolated boundary components $\Sigma$ and $\Sigma'$; the argument for countably many components is similar (one can use a diagonal argument to alternate between components). Indeed, let $\underline{s}(\Sigma) = s_1s_2\dots$ and $\underline{s}'=\underline{s}(\Sigma') = s_1's_2'\dots$ denote the boundary addresses of $\Sigma$ and $\Sigma'$, where $\Sigma, \Sigma'\subset\partial\Omega$ are non-isolated boundary components, and let $D_{s_1s_2\dots s_n}$ and $D_{s_1's_2'\dots s_n'}$ denote the complementary components of $\Omega_n$ such that
	\[ \Sigma = \bigcap_{n\geq 1} \overline{D_{s_1s_2\dots s_n}} \  \ \ \text{ and } \ \ \  \Sigma' = \bigcap_{n\geq 1} \overline{D_{s_1's_2'\dots s_n'}}. \]
	Now, consider any lift $\tilde D_1$ of $D_{s_1}$; by Lemma \ref{lem:restriction}, $\pi\colon\tilde D_1\to D_{s_1}$ is a universal covering, but since $D_{s_1}$ is (by hypothesis) not simply connected, this covering has infinite degree. In particular, $\partial\tilde D_1$ consists of infinitely many arcs with endpoints in $\partial\DD$, one of which (say $C_1$) separating $0$ from all the others (see Lemma \ref{lemma-lifts-of-curves}). The domain $\tilde D_1$ also contains infinitely many lifts of $\sigma_{s_1}\subset\partial D_{s_1}$; we pick any one of these distinct from $C_1$ and cross back into a component $\tilde \Omega_1$ of $\pi^{-1}(\Omega_1)$. Once there, we choose a crosscut $C_1'$ corresponding to a preimage of $\sigma_{s_1'}\subset \partial  D_{s_1'}$. After crossing $C_1'$ into a component $\tilde D_1'$ of $\pi^{-1}(D_{s_1'})$, which contains (once again) infinitely many preimages of $\sigma_{s_1'}$; we can cross one that is distinct from $C_1'$, then cross another component of $\pi^{-1}(\sigma_{s_1})$ into a new component of $\pi^{-1}(D_{s_1})$. This time, we choose a preimage $C_2$ of $\sigma_{s_1s_2}$, which will take us into a preimage of $D_{s_1s_2}$. Repeating this process inductively, we produce sequences $C_n$ and $C_n'$ of fundamental crosscuts of $\DD$, which are shrinking to a point $e^{i\theta}\in\partial\DD$. This point, by construction, has both $\underline{s}$ and $\underline{s}'$ as associated boundary sequences.

	There are uncountably many points with this property because we made countably many choices among countably many possibilities at each choice.
\end{proof}

\subsection{Ohtsuka's classification of points on the unit circle revisited}\label{ssec:Ohtsuka2}
Using the terminology and results in Sections \ref{ssec:depth} and \ref{ssec:association}, we can restate Ohtsuka's results (Thms. \ref{thm:alphaimage} and \ref{thm:notalphaimage}) as follows.
\begin{summ}{\bf (Ohtsuka's classification of points in $ \partial \mathbb{D} $) 
	} \label{thm-ohtsuka}
	Let $ \Omega $ be a multiply connected domain,  let $ \pi\colon\mathbb{D}\to\Omega $ be a universal covering and let $ e^{i\theta}\in\partial\mathbb{D} $. Let $ \left\lbrace \Omega_n\right\rbrace _n $ be an Ohtsuka exhaution of $ \Omega $. Then, the following hold.
	\begin{enumerate}[label={\em (\alph*)}]
	\item $ e^{i\theta} $ is of finite depth if and only if $ e^{i\theta}  $ lies on the boundary of some connected component of $ \pi^{-1}(\Omega_n) $, for some $ n\geq 1 $. There are uncountably many points of finite depth.
	\item If $ e^{i\theta} $ is of infinite depth and belongs to an $ \alpha $-image $ \tilde\Sigma $ corresponding to $ \Sigma \subset\partial \Omega $, then there exists a curve $ \eta\colon\left[ 0,1\right) \to \mathbb{D}$ such that $ L(\eta)\subset \tilde\Sigma $ and $ L(\pi(\eta))\subset \Sigma $. Conversely, the lift of any curve $ \eta\colon\left[ 0,1\right) \to \Omega$ with $ L(\eta)\subset \Sigma $ has landing set contained in an $ \alpha $-image  corresponding to $ \Sigma $.

	\noindent In particular,
	\begin{enumerate}[label={\em (\roman*)}]
		\item for any boundary component $ \Sigma $, there are at least countably many  points associated to $ \Sigma $;
		\item all regular points have infinite depth, and hence are associated to a unique boundary component;
		\item $ \Sigma $ is a non-isolated boundary component if and only if there exist singular points associated to $ \Sigma $. In this case, there are uncountably many of them.
	\end{enumerate}
		\item $ e^{i\theta} $ is of oscillating depth if and only if the image of any curve $ \eta $ landing at $ e^{i\theta} $ is neither compactly contained in $ \Omega $ nor converges to $\partial \Omega $. 
	
		\noindent There exist points of oscillating depth if and only if $ \Omega $ has infinite connectivity, In this case, there are uncountably many of them.
\end{enumerate}
\end{summ}


According to the previous result, it is clear that points of oscillating depth correspond to points of bungee type, as introduced in Definition \ref{def:escaping-bounded-bungee}. It is less clear {\em a priori} if analogous correspondences hold for points of bounded or escaping type. Moreover, the relation between the depth of a point and the fundamental group is mainly unexplored. By appealing to hyperbolic Stolz angles and using a specific Ohtsuka exhaustions of $\Omega$, we  complement Ohtsuka's work by addressing the previous questions in a satisfactory manner, as show Propositions \ref{prop-bound-esc-bungee2} and \ref{prop:type-group}. This way, we provide a detailed analysis of the angular cluster sets for the different types of points, and describe more precisely the relation with the limit set (and the non-tangential limit set),  improving substantially the implications in part (b) of Theorem \ref{thm-ohtsuka}.

\begin{prop}{\bf (Depths and radial images)}\label{prop-bound-esc-bungee2}
	Let $ \Omega $ be a multiply connected domain, let $ \pi\colon\mathbb{D}\to\Omega $ be a universal covering, and let $ e^{i\theta} \in\partial\mathbb{D}$. Then,
		\begin{enumerate}[label={\em (\alph*)}]
		\item $ e^{i\theta} $ is of finite depth if and only if $ e^{i\theta}  $ is of bounded type;
		\item $ e^{i\theta} $ is of infinite depth if and only if $ e^{i\theta} $ is of escaping type;
		\item $ e^{i\theta} $ is of oscillating depth if and only if $ e^{i\theta} $ is of bungee type.
	\end{enumerate}
\end{prop}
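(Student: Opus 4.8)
The plan is to realise the abstract depth sequence geometrically, by following the crossings of the radial segment $R_\theta$ with the fundamental crosscuts, and then reading off the type of boundary behaviour directly from this crossing data. By Lemma \ref{lemma-equivalence-ohtsuka}, the depth type of $e^{i\theta}$ is independent of the Ohtsuka exhaustion, so I would fix an exhaustion $\{\Omega_n\}_n$ whose boundary curves are closed geodesics (and horocycles around punctures), exactly as in the proof of Proposition \ref{prop-associated-boundary-comp}. By Lemma \ref{lemma-lifts-of-curves} every lift $\pi^{-1}(\sigma_{s_1\dots s_n})$ is then a geodesic crosscut, i.e. a circle arc orthogonal to $\partial\DD$, and by Proposition \ref{prop:hypdisc}(c) the radial geodesic $R_\theta$ meets each fundamental crosscut $C_m$ exactly once and meets no other lifted boundary curve. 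Writing $p_m$ for the intersection of $R_\theta$ with $C_m$, ordered so that $0, p_1, p_2, \dots\to e^{i\theta}$, we have $\pi(p_m)\in\partial\Omega_{d_m}$, where $\underline d(e^{i\theta})=\{d_m\}_m$ is the depth sequence.

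The key structural step is to trap the image of $R_\theta$ between consecutive crossings. Since $R_\theta$ meets no lifted boundary curve strictly between $p_m$ and $p_{m+1}$, the arc $\pi(R_\theta|_{(p_m, p_{m+1})})$ lies in a single connected component $A_m$ of $\Omega\setminus\bigcup_n\partial\Omega_n$, hence in a component of $\Omega_{k+1}\setminus\overline{\Omega_k}$ for some $k$. Such a component has its inner boundary on $\partial\Omega_k$ and its outer boundary on $\partial\Omega_{k+1}$, so $\{d_m, d_{m+1}\}\subset\{k, k+1\}$; in particular consecutive depths differ by at most one, $\overline{A_m}\subset\overline{\Omega_{k+1}}$, and $A_m\cap\Omega_k=\emptyset$. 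These two containments are what tie the depth sequence to the radial image.

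With this in hand, I would prove the three forward implications; since both the depth trichotomy and the type trichotomy partition $\partial\DD$ into mutually exclusive, exhaustive classes, establishing one implication for each class forces all three biconditionals. If $e^{i\theta}$ has finite depth, say $\sup_m d_m = N$, then every $A_m$ satisfies $\overline{A_m}\subset\overline{\Omega_{N+1}}$, so $\pi(R_\theta)$ lies in the compact set $\overline{\Omega_{N+1}}\subset\Omega$ and $e^{i\theta}$ is of bounded type. If $e^{i\theta}$ has infinite depth, then given a compact $K\subset\Omega$ I choose $N$ with $K\subset\Omega_N$ and $M$ with $d_m>N$ for all $m\geq M$; for such $m$ the inner bounding level $k$ of $A_m$ satisfies $k\geq N$, whence $A_m\cap\Omega_N=\emptyset$ and so $A_m\cap K=\emptyset$. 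Thus $\pi(R_\theta(t))$ eventually leaves every compact set, i.e. $\pi(R_\theta)\to\partial\Omega$ and $e^{i\theta}$ is of escaping type. Finally, if $e^{i\theta}$ has oscillating depth, then $\liminf_m d_m=L<\infty$ and $\limsup_m d_m=\infty$ give a subsequence $\{p_{m_k}\}$ with $\pi(p_{m_k})\in\overline{\Omega_L}$ and a subsequence $\{p_{m_j}\}$ with $\pi(p_{m_j})\in\Omega\setminus\Omega_{d_{m_j}}$ leaving every compact subset of $\Omega$; hence $\pi(R_\theta)$ is neither compactly contained in $\Omega$ nor convergent to $\partial\Omega$, and $e^{i\theta}$ is of bungee type.

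The step I expect to be the main obstacle is the trapping lemma of the second paragraph: verifying rigorously that $R_\theta$ crosses exactly the fundamental crosscuts, each exactly once, and that its inter-crossing arcs are confined to a single inter-level region. This relies on the geodesic (respectively horocyclic) choice of exhaustion and on the orthogonality of geodesics to $\partial\DD$ from Proposition \ref{prop:hypdisc}. Some care is needed for degenerate fundamental crosscuts at a parabolic fixed point, where the relevant lifts are horocycles rather than orthogonal circle arcs; there $e^{i\theta}$ is of infinite depth and $R_\theta$ lands at the corresponding puncture, which is consistent with the escaping case above. Once the containments $\overline{A_m}\subset\overline{\Omega_{k+1}}$ and $A_m\cap\Omega_k=\emptyset$ are secured, the three implications and the partition argument are routine.
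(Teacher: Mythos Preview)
Your proposal is correct and follows essentially the same approach as the paper: choose an Ohtsuka exhaustion whose boundary curves are closed geodesics, so that lifted boundary curves are circle arcs orthogonal to $\partial\mathbb{D}$ and the radius $R_\theta$ meets precisely the fundamental crosscuts, each exactly once, and then read off the type from the depth sequence. Your version is somewhat more explicit (the ``trapping'' of each inter-crossing arc in a single layer $\Omega_{k+1}\setminus\overline{\Omega_k}$, and the use of the two trichotomies to reduce to three forward implications), but the underlying mechanism is identical to the paper's.
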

\begin{proof}
Consider, without loss of generality,  an exhaustion $ \left\lbrace \Omega_n\right\rbrace _n $ of $ \Omega $ such that the curves in $ \left\lbrace \partial\Omega_n\right\rbrace _n $  are all closed geodesics. Therefore, the corresponding crosscuts in $ \mathbb{D} $ are arcs of Euclidean circles, orthogonal to $ \partial\mathbb{D} $. In this situation, if one considers all the crosscuts which arise from lifting $ \left\lbrace \Omega_n\right\rbrace _n $, the radius $ R_\theta $ at $ e^{i\theta} $ crosses each of the crosscuts forming the fundamental sequence of crosscuts at $ e^{i\theta} $ exactly once, and is disjoint from the rest of crosscuts.

We now prove (a). If $ e^{i\theta} $ is of finite depth, the image under $ \pi $ of the fundamental sequence of crosscuts at $ e^{i\theta} $ is contained in some level of the exhaustion, say $ \Omega_n $. Since the radius $ R_\theta $ only crosses the fundamental crosscuts at $e^{i\theta}$, it follows that its image is bounded in $ \Omega $, and hence $ e^{i\theta} $ is bounded. Conversely, if $ e^{i\theta} $ is bounded, then the corresponding sequence of fundamental crosscuts must have finite depth -- otherwise, $\pi(R_\theta)$ contains a sequence of points that is not compactly contained in $\Omega$, coming from the intersection of $R_\theta$ with the fundamental crosscuts.

The proof of (b) proceeds similarly, relying on our specially chosen Ohtsuka exhaustion of $\Omega$. Once (b) is proved, then (c) follows by exclusion, and this ends the proof of the proposition.
\end{proof}

\begin{prop}{\bf (Depths and fundamental group)}\label{prop:type-group}
Let $\Omega$ be a multiply connected domain, let $\pi\colon\DD\to\Omega$ be a universal covering, and let $e^{i\theta}\in\partial\DD$. Then,
\begin{enumerate}[label={\em (\alph*)}]
	\item $e^{i\theta}$ is of escaping type if and only if $e^{i\theta}\in\partial\DD\setminus\Lambda_{NT}$.
	\item $e^{i\theta}$ is of bounded type if and only if $e^{i\theta}\in\Lambda_{NT}'$, where $\Lambda_{NT}'$ denotes the non-tangential limit set of a finitely generated subgroup of $\Gamma$.
	\item $e^{i\theta}$ is of bungee type if and only if $e^{i\theta}\in\Lambda_{NT}$, but does not belong to the non-tangential limit set of any finitely generated subgroup of $\Gamma$.
\end{enumerate}
\end{prop}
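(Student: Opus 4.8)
My plan is to treat part (a) as an essentially immediate consequence of the two results that precede the proposition, and then to reduce parts (b) and (c) to a finitely connected piece of the Ohtsuka exhaustion. For $(\Rightarrow)$ in (a), if $e^{i\theta}$ is of escaping type then Proposition \ref{prop-escaping-bounded-bungee} gives $Cl_{\mathcal{A}}(\pi,e^{i\theta})\subset\partial\Omega$, so $Cl_{\mathcal{A}}(\pi,e^{i\theta})\cap\Omega=\emptyset$; the contrapositive of Lemma \ref{lem:non-tangential-curves}(b) then forces $e^{i\theta}\notin\Lambda_{NT}$. For $(\Leftarrow)$, if $e^{i\theta}\notin\Lambda_{NT}$ then Lemma \ref{lem:non-tangential-curves}(a) gives $Cl_{\mathcal{A}}(\pi,e^{i\theta})\subset\partial\Omega$, hence in particular $Cl_R(\pi,e^{i\theta})\subset\partial\Omega$; since $\overline\Omega$ is compact in $\widehat{\mathbb{C}}$, a short compactness argument (any accumulation point of $\pi(R_\theta)$ kept at fixed distance from $\partial\Omega$ would lie in $\Omega$) upgrades this to $\pi(R_\theta(t))\to\partial\Omega$, i.e.\ escaping type.

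For (b) I would first record the geometric dictionary. By Proposition \ref{prop-bound-esc-bungee2}, bounded type equals finite depth, and this means $\pi(R_\theta)$ is relatively compact in $\Omega$, hence contained in some exhaustion level $\Omega_N$ with $\overline{\Omega_N}\subset\Omega$. As $R_\theta$ is connected and starts at $0$, it is trapped in the single component $\tilde\Omega_N$ of $\pi^{-1}(\Omega_N)$ containing $0$, so $e^{i\theta}\in\partial\tilde\Omega_N\cap\partial\DD$. By Lemma \ref{lem:restriction}, $\pi|_{\tilde\Omega_N}\colon\tilde\Omega_N\to\Omega_N$ is a universal covering whose deck group is the stabiliser $\Gamma_N:=\{\gamma\in\Gamma:\gamma(\tilde\Omega_N)=\tilde\Omega_N\}$, finitely generated because $\Omega_N$ is finitely connected (Lemma \ref{lem:deck}); this is the subgroup witnessing $\Lambda_{NT}'$. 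To prove the forward implication, I fix an accumulation point $w\in\Omega_N$ of $\pi(R_\theta)$ and a preimage $\tilde z_0\in\tilde\Omega_N$; along $t_k\to 1$ with $\pi(R_\theta(t_k))\to w$, Lemma \ref{lem:superSP}(b) yields distance-realising deck transformations, which can be taken inside $\Gamma_N$ by running the argument inside the covering $\pi|_{\tilde\Omega_N}$. The orbit points $\gamma_k(\tilde z_0)$ then stay within bounded hyperbolic distance of $R_\theta$; since the radius approaches $e^{i\theta}$ non-tangentially and bounded hyperbolic distance preserves non-tangential approach (equivalence of Euclidean and hyperbolic Stolz angles), we obtain $\gamma_k(\tilde z_0)\to e^{i\theta}$ non-tangentially, i.e.\ $e^{i\theta}\in\Lambda_{NT}(\Gamma_N)$.

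The converse implication in (b) is the crux. Suppose $e^{i\theta}\in\Lambda_{NT}(\Gamma')$ for some finitely generated $\Gamma'\subset\Gamma$, say $\gamma_k(0)\to e^{i\theta}$ non-tangentially with $\gamma_k\in\Gamma'$. Writing each $\gamma_k$ as a word in a fixed finite generating set, its defining loop in $\Omega$ is supported in a single compact set, hence in some level $\Omega_N$ with $\overline{\Omega_N}\subset\Omega$; lifting, the path from $0$ to $\gamma_k(0)$ stays in the component $\tilde\Omega_N$ of $\pi^{-1}(\Omega_N)$ containing $0$, so $\gamma_k\in\Gamma_N:=\mathrm{Stab}_\Gamma(\tilde\Omega_N)$ and $e^{i\theta}\in\partial\tilde\Omega_N$ is a non-tangential limit point of $\Gamma_N$. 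The remaining step is to show the radius is eventually trapped in $\tilde\Omega_N$: because $e^{i\theta}$ is a limit point of the stabiliser, infinitely many of the bounding crosscuts of $\tilde\Omega_N$ (lifts of $\partial\Omega_N$ as in Lemma \ref{lemma-lifts-of-curves}) shrink to $e^{i\theta}$ and form a null-chain of crosscut neighbourhoods contained in $\tilde\Omega_N$; any curve landing at $e^{i\theta}$, and $R_\theta$ in particular, must enter every member of this null-chain, hence lie in $\tilde\Omega_N$. Then $\pi(R_\theta)\subset\Omega_N$ is relatively compact, so $e^{i\theta}$ is of bounded type. I expect making this trapping step rigorous -- showing that a non-tangential limit point of the finitely generated $\Gamma_N$ is genuinely surrounded by boundary crosscuts shrinking to it, rather than sitting on a free boundary arc -- to be the main technical obstacle; it is precisely here that finite generation (equivalently $\overline{\Omega_N}\subset\Omega$) is essential, as it excludes the oscillating excursions characteristic of bungee points.

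Finally, part (c) follows by elimination. The escaping, bounded and bungee types partition $\partial\DD$; part (a) identifies the escaping points with $\partial\DD\setminus\Lambda_{NT}$, and part (b) identifies the bounded points with $\bigcup_{\Gamma'}\Lambda_{NT}(\Gamma')$, the union over finitely generated subgroups $\Gamma'\subset\Gamma$. Since $\Lambda_{NT}(\Gamma')\subset\Lambda_{NT}$ for every subgroup $\Gamma'$ (the same witnessing sequence works for $\Gamma$), the bungee points are exactly $\Lambda_{NT}\setminus\bigcup_{\Gamma'}\Lambda_{NT}(\Gamma')$, which is the asserted characterisation.
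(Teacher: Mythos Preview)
Your overall strategy matches the paper's: part (a) via Lemma~\ref{lem:non-tangential-curves} and Proposition~\ref{prop-escaping-bounded-bungee}, part (b) by passing to an exhaustion level $\Omega_N$ and its stabiliser, part (c) by exclusion. The forward direction of (b) differs only cosmetically: the paper, instead of your direct orbit-tracking argument, composes with a Riemann map $\varphi\colon\DD\to\tilde\Omega_N$ fixing $0$ and $e^{i\theta}$ and reapplies Lemma~\ref{lem:non-tangential-curves}(b) to the finitely connected covering $\pi\circ\varphi$.

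However, the step you flag as ``the main technical obstacle'' in the converse of (b)---trapping $R_\theta$ inside $\tilde\Omega_N$---is not needed, and your sketch of it is in fact backwards. Once you have shown (as you correctly do) that the orbit $\Gamma'(0)$ lies in $\tilde\Omega_N$ and accumulates at $e^{i\theta}$, you already have $e^{i\theta}\in\partial\tilde\Omega_N\cap\partial\DD$. At this point you are done: by Summary~\ref{thm-ohtsuka}(a) (equivalently, the very characterisation you invoked in the forward direction), lying on $\partial\pi^{-1}(\Omega_N)$ is precisely finite depth, and Proposition~\ref{prop-bound-esc-bungee2} converts this to bounded type. This is exactly how the paper closes the argument. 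Your proposed null-chain argument is both unnecessary and confused: the boundary crosscuts of $\tilde\Omega_N$ shrinking to $e^{i\theta}$ bound crosscut neighbourhoods lying \emph{outside} $\tilde\Omega_N$, not inside it, so entering them would mean leaving $\tilde\Omega_N$, the opposite of what you want.
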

\begin{proof}
First, it follows immediately from Lemma \ref{lem:non-tangential-curves} that points of escaping type are in $ \partial\mathbb{D}\smallsetminus\Lambda_{NT}$, whereas points of bounded and bungee type are in $ \Lambda_{NT} $. Next, in order to distinguish between bounded and bungee type, we rely on Proposition \ref{prop-bound-esc-bungee2}, which asserts that $ e^{i\theta}  $ is of bounded type if and only if it lies in the boundary of some connected component of $ \pi^{-1}(\Omega_n) $.

Indeed, let $e^{i\theta}\in\partial\DD$ be of bounded type. Then, by definition, the image of the radius $R_\theta$ is compactly contained in $\Omega_n$ for some sufficiently large $n$, and $e^{i\theta}$ lies on the boundary of some component $\tilde\Omega_n$ of $\pi^{-1}(\Omega_n)$. This component is simply connected, and $\pi|_{\tilde\Omega_n}$ is a universal covering of $\Omega_n$ by Lemma \ref{lem:restriction}. Using a Riemann map $\varphi\colon\DD\to\tilde\Omega_n$ that fixes the origin and $e^{i\theta}$, we can apply Lemma \ref{lem:non-tangential-curves} to the new covering $\pi\circ\varphi$ and conclude that $e^{i\theta}$ is in the non-tangential limit set of $\Lambda'$, where $\Lambda'$ is the group of deck transformations of $\pi\circ\varphi$. Since $\Omega_n$ is finitely connected, $\Lambda'$ is finitely generated, and the conclusion follows.
	
Conversely, assume that $e^{i\theta}\in\partial\DD$ is in the non-tangential limit set of a finitely generated subgroup $\Lambda'$ of $\Lambda$. Let $\gamma_1, \ldots, \gamma_m$ be generators of $\Lambda'$; then, by Lemma \ref{lem:deck}, they correspond to (simple) closed curves $\eta_1, \ldots \eta_m$ in $\Omega$, which are contained in some $\Omega_n$ for some large enough $n$. We can assume that the curves $\eta_i$, $1\leq i\leq m$, all pass through the point $\pi(0)$ (since we can deform them within their respective homotopy classes), meaning that the orbit $\Lambda'(0) = \{\gamma(0)\colon \gamma\in\Lambda'\}$ is now a subset of the connected component $\tilde\Omega_n$ of $\pi^{-1}(\Omega_n)$ that contains $0$. Since this orbit contains, by hypothesis, a subsequence converging to $e^{i\theta}$, it follows that $e^{i\theta}\in\partial\tilde\Omega_n$, whence (by Theorem \ref{thm-ohtsuka}) it is of bounded type.
	
The bungee case now follows by exclusion.
\end{proof}

\subsection{The Main Theorem and its immediate consequences}\label{subsect-proof-main-result}
We can now summarise the results of Section \ref{ssec:Ohtsuka2} in the following statement.
\begin{summ}{\bf (Main Theorem)}\label{thm-main-result-complete}
	Let $ \Omega $ be a multiply connected domain, let $ \pi\colon \mathbb{D}\to\Omega $ be a universal covering and let $ e^{i\theta} \in\partial\mathbb{D}$. Then, the following hold.
		\begin{enumerate}[label={\normalfont (\alph*)}]
		\item $ e^{i\theta} $ is of escaping type if and only if $ e^{i\theta} $ is of infinite depth if and only if $ e^{i\theta} \in\partial\mathbb{D}\smallsetminus \Lambda_{NT}$. Moreover,\label{item:MTescaping}
		\begin{enumerate}[label={\normalfont (\roman*)}]
			\item $ e^{i\theta} $ is associated to a unique boundary component $ \Sigma $, and $ Cl_\mathcal{A}(\pi, e^{i\theta})\subset\Sigma$;
			\item if $ \Sigma $ is an  isolated non-degenerate continuum in $ \partial \Omega $, then $ e^{i\theta} $ is regular;
			\item $ \Sigma=\left\lbrace p\right\rbrace  $ is an isolated point in $ \partial \Omega $ if and only if $ e^{i\theta} $ is a parabolic fixed point.
		\end{enumerate}
		\item $ e^{i\theta} $ is of bounded type if and only if $ e^{i\theta} $ is of finite depth if and only if $e^{i\theta}$belongs to the non-tangential limit set of some finitely generated subgroup of $\Gamma$.\label{item:MTbounded}
		\item $ e^{i\theta} $ is of bungee type if and only if $ e^{i\theta} $ is of oscillating depth if and only if $e^{i\theta}\in \Lambda_{NT}$, but does not belong to the non-tangential limit set of any finitely generated subgroup of $\Gamma$. 
		
		\noindent Moreover, if $ e^{i\theta} $ is associated to a boundary component $ \Sigma $, then $ \Sigma $ is a non-isolated boundary component.\label{item:MTbungee}
	\end{enumerate}
	Furthermore,
	\begin{enumerate}[resume, label={\normalfont (\alph*)}]
	\item for any boundary component $ \Sigma $, there are at least countably many points of escaping type associated to $ \Sigma $;\label{item:MTesc-countable}
	\item there are uncountably many points of bounded type; and\label{item:MTbounded-uncountable}
	\item for every countable collection of non-isolated boundary components $ \left\lbrace \Sigma_n \right\rbrace _n$, there exist uncountably many points of oscillating type $ e^{i\theta}\in\partial\mathbb{D} $ having $ \left\lbrace \Sigma_n \right\rbrace _n$ as associated boundary components.\label{item:MTosc-uncountable}
	\end{enumerate}
\end{summ}

The previous theorem
 has as straightforward consequences \ref{teo:A} and \ref{teo:B}, as promised in the introduction. We restate  them here for the reader's convenience.
\begin{named}{Theorem A}
		Let $ \Omega $ be a multiply connected domain of connectivity greater than two, and let $ \pi\colon \mathbb{D}\to\Omega $ be a universal covering. Then, the following hold.
	\begin{enumerate}[label={\normalfont (\alph*)}]
			\item $ \Lambda_{NT} $ always consists of uncountably many points.
			\item $ \Lambda\smallsetminus \Lambda_{NT} $ is non-empty if and only if there exists a non-isolated boundary component or an isolated boundary point. Moreover, $ \Lambda\smallsetminus \Lambda_{NT} $ is uncountable if and only if there exists a non-isolated boundary component.
			\item If the radial limit  $ \pi^*(e^{i\theta}) $ exists, then $ e^{i\theta}\in\partial\DD\setminus\Lambda_{NT}$.
			\item There always exists countably many ambiguous points for $ \pi\colon \mathbb{D}\to\Omega $.
		\end{enumerate}
\end{named}
\begin{proof} Item (a) follows immediately from Theorem \ref{thm-main-result-complete}, and more specifically from items \ref{item:MTbounded} and \ref{item:MTbounded-uncountable}. For item (b), notice that the existence of points in $\Lambda\setminus\Lambda_{NT}$ implies the existence of either a non-isolated boundary component or an isolated boundary point by Theorem \ref{thm-main-result-complete}\ref{item:MTescaping}. Conversely, given any such boundary component, there are countably many points on $\dD$ associated with it by Theorem \ref{thm-main-result-complete}\ref{item:MTesc-countable}. The uncountability of $\Lambda\setminus\Lambda_{NT}$ is, by Theorem \ref{thm-main-result-complete}\ref{item:MTosc-uncountable}, equivalent to the presence of a non-isolated boundary component (notice that, if there are uncountably many points in $\Lambda\setminus\Lambda_{NT}$, at most countably of those are due to isolated boundary points).

For item (c), since a universal covering map satisfies the path-lifting property, if $ \pi^*(e^{i\theta}) $ exists, then $ \pi^*(e^{i\theta}) \in\partial\Omega$. This already implies that $ e^{i\theta} $ is of escaping type, whence (c) follows by Theorem \ref{thm-main-result-complete}\ref{item:MTescaping}. Finally, (d) follows from (a) combined with Proposition \ref{prop-escaping-bounded-bungee}.
\end{proof}

The statements in \ref{teo:A} deserve a few comments. First, concerning (b), since a subdomain of $\Chat$ has a non-isolated boundary component if and only if it is infinitely connected (recall Lemma \ref{lem:deck}), this gives us a different proof of the main result of \cite{BM74} for the case where $\Omega = \DD/\Gamma$ is a plane domain. In particular, we obtain a different proof of the Ahlfors' Area conjecture for Fuchsian groups that does not rely on the equivalence between finitely generated and geometrically finite Fuchsian groups.
	
Concerning (c), we have that, in particular, $\pi$ having radial limits almost everywhere implies that $\Lambda_{NT}$ has measure zero. Since the converse implication is already covered by the Hopf-Tsuji-Sullivan Ergodic Theorem  \ref{thm-hopf-tsuji-sullivan}, we have proved \ref{teo:B}.
	
\begin{named}{Corollary B}
	Let $\Omega$ be a hyperbolic plane domain, and let $\pi\colon\DD\to\Omega$ be a universal covering. Then, the following are equivalent.
	\begin{enumerate}[label={\normalfont (\alph*)}]
		\item $\Lambda_{NT}$ has zero Lebesgue measure on $\DD$.
		\item $\partial\Omega$ has positive logarithmic capacity.
		\item $\pi$ belongs to the Nevanlinna class.
		\item $\pi^*$ exists almost everywhere on $ \partial\mathbb{D} $.
	\end{enumerate}
\end{named}

\section{Applications}\label{section-applications}
As a consequence of the Main Theorem proved in the previous section, we are capable of describing accesses to the boundary for multiply connected domains,  and develop a prime end theory which is compatible with the action of the universal covering map. We do this in Sections \ref{ssect:accesses} and \ref{subsect-prime-ends}. Using this new machinery, we also construct quick-and-easy examples of Fuchsian groups with `pathological' limit sets in Section \ref{ssec:group-examples}.

\subsection{Accessing the boundary of a multiply connected domain}\label{ssect:accesses}
Here we discuss accesses to points on the boundary of $\partial\Omega$. In particular, we address the questions of existence of accesses and their relation with radial limits and the universal covering.

Let us start with the following result -- which may be folklore, but we were unable to find a reference to it. Recall that a point $ p\in\partial\Omega $ is said to be accessible from $ \Omega $ if there exists a curve in $ \Omega $ landing at it (Def. \ref{def-accessible-point}).
\begin{lemma}{\bf (Accessible points are dense)}\label{lem:dense}
	Let $ \Omega\subset\Chat $ be a domain. Then, the points  in $ \partial\Omega $ which are accessible from $ \Omega $ are dense in $ \partial\Omega $.
\end{lemma}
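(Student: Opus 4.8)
The plan is to use a \emph{first-exit-point} argument along a segment aimed at the target boundary point. Fix $q\in\partial\Omega$ and $\varepsilon>0$; it suffices to produce an accessible point $p\in\partial\Omega$ with $\dist_{\Chat}(p,q)<\varepsilon$, as $q$ and $\varepsilon$ are arbitrary. Since the ambient space is the sphere, I would first reduce to the plane: after post-composing with a rotation of $\Chat$ (a conformal automorphism that preserves the spherical metric, boundaries, and accessibility) we may assume $q\neq\infty$ and work in a chart where $q$ is finite. The spherical ball $B_{\Chat}(q,\varepsilon)$ then contains a Euclidean ball $B(q,\delta)$; since $q\in\partial\Omega$, we may pick $z_0\in\Omega\cap B(q,\delta)$.

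Next I would consider the Euclidean segment $\gamma(t)=(1-t)z_0+tq$, $t\in[0,1]$, which joins $z_0=\gamma(0)\in\Omega$ to $q=\gamma(1)\in\partial\Omega$ and stays finite. Define the first-exit parameter
\[ t^*=\sup\{t\in[0,1]\colon \gamma([0,t])\subset\Omega\}. \]
Since $\Omega$ is open and contains $\gamma(0)$ we have $t^*>0$, and since $\gamma(1)=q\notin\Omega$ we have $t^*\le 1$. Put $p=\gamma(t^*)$.

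The key step is to check that $p$ works. From the definition of $t^*$ we get $\gamma([0,t^*))\subset\Omega$, so $p\in\overline{\Omega}$; on the other hand $p\notin\Omega$, for otherwise the openness of $\Omega$ together with $t^*<1$ would allow extending the segment slightly past $t^*$ inside $\Omega$, contradicting the maximality of $t^*$, whereas $t^*=1$ forces $p=q\notin\Omega$. Hence $p\in\overline{\Omega}\setminus\Omega=\partial\Omega$, and $\gamma|_{[0,t^*)}$ is a curve in $\Omega$ landing at $p$, so $p$ is accessible in the sense of Definition \ref{def-accessible-point}. Finally, by convexity of $B(q,\delta)$ the whole segment lies in $B(q,\delta)\subset B_{\Chat}(q,\varepsilon)$, so in particular $\dist_{\Chat}(p,q)<\varepsilon$; undoing the rotation preserves this estimate.

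I do not expect a genuine obstacle here: the statement is elementary, and the only point demanding attention is the spherical setting, which is why I would open with the reduction to a finite chart (equivalently, one could run the whole argument with spherical geodesic arcs joining the non-antipodal points $z_0$ and $q$). The substance is simply the observation that a path aimed from inside $\Omega$ at a point of its complement must cross $\partial\Omega$, and that its first crossing is an accessible boundary point arbitrarily close to $q$.
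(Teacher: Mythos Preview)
Your proof is correct and follows essentially the same first-exit-point argument along a straight segment as the paper's own proof. You are in fact slightly more careful than the paper about the spherical setting (reducing via a rotation to the case $q\neq\infty$), whereas the paper tacitly works in $\mathbb{C}$.
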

\begin{proof}
	Let $ x\in\partial\Omega $ and $ r>0 $; we have to see that there is an accessible point in $ D(x,r) $. Let $ z\in \Omega \cap D(x,r)  $,  and consider $ \eta\colon \left[ 0,1\right] \to \mathbb{C} $ to be the straight segment joining $ z $ and $ x $, parametrized so that $ \eta(0)=z $ and $ \eta(1)=x $. Thus, $ \eta $ starts in $ \Omega $ and ends outside it, so there exists a minimal $ t_0\in \left( 0,1\right]  $ with $ \eta(t_0) \notin\Omega$. Then, $ \eta(t_0) \in\partial\Omega$, and it is accessible from $ \Omega $, since the curve $ \left\lbrace \eta (t)\colon \ t\in (0, t_0) \right\rbrace  $ lands at it. See Figure \ref{fig-accessible}.
	
			\begin{figure}[h]\centering
		\includegraphics[width=14cm]{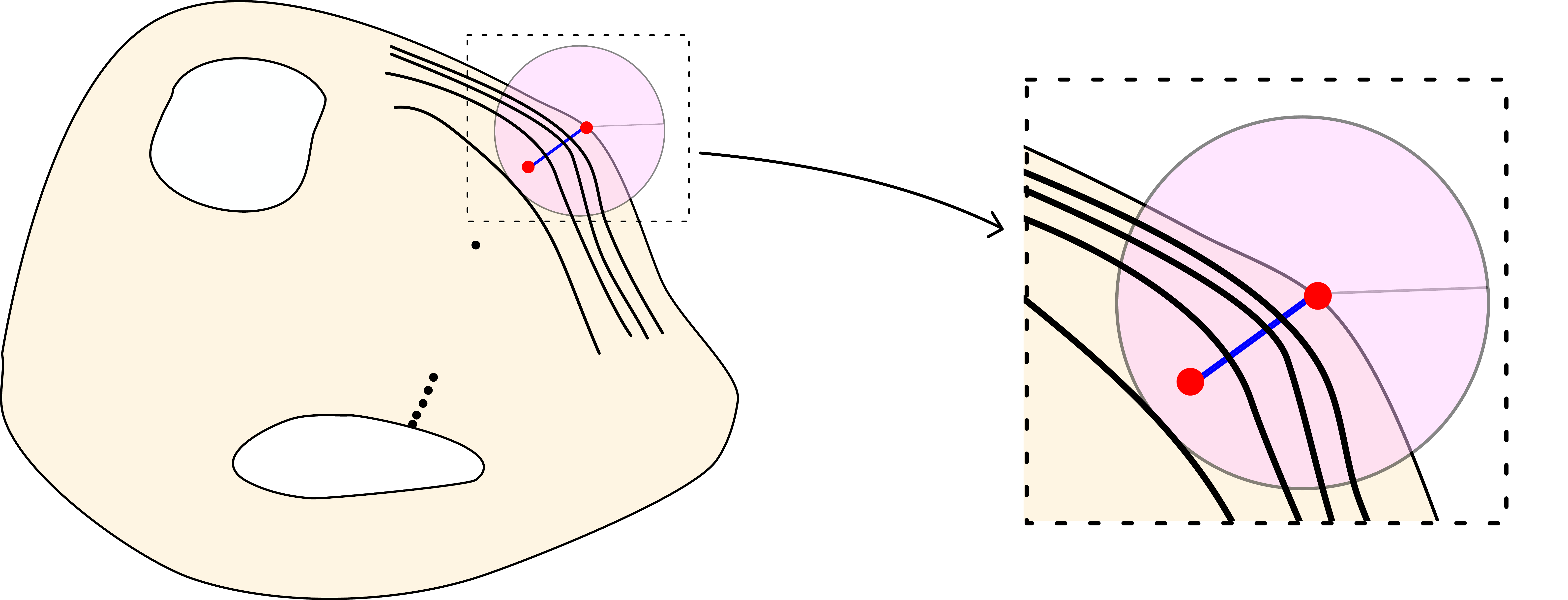}
		\setlength{\unitlength}{14cm}
		\put(-0.95, 0.35){$ \Omega $}
			\put(-0.16, 0.205){$ x $}
				\put(-0.265, 0.13){$ z$}
					\put(-0.11, 0.2){$ r $}
		\caption{\footnotesize A sketch of the proof of Lemma \ref{lem:dense} to find an accessible point in $ D(x,r) $, with a zoom on the right.}\label{fig-accessible}
	\end{figure}
\end{proof}

Since our domains are now multiply connected, it makes sense to define what an accessible boundary component is.

\begin{defi}{\bf (Accessible boundary component)}
	We say that a boundary component $ \Sigma\subset \partial \Omega $ is {\em accessible} from $ \Omega $ if there exists a curve $ \eta \colon \left[0, 1\right) \to \Omega$  whose landing set is contained in $ \Sigma $, i.e. $ L(\eta)\subset \Sigma $.
\end{defi}

We now turn to the question of how to relate accessible points (and accessible boundary components)  to points on the unit circle by means of the universal covering. We start with a simple result mimicking Lindel\"of's Theorem \ref{thm-lindelof} for simply connected domains, which is a more complete version of \ref{teo:C} in the introduction.
\begin{prop}
	{\bf (Accessible points and radial limits)}\label{prop:access}
	Let $ \Omega $ be a multiply connected domain, and let $ \pi\colon\mathbb{D}\to\Omega $ be the universal covering of $ \Omega $. Then, all boundary components $ \Sigma\subset \partial \Omega $ are accessible from $ \Omega $.	More precisely, 
	there exists $ e^{i\theta}\in \partial \mathbb{D} $ escaping such that $ Cl_\mathcal{A}(\pi, e^{i\theta})\subset\Sigma $.
	
	\noindent Moreover, a point $ p\in\partial \Omega $ is accessible from $ \Omega $ if and only if there exists $ e^{i\theta}\in \partial \mathbb{D} $ such that $ \pi^* (e^{i\theta}) =p$. Such $ e^{i\theta} $ is escaping.
	\end{prop}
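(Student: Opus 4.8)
The plan is to read the first assertion straight off the Main Theorem (Summary~\ref{thm-main-result-complete}), and to split the equivalence into its two implications, with the Lehto--Virtanen Theorem~\ref{thm-lehto-virtanen} and the completeness of the hyperbolic metric as the main analytic inputs. For the accessibility of boundary components, I would argue as follows: by Theorem~\ref{thm-main-result-complete}\ref{item:MTesc-countable} every component $\Sigma\subset\partial\Omega$ carries at least one associated point $e^{i\theta}\in\partial\DD$ of escaping type, and by Theorem~\ref{thm-main-result-complete}\ref{item:MTescaping}(i) such a point satisfies $Cl_\mathcal{A}(\pi,e^{i\theta})\subset\Sigma$, which is exactly the ``more precisely'' clause. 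Accessibility of $\Sigma$ then comes for free from the radial curve: since $e^{i\theta}$ is escaping, $\pi(R_\theta(t))\to\partial\Omega$ and $L(\pi\circ R_\theta)=Cl_R(\pi,e^{i\theta})\subset Cl_\mathcal{A}(\pi,e^{i\theta})\subset\Sigma$, so $\pi\circ R_\theta$ is a curve in $\Omega$ landing inside $\Sigma$.

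The reverse implication of the equivalence is immediate. If $\pi^*(e^{i\theta})=p$ exists with $p\in\partial\Omega$, then the radial curve $\pi\circ R_\theta$ lands at $p$, so $p$ is accessible by Definition~\ref{def-accessible-point}; moreover, the mere existence of the radial limit forces $e^{i\theta}\in\partial\DD\smallsetminus\Lambda_{NT}$ by Theorem~\ref{teo:A}(c), i.e. $e^{i\theta}$ is of escaping type.

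The substance is the forward implication. Given $p\in\partial\Omega$ accessible, I fix a curve $\eta\colon[0,1)\to\Omega$ landing at $p$ and a lift $\tilde\eta$ with $\pi\circ\tilde\eta=\eta$. First I would show $\tilde\eta(t)\to\partial\DD$: as $\eta(t)\to p\in\partial\Omega$ the curve eventually leaves every compact subset of $\Omega$, so by completeness of $(\Omega,\rho_\Omega)$ (Lemma~\ref{lem:superSP}(c)) one gets $\mathrm{dist}_\Omega(\eta(0),\eta(t))\to\infty$, and since $\pi$ does not increase hyperbolic distance (Lemma~\ref{lem:superSP}(a)--(b)) the same holds for $\mathrm{dist}_\DD(\tilde\eta(0),\tilde\eta(t))$, forcing $\tilde\eta\to\partial\DD$. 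The key step is then to prove that $\tilde\eta$ lands at a \emph{single} point $e^{i\theta}\in\partial\DD$. Granting this, $\pi(\tilde\eta)=\eta$ lands at $p$, so the Lehto--Virtanen Theorem~\ref{thm-lehto-virtanen} (applicable because $\pi$ omits three values) gives that $\pi$ has angular limit $p$ at $e^{i\theta}$, and in particular $\pi^*(e^{i\theta})=p$; that $e^{i\theta}$ is escaping then follows from Theorem~\ref{teo:A}(c), as in the reverse implication.

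The main obstacle is precisely this single-point landing, which is where the homotopical winding of $\eta$ around boundary components must be controlled. My plan is to show that $\eta$ cannot wind in an essential way: its tail lies in one component $U$ of $D(p,\varepsilon)\cap\Omega$, of spherical diameter at most $2\varepsilon$, so the loops it traces are too small to enclose the continuum $\Sigma\ni p$. Concretely, I would pass to a simply connected crosscut neighbourhood on which $\pi$ is univalent (via Lemma~\ref{lem:restriction}, using Lemma~\ref{lemma-limit-sets3}(b) when $e^{i\theta}\notin\Lambda$) and invoke the classical Lindel\"of Theorem~\ref{thm-lindelof} for the resulting Riemann map to conclude that the lift lands at one point. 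The delicate situation is when boundary components of vanishing diameter accumulate at $p$, so that $\Sigma$ is non-isolated and the relevant $\alpha$-image is a nondegenerate arc; there I expect the argument to require showing that the radial cluster set at the landing point still collapses to $\{p\}$, which is automatic when $\Sigma$ is a single point and otherwise follows once a non-winding accessing curve is secured.
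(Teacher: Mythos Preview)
Your setup is correct and matches the paper: the accessibility of boundary components follows from Theorem~\ref{thm-main-result-complete}\ref{item:MTesc-countable} and \ref{item:MTescaping}(i), the reverse implication is immediate, and the endgame via Lehto--Virtanen is right. The genuine gap is in the single-point landing of $\tilde\eta$, and neither of your proposed mechanisms closes it. The ``loops too small to enclose $\Sigma$'' argument fails precisely in the non-isolated case you flag: arbitrarily small loops near $p$ can be essential in $\pi_1(\Omega)$ by winding around nearby boundary components, so the tail of $\eta$ need not sit in any simply connected subregion of $\Omega$. Your appeal to Lemma~\ref{lemma-limit-sets3}(b) is circular, since that lemma gives a univalent crosscut neighbourhood of a \emph{given} regular point $e^{i\theta}$, but you do not yet know that $L(\tilde\eta)$ is a single point, nor that it avoids $\Lambda$; and even granting $L(\tilde\eta)\subset\partial\DD\smallsetminus\Lambda$, there is no reason the pointwise neighbourhoods patch into one crosscut neighbourhood containing the whole tail of $\tilde\eta$.

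The paper closes this gap by a different route. First, it arranges an Ohtsuka exhaustion so that $\eta$ crosses each $\sigma_{s_1\dots s_n}$ exactly once (Lemma~\ref{lemma-equivalence-ohtsuka} allows moving the exhaustion within homotopy), which traps $L(\tilde\eta)$ inside a nested intersection $\bigcap_n\overline{N_{s_1\dots s_n}}$, i.e.\ inside a single $\alpha$-image. If this is a point, you are done. If it is a nondegenerate arc $I$, its interior consists of regular points, so $\Lambda_{NT}$ has measure zero by Lemma~\ref{lemma-limit-sets}(c), and then \ref{teo:B} gives that $\pi^*$ exists Lebesgue-a.e.\ on $I$. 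But if $L(\tilde\eta)$ were a nondegenerate subarc of $I$, then along $\tilde\eta$ the map $\pi$ tends to $p$, which would force $\pi^*=p$ on a set of positive measure and hence $\pi\equiv p$ --- a contradiction. This measure-theoretic step via \ref{teo:B} is the missing idea; note in particular that the ``delicate'' arc case already occurs for \emph{isolated} nondegenerate continua $\Sigma$ (where the $\alpha$-image is the arc between two hyperbolic fixed points), not only in the non-isolated situation you singled out.
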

	\begin{proof}
	The fact that all boundary components are accessible from $ \Omega $ is a straightforward consequence from Theorem \ref{thm-main-result-complete}, items \ref{item:MTescaping}(i) and \ref{item:MTesc-countable}.
	
	For the second statement, it is clear that, if  there exists $ e^{i\theta}\in \partial \mathbb{D} $ such that $ \pi^* (e^{i\theta}) =p$, then $ p $ is accessible. For the converse, assume $ p$ is accessible from $ \Omega $, and let $ \eta\colon \left[ 0,1\right) \to\Omega $, $ \eta(0)=\pi(0) $, be a curve landing at $ p $. Let $ \nu\colon \left[ 0,1\right) \to\mathbb{D} $ be the lift of $ \eta $ starting at $ \nu(0)=0 $. Then, $ \nu (t)\to\partial\mathbb{D} $ as $ t\to 1^{-} $.
	
	We claim that in fact there exists $ e^{i\theta} \in\partial\mathbb{D}$ such that $ \nu (t)\to e^{i\theta} $ as $ t\to 1^- $. To do so, consider an Ohtsuka exhaustion $ \left\lbrace \Omega_n\right\rbrace _n $ of $ \Omega $, and let $\left\lbrace \sigma_{s_1s_2\dots s_n}\right\rbrace _n$ be the boundary curves isolating $ \Sigma $ (i.e., $\Sigma$ has address $s_1s_2\dots s_n$); assume $ \eta $ intersects each $\left\lbrace  \sigma_{s_1s_2\dots s_n}\right\rbrace _n $ at precisely one point (we can, for instance, exchange $\Omega_n$ by any homotopically equivalent exhaustion; recall Lemma \ref{lemma-equivalence-ohtsuka}). Denote by $ \left\lbrace C_{s_1s_2\dots s_n}\right\rbrace _n $ the corresponding crosscuts in $ \mathbb{D} $ which intersect $ \nu $, and by $ \left\lbrace N_{s_1s_2\dots s_n}\right\rbrace _n $ their respective crosscuts neighbourhoods (see Figure \ref{fig-lindelof}).
	
	\begin{figure}[h]
		\includegraphics[width=14cm]{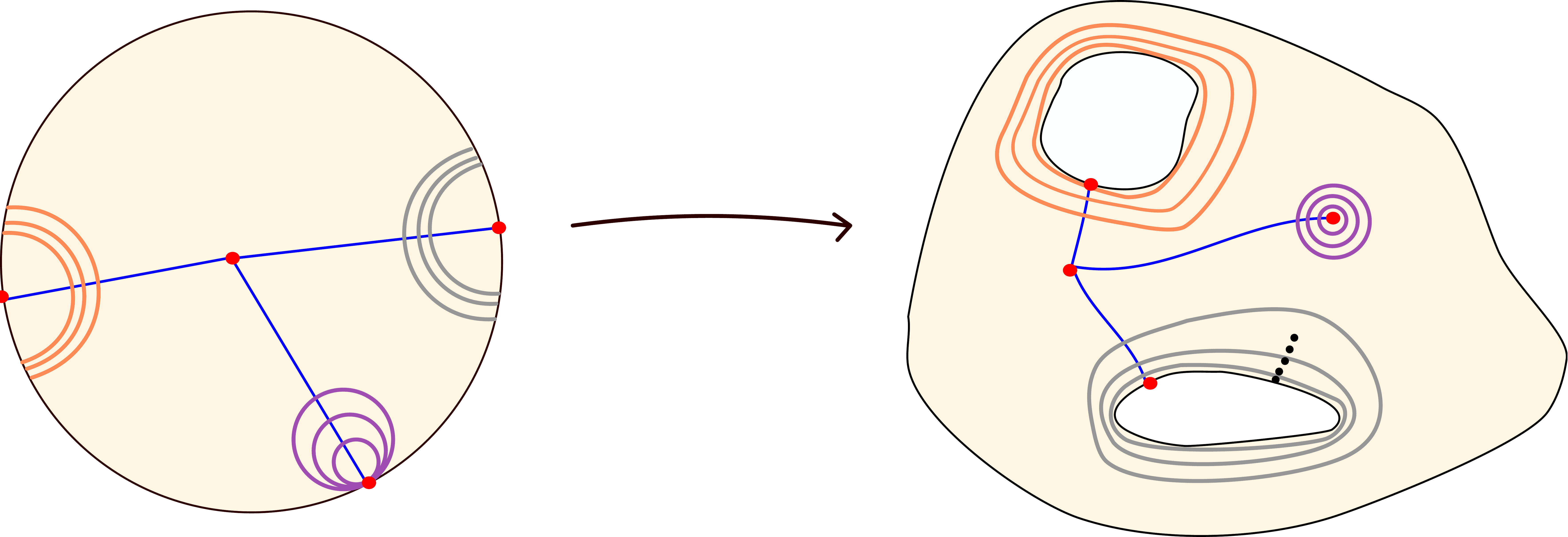}
		\setlength{\unitlength}{14cm}
		\put(-0.56, 0.21){$ \pi $}
		\put(-0.75, 0.31){$ \mathbb{D} $}
		\put(-0.14, 0.3){$ \Omega $}
		\put(-0.35, 0.16){$ z_0$}
		\put(-0.86, 0.185){$ 0$}
		\caption{\footnotesize Idea of the construction in Proposition \ref{prop:access}.}\label{fig-lindelof}
	\end{figure}

Observe that \[L(\nu)\subset\bigcap\limits_{j}\overline{N_{s_0s_1\dots s_j}}.\]
	
	Two possibilities arise. First, if the previous intersection of crosscut neighbourhoods consists of a point $ e^{i\theta} $, then obviously $ \nu (t)\to e^{i\theta} $,  as $ t\to 1^-$. Second, if the previous intersection (which is an $ \alpha $-image for $ \Sigma $) is a circular interval $ I\subset\partial\DD $, then, all the interior points of $ I $ are regular for $ \pi $, and hence $\Lambda_{NT}$ has Lebesgue measure zero. In particular, $ \pi^* $ exists for Lebesgue almost every point in $ I $ by \ref{teo:B}(d). If $ L(\nu) $ was a non-degenerate continuum in $ I $, then the radial limit $ \pi^* $ would not exist for all points in $ L(\nu) $, a set of positive measure, leading to a contradiction. Thus, there exists $ e^{i\theta} \in\partial\mathbb{D}$ with $ \nu (t)\to e^{i\theta} $,  when $ t\to 1 ^-$, as claimed.
	
	Therefore, we have that $ \nu (t)\to e^{i\theta} $ and $ \pi(\nu (t))=\eta(t)\to p $,  as $ t\to 1^- $. By the Lehto-Virtanen Theorem  \ref{thm-lehto-virtanen}, this already implies $ \pi^* (e^{i\theta}) =p$, as desired.
	Finally, since radial limits only exist for escaping points (\ref{teo:A}(c)), it is clear that such $ e^{i\theta} $ must be escaping.
	\end{proof}

Note that any boundary component $ \Sigma $ of $ \partial\Omega $ is a continuum in $ \widehat{\mathbb{C}} $ (i.e. a compact connected metric space). Then, it is either degenerate, if it consists of a single point, or non-degenerate, otherwise. Thus, as a straightforward corollary of the previous proposition, we get that all degenerate boundary components are accessible points.
\begin{cor}{\bf (Degenerate boundary components are accessible)}\label{cor:degenerate}
	Let $ \Omega $ be a multiply connected domain, and let $ \Sigma =\left\lbrace p\right\rbrace  $ be a boundary component. Then, $ p $ is an accessible  point on $ \partial \Omega$.
\end{cor}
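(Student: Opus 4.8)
The plan is to deduce this immediately from Proposition \ref{prop:access}, observing that for a singleton boundary component the accessibility of the component coincides with the accessibility of its unique point. First I would note that $\Sigma = \{p\}$ is by hypothesis a boundary component of $\Omega$, so the first part of Proposition \ref{prop:access} applies and yields that $\Sigma$ is accessible from $\Omega$; that is, there is a curve $\eta\colon [0,1) \to \Omega$ with $L(\eta) \subset \Sigma = \{p\}$. Since the landing set of any curve is a nonempty compact connected subset of $\widehat{\mathbb{C}}$, the inclusion $L(\eta)\subset\{p\}$ forces $L(\eta) = \{p\}$, which by definition of the landing set means $\lim_{t\to 1^-}\eta(t) = p$. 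Thus $\eta$ lands at $p$, and by Definition \ref{def-accessible-point} the point $p$ is accessible from $\Omega$.

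An equivalent route, which I would mention for completeness, goes through radial limits: Proposition \ref{prop:access} supplies an escaping point $e^{i\theta}\in\partial\mathbb{D}$ with $Cl_\mathcal{A}(\pi, e^{i\theta}) \subset \Sigma = \{p\}$. Because angular cluster sets are nonempty and, for escaping points, $Cl_\mathcal{A}(\pi, e^{i\theta}) = Cl_R(\pi, e^{i\theta})$ by Proposition \ref{prop-escaping-bounded-bungee}, this forces $Cl_R(\pi, e^{i\theta}) = \{p\}$. Hence the radial limit $\pi^*(e^{i\theta})$ exists and equals $p$, and the second part of Proposition \ref{prop:access} then gives that $p$ is accessible from $\Omega$.

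There is essentially no obstacle here, the corollary being a direct unwinding of definitions: the only point requiring care is the elementary remark that, when $\Sigma$ is a singleton, the statement ``$L(\eta)$ is contained in $\Sigma$'' is the same as ``$\eta$ lands at $p$''. This is precisely where the degeneracy hypothesis $\Sigma = \{p\}$ enters, distinguishing the present situation from the case of a non-degenerate continuum, where a curve landing inside the component need not land at any prescribed point of it.
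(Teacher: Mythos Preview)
Your proposal is correct, and your second route is exactly the paper's argument: apply Proposition \ref{prop:access} to obtain an escaping $e^{i\theta}$ with $Cl_R(\pi,e^{i\theta})\subset\Sigma=\{p\}$, whence $\pi^*(e^{i\theta})=p$ and $p$ is accessible. Your first route is a harmless rephrasing of the same idea at the level of landing sets rather than radial limits.
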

\begin{proof}
	Applying Proposition \ref{prop:access}, we get that there exists $ e^{i\theta}\in \partial \mathbb{D} $ escaping such that $ Cl_R(\pi, e^{i\theta})\subset\Sigma $. But, since $ \Sigma =\left\lbrace p\right\rbrace  $ it follows that $ \pi^*(e^{i\theta})$ exists and equals $p $, as desired.
\end{proof}

However, notice that, although all boundary components $ \Sigma\subset \partial \Omega $ are accessible from $ \Omega $, they may  have no accessible points, as shown by the following examples.

\begin{ex}\label{ex-radial-limit-exists}
	Let
	\[	\Omega= \Chat\smallsetminus\left( \bigcup_n \left\lbrace z\in\mathbb{D}\colon \ \left| z\right| = \frac{1}{n}, \ \left| \textrm{Arg }((-1)^nz)\right| <\frac{(n-1)\pi}{n}\right\rbrace \cup \left\lbrace 0\right\rbrace \right);  \]
	see Figure \ref{subfig:expoint}. Then, $0$ (shown in red) is a boundary component of $\Omega$. By Corollary \ref{cor:degenerate}, it is an accessible boundary point of $\Omega$, despite being hidden behind infinitely many reefs. Of course, the \textit{probability} that a random path in $\Omega$ hits $0$ is zero.
\end{ex}

\begin{ex}\label{ex-radial-limit-no-exists}
	This example can be seen as a deformation of the previous one. For $n\geq 1$, let $C_n$ denote the union of the three line segments $[-1/4 - (1 + i)/n, 1/4 + (1 - i)/n]$, $[1/4 + (1 - i)/n, 1/4 + (1 + i)/n]$ and $[1/4 + (1 + i)/n, -1/4 - (1 - i)/n]$. Then, the domain
	\[ \Omega = \Chat\setminus \left(\bigcup_n (-1)^nC_n \cup [-1/4, 1/4] \right) \]
	has the interval $I = [-1/4, 1/4]$ as a boundary component (red in Fig. \ref{subfig:exint}), and it is accessible by Prop. \ref{prop:access}. However, no point on $I$ is accessible, because a curve $\eta$ in $\Omega$ going towards a point $x\in I$ must avoid the infinitely many reefs $(-1)^nC_n$ accumulating on $I$; in particular $\eta$ must accumulate at every point of $I$.
\end{ex}
\begin{figure}[h]\centering
	\begin{subfigure}{0.45\textwidth}\centering
		\includegraphics[width=0.8\textwidth]{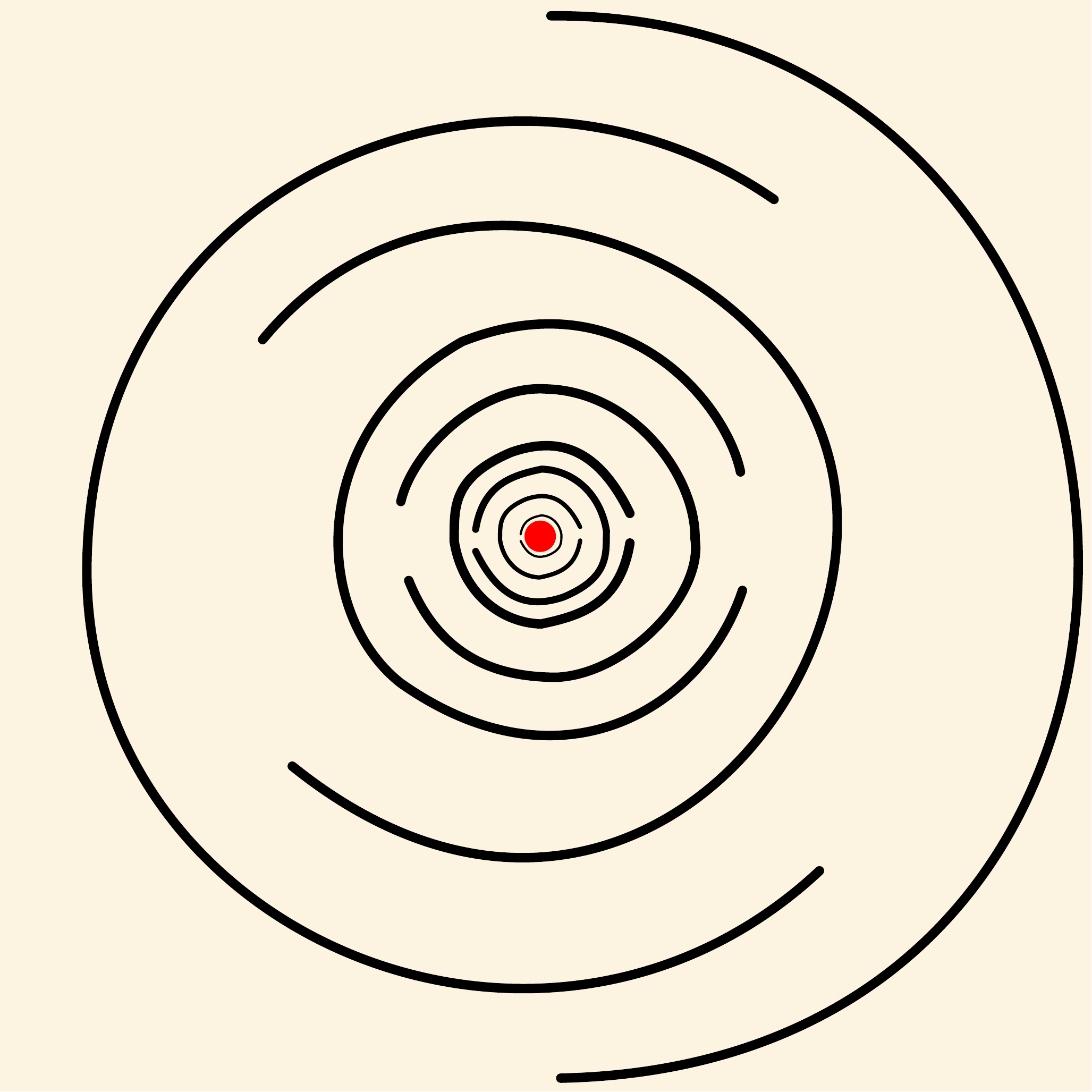}
		\setlength{\unitlength}{0.8\textwidth}
		\put(-0.98,0.94){$ \Omega $}
		\caption{\footnotesize The domain $\Omega$ in Example \ref{ex-radial-limit-exists}.}\label{subfig:expoint}
	\end{subfigure}
\hfill
	\begin{subfigure}{0.45\textwidth}\centering
		\includegraphics[width=0.8\textwidth]{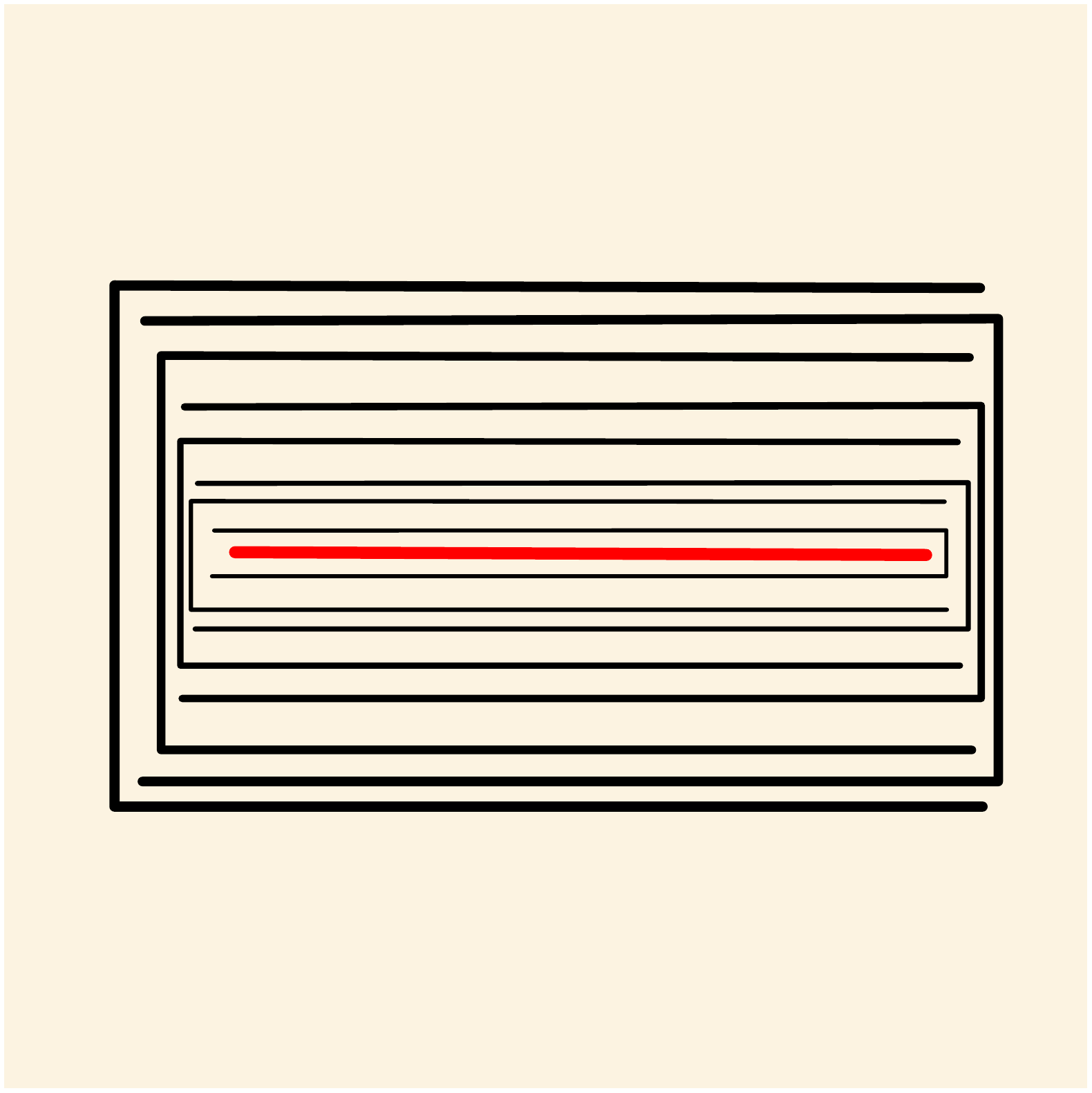}
		\setlength{\unitlength}{0.8\textwidth}
				\put(-0.98,0.94){$ \Omega $}
		\caption{\footnotesize The domain $\Omega$ in Example \ref{ex-radial-limit-no-exists}.}\label{subfig:exint}
	\end{subfigure}
\caption{\footnotesize Different accessible boundary components.}
\end{figure}

Finally, we develop a multiply connected analogue of the Correspondence Theorem  \ref{correspondence-theorem}. Our first task is to determine whether the definition of access to a point on $\partial\Omega$  (Def. \ref{def:access-sc}) remains reasonable -- a simple example shows that it does not.

Indeed, let $\mathcal{A}_r = \{z\in\Cx\colon 1/r < |z| < r\}$ for some $r > 1$. Then, the universal covering of $ \mathcal{A}_r  $ can be thought as the exponential map  defined on the strip $S = \{z\in\Cx\colon |\textrm{Re} z| < \log r\}$, \[\exp\colon S\longrightarrow \Omega, \]  mapping $0$ to $1$ and the imaginary axis onto the unit circle (which is the unique simple closed geodesic of $\Omega$). Let $\eta_1$ and $\eta_2$ be the Jordan curves joining $1\in\Omega$ to $r\in\partial\Omega$. One would expect that they should define the same acccess to the point $r$, but that is clearly not the case according to Definition \ref{def:access-sc}. In fact, there are countably many different {\em accesses} to any point on $\partial\Omega$, corresponding to how many times a representative curve winds around the hole in $\Omega$.  Compare with Figure \ref{fig:access-annulus}.

Moreover, when working with the universal covering, another ambiguity appears: each of the curves $\eta_i$, $i=1,2$, has infinitely many lifts to $S$, each starting at preimage $\tilde z_n = 2n\pi i$, $n\in\mathbb{Z}$ of $1$, and terminating at a preimage $\tilde w_n = \log r + 2n\pi i$, $n\in\mathbb{Z}$, of $r$. Since universal coverings have the path-lifting property, this can be solved by fixing a preimage of the basepoint $1\in\Omega$ and considering all the unique lift of each curve starting at that point.

We see that modifications must be made to the definition of access. Specifically, not only two homotopic curves (with fixed endpoints) must define the same access, but, moreover, two non-homotopic curves which land at the same boundary point and only differ on adding (concatenating) a closed curve must define also the same access.  Morally, we must have some way of ignoring finitely many holes in $\Omega$. However, we cannot ignore infinitely many  holes at once. Indeed, as shown in  Figure \ref{fig:access-infinity}, there may exist two different accesses to $x\in\partial \Omega$ which differ in infinitely many holes, and thus are unrelated by a single element of the fundamental group (note that the two curves do not differ one from the other by concatenating a loop). 

\begin{figure}[h]
	\begin{subfigure}{0.45\textwidth}\centering
		\includegraphics[width=0.8\textwidth]{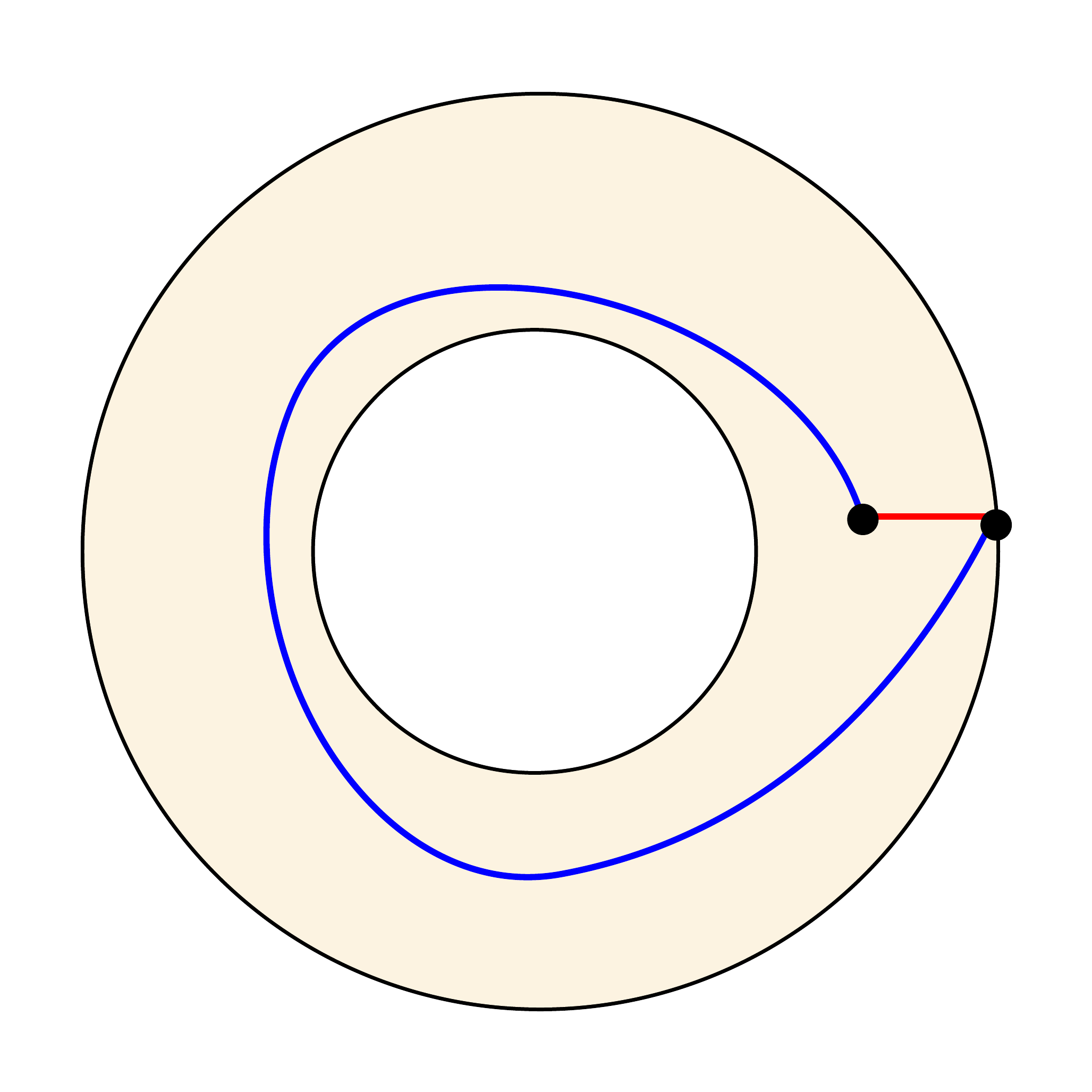}
		\setlength{\unitlength}{0.8\textwidth}
			\put(-0.85,0.8){$ \Omega $}
		\caption{\footnotesize The two curves in the annulus should define the same access. Notice also that the preimages of said curves are related to each other in non-trivial ways.}\label{fig:access-annulus}
	\end{subfigure}
\hfill
	\begin{subfigure}{0.45\textwidth}\centering
		\includegraphics[width=0.8\textwidth]{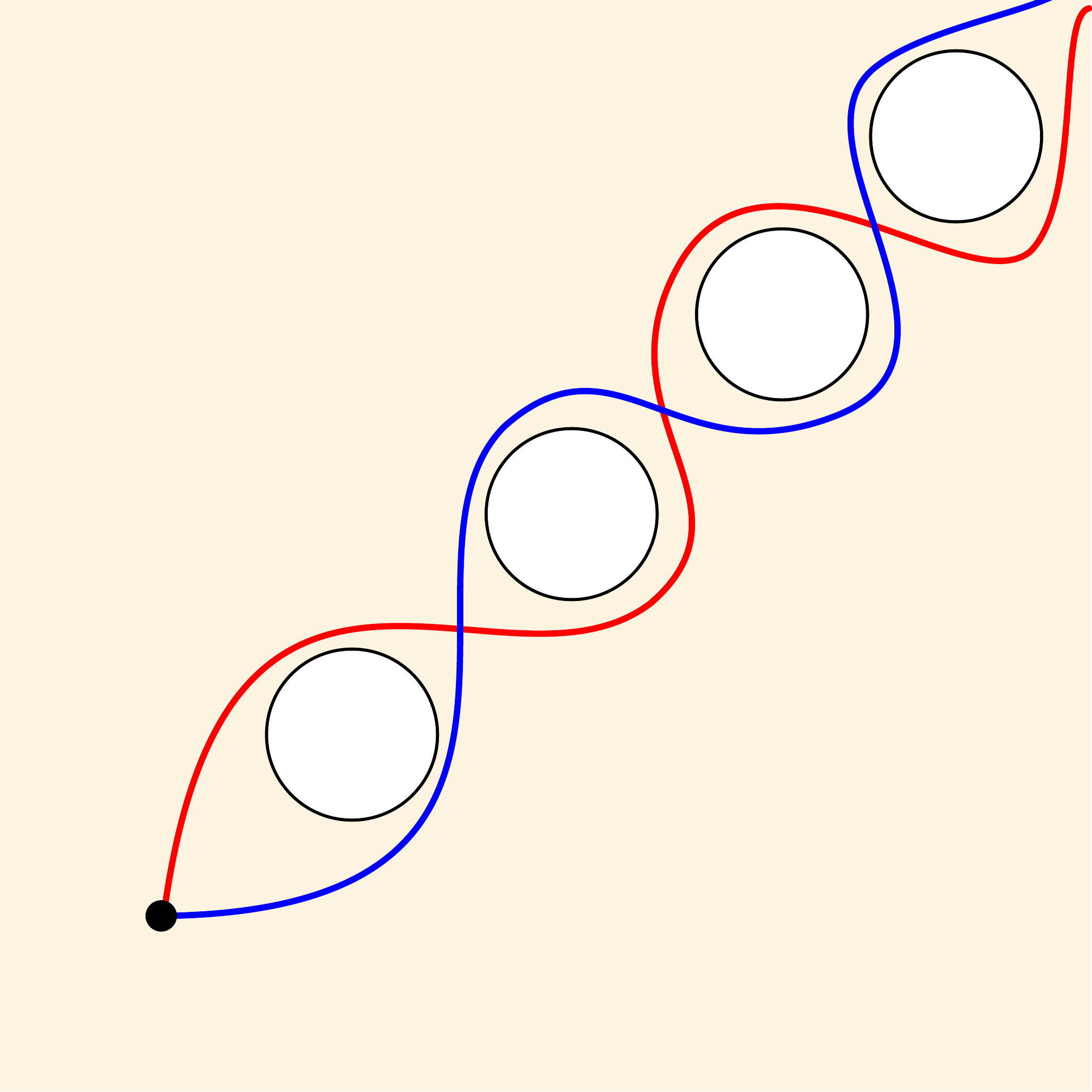}
		\setlength{\unitlength}{0.8\textwidth}
			\put(-0.98,0.94){$ \Omega $}
		\caption{\footnotesize The two curves in this domain should define different accesses to $ \infty\in\partial\Omega $, where $\Omega$ is $\Cx$ minus a disjoint union of disks accumulating at infinity.}\label{fig:access-infinity}
	\end{subfigure}
\caption{\footnotesize Considerations on the definition of access in a multiply connected domain.}
\end{figure}\label{fig-acc}

We land\footnote{Pun intended.} at the following definition of access, which takes into account the relations induced by the fundamental group.
\begin{defi}{\bf (Access for multiply connected domains)}\label{def:access-mc}
Let $\Omega$ be a multiply connected domain, and let $z_0\in\Omega$. Let $p\in\partial\Omega$ be an accessible point. We say that two curves $\eta_{1,2}\colon[0, 1)\to\Omega$ with $\eta_1(0) = \eta_2(0) = z_0$ and $\lim_{t\to1^-} \eta_1(t) = \lim_{t\to 1^-} \eta_2(t) = p$ are {\em equivalent} (as curves  in $\Omega$ joining $z_0$ to $p$) if there exists a continuous function $H\colon[0, 1]\times[0, 1)\to\Omega$ such that:
\begin{enumerate}[label={\normalfont (\alph*)}]
	\item For all $s\in[0, 1]$, $\lim_{t\to 1^-} H(s, t) = p$;
	\item $H(0, t) = \eta_1(t)$ and $H(1, t) = \eta_2(t)$.
\end{enumerate}
An {\em access} to $p$ (from $z_0$) is an equivalence class of curves in $\Omega$ joining $z_0$ to $p$. The set of all accesses to $p$ from $z_0$ is denoted $\mathcal{A}(p; z_0)$.
\end{defi}
In other words, an access to $p$ (from $z_0$) is an equivalence class of curves in $\Omega$ joining $z_0$ to $p$, where curves are said to be equivalent if they are homotopic \emph{through a homotopy that fixes only $p$}. Notice that both curves in Figure \ref{fig:access-annulus} are equivalent, while the curves in \ref{fig:access-infinity} are not. 

Furthermore, the choice of the basepoint $ z_0 $ is irrelevant. Indeed, for any two points $z_0$ and $z_1$ in $\Omega$, any curve joining $z_0$ to $z_1$ gives a well-defined function $\mathcal{A}(p; z_0)\to\mathcal{A}(p; z_1)$; conversely, any access to $p$ from $z_1$ can be homotopically deformed to pass through $z_0$, defining an inverse function $\mathcal{A}(p; z_1)\to\mathcal{A}(p; z_0)$. Therefore, accesses to $p$ from any point in $\Omega$ are in a one-to-one correspondence, and so we can speak of \emph{accesses to $p$}. However, for the purpose of stating and proving our results, it is often more convenient to make explicit reference to a basepoint. 

We note that if $\eta_{1,2}\colon[0, 1)\to\mathbb{D}$ with $\eta_1(0) = \eta_2(0) = 0$ and $\lim_{t\to1^-} \eta_1(t) = \lim_{t\to 1^-} \eta_2(t) = e^{i\theta}\in\partial\mathbb{D}$ non-tangentially, then $ \pi(\eta_{1}) $ and $ \pi(\eta_{2}) $ are homotopic in $ \Omega $, and thus {equivalent} (indeed, if $ H $ is the homotopy in $ \mathbb{D} $ between $ \eta_1 $ and $ \eta_2$, then $ \pi\circ H $ is the homotopy in $ \Omega $ between their images). On the other hand, if two curves $\eta_{1,2}\colon[0, 1)\to\Omega$, with $\eta_1(0) = \eta_2(0) = z_0$ and $\lim_{t\to1^-} \eta_1(t) = \lim_{t\to 1^-} \eta_2(t) = p\in\partial\Omega$ are homotopic (with fixed endpoints, i.e. $ H(s,0)=z_0 $ for all $ s\in \left[ 0,1\right]  $), then its unique lifts $ \widetilde{\eta}_{1,2} $ under the universal covering $ \pi\circ\mathbb{D}\to\Omega $, $ \pi(0)=z_0 $, starting at 0, are homotopic (and thus land at the same $ e^{i\theta}\in\partial\mathbb{D} $). Moreover, the following is true.

\begin{lemma}{\bf (Characterization of equivalent curves)} \label{lemma-eq-curves} Let $\Omega$ be a multiply connected domain, and let $z_0\in\Omega$. Let $p\in\partial\Omega$ be an accessible point. Two curves $\eta_{1,2}\colon[0, 1)\to\Omega$ starting at $z_0$ and landing at $p$ are equivalent if and only if there exists a closed curve $ \sigma$ such that $ \eta_1 $ is homotopic in $ \Omega $ to $  \sigma+ \eta_2$ (where addition denotes concatenation of curves).
\end{lemma}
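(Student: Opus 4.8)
The plan is to prove both implications by writing down explicit concatenation homotopies, so that neither direction needs the universal covering. The one subtlety to keep in mind throughout is that the two notions of homotopy in play differ only in their treatment of basepoints: the equivalence of accesses (Definition~\ref{def:access-mc}) fixes \emph{only} the landing point $p$ and lets the initial point move, whereas ``homotopic in $\Omega$'' here means with \emph{both} endpoints fixed.

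For the forward implication ($\Rightarrow$), suppose $\eta_1$ and $\eta_2$ are equivalent, witnessed by a continuous $H\colon[0,1]\times[0,1)\to\Omega$ with $H(0,\cdot)=\eta_1$, $H(1,\cdot)=\eta_2$ and $\lim_{t\to1^-}H(s,t)=p$ for every $s$. Since $H(0,0)=H(1,0)=z_0$ and $H$ is continuous, the curve $\sigma(s):=H(s,0)$ is a closed loop based at $z_0$; this is the loop claimed by the lemma. The idea is that the bottom edge $s\mapsto H(s,0)$ (which is $\sigma$) and the slices $H(s,\cdot)$ (each a curve from $\sigma(s)$ to $p$) glue into an interpolation from $\eta_1$ to $\sigma+\eta_2$. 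Concretely I would set
\[
G(s,t)=\begin{cases} H(2ts,\,0), & t\in[0,1/2],\\ H(s,\,2t-1), & t\in[1/2,1),\end{cases}
\]
whose two pieces agree at $t=1/2$ (both equal $\sigma(s)=H(s,0)$), so $G$ is continuous. Then $G(s,0)=z_0$ for all $s$ and $\lim_{t\to1^-}G(s,t)=p$ for all $s$, while $G(1,\cdot)=\sigma+\eta_2$ and $G(0,\cdot)$ is $\eta_1$ preceded by a constant segment, hence homotopic to $\eta_1$ with fixed endpoints. Thus $G$ (composed with a trivial reparametrisation) realises $\eta_1\simeq\sigma+\eta_2$ with fixed endpoints.

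For the converse ($\Leftarrow$), suppose $\eta_1$ is homotopic with fixed endpoints to $\sigma+\eta_2$ for a loop $\sigma$ based at $z_0$. First I would observe that $\sigma+\eta_2$ defines the \emph{same access} as $\eta_2$, via the homotopy that retracts the loop by sliding the basepoint along $\sigma$:
\[
F(s,t)=\begin{cases}\sigma\bigl(s+(1-s)\,2t\bigr), & t\in[0,1/2],\\ \eta_2(2t-1), & t\in[1/2,1).\end{cases}
\]
This $F$ is continuous, has $F(0,\cdot)=\sigma+\eta_2$ and $F(1,\cdot)=\eta_2$ (up to a constant initial segment), and satisfies $\lim_{t\to1^-}F(s,t)=p$ for all $s$; its basepoint $F(s,0)=\sigma(s)$ moves, which is precisely what Definition~\ref{def:access-mc} permits. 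On the other hand, a fixed-endpoint homotopy is a special case of the homotopies allowed in Definition~\ref{def:access-mc}, so $\eta_1$ and $\sigma+\eta_2$ also define the same access. Since ``defining the same access'' is an equivalence relation (its classes are the accesses), transitivity gives that $\eta_1$ and $\eta_2$ are equivalent.

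The step one might naively expect to be the obstacle — controlling the behaviour of these homotopies \emph{at} the landing $t=1$ — turns out not to arise. The key observation that keeps the argument elementary is that Definition~\ref{def:access-mc} only requires continuity of the homotopy on $[0,1]\times[0,1)$ together with the pointwise condition $\lim_{t\to1^-}=p$ for each fixed $s$; it does not ask for a continuous extension to $t=1$. Consequently the concatenations $G$ and $F$ are legitimate without any uniform-convergence or continuous-extension argument. The only genuine care needed is the bookkeeping of which endpoints are fixed in each homotopy, and the verification that $G$ and $F$ meet exactly the conditions of the two respective notions — which the parametrisations above are designed to make transparent.
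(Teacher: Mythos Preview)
Your proof is correct and follows essentially the same approach as the paper: the paper's proof is a two-sentence sketch (noting that $\sigma(s):=H(s,0)$ is the loop for the forward direction, and that the fixed-endpoint homotopy ``defines an equivalence'' for the converse), and you have supplied the explicit concatenation homotopies $G$ and $F$ that make that sketch rigorous. The only difference is level of detail, not method.
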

\begin{proof}
One implication is a straight-forward consequence from the definition of equivalent curves, noting that \[\left\lbrace H(s,0)\colon s\in \left[ 0,1\right] \right\rbrace \] is a loop in $ \Omega $. For the other implication, note that, $ \eta_1 $ is homotopic in $ \Omega $ to $  \sigma+ \eta_2$ by a homotopy $ H $, the same $ H $ defines an equivalence between $ \eta_1 $ and $ \eta_2 $.	See Figure \ref{fig-def-acc}.
\end{proof}
\begin{figure}[h]
	\includegraphics[width=14cm]{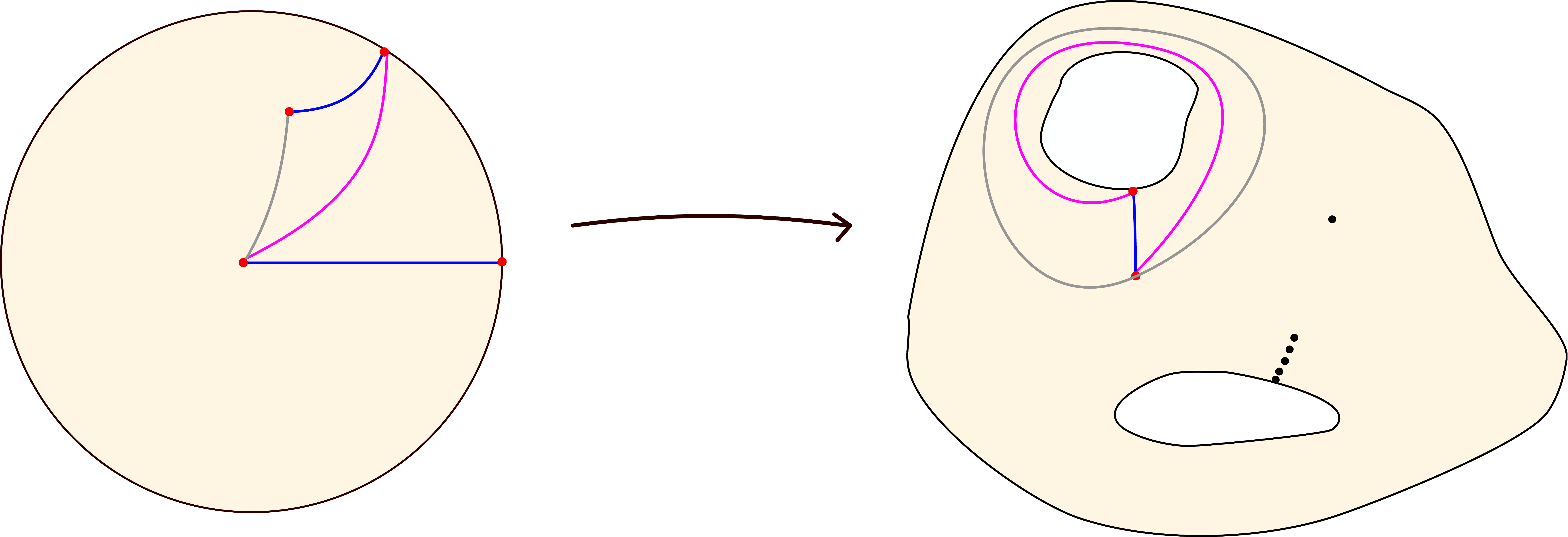}
	\setlength{\unitlength}{14cm}
	\put(-0.56, 0.21){$ \pi $}
	\put(-0.75, 0.31){$ \mathbb{D} $}
	\put(-0.14, 0.3){$ \Omega $}
	\put(-0.28, 0.145){$ z_0$}
	\put(-0.85, 0.15){$ 0$}
	\caption{\footnotesize Two equivalent curves in $ \Omega $, and their lifts in the unit disk $ \mathbb{D} $.}\label{fig-def-acc}
\end{figure}

With this in mind, we have the following result (compare with \cite[Thm. 4]{Oht54}, which states less precisely a weaker correspondence in the same spirit).
\begin{thm}{\bf(Correspondence Theorem for multiply connected domains)}\label{thm:correspondence-mc}
Let $\Omega$ be a hyperbolic multiply connected domain, let $z_0\in\Omega$, and let $p\in\partial\Omega$. Let $\pi\colon\DD\to\Omega$ be a universal covering such that $\pi(0) = z_0$. Then,
\[ (\pi^*)^{-1}(p)/\Gamma \simeq \mathcal{A}(p; z_0). \]
More precisely, the following hold.
\begin{enumerate}[label={\normalfont (\alph*)}]
	\item If $\pi^*(e^{i\theta}) = p$, then $\pi^*(\gamma(e^{i\theta})) = p$ for every $\gamma\in\Gamma$.
		\item If $\pi^*(e^{i\theta}) = p$, there exists an access $A$ to $p$ from $z_0$ such that, for any $\gamma\in\Gamma$, any curve $\eta\subset\DD$ starting at $0$ and landing non-tangentially at $\gamma(e^{i\theta})$ satisfies $\pi(\eta)\in A$ (in particular, $\pi(\eta)$ lands at $p$).
	\item If $A$ is an access to $p$ from $z_0$, then there exists a point $e^{i\theta}\in\partial\DD$ such that:
	\begin{enumerate}[label={\normalfont (\roman*)}]
		\item for any curve $\eta\in A$, its (unique) lift $\tilde\eta\subset\pi^{-1}(\eta)$ starting at $0$ lands at $\gamma(e^{i\theta})$ for some $\gamma\in\Gamma$;
		\item if $B\in\mathcal{A}(p; z_0)$ is distinct from $A$ with corresponding point $e^{i\alpha}\in\partial\DD$, then there does not exists a deck transformation $ \gamma\in \Gamma $ with $ \gamma(e^{i\theta})=e^{i\alpha} $.
	\end{enumerate}
\end{enumerate}
\end{thm}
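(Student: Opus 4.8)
The plan is to exhibit the bijection explicitly in both directions and check that the two maps are mutually inverse, with the three labelled items appearing as the ingredients. Throughout I write $R_\zeta$ for the radial segment landing at $\zeta\in\partial\DD$, and I repeatedly use that, since $\pi$ omits three values, the Lehto--Virtanen Theorem \ref{thm-lehto-virtanen} upgrades ``the image of a curve lands at $p$'' to ``the angular (hence radial) limit of $\pi$ equals $p$''.

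\emph{Items (a) and (b).} For (a), a deck transformation $\gamma\in\Gamma$ extends to a homeomorphism of $\overline{\DD}$, so $\gamma(R_\theta)$ is a curve landing at $\gamma(e^{i\theta})$; since $\pi\circ\gamma=\pi$, its image $\pi(\gamma(R_\theta))=\pi(R_\theta)$ lands at $p$, and Theorem \ref{thm-lehto-virtanen} gives $\pi^*(\gamma(e^{i\theta}))=p$. For (b), set $A:=[\pi(R_\theta)]$, which is a genuine access because $\pi^*(e^{i\theta})=p$. Given $\gamma$ and a curve $\eta$ from $0$ landing non-tangentially at $\gamma(e^{i\theta})$, part (a) and Theorem \ref{thm-lehto-virtanen} show $\pi(\eta)$ lands at $p$. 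To see $\pi(\eta)\in A$, join $0$ to $\gamma(0)$ by a compact arc $\nu$; then $\nu+\gamma(R_\theta)$ starts at $0$ and lands non-tangentially at $\gamma(e^{i\theta})$ (its tail is a geodesic), so by the remark on non-tangential curves preceding the theorem, $\pi(\eta)$ and $\pi(\nu+\gamma(R_\theta))=\pi(\nu)+\pi(R_\theta)$ are equivalent; as $\pi(\nu)$ is a loop at $z_0$, Lemma \ref{lemma-eq-curves} yields $\pi(\nu)+\pi(R_\theta)\sim\pi(R_\theta)$, whence $\pi(\eta)\in A$.

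\emph{The two maps and item (c)(i).} Define $\bar\Phi\colon (\pi^*)^{-1}(p)/\Gamma\to\mathcal A(p;z_0)$ by $\Gamma e^{i\theta}\mapsto[\pi(R_\theta)]$; this is well defined on orbits by (b), since $R_{\gamma(e^{i\theta})}$ lands non-tangentially at $\gamma(e^{i\theta})$ and so $\pi(R_{\gamma(e^{i\theta})})\in[\pi(R_\theta)]$. In the other direction, given an access $A$ pick $\eta\in A$ and lift to $\tilde\eta$ from $0$; by the argument of Proposition \ref{prop:access} (nested crosscut neighbourhoods of the address of the component containing $p$, plus the measure argument in the interval case) $\tilde\eta$ lands at a single point $e^{i\theta}$ with $\pi^*(e^{i\theta})=p$. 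The orbit $\Gamma e^{i\theta}$ is independent of the representative, which is exactly (c)(i): if $\eta'\in A$, Lemma \ref{lemma-eq-curves} gives $\eta'\simeq\sigma+\eta$ (fixed endpoints) for a loop $\sigma$ at $z_0$, and the lifting fact preceding the theorem (fixed-endpoint homotopic curves lift to curves with a common landing point) shows the lift of $\eta'$ from $0$ and the lift of $\sigma+\eta$ from $0$ land at the same point; the latter lands at $\gamma_\sigma(e^{i\theta})$, where $\gamma_\sigma\in\Gamma$ is the deck transformation of $\sigma$ via Lemma \ref{lem:deck}. Thus the lift of $\eta'$ lands in $\Gamma e^{i\theta}$, and we may set $\Psi(A):=\Gamma e^{i\theta}$. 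Surjectivity of $\Psi$ is then immediate: applying (c)(i) to the representative $\pi(R_\zeta)$ of $\bar\Phi(\Gamma\zeta)$, whose lift from $0$ is $R_\zeta$ landing at $\zeta$, gives $\Psi(\bar\Phi(\Gamma\zeta))=\Gamma\zeta$.

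\emph{Item (c)(ii), the crux.} It remains to show distinct accesses give distinct orbits, i.e. that $\Psi$ is injective. If $\Psi(A)=\Psi(B)=\Gamma\zeta$, then choosing representatives and a loop $\sigma$ realising the orbit relation reduces us (as in (b)) to two curves $\eta_A$ and $\sigma+\eta_B$ from $z_0$ to $p$ whose lifts from $0$ land at the \emph{same} point $\zeta$; one must prove they are equivalent, forcing $A=B$. This is the converse of the lifting fact used above and is the main obstacle, because at a singular escaping point $\zeta\in\Lambda\setminus\Lambda_{NT}$ one has $Cl(\pi,\zeta)=\overline{\Omega}$ by Lemma \ref{lemma-limit-sets3}(a), so $\pi$ admits no continuous extension to $\zeta$ and a naive homotopy of the two lifts in $\DD$ need not project to curves landing at $p$. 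I would organise the argument by the type of the component $\Sigma\ni p$: if $\Sigma=\{p\}$ is degenerate (equivalently $\zeta$ is parabolic, Theorem \ref{thm-main-result-complete}\ref{item:MTescaping}) then ``lands in $\Sigma$'' already means ``lands at $p$'' and the straight-line homotopy in the convex set $\DD$ suffices; if $\Sigma$ is an isolated non-degenerate continuum then $\zeta$ is regular (Theorem \ref{thm-main-result-complete}\ref{item:MTescaping}), $\pi$ is univalent on a crosscut neighbourhood (Lemma \ref{lemma-limit-sets3}(b)), and the simply connected Correspondence Theorem \ref{correspondence-theorem} applies verbatim there. The genuinely delicate case is a non-isolated $\Sigma$ with $\zeta$ singular: here I would use the fundamental sequence of crosscuts $\{C_m\}$ at $\zeta$ from a geodesic Ohtsuka exhaustion, note that any curve from $0$ to $\zeta$ crosses each $C_m$ so both $\eta_A$ and $\sigma+\eta_B$ are eventually trapped in the same nested neighbourhoods $N_m$ with $\overline{\pi(N_m)}$ shrinking toward $\Sigma$, and build the equivalence homotopy inductively inside these neighbourhoods; keeping every slice trapped in $N_m$ forces all slices to land in $\Sigma$, while the single-landing-point result of Proposition \ref{prop:access} applied to each slice, together with the two endpoint curves landing at the single point $p$, pins the common landing to $p$. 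Once (c)(ii) holds, $\Psi$ is a bijection inverse to $\bar\Phi$, yielding $(\pi^*)^{-1}(p)/\Gamma\simeq\mathcal A(p;z_0)$ and completing (a)--(c).
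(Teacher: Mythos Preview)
Your treatment of (a), (b), and (c)(i) is essentially the same as the paper's, with only cosmetic differences in how the deck transformation is applied and where the concatenating loop is inserted.

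For (c)(ii) the paper simply asserts that it ``follows immediately from items (b) and (c)(i)''. You are right that the missing ingredient is the converse of the lifting remark: if two curves from $z_0$ to $p$ have lifts from $0$ landing at the \emph{same} point $\zeta$, then they are equivalent. Your instinct to isolate this is good, but your case analysis is both heavier than necessary and, in the singular non-isolated case, incomplete. Trapping the homotopy inside the nested rectified neighbourhoods $N_m$ only forces each intermediate slice to land in the component $\Sigma$; your sentence ``the two endpoint curves landing at the single point $p$ pins the common landing to $p$'' is not an argument --- nothing prevents the middle slices from landing at other points of $\Sigma$, and Proposition~\ref{prop:access} applied slice by slice gives no control on \emph{which} point of $\Sigma$ is hit.

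There is a short uniform argument, with no case distinction. Given $\eta$ from $z_0$ to $p$ whose lift $\tilde\eta$ from $0$ lands at $\zeta$, define $\widetilde H(s,\cdot)$ for $s\in(0,1]$ by following $\tilde\eta$ on $[0,1-s]$ and then the hyperbolic geodesic ray from $\tilde\eta(1-s)$ to $\zeta$; set $\widetilde H(0,\cdot)=\tilde\eta$. This is continuous on $[0,1]\times[0,1)$, sends $(1,\cdot)$ to $R_\zeta$, and for every $s>0$ the slice lands \emph{non-tangentially} at $\zeta$ (its tail is a geodesic), so by Lehto--Virtanen $\pi(\widetilde H(s,\cdot))$ lands at $\pi^*(\zeta)=p$; the slice $s=0$ lands at $p$ by hypothesis. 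Thus $H=\pi\circ\widetilde H$ is an equivalence and $\eta\sim\pi(R_\zeta)$. This single step shows $\bar\Phi\circ\Psi=\mathrm{id}$, and combined with your $\Psi\circ\bar\Phi=\mathrm{id}$ yields the bijection and (c)(ii) at once.
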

\begin{proof}
Proposition \ref{prop:access} guaranteed that an access $A$ to $p$ from $z_0$ yields a point $e^{i\theta}\in\partial\DD$ such that $\pi^*(e^{i\theta}) = p$, and vice-versa. Hence, we are left to show that this correspondence is well-defined up to homotopy (for accesses) and $\Gamma$-orbit (for points in $\partial \DD$) -- that is to see that items (a)-(c) hold.

First, we show that if $\pi^*(e^{i\theta}) = p$, then $\pi^*(\gamma(e^{i\theta})) = p$ for every $\gamma\in\Gamma$. Indeed, for any deck transformation $\gamma$, consider the curve $\gamma(R_\theta)$. Since $\gamma$ is a deck transformation,  it is clear that $\pi(\gamma(R_\theta))$ lands at $p$, and also that $\gamma(R_\theta)$ lands non-tangentially at $\gamma(e^{i\theta})$ (because $\gamma$ is conformal on $\overline{\DD}$) . By Theorem \ref{thm-lehto-virtanen}, $\pi^*$ has non-tangential limit $p$ at $\gamma(e^{i\theta})$, as promised, finishing the proof of (a).

Next, we prove (b). Let $e^{i\theta}\in\partial\DD$ be such that $\pi^*(e^{i\theta}) = p$; then, the image $\pi(R_\theta)$ defines an access $A$ to $p$ from $z_0$, and any curve $\eta\subset\DD$ starting at $0$ and landing non-tangentially at $e^{i\theta}$ satisfies $\pi(\eta)\in A$ (because any such curve is homotopic to $R_\theta$). 

We are left to show that we can shift the endpoint of such a curve to $\gamma(e^{i\theta})$ while still defining the same access. To that end, take any $\gamma\in\Gamma$ and consider a curve $\eta\subset\DD$ starting at $0$ and landing non-tangentially at $\gamma(e^{i\theta})$. Then, the curve $\gamma^{-1}(\eta)$ starts at $\gamma^{-1}(0)$ and lands non-tangentially at $e^{i\theta}$, and satisfies $\pi(\gamma^{-1}(\eta)) = \pi(\eta)$ since $\gamma$ is a deck transformation. We join $0$ to $\gamma^{-1}(0)$ by a line segment $\ell$, defining a new curve $\eta' = \ell + \gamma^{-1}(\eta)$ (where addition means concatenation of curves).  Its image $\pi(\eta')$ consists of the concatenation of a closed curve $\pi(\ell)\subset\Omega$ through $z_0$  with $\pi(\eta)$. By Lemma \ref{lemma-eq-curves}, $\pi(\eta')$ and $\pi(\eta)$ are equivalent, and thus define  the same access from $ z_0 $ to $ p $, as desired.

To prove (c)(i), let $A\in\mathcal{A}(p; z_0)$, and let $\eta\in A$. Then, the unique lift $\tilde\eta$ of $\eta$ starting at $0$ lands at some $e^{i\theta}\in\partial \DD$ where $\pi$ has non-tangential limit $p$ by Proposition \ref{prop:access}. Take $\eta'\in A$, and we have to show that its unique lift starting at $0$ lands at $\gamma(e^{i\theta})$, for some $\gamma\in\Gamma$. Since $ \eta $ and $ \eta '$ are equivalent, Lemma \ref{lemma-eq-curves}, 
there exists   a closed curve $\sigma$ through $z_0$ such that $\sigma + \eta$ is homotopic to $ \eta' $. The loop  $ \sigma $  corresponds to some deck transformation $\gamma^{-1}$ (by Lemma \ref{lem:deck}).  Again by Lemma \ref{lem:deck}, the curve $\sigma + \eta$ has a unique lift $\widetilde{\sigma + \eta}$ joining $\gamma^{-1}(0)$ to $0$ by a lift of $\sigma$, and then joining $0$ to $e^{i\theta}$ through the original lift $\tilde\eta$. We apply $\gamma$ to obtain $\gamma(\widetilde{\sigma + \eta})$, which joins $0$ to $\gamma(e^{i\theta})$ and satisfies $\pi(\gamma(\widetilde{\sigma + \eta})) = \sigma + \eta$. Since $\sigma + \eta$ is homotopic to $\eta'$, the homotopy lifting property of $\pi$ implies that the lift $\tilde\eta'$ of $\eta'$ starting at $0$ also lands at $\gamma(e^{i\theta})$.

Finally, Theorem \ref{thm:correspondence-mc}(c)(ii) follows immediately from items (b) and (c)(i) of the same theorem.
\end{proof}

\subsection{Prime ends for a multiply connected domain}\label{subsect-prime-ends}
Our goal is to define prime ends for multiply connected domains. In the same spirit as in the simply connected case, we aim for a compactification of $ \Omega $ that is compatible with the radial extension of the universal covering $ \pi\colon\mathbb{D}\to \Omega$. 

As explained in the introduction, even though several theories of prime ends have been developed, none of them interacts successfully with $ \pi\colon\mathbb{D}\to \Omega$. Presumably, the best approach in this sense is made by 
Ohtsuka  \cite[Sect. 1]{Oht54}, who does obtain a compactification of the boundary, supposedly well-behaved under $ \pi $, relying on the exhaustions introduced before. However, it is quite crude in the sense that it does not distinguish between points in the same boundary component. In particular, it does not agree with Carathéodory's prime end theory for simply connected domains. Moreover, his approach depends heavily on the exhaustion he considers. Hence, we present here a different approach, which substantially refines Ohtsuka's work, and improves the understanding of the connection with the universal covering.

As in the simply connected case, we will define a prime end as an equivalence class of chains of crosscuts, and its impression as the intersections of the images of the corresponding crosscut neighbourhoods. However, some modifications have to be done in the multiply connected case, since approaching $ \partial \mathbb{D} $ does not always correspond to approaching $ \partial \Omega$, as we have shown previously. Moreover, the cluster set of $ \pi  $ at any point in the limit set is $ \overline{\Omega} $. 

As the reader may guess, we must consider only admissible null-chains (recall Def. \ref{def-admissible-crosscut-null-chain}) for points of escaping type and disregard some part of the corresponding crosscut neighbourhoods, in order to get a proper definition of prime end. More precisely, we define the following.
\begin{defi}{\bf (Rectified crosscut neighbourhood)}\label{def-true-crosscut-nhd}
Let $\Omega\subset\Chat$ be a hyperbolic domain, and let $\pi\colon\DD\to\Omega$ be a universal covering with deck transformation group $\Gamma$. Let $ e^{i\theta}\in\partial\mathbb{D} $ be of escaping type, and let $ C $ be an admissible crosscut at $ e^{i\theta }$. We define the {\em rectified crosscut neighbourhood} $ N_C $ of $ C $ as the connected component of \[\mathbb{D}\smallsetminus \Gamma(C)\] such that $  e^{i\theta}\in\overline{N_C}$.
\end{defi}

Definition \ref{def-true-crosscut-nhd} requires some comments. First, notice that, by requiring that $e^{i\theta}$ be of escaping type, we avoid ``pathological'' cases such as the existence of a sequence $\{C_n\}_n\subset \Gamma(C)$ such that $C_{n+1}$ separates $e^{i\theta}$ from $C_n$. In other words, $N_C$ is always well-defined. Second (and this is very important), the rectified crosscut neighbourhood of a regular point coincides with its crosscut neighbourhood as defined in Definition \ref{def:cc-nbhd-stolz}, but the same is false for rectified crosscut neighbourhoods of singular points. Third, by Theorem \ref{thm-main-result-complete}, the requirement of $e^{i\theta}$ being of escaping type also guarantees that, given any crosscut $C$ separating $e^{i\theta}$ from $0$, there exists another crosscut $C'\in\Gamma(C)$ (which may be equal to $C$) such that $C'\subset \partial N_{C'}$, i.e. one can (for our purposes) always assume that a crosscut is part of the boundary of its rectified crosscut neighbourhood. Finally, note that, by construction, any connected component of $\pi^{-1}(D_{s_0s_1\dots s_j})$ is a rectified crosscut neighbourhood for the points in the unit circle which lie in its boundary. 

We can now give a precise definition of prime ends and their impressions.

\begin{thm}{\bf (Prime ends and impressions)}\label{thm-prime-ends-MC} Let $ \Omega $ be a multiply connected domain, and let $ \pi\colon\mathbb{D}\to\Omega $ be the universal covering. Let $ e^{i\theta} \in\partial\mathbb{D}$ be a point of escaping type with associated boundary component $ \Sigma $. Then, we define its {\em prime end} $ P(\pi, e^{i\theta}) $  as the equivalence class of admissible null-chains for $ \pi $ at $ e^{i\theta} $.

	\noindent Given an admissible null-chain $ \left\lbrace C_n\right\rbrace _n $ for $ \pi $ at $ e^{i\theta} $, with rectified crosscut neighbourhoods $ \left\lbrace N_n\right\rbrace _n $, we define the {\em impression} of the prime end $ P(\pi, e^{i\theta}) $ as
	\[I (P(\pi, e^{i\theta}))= \bigcap\limits_n \overline{\pi (N_n)}\subset \Sigma,\] which does not depend on the chosen admissible null-chain.

	\noindent Moreover, exactly one of the following statements holds.
	\begin{enumerate}[label={\normalfont (\alph*)}]
		\item $ e^{i\theta} \in\partial\mathbb{D}\smallsetminus\Lambda$ if and only if for every admissible null-chain $ \left\lbrace C_n\right\rbrace _n $ for $ \pi $ at $ e^{i\theta} $, with rectified crosscut neighbourhoods $ \left\lbrace N_n\right\rbrace _n $, we have that $ \pi(N_n) $ is eventually simply connected. We say that the prime end $P(\pi, e^{i\theta})$ is {\em regular}, and in this case
		\[ I(P(\pi, e^{i\theta}))=Cl(\pi, e^{i\theta})\subset \Sigma.\]
	\item $ e^{i\theta} \in\Lambda$, but is not a parabolic fixed point, if and only if for every admissible null-chain $ \left\lbrace C_n\right\rbrace _n $ for $ \pi $ at $ e^{i\theta} $, with rectified crosscut neighbourhoods $ \left\lbrace N_n\right\rbrace _n $, we have that $ \pi(N_n) $ is infinitely connected for all $ n\geq 0 $. We say that the prime end $P(\pi, e^{i\theta})$ is {\em singular}, and in this case
	\[ I(P(\pi, e^{i\theta}))= \Sigma, \hspace{0.5cm} Cl(\pi, e^{i\theta})=\overline{\Omega}.\]
		\item $ e^{i\theta} $ is a parabolic fixed point if and only if for every admissible null-chain $ \left\lbrace C_n\right\rbrace _n $ for $ \pi $ at $ e^{i\theta} $, $ C_n $ is a degenerate crosscut for every large $n$. We say that the prime end $P(\pi, e^{i\theta})$ is {\em parabolic}, and in this case $ \Sigma =\left\lbrace p\right\rbrace  $ and
		\[I(P(\pi, e^{i\theta}))=Cl_{\mathcal{A}}(\pi, e^{i\theta})=\left\lbrace p\right\rbrace, \hspace{0.5cm}Cl(\pi, e^{i\theta})=\overline{\Omega} .\]
	\end{enumerate}
\end{thm}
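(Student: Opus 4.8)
The plan is to run the argument through the dichotomy of Lemma~\ref{lemma-limit-sets3}: the regular case $e^{i\theta}\in\partial\DD\setminus\Lambda$, where $\pi$ is locally univalent and the simply connected Carathéodory theory applies verbatim, and the singular case $e^{i\theta}\in\Lambda$, where $Cl(\pi,e^{i\theta})=\overline\Omega$ and the rectified crosscut neighbourhoods carry all the information. Throughout I would keep in the background the standing facts for escaping points supplied by the Main Theorem~\ref{thm-main-result-complete} and Proposition~\ref{prop-escaping-bounded-bungee}: $e^{i\theta}$ is of infinite depth, lies in $\partial\DD\setminus\Lambda_{NT}$, is associated to a unique boundary component $\Sigma$, and satisfies $Cl_{\mathcal A}(\pi,e^{i\theta})=Cl_R(\pi,e^{i\theta})\subset\Sigma$.

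First I would dispose of existence and of the inclusion $I(P)\subset\Sigma$. In the regular case, Lemma~\ref{lemma-limit-sets3}(b) hands us a null-chain whose crosscuts are admissible of the second type. In the singular case I would argue that the associated $\alpha$-image must be a single point --- any non-degenerate $\alpha$-image arc has regular interior and its endpoints are hyperbolic fixed points, which lie in $\Lambda_{NT}$ by Lemma~\ref{lemma-limit-sets}(b) and hence are not escaping --- so the fundamental sequence of crosscuts from an Ohtsuka exhaustion (built by following the address of $\Sigma$ as in Proposition~\ref{prop-associated-boundary-comp}) shrinks to $e^{i\theta}$ and yields an admissible null-chain of the first type, whose crosscuts are degenerate precisely in the parabolic subcase (Lemma~\ref{lemma-lifts-of-curves}(a)). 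The computational engine is then the identity $\pi(N_n)=D_{\underline s^{d_n}}$ for rectified neighbourhoods of first-type crosscuts (the remark after Definition~\ref{def-true-crosscut-nhd} together with Lemma~\ref{lem:restriction}); since $d_n\to\infty$ by infinite depth,
\[ \bigcap_n \overline{\pi(N_n)}=\bigcap_n \overline{D_{\underline s^{d_n}}}=\Sigma \]
by Definition~\ref{def:boundaryaddress}, giving $I(P)\subset\Sigma$ in all cases.

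The heart of the trichotomy is to show that the type of $e^{i\theta}$ forces the connectivity of $\pi(N_n)$ uniformly over every admissible null-chain. The crucial lemma I would prove is that a singular point admits no admissible crosscut of the second type: if $\pi(N_C)$ were simply connected, then Lemma~\ref{lem:restriction} would make $\pi$ injective on the component of $\pi^{-1}(\pi(N_C))$ containing $N_C$, whereas $e^{i\theta}\in\Lambda$ forces the orbit $\{\gamma_n(0)\}$ --- an infinite set of distinct preimages of $\pi(0)$ --- to enter the crosscut neighbourhood $N_C$ eventually (as $e^{i\theta}$ is interior to the boundary arc of $N_C$), a contradiction. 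Hence for $e^{i\theta}\in\Lambda$ every admissible crosscut is of the first type and $\pi(N_n)=D_{\underline s^{d_n}}$, whose connectivity is dictated by $\Sigma$: infinitely connected for non-isolated $\Sigma$ (the singular non-parabolic case, by Summary~\ref{thm-ohtsuka}) and a punctured disk for $\Sigma=\{p\}$ (the parabolic case, by Main Theorem~\ref{thm-main-result-complete}\ref{item:MTescaping}(iii) and Lemma~\ref{lemma-lifts-of-curves}(a)). Conversely, for $e^{i\theta}\notin\Lambda$ the closedness of $\Lambda$ gives a crosscut neighbourhood on which $\pi$ is univalent with simply connected image, and since every admissible null-chain shrinks to $e^{i\theta}$ its neighbourhoods eventually lie inside it, so $\pi(N_n)$ is eventually simply connected; the three alternatives ``eventually simply connected'', ``infinitely connected for all $n$'' and ``eventually degenerate'' being mutually exclusive, the biconditionals and the mutual exclusivity of (a)--(c) follow. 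The remaining impression and cluster-set values are then read off: $Cl(\pi,e^{i\theta})=\overline\Omega$ from Lemma~\ref{lemma-limit-sets3}(a) in (b)--(c); $I(P)=Cl(\pi,e^{i\theta})$ in (a) by transporting Theorems~\ref{thm-prime-ends-sc}--\ref{thm-prime-ends} through the Riemann map $\pi|_N$ and noting that every sequence tending to $e^{i\theta}$ eventually lies in $N$; and $I(P)=Cl_{\mathcal A}(\pi,e^{i\theta})=\{p\}$ in (c) from $I(P)\subset\Sigma=\{p\}$ and Proposition~\ref{prop-escaping-bounded-bungee}. Independence of the null-chain follows in the singular cases because the value is always $\Sigma$ (resp. $\{p\}$), and in the regular case from the simply connected theory, after noting that any two admissible null-chains at $e^{i\theta}$ are equivalent as null-chains of $\DD$.

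The step I expect to be the main obstacle is exactly this uniformity over all admissible null-chains --- making the local behaviour of $\pi$ (simply connected versus infinitely connected image, degenerate versus non-degenerate crosscuts) an intrinsic attribute of $e^{i\theta}$ rather than of the chosen chain. The delicate technical points are the passage from the unrestricted statement $e^{i\theta}\in\Lambda$ to the injectivity contradiction on a one-sided crosscut neighbourhood (which hinges on $e^{i\theta}$ being interior to its boundary arc, so that the orbit genuinely enters $N_C$), and the careful bookkeeping between rectified and usual crosscut neighbourhoods --- these coincide for regular points but differ for singular ones, and it is only after rectification that $\pi(N_n)$ equals the Ohtsuka domain $D_{\underline s^{d_n}}$ and the impression stabilises to $\Sigma$.
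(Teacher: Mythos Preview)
Your proposal is correct and follows essentially the same route as the paper: the paper packages the trichotomy ``eventually simply connected / non-degenerate with closed-curve image / eventually degenerate'' into a separate Lemma~\ref{lemma-technical-null-chains} (proved via Lemma~\ref{lemma-limit-sets3} and Lemma~\ref{lemma-lifts-of-curves}(a), exactly the injectivity contradiction and parabolic characterisation you spell out), packages well-definedness of the impression into Lemma~\ref{lemma-impression} (regular case via Theorem~\ref{thm-prime-ends-sc}, singular case via Ohtsuka domains $D_{\underline s^n}$, parabolic case trivially), and then the main proof just assembles these with Lemma~\ref{lemma-limit-sets3}(a) for the cluster sets. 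Your inline organisation and the explicit $\alpha$-image argument for the shrinking of fundamental crosscuts at singular escaping points are minor expository differences, not a different strategy.
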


	\begin{figure}[h]\centering
	\includegraphics[width=14cm]{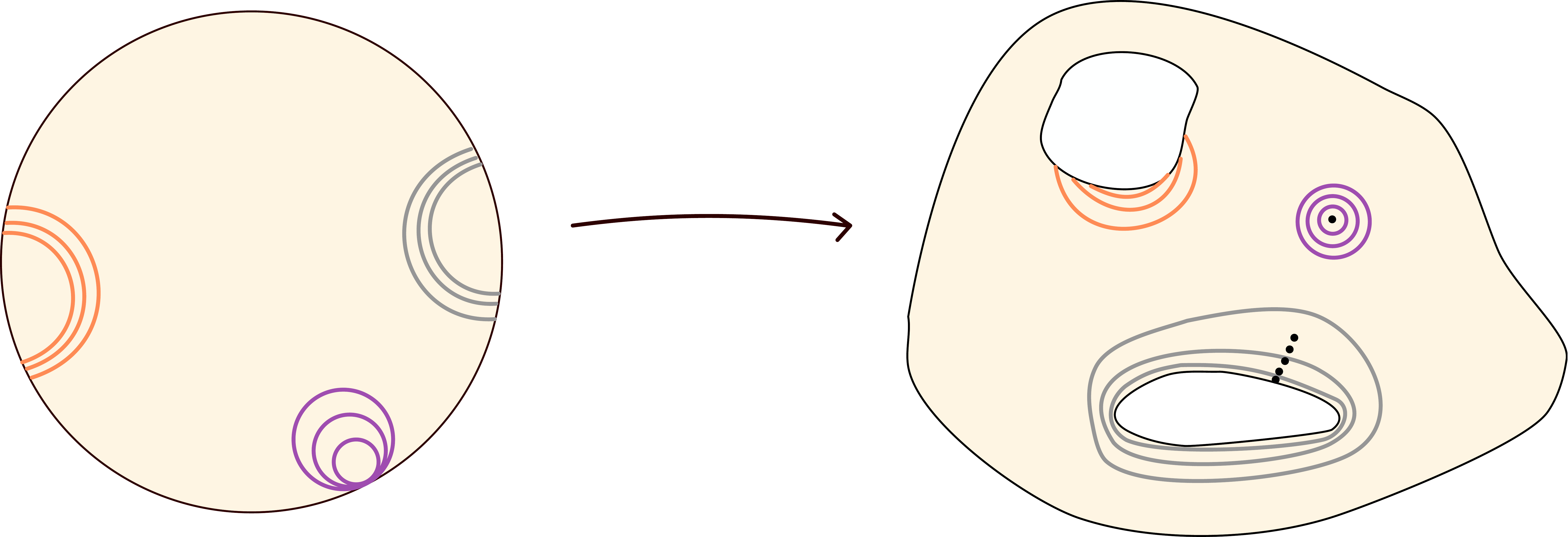}
	\setlength{\unitlength}{14cm}
		\put(-0.56, 0.21){$ \pi $}
	\put(-0.75, 0.31){$ \mathbb{D} $}
	\put(-0.14, 0.3){$ \Omega $}
	\caption{\footnotesize The different types of null-chains considered in Theorem \ref{thm-prime-ends-MC}: regular (orange), singular (grey) and parabolic (purple). Note that the crosscuts in the regular null-chain are true crosscuts in $ \Omega $, indicating that $ \Lambda\subsetneq \partial \mathbb{D} $ (\ref{cor:E}).}
\end{figure}

If a prime end $P(\pi, e^{i\theta})$ is regular (resp. singular, parabolic), then we say that any admissible null-chain at $e^{i\theta}$ is also regular (resp. singular, parabolic).

We postpone the proof of Theorem \ref{thm-prime-ends-MC} until the end of this section. We start by highlighting an easy consequence of our result, which allows us to determine whether the limit set is $ \partial\mathbb{D} $ by means of the geometry of $ \Omega $. 

Notice that, when $ \Omega $ is multiply connected, the notion of crosscut in $ \Omega $ (understood as an open Jordan arc with both endpoints in $ \partial \Omega $) is too crude to be useful in constructing the previous admissible null-chains directly in $ \Omega $. In this sense, the non-contractible curves given by an Ohtsuka exhaustion are more useful, and have to be understood as crosscuts in $ \Omega $  for the multiply connected case, taking into account that this will not yield admissible null-chains for regular points. To deal with such points, and to determine their existence by means of the geometry of $ \Omega $, we need the following definition.
\begin{defi}{\bf (True crosscut)}\label{defi-true-crosscut} 	Let $ \Omega $ be a multiply connected domain.
	We define a {\em true crosscut} in $ \Omega $ as an open Jordan arc $ C\subset\Omega $ such that $ \overline{C}=C\cup \left\lbrace a,b\right\rbrace   $, with $ a\neq b $, $ a,b \in \Sigma\subset\partial\Omega$, and such that $ \Omega\smallsetminus C $ has exactly two connected components, one of which simply connected.
\end{defi}

\begin{named}{Corollary E}{\bf (Geometric characterization of limit sets)}
	Let $\Omega\subset\Chat$ be a hyperbolic multiply connected domain, and let $\pi$ be its universal covering. Then, $\Lambda$ is a Cantor subset of $\partial\DD$ if and only if there exists a true crosscut in $\Omega$.
\end{named}
\begin{proof}
First, if $\Omega$ admits a true crosscut $\eta$, then any lift $\tilde\eta\subset \pi^{-1}(\eta)$ yields a non-degenerate crosscut in $\DD$. The corresponding crosscut  neighbourhood $N_{\tilde\eta}$ is mapped onto the simply connected domain $\Omega_\eta$ bounded by $\eta$ and $ \Sigma $. Therefore, $\pi\colon N_{\tilde\eta}\to \Omega_\eta$ is a conformal isomorphism. In particular, for all $ e^{i\theta}\in \partial N_{\tilde\eta}\cap\partial\mathbb{D} $, we have $ Cl(\pi, e^{i\theta}) \subsetneq \Omega $. By Lemma \ref{lemma-limit-sets3}, such $ e^{i\theta} $ is regular, and $\Lambda\neq \partial\mathbb{D} $, as desired.

On the other hand, if $e^{i\theta}\in\partial\DD$ is a regular point, then by Theorem \ref{thm-prime-ends-MC} any admissible null-chain at $e^{i\theta}$ maps to a chain of true crosscuts in $\Omega$.
\end{proof}

For example, if all the components of $ \partial\Omega $ are points (for instance, if $ \partial\Omega $ is a Cantor set), then $ \Lambda = \partial\mathbb{D} $, as was already pointed out in \cite{Fer89}. However, \ref{cor:E} shows that the same can be true even if $\partial\Omega$ contains continua: take, for instance, $\Omega$ to be the unit disc minus a sequence of points $\{z_n\}_n$ accumulating at every point of $\partial\DD$. Notice also that, by \ref{teo:B}, $\partial\Omega$ supports a harmonic measure, and (since  $\{z_n\}_n$ is a countable set of points) this measure is supported solely on $\partial\DD$. {Thus, even if  $\partial\DD$ is a `small' boundary component of $\Omega$, in the sense that all its $ \alpha $-images are points, it is still a big boundary component in the sense of measure.}

\subsection*{Proof of the Prime End Theorem \ref{thm-prime-ends-MC}}
We split the proof of Theorem \ref{thm-prime-ends-MC} in several steps. We start by dividing admissible null-chains into three cases.
\begin{lemma}{\bf (Characterization of admissible null-chains)}\label{lemma-technical-null-chains} Let $ \Omega $ be a multiply connected domain, and let $\pi\colon\mathbb{D}\to\Omega $ be a universal covering. Let $ e^{i\theta} \in\partial \mathbb{D}$ be of escaping type, and let  $ \left\lbrace C_n\right\rbrace _n $ be an admissible null-chain for $ \pi $ at $ e^{i\theta} $, with rectified crosscut neighbourhoods $ \left\lbrace N_n\right\rbrace _n $. Then, exactly one of the following holds.
	\begin{enumerate}[label={\normalfont (\alph*)}]
		\item There exists $ n_0\geq 0 $ such that $ \pi (N_n) $ is simply connected, for all $ n\geq n_0 $.
		\item  For all $ n\geq 0 $, $ C_n $ is a non-degenerate crosscut and $ \pi (C_n) $ is a simple closed curve.
		\item There exists $ n_0\geq 0 $ such that $ C_n $ is a degenerate crosscut with endpoints coinciding at $ e^{i\theta} $, for all $ n\geq n_0 $.
	\end{enumerate}
Moreover, the previous classification depends only on $ e^{i\theta} $, not on the chosen admissible null-chain.
\end{lemma}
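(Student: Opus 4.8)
The plan is to let the trichotomy (a)/(b)/(c) be governed entirely by the type of $e^{i\theta}$ supplied by the Main Theorem~\ref{thm-main-result-complete}. Since $e^{i\theta}$ is of escaping type, exactly one of the following holds: $e^{i\theta}\notin\Lambda$ (regular); $e^{i\theta}\in\Lambda\smallsetminus\Lambda_{NT}$ and is not a parabolic fixed point; or $e^{i\theta}$ is a parabolic fixed point. I would show that these three possibilities force, respectively, (a), (b) and (c) for \emph{every} admissible null-chain at $e^{i\theta}$. Once this is done, the ``exactly one'' statement and the independence of the chosen null-chain follow at once, provided (a), (b) and (c) are pairwise incompatible --- which they are, since (a) gives $\pi(N_n)$ simply connected for large $n$, (b) gives $\pi(N_n)=D_{\pi(C_n)}$ non-simply connected for all $n$ (as $\pi(C_n)$ is a non-contractible loop), and (c) gives $\pi(N_n)$ a punctured-disc-type domain bounded by degenerate crosscuts for large $n$.

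For the regular case I would argue as follows. By Lemma~\ref{lemma-limit-sets3}(b) there is a null-chain $\left\lbrace C_n'\right\rbrace_n$ at $e^{i\theta}$ whose crosscut neighbourhoods $N_n'$ satisfy that $\pi|_{N_n'}$ is univalent and $\pi(N_n')$ is simply connected. Since $e^{i\theta}$ is regular, its rectified crosscut neighbourhoods coincide with the ordinary ones (Def.~\ref{def-true-crosscut-nhd} and the remark thereafter); comparing the two null-chains at the same point \cite[Lemma 17.8]{Milnor}, for each $m$ there is $n$ with $N_n\subset N_m'$, and univalence of $\pi$ on $N_m'$ makes $\pi(N_n)$ a homeomorphic image of the simply connected $N_n$, hence simply connected. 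This is~(a).

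The core tool for the two singular cases is the following observation. If $C$ is a non-degenerate admissible crosscut with $e^{i\theta}$ interior to the arc $\overline{N_C}\cap\partial\mathbb{D}$ and $\pi(N_C)$ simply connected, then $\pi|_{N_C}$ is univalent (apply Lemma~\ref{lem:restriction} to the component of $\pi^{-1}(\pi(N_C))$ containing $N_C$), and a small disc around $e^{i\theta}$ meets $\mathbb{D}$ only inside $N_C$; hence $Cl(\pi,e^{i\theta})\subset\overline{\pi(N_C)}\subsetneq\overline\Omega$, and Lemma~\ref{lemma-limit-sets3}(a) forces $e^{i\theta}\notin\Lambda$. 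Consequently, whenever $e^{i\theta}\in\Lambda$ no crosscut of an admissible null-chain can be of the second admissible type, so every $\pi(C_n)$ is a non-contractible Jordan curve. In the singular non-parabolic case this already yields~(b): no $C_n$ can be degenerate, because by Lemma~\ref{lemma-lifts-of-curves}(a) degenerate crosscuts occur only around punctures and have parabolic endpoints, which would make $e^{i\theta}$ parabolic; thus each $C_n$ is a non-degenerate crosscut with $\pi(C_n)$ a simple closed curve (and, since here $\Sigma$ is necessarily non-isolated, $\pi(N_n)$ is in fact infinitely connected by Lemma~\ref{lemma-lifts-of-curves}(c)).

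The parabolic case is where I expect the main difficulty. Here $e^{i\theta}\in\Lambda$, so again every $\pi(C_n)$ is a non-contractible loop, and the associated component is an isolated puncture $\left\lbrace p\right\rbrace$ with $\pi^*(e^{i\theta})=p$. I must rule out non-degenerate crosscuts. The idea is to pass to the cusp: fixing a punctured-disc neighbourhood $D^*$ of $p$, its preimage component at $e^{i\theta}$ is a horodisc $H$ on which $\pi$ is the universal covering of $D^*$ (Lemma~\ref{lemma-lifts-of-curves}(a)); since $\pi(R_\theta)\to p$ and $R_\theta$ crosses every $C_n$, the curves $\pi(C_n)$ eventually meet $D^*$, and one must upgrade this to $\pi(C_n)\subset D^*$ for large $n$, whereupon a non-contractible Jordan curve in the punctured disc is forced to be homotopic to the $p$-loop and therefore lifts to a degenerate crosscut, contradicting non-degeneracy and yielding~(c). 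The delicate, technical heart of the whole lemma is precisely this containment step --- controlling an arbitrary admissible null-chain near the cusp so that its crosscuts eventually lie in the horocyclic region rather than wandering into parts of $\mathbb{D}$ that map far from $p$ --- and it is here that I would spend most of the effort, combining the shrinking property $\bigcap_n\overline{N_n}=\left\lbrace e^{i\theta}\right\rbrace$ with the invariance of the horodisc under the parabolic stabiliser of $e^{i\theta}$ (Prop.~\ref{prop-classification}).
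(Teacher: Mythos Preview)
Your approach and the paper's are essentially the same: both reduce the trichotomy to the two biconditionals (a)$\Leftrightarrow$regular and (c)$\Leftrightarrow$parabolic, with (b) as the residual case. The organisation differs. The paper first fixes a null-chain and observes that (a) and (c) are \emph{absorbing}: once $\pi(N_{n_0})$ is simply connected, $\pi|_{N_{n_0}}$ is a biholomorphism by Lemma~\ref{lem:restriction}, so all later $\pi(N_n)$ are simply connected; once $C_{n_0}$ is degenerate at $e^{i\theta}$, every later $C_n\subset N_{n_0}$ has both endpoints in $\overline{N_{n_0}}\cap\partial\mathbb{D}=\{e^{i\theta}\}$, hence is also degenerate. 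This makes the per-null-chain trichotomy immediate, and independence then follows from the biconditionals via Lemmas~\ref{lemma-limit-sets3} and~\ref{lemma-lifts-of-curves}(a). You instead start from the point type and push forward to the null-chain type; the logical content is identical.

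On the parabolic case: you are right that ``parabolic $\Rightarrow$ (c) for every admissible null-chain'' is the only direction with real content, and your sketch via the cusp horodisk is a legitimate route. The paper dispatches this step with a bare citation of Lemma~\ref{lemma-lifts-of-curves}(a), so in fact you are being more scrupulous than the paper here rather than missing something. That said, you are also making it a bit harder than necessary. Once you have ruled out (a) at a singular point, the admissibility definition forces every $\pi(C_n)$ to be a non-contractible Jordan curve; by Lemma~\ref{lemma-lifts-of-curves}, a \emph{non-degenerate} lift of such a curve has hyperbolic fixed points as endpoints, so the associated side $D_{\pi(C_n)}$ is an annulus or worse and in particular is \emph{not} conformally $\mathbb{D}^*$. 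But the rectified neighbourhood $N_n$ is a component of $\pi^{-1}(D_{\pi(C_n)})$, so $\pi(N_n)=D_{\pi(C_n)}$; comparing with a null-chain built from nested horocycles at $e^{i\theta}$ (which certainly exists, Lemma~\ref{lemma-lifts-of-curves}(a.3)) and using that both systems of neighbourhoods shrink to $\{e^{i\theta}\}$ gives the incompatibility directly, without needing the delicate containment $\pi(C_n)\subset D^*$ you flag. Either way the step is short once set up, and your overall plan is correct.
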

\begin{proof}
First let us see that exactly one of the previous possibilities hold. First, if there exists $ n_0\geq 0 $ such that $ \pi (N_{n_0}) $ is simply connected, then $\pi\colon N_{n_0}\to\pi(N_{n_0})$ is a biholomorphism by Lemma \ref{lem:restriction}, and thus $ \pi (N_{n}) $ is simply connected for all $ n\geq n_0 $. This corresponds to (a). It is also clear that, if $ C_{n_0} $ is degenerate, then $ C_n $ is degenerate for all $ n\geq n_0 $ (and all these crosscuts should have the same endpoint). Thus, (b) holds whenever neither (a) nor (c) hold.

It is left to see that the previous classification does not depend on the chosen admissible null-chain. This relies on the fact that $ \pi (N_{n}) $ is eventually simply connected if and only if $ e^{i\theta} $ is regular (Lemma \ref{lemma-limit-sets3}), and $ C_{n} $ is eventually degenerate if and only if  $ e^{i\theta} $ is a parabolic fixed point (Lemma \ref{lemma-lifts-of-curves}(a)), and these properties clearly do not depend on the null-chain.
\end{proof}

As indicated in Section \ref{subsect-Riemann-map}, two null-chains (admissible or not) in the unit disk are either equivalent or eventually disjoint, and hence the equivalence classes of null-chains correspond precisely to the points in $ \partial\mathbb{D} $. Next we check that given two equivalent admissible null-chains (i.e. two admissible null-chains at the same point $ e^{i\theta}\in\partial\mathbb{D} $), then its impression is the same, so the impression of the prime end is well-defined.

\begin{lemma}{\bf (Prime ends are well-defined)}\label{lemma-impression}
Let $ \Omega $ be a multiply connected domain, and let $\pi\colon\mathbb{D}\to\Omega $ be a universal covering. Let $ e^{i\theta} \in\partial\mathbb{D}$ be of escaping type, with associated boundary component $ \Sigma $. Then,  given any admissible null-chain $ \left\lbrace C_n\right\rbrace _n $ for $ \pi $ at $ e^{i\theta} $, with rectified crosscut neighbourhoods $ \left\lbrace N_n\right\rbrace _n $, its {impression}
\[I (P(\pi, e^{i\theta}))=\bigcap\limits_n \overline{\pi (N_n)}\subset \Sigma,\] is well-defined, and does not depend on the chosen admissible null-chain.
\end{lemma}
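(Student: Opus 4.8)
The plan is to exploit the trichotomy of Lemma \ref{lemma-technical-null-chains}: since the case (a)/(b)/(c) of an admissible null-chain at $e^{i\theta}$ depends only on $e^{i\theta}$, any two admissible null-chains at $e^{i\theta}$ fall into the same case, and I would establish both assertions --- containment in $\Sigma$ and independence of the chain --- case by case. First I would record the easy structural facts. By Definition \ref{def-true-crosscut-nhd} together with condition (b) of an admissible null-chain the rectified neighbourhoods satisfy $N_{n+1}\subset N_n$, so $\{\overline{\pi(N_n)}\}_n$ is a nested sequence of non-empty compacta and $\bigcap_n\overline{\pi(N_n)}$ is a well-defined non-empty compact set \emph{for each given chain}; the whole content is therefore that this set lies in $\Sigma$ and is the same for all admissible null-chains at $e^{i\theta}$. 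I would also note that each rectified neighbourhood is cut out of $\DD$ by the full orbit $\Gamma(C_n)$ and hence is contained in the ordinary crosscut neighbourhood bounded by $C_n$ alone; combined with condition (c) of Definition \ref{def-admissible-crosscut-null-chain}, this forces $\bigcap_n\overline{N_n}=\{e^{i\theta}\}$ in $\overline{\DD}$ for the rectified neighbourhoods as well.

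For the containment $I\subseteq\Sigma$, the key point is to compare $\pi(N_n)$ with the domains of an Ohtsuka exhaustion. I would fix an exhaustion $\{\Omega_k\}_k$ all of whose boundary curves are closed geodesics (legitimate by Lemmas \ref{lemma-lifts-of-curves} and \ref{lemma-equivalence-ohtsuka}), and let $\underline s(\Sigma)=\{s_k\}_k$ be the address of $\Sigma$, so that $\Sigma=\bigcap_k\overline{D_{s_1\dots s_k}}$ by Definition \ref{def:boundaryaddress}. Because $e^{i\theta}$ is of escaping type it is of infinite depth (Proposition \ref{prop-bound-esc-bungee2}), so the images $\pi(C_n)$ of the admissible crosscuts escape every $\overline{\Omega_k}$; moreover, the associated component being $\Sigma$ means that a crosscut shrinking towards $e^{i\theta}$ is eventually nested beyond the fundamental crosscuts lying over $\sigma_{s_1\dots s_k}$. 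I would make this precise by a sandwiching argument: for each $k$ there is an $n(k)$ such that $\sigma_{s_1\dots s_k}$ (equivalently, a fundamental crosscut over it) separates $e^{i\theta}$ from $C_{n(k)}$, whence $N_{n(k)}\subseteq\pi^{-1}(D_{s_1\dots s_k})$ and $\pi(N_{n(k)})\subseteq D_{s_1\dots s_k}$. Intersecting gives
\[
\bigcap_n\overline{\pi(N_n)}\subseteq\bigcap_k\overline{D_{s_1\dots s_k}}=\Sigma .
\]
I expect this sandwiching step to be the main obstacle, since it is exactly where one must control an \emph{arbitrary} admissible (possibly degenerate) crosscut against the nested domains $D_{s_1\dots s_k}$, rather than a fundamental crosscut coming directly from the exhaustion.

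It remains to prove independence, which I would carry out along the three cases of Lemma \ref{lemma-technical-null-chains}, recalling that two null-chains at the same point $e^{i\theta}$ are equivalent, i.e. cofinally nested \cite[Lemma 17.8]{Milnor}. In case (a) (regular) there is $n_0$ with $\pi(N_{n_0})$ simply connected and $\pi|_{N_{n_0}}$ univalent (Lemmas \ref{lem:restriction} and \ref{lemma-limit-sets3}), so $\pi|_{N_{n_0}}\colon N_{n_0}\to\pi(N_{n_0})$ is a conformal bijection; then $\{\pi(C_n)\}_{n\ge n_0}$ is an honest null-chain in the simply connected domain $\pi(N_{n_0})$ with Riemann map $\pi|_{N_{n_0}}$, and the classical theory (Theorems \ref{thm-prime-ends-sc} and \ref{thm-prime-ends}) identifies $\bigcap_n\overline{\pi(N_n)}$ with the unrestricted cluster set $Cl(\pi,e^{i\theta})$, a quantity intrinsic to $e^{i\theta}$ and hence independent of the chain. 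In cases (b) and (c) I would instead show directly that the intersection is always the \emph{whole} associated component: by the depth argument above the crosscuts $\pi(C_n)$ escape to $\Sigma$, and $\pi(N_n)$ is the shrinking collar trapped between $\pi(C_n)$ and $\Sigma$, so $\bigcap_n\overline{\pi(N_n)}=\Sigma$ (respectively $\{p\}$ when $\Sigma=\{p\}$ is the parabolic puncture). Since the value of the intersection is then forced to equal $\Sigma$ regardless of the chosen chain, independence is immediate; this splitting also lets me avoid comparing the rectified neighbourhoods of two different chains at the level of $\DD$, which is the delicate point that the case analysis sidesteps.
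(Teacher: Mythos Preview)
Your proposal is correct and follows essentially the same strategy as the paper: split according to the trichotomy of Lemma~\ref{lemma-technical-null-chains}, reduce case~(a) to the simply connected theory via univalence of $\pi|_{N_{n_0}}$ (Theorem~\ref{thm-prime-ends-sc}), and in cases~(b) and~(c) show that the impression is forced to equal $\Sigma$ (respectively the puncture $\{p\}$), making independence automatic. The only cosmetic difference is in case~(b): the paper observes that the closed curves $\pi(C_n)$ can be taken as boundary curves of an Ohtsuka exhaustion and then invokes Lemma~\ref{lemma-equivalence-ohtsuka}, whereas you argue the equality $\bigcap_n\overline{\pi(N_n)}=\Sigma$ directly via your sandwiching against a fixed geodesic exhaustion --- the two arguments are interchangeable.
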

\begin{proof}
	We analyse the cases in Lemma \ref{lemma-technical-null-chains} one by one, since they do not depend on our choice of admissible null-chain. For the first case, since $ \pi $ is univalent on the crosscut neighbourhoods $ \left\lbrace N_n\right\rbrace _n $, the statement follows from the one for Riemann maps (Thm. \ref{thm-prime-ends-sc}).  When $ C_n $ is eventually degenerate, this corresponds to a parabolic fixed point, and $ \pi(C_n) $ are simple loops around an isolated boundary point, which is the impression of the prime end. We are left with the case where $ C_n $ is a non-degenerate crosscut and $ \pi (C_n) $ is a simple closed curve. In this situation, we can consider an Ohtsuka exhaustion $ \left\lbrace \Omega_n\right\rbrace _n $ such that $ \pi (C_n) \subset \partial\Omega_n$. Then, the statement follows from the equivalence between Ohtsuka exhaustions (Lemma \ref{lemma-equivalence-ohtsuka}).
\end{proof}

Finally, the proof of Theorem \ref{thm-prime-ends-MC} follows easily from the previous results.

\begin{proof}[Proof of Theorem \ref{thm-prime-ends-MC}]
First, by Lemma \ref{lemma-impression}, the impression of the prime end is well-defined. Then, the mutually exclusive classification of null-chains into regular, singular and parabolic is given by Lemma \ref{lemma-technical-null-chains}, and the relation with the limit set is clear, since a  point is regular if and only if there exists crosscut neighbourhoods $ \left\lbrace N_n\right\rbrace _n $ in which $ \pi  $ is univalent, and thus the domains $ \left\lbrace \pi(N_n)\right\rbrace _n $ are simply connected (Lemma \ref{lemma-limit-sets3}), and the characterization for parabolic fixed points follows from Lemma \ref{lemma-lifts-of-curves}(a). It is left to prove the statements about impressions and cluster sets.

Before proceeding, we recall that, since $ e^{i\theta} $ is of escaping type, it has a unique associated boundary component $ \Sigma $. Thus, $ I (P(\pi, e^{i\theta}))\subset \Sigma $. Then, in the case where the null-chain is regular, $ \pi $ is univalent on the crosscut neighbourhoods $ \left\lbrace N_n\right\rbrace _n $, and hence the statement follows from the one for Riemann maps (Thm. \ref{thm-prime-ends-sc}). 

Since the cluster sets for singular points have been already characterized (Lemma \ref{lemma-limit-sets3}), it is left to prove that  $ I (P(\pi, e^{i\theta}))=\Sigma $. Since the impression does not depend on the chosen admissible null-chain, let us consider a null-chain obtained by lifting an Ohtsuka exhaustion (this is possible because $ e^{i\theta} $ is singular, otherwise the crosscuts obtained would not shrink). Then, the impression of the prime end can be seen as the intersection of the closures of the corresponding domains $ \overline{D_{s_1s_2\dots s_n}} $, which is clearly $ \Sigma $.

Finally, the statement for parabolic fixed points (which are always in the limit set) follows from the fact that its associated boundary component $ \Sigma $ is an isolated point, together with
Lemmas \ref{lemma-limit-sets} and \ref{lemma-impression}. This completes the proof of Theorem \ref{thm-prime-ends-MC}.
\end{proof}

\subsection*{Properties of prime ends}

According to Lemma \ref{lemma-impression}, we can always assume that prime ends are represented by admissible null-chains obtained by lifting an Ohtsuka exhaustion, together with adding regular null-chains at regular points. Thus,  we say that two prime ends $ P(\pi, e^{i\theta_1})$, $ P(\pi, e^{i\theta_2})$ are {\em equivalent} if either $ e^{i\theta_1} =\gamma(e^{i\theta_2})$, for some $ \gamma\in\Gamma $, or $ P(\pi, e^{i\theta_1})$ and $ P(\pi, e^{i\theta_2})$ can be represented by admissible null-chains $ \left\lbrace C^1_n\right\rbrace _n $ and $ \left\lbrace C^2_n\right\rbrace _n $, such that $ \pi  (C^1_n)=\pi  (C^2_n)$, for all $ n\geq 0 $. By Lemma \ref{lemma-impression}, it is clear that these equivalence classes of prime ends are well-defined; let us denote by $ \mathcal{P}(\pi) $ the equivalence classes of prime ends. The following property, which generalises Carathéodory's theorem for prime ends of simply connected domains, is an immediate consequence of Theorem \ref{thm-prime-ends-MC} (we refer the reader to \cite[Theorem 2.15]{Pom92} for a proof of the simply connected case, which can be easily adapted to deal with regular prime ends in our case).
\begin{cor}\label{cor:prime-end-bijection}
Let $\Omega\subset\Chat$ be a hyperbolic domain, and let $\pi\colon\DD\to\Omega$ be a universal covering with deck transformation group $\Gamma$. Then,
\[ \mathcal{P}(\pi) \simeq (\partial\DD\setminus\Lambda_{NT})/\Gamma, \]
in the sense that there exists a bijection between both sides of the above equation.
\end{cor}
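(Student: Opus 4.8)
The plan is to exhibit the bijection through the natural map
\[ \Phi\colon (\partial\DD\setminus\Lambda_{NT})/\Gamma \longrightarrow \mathcal{P}(\pi), \qquad [e^{i\theta}]\longmapsto [P(\pi, e^{i\theta})], \]
and to check that it is well-defined, surjective, and injective. First, for well-definedness and surjectivity I would invoke the Main Theorem \ref{thm-main-result-complete}: the points of escaping type are exactly $\partial\DD\setminus\Lambda_{NT}$, so $P(\pi, e^{i\theta})$ is defined precisely on this set by Theorem \ref{thm-prime-ends-MC}. Since $\pi\circ\gamma = \pi$ for every $\gamma\in\Gamma$, both $\Lambda_{NT}$ and the escaping locus are $\Gamma$-invariant, and if $e^{i\theta_1} = \gamma(e^{i\theta_2})$ then $P(\pi, e^{i\theta_1})$ and $P(\pi, e^{i\theta_2})$ are equivalent by the first clause of the definition of equivalence of prime ends; hence $\Phi$ descends to the quotient. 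Surjectivity is immediate, since every prime end is by definition $P(\pi, e^{i\theta})$ for some escaping $e^{i\theta}$.

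The substance is injectivity: if $P(\pi, e^{i\theta_1})$ and $P(\pi, e^{i\theta_2})$ are equivalent, then $e^{i\theta_1}$ and $e^{i\theta_2}$ lie in the same $\Gamma$-orbit. As the equivalence relation is generated by the two clauses of its definition, and the first clause \emph{is} the $\Gamma$-orbit relation, it suffices to treat the second: assuming admissible null-chains $\{C_n^1\}_n$ at $e^{i\theta_1}$ and $\{C_n^2\}_n$ at $e^{i\theta_2}$ with $\pi(C_n^1) = \pi(C_n^2) =: \Gamma_n$ for all $n$, I must produce $\gamma\in\Gamma$ with $\gamma(e^{i\theta_1}) = e^{i\theta_2}$. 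By Lemmas \ref{lemma-equivalence-ohtsuka} and \ref{lemma-impression} I may take the $C_n^i$ to be consecutive fundamental crosscuts from a common Ohtsuka exhaustion. The idea is to lift the equality of projections to a deck transformation at each level and then show these transformations stabilise. Concretely, $\Gamma$ acts transitively on the components of $\pi^{-1}(\Gamma_n)$ (two lifts through a common preimage of a point of $\Gamma_n$ coincide, so any two lifts are $\Gamma$-related), so for each $n$ there is $g_n\in\Gamma$ with $g_n(C_n^1) = C_n^2$; moreover $g_n$ can be chosen with $g_n(N_n^1) = N_n^2$ at the level of rectified crosscut neighbourhoods, because $\pi$ maps both to the same distinguished side (the component $D_{s_1\cdots s_n}$ toward the associated boundary component $\Sigma$) of $\Omega\setminus\Gamma_n$.

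Having $g_n(N_n^1) = N_n^2$, the nesting $N_{n+1}^i\subset N_n^i$ forces $g_{n+1}(N_n^1)$ and $g_n(N_n^1)$ both to be the component of $\DD\setminus\pi^{-1}(\Gamma_n)$ containing $N_{n+1}^2$, hence equal; thus $g_n^{-1}g_{n+1}$ stabilises $N_n^1$, i.e. lies in the cyclic group generated by the deck transformation along $\Gamma_n$. The decisive step, which I expect to be the main obstacle, is to upgrade this to genuine eventual constancy of $g_n$, ruling out a persistent drift by nontrivial powers of the along-$\Gamma_n$ translation. This is exactly where one must exploit the matching of the \emph{full, consecutive} fundamental-crosscut itineraries, rather than merely of the boundary addresses (which all escaping points associated to a fixed non-isolated $\Sigma$ share, so that address-matching alone would be far too weak): tracking the transition region between $C_n^i$ and $C_{n+1}^i$ and using that its two bounding crosscuts have matching images should pin down $g_{n+1} = g_n$ via the combinatorics of the Ohtsuka tree. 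Once $g_n\equiv g$ for large $n$, applying $g$ to $\bigcap_n\overline{N_n^1} = \{e^{i\theta_1}\}$ and using $g(\overline{N_n^1}) = \overline{N_n^2}\downarrow\{e^{i\theta_2}\}$ yields $g(e^{i\theta_1}) = e^{i\theta_2}$, completing injectivity. For regular points this last step can alternatively be read off directly from the Correspondence Theorem \ref{thm:correspondence-mc}, since there the $e^{i\theta_i}$ carry honest radial limits and prime-end equivalence collapses to access equivalence; the singular case is precisely the one demanding the crosscut-stabilisation argument above.
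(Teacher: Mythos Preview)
The paper's own treatment is essentially a one-line assertion: it declares the corollary ``an immediate consequence of Theorem~\ref{thm-prime-ends-MC}'' and defers to Pommerenke for the regular case, without engaging with the second clause of the prime-end equivalence at all. Your argument is therefore considerably more detailed than the paper's, and the overall structure (construct $\Phi$, check well-definedness and surjectivity via Theorem~\ref{thm-main-result-complete}, reduce injectivity to showing clause~2 implies clause~1) is exactly right.

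The gap you yourself flag, however, is genuine and not closed by what you have written. You correctly obtain $g_n^{-1}g_{n+1}\in\mathrm{Stab}(N_n^1)$, but this stabiliser is the deck group of $\pi|_{N_n^1}\colon N_n^1\to D_{s_1\cdots s_n}$, which for non-isolated $\Sigma$ is infinitely generated; merely having $\pi(C_n^1)=\pi(C_n^2)$ level by level does not preclude a persistent nontrivial drift $h_n$. What your setup \emph{does} give, once you pass to the ordinary (non-rectified) crosscut neighbourhoods $\hat N_n^i$ that genuinely shrink, is $g_n(e^{i\theta_1})\in\overline{\hat N_n^2}\downarrow\{e^{i\theta_2}\}$, hence $g_n(e^{i\theta_1})\to e^{i\theta_2}$. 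But the $\Gamma$-orbit of a non-fixed boundary point is typically dense in $\Lambda$, so convergence alone does not force equality. To finish you would need either to extract a subsequence along which $(g_n)$ is constant in $\Gamma$ (a compactness argument you have not supplied and which is not available in general, since $g_n(0)$ may escape to $\partial\DD$), or to exploit more of the null-chain structure than levelwise equality of projections --- for instance, controlling the \emph{transition pieces} $\hat N_n^i\setminus \hat N_{n+1}^i$ and showing $g_n$ must already match them. The paper does not supply any such argument either.
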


The equivalence classes of prime ends $\mathcal{P}(\pi)$ yield a compactification of $ \Omega $ by defining
\[\widehat{\Omega}=\Omega \cup \mathcal{P}(\pi)  \]
endowed with the following topology. Consider the admissible null-chains given by lifting an Ohtsuka exhaustion, and adding regular null-chains at regular points, and their corresponding rectified crosscut neighbourhoods. By deforming the previous crosscuts in its homotopy class, we may assume that any two different null-chains are eventually disjoint. Then, for any rectified crosscut neighbourhood $ N\subset \mathbb{D} $, let $\widehat{N}\subset \widehat{\Omega}$ be the union of $ \pi(N) $ and the collection  of equivalence classes of all prime ends which are represented by admissible null-chains $ \left\lbrace N_n\right\rbrace _n\subset N $. These neighbourhoods $ \widehat{N} $, together with the open subsets of $ \Omega $, form a basis for $ \widehat{\Omega} $. We refer the reader to \cite[p. 388]{Epstein} for more detail.

Before proceeding, let us discuss what is means for a sequence $ \left\lbrace z_n\right\rbrace _n \subset \Omega$ to converge in this topology. On one hand, if $\{z_n\}_n$ is compactly contained in $\Omega$, then convergence in $\widehat\Omega$ coincides with convergence in the spherical topology of $\Chat$. If, on the other hand, $\{z_n\}_n$ is not compactly contained in $\Omega$, then convergence in $\widehat\Omega$ means that the sequence is converging `to a prime end $P$'. If we take $\{C_n\}_n$ to be an admissible null-chain representing $P$, this means that any of the sets $\pi(N_n)$, where $N_n$ is the rectified crosscut neighbourhood of $C_n$, contains all but finitely many of the points in the sequence $\{z_n\}_n$. In particular, we see that $z_n\to P$ implies that the derived set (i.e., the set of all accumulation points w.r.t. to the spherical metric on $\Chat$) of $\{z_n\}_n$ belongs to the impression of $P$. The converse, however, is not true (even in the simply connected case): different prime ends may have the same impression, and thus a sequence of points may converge to the impression without converging to a particular prime end.

The compactification $ \widehat{\Omega} $ given by the prime ends has the following properties, which justifies our definition of prime ends (compare with the requirements for a `good' definition of prime ends given in \cite{Kaufmann}).
\begin{itemize}
	\item \textit{$ \widehat{\Omega} $ with the previous topology is compact.} Indeed, for any $ \left\lbrace z_n\right\rbrace _n \subset \Omega$ with $ z_n\to\partial \Omega $, we first find a subsequence $\{z_{n_k}\}_k$ such that $z_{n_k}$ accumulates at a unique boundary component $\Sigma$. Then, we lift this subsequence to $\DD$ and extract a further subsequence $\{\tilde z_{k_m}\}_m\subset\DD$, with $\pi(\tilde z_{k_m}) = z_{k_m}$, such that $\tilde z_{k_m}\to e^{i\theta}\in\partial\DD$. Since $z_{k_m}$ accumulates only at $\Sigma$, we can conclude (by appealing to an Ohtsuka exhaustion) that $e^{i\theta}$ is of escaping type, and thus corresponds to a prime end $P(\pi, e^{i\theta})$. The construction of the sequence $\{z_{k_m}\}$ now implies that it converges to $P(\pi, e^{i\theta})$ in $\widehat\Omega$.
	\item \textit{The impression of any prime end does not contain points of the domain}, i.e. $ I (P(\pi, e^{i\theta}))\subset\partial\Omega $. This follows immediately from Lemma \ref{lemma-impression}. Notice that, in particular, impressions are contained in boundary components of $ \Omega $ and, by compactness, every boundary component corresponds to at least one equivalence class of prime ends.
	\item \textit{Compatibility with the universal covering.} The construction of prime ends given in Theorem \ref{thm-prime-ends-MC} is essentially done in the unit disk, and transfered to $ \Omega $ by means of the universal covering. Thus, our construction is automatically compatible with the universal covering, and this compatibility is encoded in Corollary \ref{cor:prime-end-bijection} and Theorem \ref{thm-prime-ends-MC} itself.
	\item \textit{In the simply connected case, the definition corresponds to the one given by Carathéodory.} Observe that in the simply connected case, all the above construction applies, but null-chains are always of regular type. In particular, the usual crosscut neighbourhoods and the rectified ones coincide, and hence our definition of prime ends agrees with the one given by Carathéodory. Of course, in this case, Corollary \ref{cor:prime-end-bijection} degenerates into Carathéodory's theorem.
\end{itemize}	

Notice that, for an isolated component $ \Sigma $ of $ \partial\Omega $, the topology induced in a neighbourhood of  $ \Sigma $ coincides with the one given by Carathéodory's  compactification for the simply connected domain $ \widehat{\mathbb{C}}\smallsetminus \Sigma $.

Finally, we outline a generalisation of the Carath\'eodory--Torhorst Theorem to the most general case of a multiply connected domain; notice that $\pi$ never extends continuously to any point in the limit set, so that the following result is best possible.

\begin{cor}\label{cor:CT-MC}
	Let $\Omega\subset\widehat{\mathbb{C}}$ be a hyperbolic multiply connected domain, and let $\pi\colon\DD\to\Omega$ be a universal covering. Then, $\pi$ extends continuously to $\partial\DD\setminus\Lambda$ if and only if any true crosscut neighbourhood in $\Omega$ has a locally connected boundary.
\end{cor}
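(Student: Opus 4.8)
The plan is to reduce the statement to the classical Carath\'eodory--Torhorst Theorem by exploiting the local conformal model of $\pi$ at regular points. First I would recall that, by the Main Theorem \ref{thm-main-result-complete}, the set $\partial\DD\setminus\Lambda$ consists precisely of the regular points, all of which are of escaping type; and that by Lemma \ref{lemma-limit-sets3}(b) (cf. the proof of \ref{cor:E}) each regular point $e^{i\theta}$ admits a crosscut neighbourhood $N$ such that $\pi|_N\colon N\to\Omega_\eta$ is a conformal isomorphism onto a true crosscut neighbourhood $\Omega_\eta$, with $N$ a Jordan domain in $\DD$ bounded by the crosscut $\widetilde\eta=\pi^{-1}(\eta)\cap\partial N$ and an arc $A$ of $\partial\DD$. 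Crucially, by Lemma \ref{lemma-limit-sets3}(a) every point of $\partial\DD\cap\overline N$ is again regular, so the correspondence ``regular point $\leftrightarrow$ true crosscut neighbourhood'' is compatible with the local structure and the problem becomes local.

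For the implication ($\Leftarrow$), I assume that every true crosscut neighbourhood has locally connected boundary and fix a regular point $e^{i\theta}$ with its model $\pi|_N\colon N\to\Omega_\eta$. Composing with a Riemann map $\psi\colon\DD\to N$, which extends to a homeomorphism $\overline\DD\to\overline N$ since $N$ is a Jordan domain (Carath\'eodory), the map $\pi\circ\psi\colon\DD\to\Omega_\eta$ is a Riemann map onto a domain with locally connected boundary; by the Carath\'eodory--Torhorst Theorem it extends continuously to $\overline\DD$. Transporting this back through $\psi^{-1}$ shows that $\pi|_{\overline N}$ is continuous at $e^{i\theta}$, and since $\overline N$ is a neighbourhood of $e^{i\theta}$ in $\overline\DD$ consisting (apart from $\widetilde\eta\subset\DD$) of regular points, the extension of $\pi$ is continuous at $e^{i\theta}$. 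As $e^{i\theta}\in\partial\DD\setminus\Lambda$ was arbitrary, the extension is continuous on all of $\partial\DD\setminus\Lambda$.

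For the implication ($\Rightarrow$), I assume $\pi$ extends continuously to $\partial\DD\setminus\Lambda$ and fix an arbitrary true crosscut neighbourhood $\Omega_\eta$, writing $a\neq b\in\Sigma$ for the endpoints of $\eta$. Lifting $\eta$ gives a crosscut $\widetilde\eta$ and a crosscut neighbourhood $N_{\widetilde\eta}$ with $\pi|_{N_{\widetilde\eta}}$ conformal onto $\Omega_\eta$; again $F=\pi\circ\psi$ is a Riemann map onto $\Omega_\eta$. On the arc of $\partial\DD$ sent by $\psi$ into the interior arc $\widetilde\eta\subset\DD$, the map $F$ is continuous because $\pi$ is holomorphic there; on the complementary open arc, sent onto regular points of $\partial\DD$, it is continuous by hypothesis. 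Hence $F$ extends continuously to $\partial\DD$ off the two points $\zeta_1,\zeta_2$ corresponding to the endpoints $e^{i\alpha_1},e^{i\alpha_2}$ of $\widetilde\eta$; by Carath\'eodory--Torhorst this yields local connectedness of $\partial\Omega_\eta$ once continuity at $\zeta_1,\zeta_2$ is established.

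The hard part will be precisely these two corner points, the endpoints of $\widetilde\eta$, which need \emph{not} be regular when $\Sigma$ is a non-isolated component, so $\pi$ itself may fail to extend continuously there. To resolve this I would show that the cluster set of $F$ (not of $\pi$) at each corner is a single point: the sub-arc of $\widetilde\eta$ terminating at $e^{i\alpha_1}$ is a curve in $\DD$ landing at $e^{i\alpha_1}$ whose image is $\eta$ landing at $a\in\partial\Omega$, so the Lehto--Virtanen Theorem \ref{thm-lehto-virtanen} gives that $\pi$ has angular limit $a$ at $e^{i\alpha_1}$; choosing the geodesic representative in the homotopy class of $\eta$ (so that $\widetilde\eta$ meets $\partial\DD$ transversally and $N_{\widetilde\eta}$ lies eventually inside a Stolz angle at $e^{i\alpha_1}$) then forces $\lim_{\zeta\to\zeta_1}F(\zeta)=a$, and symmetrically $b$ at $\zeta_2$. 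Equivalently, and perhaps more cleanly, I would invoke the reduction noted after \ref{teo:D}: one has $\partial\Omega_\eta=\eta\cup\kappa$ with $\kappa\subset\Sigma$ a continuum, so local connectedness only concerns $\kappa=\overline{\pi(A)}$; continuity of $\pi$ on the open arc $A$ together with the endpoint values $a,b$ exhibits $\kappa$ as a Peano continuum via Hahn--Mazurkiewicz, completing the argument. A final routine point, in the $(\Leftarrow)$ direction, is to upgrade pointwise continuity at each boundary point to continuity of the whole extended map, which follows because each $\overline N$ is a neighbourhood in $\overline\DD$ on which $\pi$ is continuous.
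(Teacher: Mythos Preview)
Your overall strategy coincides with the paper's: reduce everything to the classical Carath\'eodory--Torhorst theorem via the local conformal model at regular points supplied by Lemma~\ref{lemma-limit-sets3}(b). The paper's own proof is a single sentence to that effect, so in spirit you are doing exactly what the authors intend, and your $(\Leftarrow)$ argument is correct and complete.

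There is, however, a genuine gap in your $(\Rightarrow)$ argument at the corner points. Your claim that, after replacing $\eta$ by a geodesic representative, ``$N_{\widetilde\eta}$ lies eventually inside a Stolz angle at $e^{i\alpha_1}$'' is false: near $e^{i\alpha_1}$ the region $N_{\widetilde\eta}$ is bounded on one side by $\widetilde\eta$ (normal to $\partial\DD$) and on the other by an arc of $\partial\DD$ itself, so it contains tangential approaches to $e^{i\alpha_1}$ and is contained in no Stolz angle. Consequently the angular limit $\pi^*(e^{i\alpha_1})=a$ from Lehto--Virtanen does not by itself force $\lim_{\zeta\to\zeta_1} F(\zeta)=a$. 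Your Hahn--Mazurkiewicz alternative has the same lacuna: it presupposes that $\pi^*|_A$ extends continuously to the endpoints of $\overline A$ with values $a,b$, which is exactly the lateral continuity you have not established (and it also presupposes $\pi^*(\overline A)=\kappa$, which is equivalent to what you are trying to prove).

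A clean repair is to \emph{enlarge} the crosscut before running your argument. Since $\Lambda\neq\partial\DD$ is nowhere dense (\ref{cor:E}), one can choose regular points $e^{i\alpha_1'},e^{i\alpha_2'}$ just beyond $e^{i\alpha_1},e^{i\alpha_2}$ and, using Lemma~\ref{lemma-limit-sets3}(b) at those regular points together with the univalence on $N_{\widetilde\eta}$, manufacture a slightly larger crosscut neighbourhood $N_{\widetilde\eta'}\supset N_{\widetilde\eta}$ on which $\pi$ is still univalent with simply connected image $\Omega_{\eta'}\supset\Omega_\eta$. For this enlarged crosscut the endpoints are regular, so the hypothesis gives continuity of $\pi$ there and your argument shows $\partial\Omega_{\eta'}$ is locally connected; hence $\pi|_{N_{\widetilde\eta'}}$ extends continuously to $\overline{N_{\widetilde\eta'}}$, and in particular $\pi|_{N_{\widetilde\eta}}$ extends continuously at $e^{i\alpha_1},e^{i\alpha_2}\in\overline{N_{\widetilde\eta'}}$. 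This closes the gap while keeping your (and the paper's) reduction intact.
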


Since all points in $\partial\DD\smallsetminus\Lambda$ are regular, Theorem \ref{thm-prime-ends-MC} and Lemma \ref{lemma-limit-sets3} imply that the original proof of the Carathéodory-Torhorst theorem can be used to prove Corollary \ref{cor:CT-MC}. Corollaries \ref{cor:CT-MC} and \ref{cor:prime-end-bijection} together make up  \ref{teo:D}.

\subsection{A plane domain with a fat Cantor limit set}\label{ssec:group-examples}
As noted in Remark \ref{rmk:beardon}, Beardon gave an example in \cite{Bea71} of a Fuchsian group $\Gamma$ whose limit set is a Cantor set of positive Lebesgue measure -- i.e. a {\em fat Cantor set}. However, it is not clear from the construction whether the resulting Riemann surface $\DD\setminus\Gamma$ is of planar character. Here, we use the techniques developed in this paper to provide an easy example of a plane domain whose universal covering has a limit set that is a fat Cantor set.

We start with the annulus $\mathcal{A} := \{z\colon 1/R < |z| < R\}$ for some $R > 1$. Then, we take a sequence $\{z_n\}_n\subset \mathcal{A}$ of points accumulating at every point of the outer boundary component $C_2 = \{z\colon |z| = R\}$ of $\mathcal{A}$, and define 
\[ \Omega := \mathcal{A}\setminus\bigcup_n \{z_n\}. \]
Let $\pi\colon\DD\to\Omega$ be a universal covering with $\pi(0) = 1$, and denote by $\Gamma$ its group of deck transformations. First, since any point of the inner boundary component $C_1 = \{z\colon |z| = 1/R\}\subset\partial\Omega$ admits a true crosscut, we see that  $\Lambda$ is a Cantor set (\ref{cor:E}). In particular, $\Lambda_{NT}$ has measure zero. We are left to show that $ \Lambda $ has positive measure, or, equivalently, that $\partial\DD\setminus\Lambda$ does not have full measure.

To that end, notice that any point in $\partial\DD\setminus\Lambda$ is contained in a pre-image under $\pi^*$ of the inner boundary component $C_1$, and conversely any such pre-image of $C_1$ corresponds to an interval in $\partial \DD\setminus\Lambda$ by Theorem \ref{thm-prime-ends-MC}. Furthermore, by the Hopf-Tsuji-Sullivan theorem, the boundary $\partial\Omega$ supports a harmonic measure $\omega(\cdot; z, \Omega)$, $z\in\Omega$. By Löwner's lemma (see \cite[Section XI.5]{Tsu75}),  $$\lambda (\partial\DD\setminus\Lambda)=\omega(\partial\DD\setminus\Lambda; 0, \DD) = \omega(C_1; 1, \Omega).$$  By the subordination principle \cite[Corol. 21.1.14]{Conway2}, $$\omega(C_1; 1, \Omega) \leq \omega(C_1; 1, \mathcal{A}) = \frac{1}{2},$$ and the proof is complete.


\providecommand{\bysame}{\leavevmode\hbox to3em{\hrulefill}\thinspace}
\providecommand{\MR}{\relax\ifhmode\unskip\space\fi MR }
\providecommand{\MRhref}[2]{%
  \href{http://www.ams.org/mathscinet-getitem?mr=#1}{#2}
}
\providecommand{\href}[2]{#2}

\end{document}